\theoremstyle{definition}
\newtheorem{thm}{Theorem}[section]
\newtheorem{prop}[thm]{Proposition}
\newtheorem{lem}[thm]{Lemma}
\newtheorem{cor}[thm]{Corollary}
\newtheorem{prop-def}[thm]{Proposition-Definition}
\newtheorem{defn}[thm]{Definition}
\newtheorem{remark}[thm]{Remark}
\newtheorem{exam}[thm]{Example}
\newcommand{\nc}{\newcommand}
\nc{\delete}[1]{{}}
\nc{\mmargin}[1]{}
\nc{\Alg}{\mathrm{Alg}}
\nc{\NjO}{\mathrm{NjO}}
\nc{\NjA}{\mathrm{NjA}}
\nc{\NjL}{\mathrm{NjL}}
\nc{\NjLD}{\mathrm{NjLD}}
\nc{\FN}{\mathrm{FN}}
\nc{\FNA}{\widetilde{\FN}}
\nc{\RN}{\mathrm{RN}}
\nc{\RNA}{\widetilde{\RN}}
\nc{\G}{\mathrm{G}}
\nc{\rmH}{\mathrm{H}}
\nc{\DT}{\mathrm{DT}}
\nc{\ac}{\mathrm{\textup{!`}}}
	\nc{\mlabel}[1]{\label{#1}} 
	\nc{\mcite}[1]{\cite{#1}} 
	\nc{\mref}[1]{\ref{#1}} 
	\nc{\mbibitem}[1]{\bibitem{#1}} 
	\nc{\mlabel}[1]{\label{#1} 
		{\hfill \hspace{1cm}{\bf{{\ }\hfill(#1)}}}}
	\nc{\mcite}[1]{\cite{#1}{{\bf{{\ }(#1)}}}} 
	\nc{\mref}[1]{\ref{#1}{{\bf{{\ }(#1)}}}} 
	\nc{\mbibitem}[1]{\bibitem[\bf #1]{#1}} 
 \font\cyrs=wncyr7
\nc{\vep}{\varepsilon}
\nc{\bin}[2]{ (_{\stackrel{\scs{#1}}{\scs{#2}}})} 
\nc{\binc}[2]{(\!\! \begin{array}{c} \scs{#1}\\
		\scs{#2} \end{array}\!\!)} 
\nc{\bincc}[2]{ ( {\scs{#1} \atop
		\vspace{-1cm}\scs{#2}} )} 
\nc{\oline}[1]{\overline{#1}}
\nc{\mapm}[1]{\lfloor\!|{#1}|\!\rfloor}
\nc{\bs}{\bar{S}}
\nc{\la}{\longrightarrow}
\nc{\ot}{\otimes}
\nc{\rar}{\rightarrow}
\nc{\lon }{\, \rightarrow\, }
\nc{\dar}{\downarrow}
\nc{\dap}[1]{\downarrow \rlap{$\scriptstyle{#1}$}}
\nc{\defeq}{\stackrel{\rm def}{=}}
\nc{\dis}[1]{\displaystyle{#1}}
\nc{\dotcup}{\ \displaystyle{\bigcup^\bullet}\ }
\nc{\hcm}{\ \hat{, }\ }
\nc{\hts}{\hat{\otimes}}
\nc{\hcirc}{\hat{\circ}}
\nc{\lleft}{[}
\nc{\lright}{]}
\nc{\curlyl}{\left \{ \begin{array}{c} {} \\ {} \end{array}
	\right. \!\!\!\!\!\!\!}
\nc{\curlyr}{ \!\!\!\!\!\!\!
	\left. \begin{array}{c} {} \\ {} \end{array}
	\right \} }
\nc{\longmid}{\left | \begin{array}{c} {} \\ {} \end{array}
	\right. \!\!\!\!\!\!\!}
\nc{\ora}[1]{\stackrel{#1}{\rar}}
\nc{\ola}[1]{\stackrel{#1}{\la}}
\nc{\scs}[1]{\scriptstyle{#1}} \nc{\mrm}[1]{{\rm #1}}
\nc{\dirlim}{\displaystyle{\lim_{\longrightarrow}}\, }
\nc{\invlim}{\displaystyle{\lim_{\longleftarrow}}\, }
\nc{\dislim}[1]{\displaystyle{\lim_{#1}}} \nc{\colim}{\mrm{colim}}
\nc{\mvp}{\vspace{0.3cm}} \nc{\tk}{^{(k)}} \nc{\tp}{^\prime}
\nc{\ttp}{^{\prime\prime}} \nc{\svp}{\vspace{2cm}}
\nc{\vp}{\vspace{8cm}}
\nc{\modg}[1]{\!<\!\!{#1}\!\!>}
\nc{\intg}[1]{F_C(#1)}
\nc{\lmodg}{\!<\!\!}
\nc{\rmodg}{\!\!>\!}
\nc{\cpi}{\widehat{\Pi}}
\nc{\ssha}{{\mbox{\cyrs X}}} 
\nc{\tsha}{{\mbox{\cyrt X}}}
\nc{\shpr}{\diamond} 
\nc{\labs}{\mid\!}
\nc{\rabs}{\!\mid}
\nc{\RBO}{{\mathrm{RBO}_\lambda}}
\nc{\Sh}{\mathrm{Sh}}
\nc{\sh}{\mathrm{\overline{Sh}}}
\nc{\RBA}{{\mathrm{RBA}_\lambda}}
\nc{\sgn}{\mathrm{sgn}}
\nc{\rH}{\mathrm{H}}
\newcommand{\ci}{C^{\infty}}
\newcommand{\hs}{\widehat{S}}
\nc{\ad}{\mrm{ad}}
\nc{\ann}{\mrm{ann}}
\nc{\Aut}{\mrm{Aut}}
\nc{\bim}{\mbox{-}\mathsf{Bimod}}
\nc{\br}{\mrm{bre}}
\nc{\can}{\mrm{can}}
\nc{\Cont}{\mrm{Cont}}
\nc{\rchar}{\mrm{char}}
\nc{\cok}{\mrm{coker}}
\nc{\de}{\mrm{dep}}
\nc{\dtf}{{R-{\rm tf}}}
\nc{\dtor}{{R-{\rm tor}}}
\renewcommand{\det}{\mrm{det}}
\newcommand{\Der}{\mathfrak{Der}}
\nc{\Div}{{\mrm Div}}
\nc{\Diff}{\mrm{DA}}
\nc{\Diffl}{\mathsf{DA}_\lambda}
\nc{\diffo}{{\mathsf{DO}_\lambda}}
\nc{\alg}{\mathsf{Alg}}
\nc{\End}{\mrm{End}}
\nc{\Alt}{\mrm{Alt}}
\nc{\Ext}{\mrm{Ext}}
\nc{\Fil}{\mrm{Fil}}
\nc{\Fr}{\mrm{Fr}}
\nc{\Frob}{\mrm{Frob}}
\nc{\Gal}{\mrm{Gal}}
\nc{\GL}{\mrm{GL}}
\nc{\Hom}{\mrm{Hom}}
\nc{\Hoch}{\mrm{Hoch}}
\nc{\C}{\mrm{C}}
\nc{\hsr}{\mrm{H}}
\nc{\hpol}{\mrm{HP}}
\nc{\id}{\mrm{Id}}
\nc{\im}{\mrm{im}}
\nc{\Id}{\mrm{Id}}
\nc{\ID}{\mrm{ID}}
\nc{\Irr}{\mrm{Irr}}
\nc{\incl}{\mrm{incl}}
\nc{\length}{\mrm{length}}
\nc{\NLSW}{\mrm{NLSW}}
\nc{\Lie}{\mrm{Lie}}
\nc{\mchar}{\rm char}
\nc{\mpart}{\mrm{part}}
\nc{\ql}{{\QQ_\ell}}
\nc{\qp}{{\QQ_p}}
\nc{\rank}{\mrm{rank}}
\nc{\rcot}{\mrm{cot}}
\nc{\rdef}{\mrm{def}}
\nc{\rdiv}{{\rm div}}
\nc{\rtf}{{\rm tf}}
\nc{\rtor}{{\rm tor}}
\nc{\res}{\mrm{res}}
\nc{\SL}{\mrm{SL}}
\nc{\Spec}{\mrm{Spec}}
\nc{\tor}{\mrm{tor}}
\nc{\Tr}{\mrm{Tr}}
\nc{\tr}{\mrm{tr}}
\nc{\wt}{\mrm{wt}}
\def\ot{\otimes}
\def\wg{\wedge}
\newcommand\dd{\mathrm{d}}
\nc{\bfk}{{\bf k}}
\nc{\bfone}{{\bf 1}}
\nc{\bfzero}{{\bf 0}}
\nc{\detail}{\marginpar{\bf More detail}
	\noindent{\bf Need more detail!}
	\svp}
\nc{\gap}{\marginpar{\bf Incomplete}\noindent{\bf Incomplete!!}
	\svp}
\nc{\FMod}{\mathbf{FMod}}
\nc{\Int}{\mathbf{Int}}
\nc{\Mon}{\mathbf{Mon}}
\nc{\sproof}{\noindent{ \textit{Sketch of Proof:} }}
\nc{\remarks}{\noindent{\bf Remarks: }}
\nc{\Rep}{\mathbf{Rep}}
\nc{\Rings}{\mathbf{Rings}}
\nc{\Sets}{\mathbf{Sets}}
\nc{\ob}{\mathsf{Ob}}
\nc{\BA}{{\mathbb A}} \nc{\CC}{{\mathbb C}}
\nc{\DD}{{\mathbb D}} \nc{\EE}{{\mathbb E}}
\nc{\FF}{{\mathbb F}} \nc{\GG}{{\mathbb G}}
\nc{\LL}{{\mathbb L}}
\nc{\NN}{{\mathbb N}} \nc{\PP}{{\mathbb P}}
\nc{\QQ}{{\mathbb Q}} \nc{\RR}{{\mathbb R}}
\nc{\TT}{{\mathbb T}} \nc{\VV}{{\mathbb V}}
\nc{\ZZ}{{\mathbb Z}} \nc{\TP}{\widetilde{P}}
\nc{\m}{{\mathbbm m}}
\newcommand{\bbR}{\mathbb R}
\newcommand{\bbS}{\mathbb S}
\newcommand{\bbZ}{\mathbb Z}
\nc{\cala}{{\mathcal A}} \nc{\calc}{{\mathcal C}}
\nc{\cald}{\mathcal{D}} \nc{\cale}{{\mathcal E}}
\nc{\calf}{{\mathcal F}} \nc{\calg}{{\mathcal G}}
\nc{\calh}{{\mathcal H}} \nc{\cali}{{\mathcal I}}
\nc{\call}{{\mathcal L}} \nc{\calm}{{\mathcal M}}
\nc{\caln}{{\mathcal N}} \nc{\calo}{{\mathcal O}}
\nc{\calp}{{\mathcal P}} \nc{\calr}{{\mathcal R}}
\nc{\cals}{{\mathcal S}} \nc{\calt}{{\Omega}}
\nc{\calv}{{\mathcal V}} \nc{\calw}{{\mathcal W}}
\nc{\calx}{{\mathcal X}}
\nc{\calLr}{{\mathcal L}^{\rho}}
\newcommand{\calA}{\mathcal A} 
\newcommand{\calB}{\mathcal B}
\newcommand{\calI}{\mathcal I}
\newcommand{\calL}{\mathcal L} 
\newcommand{\calM}{\mathcal M}
\newcommand{\calN}{\mathcal N} 
\newcommand{\calO}{\mathcal O}
\nc{\fraka}{{\mathfrak a}}
\nc{\frakb}{\mathfrak{b}}
\nc{\frakC}{\mathfrak{C}}
\nc{\frakg}{{\frak g}}
\nc{\frakl}{{\frak l}}
\nc{\frakh}{{\frak h}}
\nc{\fraks}{{\frak s}}
\nc{\frakB}{{\frak B}}
\nc{\frakm}{{\frak m}}
\nc{\frakM}{{\frak M}}
\nc{\frakp}{{\frak p}}
\nc{\frakW}{{\frak W}}
\nc{\frakX}{{\frak X}}
\nc{\frakS}{{\frak S}}
\nc{\frakT}{{\frak T}}
\nc{\frakA}{{\frak A}}
\nc{\frakL}{{\frak L}}
\nc{\frakx}{{\frakx}}
\nc{\red}{\color{red}}
\nc{\RB}{\mathfrak{RB}}
\nc{\NA}{\mathfrak{NjA}}
\nc{\NL}{\mathfrak{NjL}}
\nc{\lbar}[1]{\overline{#1}}
\nc{\ra}{\rightarrow}
\nc{\tred}[1]{\textcolor{red}{#1}} \nc{\tgreen}[1]{\textcolor{green}{#1}}
\nc{\tblue}[1]{\textcolor{blue}{#1}} \nc{\tpurple}[1]{\textcolor{purple}{#1}}
\nc{\chao}[1]{\tred{\underline{Chao:}#1 }}
\nc{\yuan}[1]{\tred{\underline{Yuan:}#1 }}
\nc{\kai}[1]{\red \tred{\red{Kai:}#1}}
\begin{document}
	
\title[Homotopy Nijenhuis algebras]{Deformations of  Nijenhuis Lie algebras and  Nijenhuis Lie algebroids}

\author{Chao Song, Kai Wang, Yuanyuan Zhang, and Guodong Zhou}

\date{\today}

\address{Chao Song, School of Mathematical Sciences, Key Laboratory of Mathematics and Engineering Applications (Ministry of Education), Shanghai Key Laboratory of PMMP, East China Normal University, Shanghai 200241, China}
\email{52265500011@stu.ecnu.edu.cn}

\address{Guodong Zhou, School of Mathematical Sciences, Key Laboratory of Mathematics and Engineering Applications (Ministry of Education), Shanghai Key Laboratory of PMMP, East China Normal University, Shanghai 200241, China}
\email{gdzhou@math.ecnu.edu.cn}

\address{Kai Wang, School of Mathematical Sciences, University of Science and Technology of China, Hefei, Anhui Provience 230026, China}
\email{wangkai17@ustc.edu.cn}

\address{Yuanyuan Zhang, School of Mathematics and Statistics, Henan University, Henan, Kaifeng 475004, China}
\email{zhangyy17@henu.edu.cn}

\begin{abstract} 
This paper is the second in a series dedicated to the operadic study of Nijenhuis structures, focusing on Nijenhuis Lie algebras and Nijenhuis geometry. We introduce the concept of homotopy Nijenhuis Lie algebras and establish that the differential graded (=dg) operad $\NL_{\infty}$ governing these structures serves as the minimal model of the operad $\NL$ for Nijenhuis Lie algebras. We construct an $L_\infty$-algebra that encodes the simultaneous deformations of Lie brackets and Nijenhuis operators, leading to the deformation cochain complex and an associated cohomology theory for Nijenhuis Lie algebras. Extending these ideas to geometry, we investigate the deformations of geometric Nijenhuis structures. We introduce the notion of a Nijenhuis Lie algebroid—a Lie algebroid equipped with a Nijenhuis structure, which generalizes the classical Nijenhuis structure on vector fields of manifolds. Using the framework of dg manifolds, we construct an $L_\infty$-algebra that governs the simultaneous deformations of Lie algebroid structures and Nijenhuis operators. As a computational application, we prove that a certain class of Nijenhuis operators satisfies the Poincar\'e Lemma, meaning its cohomology vanishes, which confirms a conjecture by Bolsinov and Konyaev.


\end{abstract}

\subjclass[2020]{
	17B10 
	17B40 
	17B66 
	18N40 
	18M60 
	18M70 
	58A50 
}

\keywords{cohomology, deformations, Nijenhuis Lie algebra, homotopy Nijenhuis Lie algebra, homotopy (co)operad, minimal model, Koszul dual homotopy cooperad, Nijenhuis Lie algebroid}

\maketitle

\tableofcontents

\allowdisplaybreaks
	
\section*{Introduction}

This work is a sequel to \cite{SWZZ2024a}. In this paper, we  study   deformations of  Nijenhuis Lie algebras and   Nijenhuis Lie algebroids.

 A Nijenhuis operator on a Lie algebra $(\frakg, [-, -])$ is a linear endomorphism $P$ satisfying the Nijenhuis relation:
\begin{align} \label{Nijenhuis relation}
	[P(a), P(b)] = P( [P(a), b] + [a, P(b)] - P[a, b]), \ \text{for } a, b \in \frakg.
\end{align}
This concept originates from the study of pseudo-complex manifolds, where Nijenhuis \cite{Nij51} introduced the notion of a Nijenhuis tensor.
Nijenhuis operators, also referred to as Nijenhuis tensors, act as unary operations on the Lie algebra of vector fields, satisfying the relation \eqref{Nijenhuis relation}. They represent a fundamental geometric condition on $(1,1)$-tensors, as discussed in \cite{BKM22}. A fundamental example of a Nijenhuis operator arises from the work of Newlander and Nirenberg \cite{NN57}, who proved that an almost complex manifold $(M, J)$ admits a unique complex structure compatible with the almost complex structure $J$ if and only if $J$ is a Nijenhuis operator. This result provides a classical example of Nijenhuis operators (see also \cite[Theorem1.2, Page356]{Hel78}). Nijenhuis operators on Lie algebras later appeared in \cite{MM84, Kos90} in the broader study of Poisson-Nijenhuis manifolds.

On a smooth manifold $M$, the space of multivector fields $\Gamma(\Lambda^{\bullet} T_M)$ is equipped with a graded Lie bracket of degree $-1$, known as the Schouten-Nijenhuis bracket. This structure generalizes the usual Lie bracket on vector fields \cite{Vai94, KS08}, and its Maurer-Cartan elements correspond precisely to Poisson bivectors. Nijenhuis structures also have a similar characterization. Specifically, Nijenhuis tensors are described in terms of the space of vector-valued differential forms $ \Omega^{\bullet}(M, T_M) $, which is equipped with a graded Lie bracket of degree $ 0 $, known as the Fr\"olicher-Nijenhuis bracket \cite{Mic87, BK21-16, BKM22, DVM95}. The Maurer-Cartan elements of this graded Lie algebra are precisely Nijenhuis tensors on $M$. Fixing a Nijenhuis tensor $P$ defines the Fr\"olicher-Nijenhuis cochain complex $(\Omega^{\bullet}(M, T_M), [-, -]_{\FN}, \dd_{\FN}=[P, -]_{\FN})$, which leads to the cohomology of Nijenhuis operators. Furthermore, Kosmann-Schwarzbach \cite{KS04} showed that the Schouten-Nijenhuis and Fr\"olicher-Nijenhuis brackets are the only two satisfying a certain equation \cite[Equation~(3.4)]{KS04} related to the Cartan formula.

Notably, the Vinogradov bracket, defined on graded differential operators on the algebra of differential forms $\Omega^\bullet(M)$, generalizes both the Schouten-Nijenhuis and Fr\"olicher-Nijenhuis brackets \cite{BM94, Vin90, CV92}. An alternative generalization was provided by Dubois-Violette and Michor \cite{DVM95}, who defined a graded Lie bracket on $\Omega^\bullet(T^{\vee}_M, T_{T^{\vee}_M})$. These results highlight the deep connections between Poisson and Nijenhuis geometry. Recently, in a series of works \cite{BKM22,BKM21,BKM23,BKM23-4,BKM24}, Bolsinov, Konyaev, and Matveev systematically studied Nijenhuis geometry. Their research covers various aspects, including the linearization problem of Nijenhuis operators, the singularities of Nijenhuis operators, the classification of Nijenhuis operators based on local normal forms, global analysis, and applications of Nijenhuis geometry.



Research on the homotopy theory of Nijenhuis Lie algebras remains relatively limited. 
In \cite{Mer05}, Merkulov introduced an operadic approach to Nijenhuis geometry, known as operad profiles. He studied linear Nijenhuis structures on formal manifolds and proved that a linear Nijenhuis structure on a vector space is equivalent to a left-symmetric algebra (also known as a pre-Lie algebra) structure on the space. To explore higher-order Nijenhuis structures, he investigated linear Nijenhuis structures on dg manifolds. In this context, he introduced the quadratic operad $ \mathbf{pre{-}Lie}^{2} $ and showed that a Nijenhuis structure on a formal dg manifold is equivalent to a $ \mathbf{pre{-}Lie}^{2} $-algebra. Strohmayer \cite{Str09} later proved that the operad $ \mathbf{pre{-}Lie}^{2} $ is Koszul, implying that its minimal model is given by the cobar construction of its Koszul dual cooperad, as introduced by Merkulov \cite{Mer05}. Furthermore, Merkulov demonstrated that a $ \mathbf{pre{-}Lie}^{2}_\infty $-algebra, when interpreted as a formal dg manifold, naturally carries a homotopy Nijenhuis structure—specifically, a homotopy Nijenhuis operator on the dg Lie algebra of vector fields. Recently, Das also introduced the cohomologies of Nijenhuis Lie algebras and Nijenhuis Lie bialgebras \cite{Das25}. 

In this paper, we primarily investigate homotopy Nijenhuis structures on $ L_\infty $-algebras from the perspective of operad theory. We present the deformation complex of a Nijenhuis Lie algebra along with its governing $ L_\infty $-structure within the framework of deformation theory. Furthermore, we establish the minimal model and the Koszul dual homotopy cooperad of the operad for Nijenhuis Lie algebras, leading to a natural extension of Nijenhuis structures to $ L_\infty $-algebras and thereby generalizing Merkulov's results. 

We also introduce the concept of a Nijenhuis Lie algebroid, which extends the notion of a Nijenhuis structure on the tangent bundle to arbitrary Lie algebroids. Building on the deformation theory of Nijenhuis Lie algebras, we develop a corresponding framework for Nijenhuis Lie algebroids. Using the language of dg manifolds, we construct an $ L_\infty $-algebra that unifies the Lie bracket of vector fields with the Fr\"olicher-Nijenhuis bracket on dg manifolds, thereby governing the simultaneous deformations of Lie algebroids and Nijenhuis operators. Additionally, we establish the cohomology theory of Nijenhuis operators on Lie algebroids and further develop the cohomology theory of Nijenhuis Lie algebroids. As an application, we prove that an analog of the Poincar\'e lemma holds for a special class of Nijenhuis operators, implying the vanishing of their Fr\"olicher-Nijenhuis cohomology. This result confirms a conjecture by Bolsinov and Konyaev.

This paper is organized as follows.

Part~\ref{Part1} (Sections~\ref{Notations}-\ref{Section: Cohomology and homotopy theory of Lie algebras}) collects relevant concepts and results on graded spaces, $L_\infty$-algebras, symmetric homotopy (co)operads, and the cohomology of Lie algebras. These are gathered from the literature to establish notation and provide a foundation for the subsequent study.

Part~\ref{Part2} (Sections~\ref{Section: Nijenhuis Lie algebras and their representations}-\ref{Section: From minimal model to Linifnity algebras NjL}) focuses on the deformation theory of Nijenhuis Lie algebras. Section~\ref{Section: Nijenhuis Lie algebras and their representations} reviews the fundamental definitions and properties of Nijenhuis Lie algebras. In Section~\ref{Sect: Cohomology theory of Nijenhuis Lie algebras}, we introduce a cochain complex for Nijenhuis operators, which serves as an algebraic analog of $\Omega^{\bullet}(M, T_M)$ in geometric setting. This cochain complex is then combined with the Chevalley-Eilenberg complex to define a deformation complex for Nijenhuis Lie algebras. Section~\ref{Section: Koszul dual cooperad NjL} introduces a symmetric homotopy cooperad, and in Section~\ref{Section: minimal model NjL}, we show that its cobar construction yields the minimal model of the operad for Nijenhuis Lie algebras, which can be viewed as its Koszul dual. This also leads to the definition of homotopy Nijenhuis Lie algebras. Section~\ref{Section: From minimal model to Linifnity algebras NjL} then constructs an $L_\infty$-algebra structure on the deformation complex from Section~\ref{Sect: Cohomology theory of Nijenhuis Lie algebras}, identifying Nijenhuis Lie algebra structures as Maurer-Cartan elements of this $L_\infty$-algebra.

Part~\ref{Part3} (Sections~\ref{Section: concepts of Lie algebroids}-\ref{Section: An Example of Calculating Frolicher-Nijenhuis Cohomology}) extends the deformation theory to geometric Nijenhuis structures. Section~\ref{Section: concepts of Lie algebroids} provides a brief review of Lie algebroids. Section~\ref{Section: Nijenhuis Lie algebroids} introduces Nijenhuis Lie algebroids, a generalization of Nijenhuis structures on vector fields to arbitrary Lie algebroids, and develops the cohomology theory of Nijenhuis operators on Lie algebroids. Section~\ref{The relation of cohomology of Nijenhuis operators between algebraic and geometric versions} compares algebraic and geometric versions of cohomology of Nijenhuis operators. To further refine this theory, Section~\ref{Section. Basic concepts of dg manifolds} reviews necessary background on dg manifolds, which leads to the construction of a deformation complex for Nijenhuis Lie algebroids in Section~\ref{Section: Cohomology theory of Nijenhuis Lie algebroids}. Section~\ref{Section: L infty algebras related with Lie algebroids} establishes an $L_\infty$-algebra structure on this complex, which governs the simultaneous deformations of the Lie algebroid and Nijenhuis structures. Finally, in Section~\ref{Section: An Example of Calculating Frolicher-Nijenhuis Cohomology}, we construct an explicit homotopy to show that a certain class of Nijenhuis operators has vanishing cohomology, thus confirming a conjecture by Bolsinov and Konyaev.

\bigskip	

\part{Preliminaries}\label{Part1}

\medskip

\section{Notations} \label{Notations}

Throughout this paper, let $\bfk$ be a field of characteristic $0$. Unless otherwise specified, all vector spaces are assumed to be $\bfk$-vector spaces, and all tensor products and $\Hom$-spaces are taken over $\bfk$.

A (homologically) graded space is a $\ZZ$-indexed family of vector spaces $V=\{V_n\}_{n \in \ZZ}$. Elements of $\cup_{n \in \ZZ} V_n$ are homogeneous with degree, written as $|v|=n$ for $v\in V_n$. 
Similarly, a cohomologically graded space is a $\ZZ$-indexed family of vector spaces $V=\{V^n\}_{n \in \ZZ}$ with indices written as superscripts.
Writing $V^n=V_{-n}$ turns homological grading into cohomological grading and vice versa.

Given two graded spaces $V$ and $W$, a linear map $f: V \rightarrow W$ such that $f(V_n) \subseteq W_{n+r}$ for all $n$ is a graded map of degree $r$ and denote $|f|=r$.
The graded space $\Hom(V, W)$ is defined to be $\{\Hom(V, W)_r\}_{r\in \ZZ}$, where
$$\Hom(V, W)_r = \prod_{p \in \ZZ} \Hom_\bfk(V_p, W_{p+r})$$ 
is the space of graded maps of degree $r$.

The tensor product $V \otimes W$ of two graded spaces $V$ and $W$
is given by
$$(V\otimes W)_n =\bigoplus_{p+q=n} V_p \otimes W_q, \ n \in \ZZ.$$

Let $V$ be a (homologically) graded space.
The suspension of $V$ is the graded space $sV$ with $(sV)_n=V_{n-1}$ for all $n\in \ZZ$. Write $sv\in (sV)_n$ for $v\in V_{n-1}$. The map $s: V \rightarrow sV$, $v\mapsto sv$ is a graded map of degree $1$.
One can also define the desuspension of $V$ which is denoted by $s^{-1}V$ with $(s^{-1}V)_n=V_{n+1}$. Write $s^{-1}v \in (s^{-1}V)_n$ for $v \in V_{n+1}$ and $s^{-1}: V \rightarrow s^{-1}V$, $v\mapsto s^{-1}v$ is a graded map of degree $-1$.
However, in the context of cohomologically graded spaces, the suspension $s$ and desuspension $s^{-1}$ have opposite degrees, i.e., $(sH)^{n} = H^{n+1}$, $(s^{-1}H)^{n} = H^{n-1}$ for a cohomologically graded space $H$. 
Sometimes, for example in geometry, the suspension of homologically or cohomologically graded space $A$ is denoted as $A[1]$, and the desuspension of $A$ is denoted as $A[-1]$.

For later use, we need to fix two isomorphisms
\begin{eqnarray} \label{Eq: first can isom} 
	\Hom((sV)^{\ot n}, sV) \simeq \Hom(V^{\ot n}, V), \ f\mapsto \widetilde{f}:= s^{-1}\circ f \circ s^{\ot n}
\end{eqnarray}
for $f\in \Hom((sV)^{\ot n}, sV)$ and
\begin{eqnarray} \label{Eq: second can isom}
	\Hom((sV)^{\ot n}, V) \simeq \Hom(V^{\ot n}, V), \ g\mapsto \widehat{g}:= g \circ s^{\ot n} 
\end{eqnarray} 
for $g\in \Hom((sV)^{\ot n}, V)$.

We will employ the Koszul sign rule to determine signs, that is, when exchanging the positions of two graded objects in an expression, we have to multiply the expression by a power of $-1$ whose exponent is the product of their degrees. 
More precisely, let $V$ be a graded space. The graded symmetric algebra $S(V)$ of $V$ is defined to be the quotient of the tensor algebra $T(V)$ by the two-sided ideal $I$ generated by $x\ot y -(-1)^{|x||y|}y\ot x$ for all homogeneous elements $x, y\in V$. For $x_1\ot\cdots\ot x_n\in V^{\ot n}\subseteq T(V)$, write $ x_1\odot x_2\odot\dots\odot x_n$ its image in $S(V)$.
Denote the symmetric group in $n$ variables by $\mathbb{S}_n$ for $n\geqslant 1$.
For homogeneous elements $x_1, \dots, x_n \in V$ and $\sigma\in \mathbb{S}_n$, the Koszul sign $\epsilon(\sigma; x_1, \dots, x_n)$ is defined by 
\begin{equation*} \label{Eq: epsilon sign}  
	x_1\odot x_2\odot\dots\odot x_n = \epsilon(\sigma; x_1, \dots, x_n)x_{\sigma(1)}\odot x_{\sigma(2)}\odot\dots\odot x_{\sigma(n)}.
\end{equation*}
We also define the alternate Koszul sign by
\begin{equation*} \label{Eq: chi sign} 
	\chi(\sigma; x_1, \dots, x_n) = \sgn (\sigma) \epsilon(\sigma; x_1, \dots, x_n), 
\end{equation*}
where $\sgn (\sigma)$ is the sign of the permutation $\sigma$.

For $1\leqslant i_1, \dots, i_r\leqslant n$ with $i_1+\cdots+i_r=n$, define $\Sh(i_1, i_2, \dots, i_r)$ as the set of $(i_1, i_2, \dots, i_r)$ shuffles, i.e., those permutations $\sigma\in \mathbb{S}_n$ such that
$$\sigma(1)< \dots< \sigma(i_1), \sigma(i_1+1)< \dots< \sigma(i_1+i_2), \dots, \sigma(i_1+\cdots+i_{r-1}+1)< \cdots< \sigma(n).$$
Moreover, define $\sh(i_1, i_2, \dots, i_r)$ as the set of $(i_1, i_2, \dots, i_r)$ local shuffles, i.e., those permutations $\sigma \in \Sh(i_1, i_2, \dots, i_r)$ such that 
$$\sigma(1)<\sigma(i_{1}+1)<\cdots<\sigma(i_1+\cdots+i_{r-1}+1).$$
For simplicity of presentation, we usually write $ \sigma \in \sh $ instead of $ \sigma \in \sh(i_1, i_2, \dots, i_r) $ whose parameters $i_1, i_2, \dots, i_r$ depend on the actual context. 


\medskip

\section{$L_\infty$-algebras and Maurer-Cartan elements} \label{section L infty} 

In this section, we recall some basics on $L_\infty$-algebras from \cite{Sta92, Get09, LS93, LM95}.

\begin{defn} \label{Def: L-infty}
	Let $L$ be a graded space. It is called an $L_\infty$-algebra if $L$ is endowed with a family of linear operators $l_n: L^{\ot n}\rightarrow L, n\geqslant 1$ with $|l_n|=n-2$ subject to the following conditions. For $n \geqslant 1$ and homogeneous elements $x_1, \dots, x_n\in L$, 
	\begin{enumerate}
		\item (generalized anti-symmetry) 
		$$l_n(x_{\sigma(1)}\ot \cdots \ot x_{\sigma(n)})=\chi(\sigma; x_1, \dots, x_n)
		l_n(x_1 \ot \cdots \ot x_n), 
		\ \text{ for any } \sigma\in \mathbb{S}_n ;$$
		\item (generalized Jacobi identity)
		$$\sum\limits_{i=1}^n\sum\limits_{\sigma\in \Sh(i, n-i)}\chi(\sigma; x_1, \dots, x_n)(-1)^{i(n-i)}
		l_{n-i+1}\big(l_i(x_{\sigma(1)}\ot \cdots \ot x_{\sigma(i)})\ot x_{\sigma(i+1)}\ot \cdots \ot x_{\sigma(n)}\big)=0. $$
	\end{enumerate}
\end{defn}

\begin{remark} \label{Rem: L-infinity for small n}
	
	In particular, if all $l_n=0$ with $n\geqslant 3$, then $(L, l_1, l_2)$ is just a dg Lie algebra; if all $l_n=0$ except $n=2$, then $(L, l_2)$ is a graded Lie algebra.	
\end{remark}

One can define Maurer-Cartan elements in $L_\infty$-algebras.

\begin{defn} \label{Def: MC element in L infinity algebra}
	An element $\alpha$ of degree $-1$ in $L_\infty$-algebra $(L, \{l_n\}_{n\geqslant1})$ is called a Maurer-Cartan element if it satisfies the Maurer-Cartan equation:
	\begin{eqnarray} \label{Eq: mc-equation}
		\sum_{n=1}^\infty
		(-1)^{\frac{n(n-1)}{2}}
		\frac{1}{n!} 
		l_n(\alpha^{\ot n})=0, 
	\end{eqnarray}
	whenever this infinite sum exists.
\end{defn}

\begin{remark} 
	For a dg Lie algebra $(L, l_1, l_2)$, Equation~\eqref{Eq: mc-equation} reduces to $l_1(\alpha)-\dfrac{1}{2}l_2(\alpha\ot \alpha)=0$.
\end{remark}

\begin{prop}[Twisting procedure] \cite{Get09, CGWZ24} \label{Prop: deformed-L-infty}
	Given a Maurer-Cartan element $\alpha$ in $L_\infty$-algebra $(L, \{l_n\}_{n\geqslant1})$, the twisting procedure gives a new $L_\infty$-structure $\{l_n^\alpha\}_{n\geqslant 1}$ on graded space $L$, by defining $l_n^{\alpha}: L^{\ot n}\rightarrow L$ as:
	\begin{eqnarray} \label{Eq: twisted L infinity algebra} 
		l^\alpha_n(x_1\ot \cdots\ot x_n)
		=\sum_{i=0}^\infty
		(-1)^{in+\frac{i(i-1)}{2}}
		\frac{1}{i!}
		l_{n+i}(\alpha^{\ot i}\ot x_1\ot \cdots \ot x_n) 
	\end{eqnarray}
	for
	$n \geqslant 1$ and homogeneous elements $x_1, \dots, x_n\in L$, whenever these infinite sums exist. The new $L_\infty$-algebra $(L, \{l_n^\alpha\}_{n\geqslant 1})$ is called the twisted $L_\infty$-algebra (by the Maurer-Cartan element $\alpha$).
\end{prop}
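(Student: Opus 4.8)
The plan is to prove the proposition conceptually, by reinterpreting the twisting as conjugation of the codifferential that encodes the $L_\infty$-structure by the coalgebra automorphism attached to $\alpha$. Conjugation by an automorphism automatically preserves the square-zero condition, and the Maurer-Cartan equation is precisely what guarantees that the conjugated codifferential carries no curvature term.

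First I would recall the coalgebra dictionary for $L_\infty$-algebras: an $L_\infty$-structure on $L$ is equivalent to a degree $-1$ coderivation $Q=\sum_{n\geqslant 1}Q_n$ of the cofree cocommutative coalgebra $S(sL)$ with $Q^2=0$, where, under the d\'ecalage governed by \eqref{Eq: first can isom} and the Koszul signs, each $Q_n\colon S^n(sL)\to sL$ is the graded-\emph{symmetric} degree $-1$ operation corresponding to $l_n$. In this language the generalized anti-symmetry of Definition~\ref{Def: L-infty} becomes genuine symmetry and the generalized Jacobi identity becomes $Q^2=0$. The Maurer-Cartan element $\alpha$ of degree $-1$ in $L$ corresponds to the degree $0$ element $s\alpha\in sL$.

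Next I would introduce the twisting automorphism. Working in the completion of $S(sL)$ (where, under the standing hypothesis, all the relevant sums converge), form the group-like element $\exp(s\alpha)=\sum_{i\geqslant 0}\tfrac{1}{i!}(s\alpha)^{\odot i}$ of degree $0$ and let $\Phi_\alpha$ be the induced coalgebra automorphism. Set $Q^\alpha:=\Phi_\alpha^{-1}\,Q\,\Phi_\alpha$. Since $\Phi_\alpha$ is a coalgebra automorphism, $Q^\alpha$ is again a coderivation, and $(Q^\alpha)^2=\Phi_\alpha^{-1}Q^2\Phi_\alpha=0$; hence $Q^\alpha$ defines a (possibly curved) $L_\infty$-structure with no extra input. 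It then remains to read off the components: expanding the corestriction $\mathrm{proj}_{sL}\circ Q^\alpha$ on $S^n(sL)$ via cofreeness and the exponential combinatorics recovers, after undoing the d\'ecalage, exactly the operations $l_n^\alpha$ of \eqref{Eq: twisted L infinity algebra} for $n\geqslant 1$, while its $S^0(sL)$-component is $l_0^\alpha=\sum_{i\geqslant 1}(-1)^{i(i-1)/2}\tfrac{1}{i!}\,l_i(\alpha^{\ot i})$, which is precisely the left-hand side of the Maurer-Cartan equation \eqref{Eq: mc-equation}. Thus an arbitrary degree $-1$ element yields a curved $L_\infty$-structure whose curvature is this expression, and the hypothesis that $\alpha$ is Maurer-Cartan kills it, leaving the flat $L_\infty$-algebra $(L,\{l_n^\alpha\}_{n\geqslant 1})$ claimed.

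The main obstacle I anticipate is the sign and combinatorial bookkeeping: checking that the d\'ecalage converts the $\chi$-signs of Definition~\ref{Def: L-infty} into the factors $(-1)^{in+i(i-1)/2}/i!$ of \eqref{Eq: twisted L infinity algebra}, and that the corestriction of the conjugate reproduces that formula on the nose, with the binomial factors of $\exp(s\alpha)$ matching the shuffle decompositions. A fully explicit alternative, which sidesteps the coalgebra language at the cost of amplifying precisely these sign issues, is to substitute \eqref{Eq: twisted L infinity algebra} directly into the generalized Jacobi identity, reorganize the resulting sums over insertions of copies of $\alpha$ and over shuffles, and cancel terms in pairs using the original identities, the residual terms assembling into the Maurer-Cartan expression and hence vanishing.
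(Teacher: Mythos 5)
The paper does not prove this proposition at all: it is recalled from the literature with citations to \cite{Get09, CGWZ24}, so there is no in-paper argument to compare against. Your proposal is the standard proof of the twisting procedure and is correct in outline: encode the $L_\infty$-structure as a square-zero degree $-1$ coderivation $Q$ on the cocommutative coalgebra on $sL$, conjugate by the translation automorphism attached to $\exp(s\alpha)$, observe that conjugation preserves both the coderivation property and $Q^2=0$, and identify the curvature component $l_0^\alpha$ with the left-hand side of the Maurer--Cartan equation \eqref{Eq: mc-equation}, which vanishes by hypothesis. Two points deserve care if you write this out. First, $\exp(s\alpha)$ does not live in the conilpotent coalgebra $S^c(sL)$; you correctly pass to a completion, but the paper's hypothesis is only the pointwise convergence of the sums in \eqref{Eq: twisted L infinity algebra} and \eqref{Eq: mc-equation}, so you should either state a completeness/filtration hypothesis under which the automorphism is defined, or argue componentwise so that only the finitely-many-terms-at-a-time sums the paper assumes to exist are ever used. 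Second, the substantive computation you defer --- that the d\'ecalage isomorphism \eqref{Eq: first can isom} turns the sign-free insertion of degree-$0$ elements $s\alpha$ into $b_{n+i}$ into exactly the factors $(-1)^{in+i(i-1)/2}/i!$ of \eqref{Eq: twisted L infinity algebra}, and turns $\sum_i \tfrac{1}{i!}b_i((s\alpha)^{\odot i})=0$ into \eqref{Eq: mc-equation} --- is where all the actual work sits; it is routine but should not be waved away, since the stated signs are the entire content of the formula being proved.
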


\begin{remark} \label{Lem: twist dgla}
	For a dg Lie algebra $(L, l_1, l_2)$ and a Maurer-Cartan element $\alpha\in L_{-1}$, Equation~\eqref{Eq: twisted L infinity algebra} reduces to $l_1^\alpha(x) = l_1(x)-l_2(\alpha\ot x)$ for $x\in L $ and $l_2^\alpha = l_2$.
\end{remark}

\medskip

\section{Symmetric homotopy (co)operads} \label{Subsection: Symmetric homotopy (co)operads} \

In this subsection, we collect some basics on symmetric homotopy (co)operads from \cite{Mar96, Mar08, MV09a, MV09b, DCV13, VdLa},
we also explain how to obtain $L_\infty$-structures from symmetric homotopy operads, in particular, from convolution symmetric homotopy operads.
For the basic theory of symmetric operads, we refer the reader to the textbooks \cite{LV12, BD16}.

Recall that a graded $ \mathbb{S} $-module $\calo=\{\calo(n)\}_{n\geqslant 1}$ is a graded collection, i.e., a family of graded spaces indexed by positive integers, such that each $\calo(n)$ is also a right $ \bfk[\mathbb{S}_{n}] $-module. 
For any $v\in \calo(n)$, the index $n$ is called the arity of $v$. The suspension of $\calo$, denoted by $s\calo$, is defined to be the graded $ \mathbb{S} $-module $\{s\calo(n)\}_{n\geqslant 1}$. In the same way, one has the desuspension $s^{-1}\calo$ of the graded $ \mathbb{S} $-module $\calo$.

We need to recall some preliminaries about planar labeled (rooted) trees. A planar labeled tree $ T $ with arity $ n $ is a planar tree with leaves labeled by a permutation of the set $ \{1, \dots, n\} $ (from left to right), i.e., a bijection from the set $ \{1, \dots, n\} $ to the set of leaves of $ T $.
A planar labeled tree is said to be reduced if each of its vertices has at least one leaf. As we only consider planar labeled reduced trees in this subsection, we will simply call them trees.
For any tree $T$, denote the weight (=number of vertices) and arity (=number of leaves) of $T$ to be $\omega(T)$ and $\alpha(T)$ respectively. 
Denote by $\mathfrak{T}$ the set of all trees of weight $\geqslant 1$. For $n, m \geqslant 1$, denote by $\mathfrak{T}^{(n)}$ the set of trees of weight $n$, by $\mathfrak{T}{(m)}$ the set of trees of arity $m$, and by $\mathfrak{T}^{(n)}(m)$ the set of trees of weight $n$ and arity $m$.
Since trees are planar, each vertex in a tree has a total order on its inputs which will be drawn clockwisely.
By the existence of the root, there is a natural induced total order on the set of all vertices of a given tree $T\in \mathfrak{T}$, which is given by counting the vertices starting from the root clockwisely along the tree. We call this order the planar order. 
For example, leaves of the following tree, of weight $4$ and arity $7$, are labeled by $\{2,5,4,3,1,7,6\}$ and the planar ordered set of all vertices of it is $\{v_1 < v_2 < v_3 < v_4\}$.
$$
\begin{tikzpicture}[scale=0.8]
	\tikzstyle{every node}=[thick, minimum size=2pt, inner sep=1pt]
		\node (t) at (0,-0.5){};
		\node[circle, fill=black, label=left:\scriptsize$v_1$] (a) at (0,0) {};
		\node[circle, fill=black, label=left:\scriptsize$v_2$] (b1) at (-1,1) {};
		\node (b2) at (0,0.85) {\scriptsize 4};
		\node[circle, fill=black, label=left:\scriptsize$v_3$] (b3) at (1,1) {};
		\node (c1) at (-1.45,1.9){\scriptsize 2};
		\node (c2) at (-0.55,1.9){\scriptsize 5};
		\node (c3) at (0.25,1.9){\scriptsize 3};
		\node[circle, fill=black, label=left:\scriptsize$v_4$] (c4) at (1,2){};
		\node (c5) at (1.75,1.9){\scriptsize 6};
		\node (d1) at (0.55,2.6){\scriptsize 1};
		\node (d2) at (1.45,2.6){\scriptsize 7};
		\draw (t)--(a);
			\draw (a)--(b1);
				\draw (b1)--(c1);
				\draw (b1)--(c2);
			\draw (a)--(b2);
			\draw (a)--(b3);
				\draw (b3)--(c3);
				\draw (b3)--(c4);
					\draw (c4)--(d1);
					\draw (c4)--(d2);
				\draw (b3)--(c5);
		\path[draw=red, dashed, dash pattern=on 1pt off 1pt,postaction={decorate}, decoration={markings, mark=at position 0.7 with {\arrow{latex}}}] 
			(-0.45,-0.45) to (-0.45,0);
		\path[draw=red, dashed, dash pattern=on 1pt off 1pt,postaction={decorate}, decoration={markings, mark=at position 1 with {\arrow{latex}}}] 
			(-0.45,0) to (-1.35,0.9);
		\path[draw=red, dashed, dash pattern=on 1pt off 1pt] 
			(-1.35,0.9) to (-1.8,1.8);
		\path[draw=red, dashed, dash pattern=on 1pt off 1pt]  
			(-1.8,1.8) to [out=120, in=100] (-1.3,2);
		\path[draw=red, dashed, dash pattern=on 1pt off 1pt]  
			(-1.3,2) to (-1,1.45);
		\path[draw=red, dashed, dash pattern=on 1pt off 1pt]  
			(-1,1.45) to (-0.7,2);
		\path[draw=red, dashed, dash pattern=on 1pt off 1pt]  
			(-0.7,2) to [out=50, in=80] (-0.35,1.85);
		\path[draw=red, dashed, dash pattern=on 1pt off 1pt,postaction={decorate}, decoration={markings, mark=at position 0.6 with {\arrow{latex}}}]  
			(-0.35,1.85) to (-0.75,1.05);
		\path[draw=red, dashed, dash pattern=on 1pt off 1pt]  
			(-0.75,1.05) to (-0.2,0.5);
		\path[draw=red, dashed, dash pattern=on 1pt off 1pt]  
			(-0.2,0.5) to (-0.2,0.9);
		\path[draw=red, dashed, dash pattern=on 1pt off 1pt]  
			(-0.2,0.9) to [out=90, in=80] (0.2,0.9);
		\path[draw=red, dashed, dash pattern=on 1pt off 1pt]  
			(0.2,0.9) to (0.2,0.5);
		\path[draw=red, dashed, dash pattern=on 1pt off 1pt]  
			(0.2,0.5) to (0.7,1);
		\path[draw=red, dashed, dash pattern=on 1pt off 1pt,postaction={decorate}, decoration={markings, mark=at position 0.6 with {\arrow{latex}}}]  
			(0.7,1) to (0.05,1.85);
		\path[draw=red, dashed, dash pattern=on 1pt off 1pt]  
			(0.05,1.85) to [out=90, in=80] (0.35,2);
		\path[draw=red, dashed, dash pattern=on 1pt off 1pt]  
			(0.35,2) to (0.9,1.3);
		\path[draw=red, dashed, dash pattern=on 1pt off 1pt]  
			(0.9,1.3) to (0.9,1.8);
		\path[draw=red, dashed, dash pattern=on 1pt off 1pt,postaction={decorate}, decoration={markings, mark=at position 0.7 with {\arrow{latex}}}]  
			(0.9,1.8) to (0.38,2.6);
		\path[draw=red, dashed, dash pattern=on 1pt off 1pt]  
			(0.38,2.6) to [out=90, in=80] (0.7,2.7);
		\path[draw=red, dashed, dash pattern=on 1pt off 1pt]  
			(0.7,2.7) to (1,2.3);
		\path[draw=red, dashed, dash pattern=on 1pt off 1pt]  
			(1,2.3) to (1.3,2.7);
		\path[draw=red, dashed, dash pattern=on 1pt off 1pt]  
			(1.3,2.7) to [out=60, in=70] (1.65,2.55);
		\path[draw=red, dashed, dash pattern=on 1pt off 1pt,postaction={decorate}, decoration={markings, mark=at position 0.4 with {\arrow{latex}}}]  
			(1.65,2.55) to (1.1,1.8);
		\path[draw=red, dashed, dash pattern=on 1pt off 1pt]  
			(1.1,1.8) to (1.1,1.3);
		\path[draw=red, dashed, dash pattern=on 1pt off 1pt]  
			(1.1,1.3) to (1.65,2);
		\path[draw=red, dashed, dash pattern=on 1pt off 1pt]  
			(1.65,2) to [out=50, in=90] (1.95,1.85);
		\path[draw=red, dashed, dash pattern=on 1pt off 1pt]  
			(1.95,1.85) to (1.2,0.9);
		\path[draw=red, dashed, dash pattern=on 1pt off 1pt,postaction={decorate}, decoration={markings, mark=at position 0 with {\arrow{latex}}}]  
			(1.2,0.9) to (0.3,0);
		\path[draw=red, dashed, dash pattern=on 1pt off 1pt,postaction={decorate}, decoration={markings, mark=at position 0.7 with {\arrow{latex}}}]  
			(0.3,0) to (0.3,-0.45);
\end{tikzpicture}
$$

There is a right $\mathbb{S}_{n}$ action on $\mathfrak{T}{(n)}$ for all $n \geqslant 1$. Let $ T \in \mathfrak{T}{(n)} $ be a tree, whose leaves are labeled by $ \{i_{1}, \dots, i_{n}\} $. For any $ \sigma \in \mathbb{S}_{n} $, define $ T \sigma \in \mathfrak{T}{(n)} $ to be the tree that shares the same underlying planar tree with $ T $ but with its leaves labeled by $ \{ i_{\sigma^{-1}(1)}, \dots, i_{\sigma^{-1}(n)}\} $. 
Naturally, we have the notion of isomorphisms of trees, which are isomorphisms of rooted trees, i.e., commutes with the parent functions, preserving all labels.

Let $ \phi: T_{1} \rightarrow T_{2}$ be an isomorphism of trees, $ \{v_{1} < \dots < v_{n}\} $ be vertices of $ T_{1} $ and $ \{v'_{1} < \dots < v'_{n}\} $ be vertices of $ T_{2} $ in the planar order respectively.
Define $ v'_{1} = \phi(v_{\tau(1)}), \dots, v'_{n} = \phi(v_{\tau(n)}) $, this defines a permutation $ \tau \in \mathbb{S}_{n} $. 
Moreover, assume leaves of $ v_{i} $ in $ T_{1} $ are labeled by $ \{l_{1}, \dots, l_{j}\} $ and leaves of $ \phi(v_{i}) $ in $ T_{2} $ are labeled by $ \{l_{\sigma_{i}^{-1}(1)}, \dots, l_{\sigma_{i}^{-1}(j)}\} $, then $ \phi $ induces a permutation $ \sigma_{i} \in \mathbb{S}_{i} $ of labels of leaves at vertex $ v_{i} $ in $ T_{1} $. 
For any $T \in \mathfrak{T}$, each vertex inherits a label by taking the minimal label of the leaves above it. We say $T \in \mathfrak{T}$ is a shuffle tree if the induced labels of leaves of $ v $ increase from left to right for each vertex $ v $ of $ T $. 
Denote by $\mathrm{S} \mathfrak{T}$ the set of all shuffle trees. For $n, m \geqslant 1$, denote by $\mathrm{S} \mathfrak{T}^{(n)}$ the set of shuffle trees of weight $n$, by $\mathrm{S} \mathfrak{T}{(m)}$ the set of shuffle trees of arity $m$, and by $\mathrm{S}\mathfrak{T}^{(n)}(m)$ the set of shuffle trees of weight $n$ and arity $m$.
Note that there is a one-to-one correspondence between the isomorphism class of trees and $\mathrm{S} \mathfrak{T}$. 
For example, consider the following trees $T_{1}$, $T_{2}$ and $T_{3}$,
$$
\begin{tikzpicture}[scale=0.6]
	\tikzstyle{every node}=[thick, minimum size=2pt, inner sep=1pt]
		\node (t1) at (-2.3,0.85){$T_{1} = $};
		\node (td) at (2.1,0.75){,};
		\node (t) at (0,-0.5){};
		\node[circle, fill=black, label=left:\scriptsize$v_1$] (a) at (0,0) {};
		\node[circle, fill=black, label=left:\scriptsize$v_2$] (b1) at (-1,1) {};
		\node (b2) at (0,1) {\scriptsize 2}; 
		\node[circle, fill=black, label=left:\scriptsize$v_3$] (b3) at (1,1) {};
		\node (c1) at (-1.5,2){\scriptsize 1}; 
		\node (c2) at (-0.5,2){\scriptsize 5}; 
		\node (c3) at (0.2,2){\scriptsize 3}; 
		\node[circle, fill=black, label=left:\scriptsize$v_4$] (c4) at (1,2){};
		\node (c5) at (1.8,2){\scriptsize 7}; 
		\node (d1) at (0.5,2.7){\scriptsize 4}; 
		\node (d2) at (1.5,2.7){\scriptsize 6}; 
		\draw (t)--(a);
			\draw (a)--(b1);
				\draw (b1)--(c1);
				\draw (b1)--(c2);
			\draw (a)--(b2);
			\draw (a)--(b3);
				\draw (b3)--(c3);
				\draw (b3)--(c4);
					\draw (c4)--(d1);
					\draw (c4)--(d2);
				\draw (b3)--(c5);
\end{tikzpicture}
\quad
\begin{tikzpicture}[scale=0.6]
	\tikzstyle{every node}=[thick, minimum size=2pt, inner sep=1pt]
		\node (t1) at (-2.3,0.85){$T_{2} = $};
		\node (td) at (2.1,0.75){,};
		\node (t) at (0,-0.5){};
		\node[circle, fill=black, label=left:\scriptsize$v_1$] (a) at (0,0) {};
		\node[circle, fill=black, label=left:\scriptsize$v_2$] (b1) at (-1,1) {};
		\node (b2) at (0,1) {\scriptsize 3}; 
		\node[circle, fill=black, label=left:\scriptsize$v_3$] (b3) at (1,1) {};
		\node (c1) at (-1.5,2){\scriptsize 1}; 
		\node (c2) at (-0.5,2){\scriptsize 5}; 
		\node (c3) at (0.2,2){\scriptsize 4}; 
		\node[circle, fill=black, label=left:\scriptsize$v_4$] (c4) at (1,2){};
		\node (c5) at (1.8,2){\scriptsize 7}; 
		\node (d1) at (0.5,2.7){\scriptsize 2}; 
		\node (d2) at (1.5,2.7){\scriptsize 6}; 
		\draw (t)--(a);
			\draw (a)--(b1);
				\draw (b1)--(c1);
				\draw (b1)--(c2);
			\draw (a)--(b2);
			\draw (a)--(b3);
				\draw (b3)--(c3);
				\draw (b3)--(c4);
					\draw (c4)--(d1);
					\draw (c4)--(d2);
				\draw (b3)--(c5);
\end{tikzpicture}
\quad
\begin{tikzpicture}[scale=0.6]
	\tikzstyle{every node}=[thick, minimum size=2pt, inner sep=1pt]
		\node (t1) at (-2.3,0.85){$T_{3} = $};
		\node (td) at (2.1,0.75){.};
		\node (t) at (0,-0.5){};
		\node[circle, fill=black, label=left:\scriptsize$v_1$] (a) at (0,0) {};
		\node[circle, fill=black, label=left:\scriptsize$v_2$] (b1) at (-1,1) {};
		\node (b2) at (0,1) {\scriptsize 2}; 
		\node[circle, fill=black, label=left:\scriptsize$v_3$] (b3) at (1,1) {};
		\node (c1) at (-1.5,2){\scriptsize 5}; 
		\node (c2) at (-0.5,2){\scriptsize 1}; 
		\node[circle, fill=black, label=left:\scriptsize$v_4$] (c3) at (0.2,2){}; 
		\node (c4) at (1,2){\scriptsize 3};
		\node (c5) at (1.8,2){\scriptsize 7}; 
		\node (d1) at (-0.3,2.7){\scriptsize 4}; 
		\node (d2) at (0.7,2.7){\scriptsize 6}; 
		\draw (t)--(a);
			\draw (a)--(b1);
				\draw (b1)--(c1);
				\draw (b1)--(c2);
			\draw (a)--(b2);
			\draw (a)--(b3);
				\draw (b3)--(c3);
					\draw (c3)--(d1);
					\draw (c3)--(d2);
				\draw (b3)--(c4);
				\draw (b3)--(c5);
\end{tikzpicture}
$$
Observe that $T_{1} \in \mathrm{S} \mathfrak{T}^{(4)}(7)$ is a shuffle tree, but $T_{2} \in \mathfrak{T}^{(4)}(7)$ is not a shuffle tree because the labels of leaves of $v_{3}$ and $v_{1}$ are $\{4,2,7\}$ and $\{1,3,2\}$ respectively, which are not increase from left to right. Likewise, $T_{3} \in \mathfrak{T}^{(4)}(7)$ is also not a shuffle tree but isomorphic to $T_{1}$.

Let $T, T^{\prime} \in \mathfrak{T}$, we say $T^{\prime}$ is a divisor (or subtree) of $T$, denoted by $T^{\prime} \subset T$, if $T^{\prime}$ is a divisor of $T$ as planar trees and the labels of $T^{\prime}$ increases from left to right. Assume that $T^{\prime}$ is a divisor of $T \in \mathfrak{T}$, then the quotient tree $T / T^{\prime} \in \mathfrak{T}$ is obtained from $T$ by replacing the underlying planar tree of $ T^{\prime} $ by a corolla of the same arity.

There is a natural permutation $\sigma=\sigma(T, T')\in \mathbb{S}_{\omega(T)}$ associated with the pair $(T, T')$ defined as follows. Assume the ordered set $\{v_1<\dots<v_n\}$ to be the sequence of all vertices of $T$ in the planar order and $\omega(T')=j$. Let $v'$ be the vertex in $T/T'$ corresponding to the divisor $T'$ in $T$ and the serial number of $v'$ in $T/T'$ is $i$ in the planar order (so there are $i-1$ vertices ``before" $T'$). Then define $\sigma=\sigma(T, T')\in \mathbb{S}_{n}$ to be the unique permutation which does not permute the vertices $v_1, \dots, v_{i-1}$, and such that the ordered set $\{v_{\sigma(i)}<\dots<v_{\sigma(i+j-1)}\}$ is exactly the planar ordered set of all vertices of $T'$ and the ordered set $\{v_{1}<\dots<v_{i-1}<v'<v_{\sigma(i+j)}<\dots< v_{\sigma(n)}\}$ is exactly the planar ordered set of all vertices of $T/T'$. 

Let $\calp=\{\calp(n)\}_{n\geqslant 1}$ be a graded $ \mathbb{S} $-module and $T\in \mathfrak{T}^{(n)}$ with vertices $\{v_1<\dots<v_n\}$ in the planar order. Define $\calp^{\ot T}$ to be the graded space $\calp(\alpha(v_1))\ot \cdots \ot \calp(\alpha(v_n))$. 
Each element of $\calp^{\ot T}$ can be pictured as a linear combination of the tree $T$ with each vertex $v_i$ decorated by an element of $\calp(\alpha(v_i))$.
For $T \in \mathfrak{T}(n)$ and $\sigma \in \mathbb{S}_n$, it is easy to see that $\calp^{\ot T} \cong \calp^{\ot T \sigma}$ as graded spaces.

\begin{defn} \label{Def: homotopy operad S}
	A symmetric homotopy operad structure on a graded $ \mathbb{S} $-module $\calp=\{\calp(n)\}_{n\geqslant 1}$ consists of a family of operations $$\{m_T: \calp^{\ot T}\rightarrow \calp(\alpha(T))\}_{T \in \mathfrak{T}}$$ with $|m_T|=\omega(T)-2$ satisfying the following:
	\begin{enumerate}
		\item for any $T \in \mathfrak{T}^{(n)}(m) $, $\sigma \in \mathbb{S}_{m}$ and $ x_{1} \ot \cdots \ot x_{n} \in \calp^{\ot T} \cong \calp^{\ot T \sigma } $, 
		$$ m_{T \sigma}(x_{1} \ot \cdots \ot x_{n}) = m_{T}(x_{1} \ot \cdots \ot x_{n}) \sigma \in \calp^{\ot T \sigma };$$
		
		\item let $ \phi :T_{1} \rightarrow T_{2}$ be an isomorphism of trees with $ \omega(T_{1}) = n $ and $ x_{1} \ot \cdots \ot x_{n} \in \calp^{\ot T_{1}} $, then
		$$ m_{T_{1}}(x_{1} \sigma_{1} \ot \cdots \ot x_{n} \sigma_{n}) = \chi(\tau;x_{1}, \dots, x_{n}) m_{T_{2}} (x_{\tau(1)}\ot \cdots \ot x_{\tau(n)}) $$
		with the induced permutations $ \tau, \sigma_{1}, \dots, \sigma_{n} $;
	
		\item the equation	
		$$\sum_{T'\subset{T}} \sgn(\sigma(T, T')) (-1)^{i-1+jk} m_{T/T'}\circ(\id^{\ot {i-1}}\ot m_{T'}\ot \id^{\ot k}) \circ r_{\sigma(T, T^{\prime})} = 0 $$
		holds for any $T\in \mathfrak{T}$, where $T'$ runs through the set of all divisors of $T$, $i$ is the serial number of the vertex $v'$ in $T/T'$, $j=\omega(T')$, $k=\omega(T)-i-j + 1$ and
		$r_{\sigma(T, T')}$ denotes the right action by $\sigma=\sigma(T, T')$, that is, $$r_\sigma(x_1\ot \cdots\ot x_n):=\epsilon(\sigma; x_1, \dots, x_{n}) x_{\sigma(1)}\ot \cdots \ot x_{\sigma(n)}.$$
	\end{enumerate}
\end{defn}

\begin{remark}
	If condition (i) holds, one can see that operations $ \{m_{T}\}_{T \in \mathfrak{T}} $ are determined by the trees labeled by $ \{1, \dots, n\} $ from left to right, i.e., trivial labeled trees. For example, for the unique trivial labeled tree $T \in \mathfrak{T}^{(1)}(n)$, $ \sigma \in \mathbb{S}_{n} $ and $ f \in \calp(n) $, we have
		$$ \begin{tikzpicture}[scale=0.9]
			\tikzstyle{every node}=[thick, minimum size=3pt, inner sep=1pt]
			\node (eq1) at (-3.2, 0.2){$T \sigma =$};
			\node (e-1) at (0, -0.5){};
			\node (e0) at (0, 0)[fill=black, circle]{};
			\draw (e-1)--(e0);
			\node (e0-1) at (-2, 1){\tiny $ \sigma^{-1}(1) $};
			\node (e0-2) at (-0.7, 1){\tiny $ \sigma^{-1}(2) $};
			\node (e0-3) at (0.7, 1){\tiny $ \sigma^{-1}(n-1) $};
			\node (e0-4) at (2, 1){\tiny $ \sigma^{-1}(n) $};
			\draw (e0)--(e0-1);
			\draw (e0)--(e0-2);
			\draw (e0)--(e0-3);
			\draw (e0)--(e0-4);
			\draw [dotted, line width=1pt] (-0.3, 0.6)--(0.3, 0.6);
		\end{tikzpicture} $$			
		and $ m_{T \sigma }(f) = m_{T}(f) \sigma $; 
		for $ \sigma=(2 \, \, 3) \in \mathbb{S}_{5}$, $ f \in \calp(2) $, $ g \in \calp(2) $, $ h \in \calp(3) $ and 
		$$ \begin{tikzpicture}[scale=0.9]
			\tikzstyle{every node}=[thick, minimum size=3pt, inner sep=1pt]
			\node (eq1) at (-1.5, 0.3){$T_{1}=$};
			\node (eq1-1) at (1.3, 0.2){,};
			\node (e-1) at (0, -0.5){};
			\node (e0) at (0, 0)[fill=black, circle]{};
			\draw (e-1)--(e0);
			\node (e0-1) at (-0.7, 0.5)[fill=black, circle]{};
			\node (e0-2) at (0.7, 0.5)[fill=black, circle]{};
			\node (e1-1) at (-0.9, 1){\tiny $ 1 $};
			\node (e1-2) at (-0.5, 1){\tiny $ 2 $};
			\node (e2-1) at (0.4, 1){\tiny $ 3 $};
			\node (e2-2) at (0.7, 1){\tiny $ 4 $};
			\node (e2-3) at (1, 1){\tiny $ 5 $};
			\draw (e0)--(e0-1);
			\draw (e0)--(e0-2);
			\draw (e0-1)--(e1-1);
			\draw (e0-1)--(e1-2);
			\draw (e0-2)--(e2-1);
			\draw (e0-2)--(e2-2);
			\draw (e0-2)--(e2-3);
		\end{tikzpicture}
		\quad
		\begin{tikzpicture}[scale=0.9]
			\tikzstyle{every node}=[thick, minimum size=3pt, inner sep=1pt]
			\node (eq2) at (3.8, 0.3){$T_{2}=T_{1} \sigma =$};
			\node (eq2-1) at (7.3, 0.2){,};
			\node ('e-1) at (6, -0.5){};
			\node ('e0) at (6, 0)[fill=black, circle]{};
			\draw ('e-1)--('e0);
			\node ('e0-1) at (5.3, 0.5)[fill=black, circle]{};
			\node ('e0-2) at (6.7, 0.5)[fill=black, circle]{};
			\node ('e1-1) at (5.1, 1){\tiny $ 1 $};
			\node ('e1-2) at (5.5, 1){\tiny $ 3 $};
			\node ('e2-1) at (6.4, 1){\tiny $ 2 $};
			\node ('e2-2) at (6.7, 1){\tiny $ 4 $};
			\node ('e2-3) at (7, 1){\tiny $ 5 $};
			\draw ('e0)--('e0-1);
			\draw ('e0)--('e0-2);
			\draw ('e0-1)--('e1-1);
			\draw ('e0-1)--('e1-2);
			\draw ('e0-2)--('e2-1);
			\draw ('e0-2)--('e2-2);
			\draw ('e0-2)--('e2-3);
		\end{tikzpicture}
		$$
		we have $ m_{T_{2}}(f \ot g \ot h) = m_{T_{1}}(f \ot g \ot h) \sigma$.
\end{remark}

A strict morphism between two symmetric homotopy operads is a graded $ \mathbb{S} $-module morphism compatible with all operations $m_T, T \in \mathfrak{T}$.

Let $\cali$ be the graded $\mathbb{S} $-module with $\cali(1)=\bfk \id$ and $\cali(n)=0$ for $n\ne1$. The graded $\mathbb{S} $-module $\cali$ can be endowed with a symmetric homotopy operad structure in a natural way, that is, $m_T: \cali(1)\ot \cali(1) \rightarrow \cali(1)$ is given by the identity when $T$ is the unique tree of weight $2$ and arity $1$, and $m_T$ vanishes otherwise.

A symmetric homotopy operad $\calp$ is called strictly unital if there exists a strict morphism of symmetric homotopy operads $\eta: \cali\rightarrow \calp$ such that for each $n\geqslant 1$, the compositions 
$$\calp(n)\cong \calp(n)\ot \cali(1)\xrightarrow{\id\ot \eta}\calp(n)\ot \calp(1)\xrightarrow{m_{T_{1, i}}}\calp(n)$$ 
and
$$\calp(n)\cong \cali(1)\ot \calp(n)\xrightarrow{\eta\ot \id}\calp(1)\ot \calp(n)\xrightarrow{m_{T_2}} \calp(n)$$
are identity maps on $\calp(n)$, where $T_{1, i}$ with $1\leqslant i\leqslant n$ is the trivial labeled tree of weight $2$ and arity $n$ with its second vertex having arity $1$ and connecting to the first vertex on its $i$-th leaf, and $T_2$ is the trivial labeled tree of weight $2$ and arity $n$ whose first vertex has arity $1$. Intuitively, one can draw trivial labeled trees $T_{1, i}$ and $T_2$ as follows (omit their labels $\{1,\dots,n\}$):
\begin{align*}
	\begin{tikzpicture}[scale=0.8, descr/.style={fill=white}]
		\tikzstyle{every node}=[thick, minimum size=2pt, inner sep=1pt]
		\node (T1) at (-1.8,0.5){$T_{1,i}=$};
		\node (t) at (0,-0.5){};
		\node[circle, fill=black] (a) at (0,0) {};
		\node (b1) at (-1,1) {};
		\node[circle, fill=black] (b2) at (0,1) {};
		\node (b3) at (1,1) {};
		\node (c) at (0,1.5) {};
		\draw (t)--(a);
		\draw (a)--(b1);
		\draw (a)--(b2);
		\draw (a)--(b3);
		\draw (b2)--(c);
		\draw [dotted, line width=1pt] (-0.5, 0.6)--(0.5, 0.6);
		\path[-,font=\scriptsize] (a) edge node[descr]{$i$} (b2);
		\node (d) at (1.3,0.4) {,};
	\end{tikzpicture}
	\quad
	\begin{tikzpicture}[scale=0.8]
		\tikzstyle{every node}=[thick, minimum size=2pt, inner sep=1pt]
		\node (T2) at (-1.7,0.4){$T_{2}=$};
		\node[circle, fill=black] (t) at (0,-0.3){};
		\node[circle, fill=black] (a) at (0,0) {};
		\node (b1) at (-1,1) {};
		\node (b2) at (0,1) {};
		\node (b3) at (1,1) {};
		\node (c) at (0,-0.6) {};
		\draw (a)--(c);
		\draw (a)--(b1);
		\draw (a)--(b2);
		\draw (a)--(b3);
		\draw [dotted, line width=1pt] (-0.5, 0.6)--(0.5, 0.6);
		\node (d) at (1.3,0.4) {.};
	\end{tikzpicture}
\end{align*}
Furthermore, for any tree $T$ of weight at least three, $m_T \circ (\id^{\ot i-1} \ot \eta \ot \id^{\ot \omega(T)-i})$ is required to be 0 for all $1 \leqslant i \leqslant \omega(T)$.

A strictly unital symmetric homotopy operad $\calp$ is called augmented if there exists a strict morphism of symmetric homotopy operads $\varepsilon: \calp\rightarrow \cali$ such that $\varepsilon \circ \eta=\id_{\cali}$.

If a symmetric homotopy operad $\calp$ satisfies $m_T=0$ for all $T\in \frakT$ with $\omega(T)\geqslant 3$, then $\calp$ is just a nonunital symmetric dg operad in the sense of Markl \cite{Mar08}, with $\{m_T \mid T \in \mathrm{S} \mathfrak{T}^{(1)}\}$ and $\{m_T \mid T \in \mathrm{S} \mathfrak{T}^{(2)}\}$ defining its differentials and partial compositions respectively.

Given a (nonunital) symmetric dg operad $\calp$, for each tree $T \in \mathfrak{T}$, one can define the composition $m_\calp^T:\calp^{\ot T} \rightarrow \calp(\alpha(T))$ in $\calp$ along $T$ as follows:
\begin{enumerate}
	\item for the trivial labeled tree $T \in \mathfrak{T}^{(1)}(m)$ with $m \geqslant 1$, $\sigma \in \mathbb{S}_{m}$ and $x \in \calp(m)$, define $m_\calp^{T \sigma}(x) = x \sigma$;
	\item for $T \in \mathfrak{T}^{(2)}$, define $m_\calp^T=m_T$;
	\item for $T \in \mathfrak{T}^{(n)}$ with $n \geqslant 3$, write $T$ as the grafting of a divisor $T'$, whose vertex set is that of $T$ except the last one, with the corolla whose unique vertex is exactly the last vertex of $T$ in the planar order, then define $m_\calp^T=m_{T/T'}\circ (m_\calp^{T'}\ot \Id) $, where $m_\calp^{T'}$ is obtained by induction.
\end{enumerate}

Dualizing the definition of symmetric homotopy operads, one has the notion of symmetric homotopy cooperads.

\begin{defn}
	Let $\calc=\{\calc(n)\}_{n\geqslant 1}$ be a graded $ \mathbb{S} $-module. A symmetric homotopy cooperad structure on $\calc$ consists of a family of operations 
	$$\{\Delta_T: \calc(\alpha(T))\rightarrow \calc^{\ot T }\}_{T\in\mathfrak{T}}$$ 
	with $|\Delta_T|=\omega(T)-2$ such that for any $c\in \calc$, $\Delta_T(c)=0$ for almost all but finitely many $T\in\frakT$, and the family of operations $\{\Delta_T\}_{T\in\frakT}$ satisfy
	\begin{enumerate}
		\item let $T \in \mathfrak{T}(n)$ and $\sigma \in \mathbb{S}_{n}$, $ \Delta_{T \sigma}(c \sigma) = \Delta_{T}(c) \sigma \in \calc^{\ot T \sigma }$;

		\item let $ \phi :T_{1} \rightarrow T_{2}$ be an isomorphism of trees with $T_{1} \in \mathfrak{T}^{(n)}(m)$, $c \in \calc(m)$ and $ \Delta_{T_{1}}(c) = x_{1} \ot \cdots \ot x_{n} $, then
		$$ \Delta_{T_{2}}(c) = \chi(\tau; x_{1}, \dots, x_{n}) \
		x_{\tau(1) } \sigma_{\tau(1)}^{-1} \ot \cdots \ot x_{\tau(n) } \sigma_{\tau(n)}^{-1}$$	
		with the induced permutations $ \tau, \sigma_{1}, \dots, \sigma_{n} $;	

		\item the equation	
		$$\sum_{T'\subset{T}} \sgn(\sigma(T, T')^{-1}) (-1)^{i-1+jk}
		r_{\sigma(T, T^{\prime})^{-1}} \circ (\id^{\ot {i-1}}\ot \Delta_{T'}\ot \id^{\ot k}) \circ \Delta_{T/T'} =0$$
		holds for any $T \in \frakT$, where $T'$ runs through the set of all divisors of $T$, and $i, j, k$ have the same meaning as for symmetric homotopy operads in Definition~\ref{Def: homotopy operad S}.	
	\end{enumerate}
\end{defn}

The graded $ \mathbb{S} $-module $\cali$ has a natural symmetric homotopy cooperad structure, that is, $\Delta_T: \cali(1) \rightarrow \cali(1)\ot \cali(1)$ is given by the identity, when $T$ is the unique tree of weight $2$ and arity $1$, and $\Delta_T$ vanishes otherwise.

A symmetric homotopy cooperad $\calc$ is called strictly counital if there exists a strict morphism of symmetric homotopy cooperads $\varepsilon: \calc\rightarrow \cali$ such that for each $n \geqslant 1$, the compositions
$$\calc(n)\xrightarrow{\Delta_{T_{1, i}}}\calc(n)\ot \calc(1)\xrightarrow{\id\ot \varepsilon}\calc(n)\ot \cali(1)\cong \calc(n)$$
and 
$$\calc(n)\xrightarrow{T_2}\calc(1)\ot \calc(n)\xrightarrow{\varepsilon\ot \id}\cali(1)\ot \calc(n)\cong \calc(n)$$ 
are identity maps on $\calc(n)$, where $T_{1, i}$ with $1\leqslant i\leqslant n$ and $T_2$ are the notations used before. Furthermore, for any tree $T$ of weight at least three, $(\id^{\ot i-1} \ot \varepsilon \ot \id^{\ot \omega(T)-i}) \circ \Delta_T$ is required to be 0 for all $1 \leqslant i \leqslant \omega(T)$.

A strictly counital symmetric homotopy cooperad $\calc$ is called coaugmented if there exists a strict morphism of symmetric homotopy cooperads $\eta:\cali\rightarrow \calc$ such that $\varepsilon\circ \eta=\id_{\cali}$. For a coaugmented symmetric homotopy cooperad $\calc$, the graded $ \mathbb{S} $-module $\overline{\calc}=\ker(\varepsilon)$ endowed with operations $\{\overline{\Delta}_T\}_{T\in\frakT}$ is naturally a symmetric homotopy cooperad, where $\overline{\Delta}_T$ is the the restriction of operation $\Delta_T$ on $\overline{\calc}$.

A symmetric homotopy cooperad $\cale=\{\cale(n)\}_{n\geqslant 1}$ such that $\Delta_T = 0$ for all $T\in \frakT$ with $\omega(T)\geqslant 3$ is exactly a noncounital symmetric dg cooperad in the sense of Markl \cite{Mar08}, with $\{\Delta_T \mid T \in \mathrm{S} \mathfrak{T}^{(1)}\}$ and $\{\Delta_T \mid T \in \mathrm{S} \mathfrak{T}^{(2)}\}$ defining its differentials and partial cocompositions respectively.

For a (noncounital) symmetric dg cooperad $\cale$, on can define the cocomposition $\Delta^{T}_\cale:\cale(\alpha(T)) \rightarrow \cale^{\ot T}$ along a tree $T$ in the dual way as the composition $m^T_\calp$ along $T$ for a symmetric dg operad $\calp$.

\begin{prop-def} \cite{MV09a}
	Let $\calc$ be a symmetric homotopy cooperad and $\cale$ be a symmetric dg cooperad. Then the graded collection $\calc\ot \cale$ with $(\calc\ot\cale)(n):=\calc(n)\ot \cale(n)$ for $ n\geqslant 1$ has a natural symmetric homotopy cooperad structure as follows:
	\begin{enumerate}
		\item for any $c \in \calc(m), e \in \cale(m)$ and $\sigma \in \mathbb{S}_m$, the right $\mathbb{S}_m$ action on $(\calc \ot_{\rH} \cale)(m)$ is defined as
		$$ (c \ot e)\sigma := c \sigma \ot e \sigma ;$$
		\item for any tree $T \sigma \in \mathfrak{T}^{(1)}(m)$ with trivial labeled tree $T$, $c \in \calc(m), e \in \mathcal{E}(m)$, define
		$$
		\Delta_{T \sigma}^{\mathrm{H}}(c \otimes e) :=\Delta_{T \sigma}^{\calc}(c) \otimes e +(-1)^{|c|} c \otimes \Delta^{\cale}_{T \sigma}(e) ;
		$$
		\item for any tree $T \in \mathfrak{T}^{(n)}(m) $ with $n\geqslant 2$, $ c \in \calc(m), e \in \mathcal{E}(m) $, define 
		$$\Delta_T^{\mathrm{H}}(c\ot e) := (-1)^{\sum\limits_{k=1}^{n-1}\sum\limits_{j=k+1}^n|e_k||c_j|}(c_1\ot e_1)\ot \cdots \ot (c_n\ot e_n)\in (\calc\ot \cale )^{\ot T}, $$ 
		with $c_1\ot \cdots\ot c_n = \Delta_T^\calc(c) \in \calc^{\ot T}$ and $e_1\ot \cdots \ot e_n = \Delta^T_\cale(e) \in \cale^{\ot T}$, where $\Delta^T_\cale$ is the cocomposition in $\cale$ along $T$.
	\end{enumerate}
	The new symmetric homotopy cooperad is called the Hadamard product of $\calc$ and $\cale$, and denoted by $\calc\ot_{\rH}\cale$.
\end{prop-def}

Define $\cals=\mathrm{End}_{\bfk s}^c$ to be the symmetric graded cooperad whose underlying graded collection is given by $\cals(n):=\Hom((\bfk s)^{\ot n}, \bfk s) \cong \bfk \delta_{n}$ for $ n\geqslant 1$, where $\delta_n \in \cals(n)$ is the map which takes $s^{\ot n}$ to $s$. 
The $\mathbb{S}$-module structure of $\cals$ is defined as
$$ \delta_{n} \sigma := \sgn (\sigma) \delta_{n} $$
for $ n \geqslant 1 $ and $ \sigma \in \mathbb{S}_{n}$.
The cooperad structure of $\cals$ is given by 
$$ \Delta_{T}(\delta_n) := \sgn (\sigma) (-1)^{(j-1)(i-1)}\delta_{n-i+1}\ot \delta_i\in \cals^{\ot T} $$
for the tree $T$ of the form
$$
\begin{tikzpicture}[scale=0.8, descr/.style={fill=white}]
	\tikzstyle{every node}=[thick, minimum size=2pt, inner sep=1pt]
	\node (T) at (-1.8,0.5){};
	\node (t) at (0,-0.5){};
	\node[circle, fill=black, label = right : \tiny $n-i+1$] (a) at (0,0) {};
	\node (b1) at (-1,1) {};
	\node[circle, fill=black, label = right : \tiny $i$] (b2) at (0,1) {};
	\node (b3) at (1,1) {};
	\node (c1) at (-0.5,1.5) {};
	\node (c2) at (0.5,1.5) {};
	\draw (t)--(a);
	\draw (a)--(b1);
	\draw (a)--(b2);
	\draw (a)--(b3);
	\draw (b2)--(c1);
	\draw (b2)--(c2);
	\draw [dotted, line width=1pt] (-0.5, 0.6)--(0.5, 0.6);
	\path[-,font=\scriptsize] (a) edge node[descr]{$j$} (b2);
	\node (d) at (1.5,0.4) {$\cdot \sigma$};
\end{tikzpicture}
$$
with $\sigma \in \mathbb{S}_{n}$.
We also define $\cals^{-1}$ to be the symmetric graded cooperad whose underlying graded collection is given by $\cals^{-1}(n):=\Hom((\bfk s^{-1})^{\ot n}, s^{-1}) \cong \bfk \varepsilon_{n} $ for $n\geqslant 1$, where $\varepsilon_n\in \cals^{-1}(n)$ is the map which takes $(s^{-1})^{\ot n}$ to $s^{-1}$. 
The $\mathbb{S}$-module structure of $\cals^{-1}$ is defined as
$$ \varepsilon_{n} \sigma := \sgn (\sigma) \varepsilon_{n} $$
for $ n \geqslant 1 $ and $ \sigma \in \mathbb{S}_{n}$.
The cooperad structure of $\cals^{-1}$ is given by 
$$ \Delta_{T}(\varepsilon_n) :=(-1)^{(i-1)(n-i+1-j)}\varepsilon_{n-i+1}\ot \varepsilon_i\in (\cals^{-1})^{\ot T} $$ 
for the tree $T$ that is the same as before.
It is easy to see that $\cals\ot_{\mathrm{H}}\cals^{-1}\cong \cals^{-1}\ot_{\mathrm{H}}\cals=:\mathbf{Com}^\vee$. Notice that for any symmetric homotopy cooperad $\calc$, we have $ \calc\ot_\rH \mathbf{Com}^\vee\cong\calc\cong \mathbf{Com}^\vee\ot_\rH \calc.$

\begin{defn}
	Let $\calc$ be a symmetric homotopy cooperad. Define the operadic suspension (resp. desuspension) of $\calc$ to be the symmetric homotopy cooperad $\calc\ot_{\mathrm{H}} \cals$ (resp. $\calc\ot_{\mathrm{H}}\cals^{-1}$), denoted as $\mathscr{S}\calc$ (resp. $\mathscr{S}^{-1}\calc$).
\end{defn}

\begin{defn} \label{defn: cobar construction symmetric}
	Let $\calc=\{\calc(n)\}_{n\geqslant 1}$ be a coaugmented symmetric homotopy cooperad. The cobar construction of $\calc$, denoted by $\Omega\calc$, is the free symmetric graded operad generated by the graded $\mathbb{S}$-module $s^{-1}\overline{\calc}$, endowed with the differential $\partial$ which is lifted from $\partial: s^{-1}\overline{\calc} \rightarrow \Omega\calc$ given by 
	$$\partial(s^{-1}f)=-\sum_{T\in \mathrm{S}\mathfrak{T}(n)} (s^{-1})^{\ot \omega(T)} \circ \overline\Delta_T(f)$$
	for any $f\in \overline{\calc}(n)$.
	More precisely, let $s^{-1} f_1 \otimes \cdots \otimes s^{-1} f_n \in(s^{-1} \overline{\mathcal{C}})^{\otimes T_{1}}$ with $T_{1} \in \mathrm{S} \mathfrak{T}^{(n)}$
	\begin{align*}
		\partial(s^{-1} f_1 \otimes \cdots \otimes s^{-1} f_n) = -\sum_{i=1}^n \sum_{T_2 \in \mathrm{S} \mathfrak{T}{(\alpha(f_i))}} r_{\sigma(T_1 \times_i T_2, T_2)^{-1}}\big(\mathrm{Id}^{\otimes i-1} \otimes((s^{-1})^{\otimes \omega(T_2)} \circ \overline{\Delta}_{T_2} \circ s) \otimes \mathrm{Id}^{\otimes n-i}\big)(s^{-1} f_1 \otimes \cdots \otimes s^{-1} f_n), 
	\end{align*}
	where $T_1 \times_i T_2$ is obtained from $T_1$ by rearranging the order of the leaves of $i$-th vertex $v_i$ according to the labeling of $T_2$ and then replacing $v_i$ with the underlying planar tree of $T_2$.
\end{defn}

This provides an alternative definition for symmetric homotopy cooperads. In fact, a graded $ \mathbb{S} $-module $\overline{\calc}=\{\overline{\calc}(n)\}_{n\geqslant 1}$ is a symmetric homotopy cooperad if and only if the free symmetric graded operad generated by $s^{-1}\overline{\calc}$ (also called the cobar construction of $\calc=\overline{C}\oplus \cali$) is endowed with a differential such that it becomes a symmetric dg operad.

There is a natural $L_\infty$-algebra associated with a symmetric homotopy operad $\calp=\{\calp(n)\}_{n\geqslant 1}$.
Denote $\calp^{\prod}:= \prod\limits_{n=1}^\infty\calp(n)$. For each $n\geqslant 1$, define operations $m_n :=\sum\limits_{T\in \mathrm{S}\mathfrak{T}^{(n)}}m_T: (\calp^{\prod})^{\ot n} \rightarrow \calp^{\prod} $ and $l_n$ to be the anti-symmetrization of $m_n$, i.e., 
$$l_n(x_1\ot \cdots \ot x_n) := \sum\limits_{\sigma\in \mathbb{S}_n}\chi(\sigma; x_1, \dots, x_n) \ m_n(x_{\sigma(1)}\ot \cdots \ot x_{\sigma(n)}).$$

Then we have the following result.

\begin{prop} \cite{MV09a, VdLa}
	Let $\calp$ be a symmetric homotopy operad. Then $(\calp^{\prod}, \{l_n\}_{n\geqslant 1})$ is an $L_\infty$-algebra.
\end{prop}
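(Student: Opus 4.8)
The plan is to verify the two defining conditions of an $L_\infty$-algebra (Definition~\ref{Def: L-infty}) for the operations $\{l_n\}_{n \geqslant 1}$ directly, reducing each to the axioms of a symmetric homotopy operad (Definition~\ref{Def: homotopy operad S}). First I would record the degree bookkeeping: for $T \in \mathrm{S}\mathfrak{T}^{(n)}$ one has $\omega(T) = n$, so $|m_T| = n-2$, whence $|m_n| = |l_n| = n-2$, matching the requirement $|l_n| = n-2$. I would also note that $\{l_n\}$ is well-defined on $\calp^{\prod}$: after fixing the output arity, only finitely many shuffle trees of weight $n$ contribute, so each $m_n$, and hence each $l_n$, lands in the product $\calp^{\prod}$ with no convergence issue.

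Condition (i), generalized anti-symmetry, is a formal consequence of $l_n$ being the anti-symmetrization of $m_n$. Writing out $l_n(x_{\tau(1)} \ot \cdots \ot x_{\tau(n)})$, reindexing the internal summation by $\rho = \tau\sigma$, and invoking the cocycle identity for the Koszul signs $\epsilon$ and $\chi$ under composition of permutations, one obtains $l_n(x_{\tau(1)} \ot \cdots \ot x_{\tau(n)}) = \chi(\tau; x_1, \dots, x_n)\, l_n(x_1 \ot \cdots \ot x_n)$. The equivariance axiom (i) of the homotopy operad guarantees that this is compatible with the internal $\mathbb{S}$-structure, so no inconsistency arises across arities.

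The core of the argument is condition (ii), the generalized Jacobi identity. I would expand its left-hand side
\[
\sum_{i=1}^n \sum_{\sigma \in \Sh(i, n-i)} \chi(\sigma; \vec x)(-1)^{i(n-i)}\, l_{n-i+1}\big(l_i(x_{\sigma(1)} \ot \cdots \ot x_{\sigma(i)}) \ot x_{\sigma(i+1)} \ot \cdots \ot x_{\sigma(n)}\big)
\]
by substituting the anti-symmetrization formulas for $l_i$ and $l_{n-i+1}$ and then the tree expansions $m_i = \sum_{T' \in \mathrm{S}\mathfrak{T}^{(i)}} m_{T'}$ and $m_{n-i+1} = \sum_{S \in \mathrm{S}\mathfrak{T}^{(n-i+1)}} m_S$. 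Each resulting summand is a two-level composite $m_S \circ (\id^{\ot p-1} \ot m_{T'} \ot \id^{\ot n-i+1-p})$, which by the grafting construction of compositions in a dg operad equals $m_{T/T'} \circ (\id^{\ot p-1} \ot m_{T'} \ot \id^{\ot n-i+1-p})$ for a unique tree $T$ of weight $n$ admitting $T'$ as the divisor contracted to the vertex in position $p$ of $S = T/T'$. The key combinatorial step is to establish that, as $(i, \sigma, \rho, \pi, T', S)$ range over the expanded index set, the data $(T, T')$ together with the induced reordering of the inputs range bijectively over all pairs of a weight-$n$ tree with a distinguished divisor, equipped with a full permutation $\tau \in \mathbb{S}_n$ of the inputs; the shuffles $\sigma$ and the inner and outer anti-symmetrizations are exactly what promote the first-slot insertion of $l_i$ to insertion at an arbitrary vertex and assemble the twist $r_{\sigma(T,T')}$.

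After this reorganization, I would match signs: the prefactor $\chi(\sigma; \vec x)(-1)^{i(n-i)}$, together with the signs $\chi(\rho)$ and $\chi(\pi)$ from the two anti-symmetrizations and the Koszul signs produced when commuting the graded maps past the $x_j$, must be shown to reproduce $\chi(\tau; \vec x)\,\sgn(\sigma(T,T'))(-1)^{p-1+ik}$, where $p$ is the position of the contracted vertex in $T/T'$ and $k = n - p - i + 1$, exactly as dictated by Definition~\ref{Def: homotopy operad S}(iii) (note $\omega(T') = i$). Granting this, the whole expression collapses to $\sum_{\tau \in \mathbb{S}_n} \chi(\tau; \vec x)$ applied to a sum, indexed by trees $T$ of weight $n$, of the left-hand sides of the homotopy operad relation (iii), each of which vanishes; hence the generalized Jacobi identity holds. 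I expect the main obstacle to be precisely this sign and equivariance bookkeeping—reconciling the three sources of signs (the $L_\infty$ shuffle and anti-symmetrization signs, the Koszul signs from the grading, and the operadic signs of axiom (iii)) and confirming that the bijection produces the twist $r_{\sigma(T,T')}$ for every vertex position—rather than the underlying combinatorial idea, which is the standard principle that anti-symmetrizing the brace-type structure of a (homotopy) operad yields an $L_\infty$-algebra.
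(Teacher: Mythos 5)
The paper does not prove this proposition at all: it is quoted verbatim from \cite{MV09a, VdLa} and used as a black box, so there is no in-paper argument to compare against. Your sketch is the standard proof from those references, and its overall architecture is correct: the degree count $|m_T|=\omega(T)-2$ gives $|l_n|=n-2$; anti-symmetry is formal from the anti-symmetrization and the cocycle property of $\chi$; and the generalized Jacobi identity reduces, after expanding $l_i$ and $l_{n-i+1}$ into shuffles and trees and regrouping two-level composites as pairs $(T,T')$ of a weight-$n$ tree with a distinguished divisor, to a $\tau$-indexed sum of instances of axiom (iii) of Definition~\ref{Def: homotopy operad S}, each of which vanishes.

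Two caveats. First, the step you rightly flag as the main obstacle --- matching $\chi(\sigma;\vec x)(-1)^{i(n-i)}$ times the two anti-symmetrization signs against $\sgn(\sigma(T,T'))(-1)^{p-1+ik}$ and the twist $r_{\sigma(T,T')}$ --- is where the entire content of the proof lives, and your proposal defers it (``must be shown to reproduce''); as written this is a plan rather than a proof. Second, in the Jacobi step you only invoke axiom (i) of the homotopy operad for the anti-symmetry check, but the regrouping itself also needs axioms (i) and (ii): the inner composite $m_S\circ(\id^{\ot p-1}\ot m_{T'}\ot\id^{\ot k})$ produced by the expansion is indexed by pairs of \emph{shuffle} trees, whereas axiom (iii) sums over \emph{all} divisors of a labeled tree, and passing between the two (and absorbing the relabeling shuffles $\sigma_i$ of an isomorphism of trees) uses the equivariance and isomorphism-invariance axioms. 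Neither point is a wrong turn, but both need to be carried out for the argument to close.
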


Denote $\calp(n)^{\mathbb{S}_n} \subset \calp(n)$ be the invariant subspace under the right $\mathbb{S}_n$ action. Define $\calp_{\mathbb{S}}^{\prod}:=$ $\prod\limits_{n=1}^{\infty} \calp(n)^{\mathbb{S}_n} \subset \calp ^{\prod}$. By restricting $\{l_n\}_{n \geqslant 1}$ of $\calp ^{\prod}$ to $\calp_{\mathbb{S}} ^{\prod}$, we have

\begin{prop} \cite{MV09a, VdLa} \label{prop: S homotopy operad L infty}
	Let $\calp$ be a symmetric homotopy operad. Then $(\calp_{\mathbb{S}} ^{\prod}, \{l_n\}_{n \geqslant 1})$ is an $L_{\infty}$-algebra. In particular, when $\calp$ is a symmetric dg operad, $\calp_{\mathbb{S}}^{\prod}$ is just a dg Lie algebra.
\end{prop}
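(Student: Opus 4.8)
The plan is to deduce the statement from the preceding proposition, which already endows $\calp^{\prod}$ with the $L_\infty$-structure $\{l_n\}_{n\geqslant 1}$. Since $\calp_{\mathbb{S}}^{\prod}$ is by definition a graded subspace of $\calp^{\prod}$, it suffices to prove that it is closed under every operation $l_n$; the generalized anti-symmetry and generalized Jacobi identities of Definition~\ref{Def: L-infty}, being identities of multilinear maps on the ambient space $\calp^{\prod}$, then restrict for free to the subspace, exhibiting $(\calp_{\mathbb{S}}^{\prod}, \{l_n\}_{n\geqslant 1})$ as a sub-$L_\infty$-algebra.

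To establish closure, I would use multilinearity and the arity grading to reduce to homogeneous inputs $x_1, \dots, x_n$ with $x_i \in \calp(k_i)^{\mathbb{S}_{k_i}}$, fix a target arity $m$, and show that the arity-$m$ component of $l_n(x_1\ot\cdots\ot x_n)$ is fixed by the right action of every $\sigma\in\mathbb{S}_m$. Unwinding the definitions, this component equals $\sum_{\rho\in\mathbb{S}_n}\chi(\rho;x_1,\dots,x_n)\sum_{T}m_T(x_{\rho(1)}\ot\cdots\ot x_{\rho(n)})$, the inner sum running over those shuffle trees $T\in\mathrm{S}\mathfrak{T}^{(n)}(m)$ whose vertex arities match the inputs. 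Applying $\sigma$ on the right and invoking condition (i) of Definition~\ref{Def: homotopy operad S} turns $m_T(-)\sigma$ into $m_{T\sigma}(-)$, where $T\sigma\in\mathfrak{T}^{(n)}(m)$ is generally no longer a shuffle tree. I would then replace $T\sigma$ by its unique shuffle representative $\widetilde{T}$ via the isomorphism of trees $\phi\colon T\sigma\to\widetilde T$ and apply condition (ii), which rewrites $m_{T\sigma}$ in terms of $m_{\widetilde T}$ at the cost of the Koszul sign $\chi(\tau;-)$ and the leafwise permutations $\sigma_1,\dots,\sigma_n$ at the vertices.

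The crucial simplification is that the inputs are invariant, so $x_i\sigma_i = x_i$ and the leafwise permutations disappear without a trace. What remains is a reindexing: as $T$ runs over $\mathrm{S}\mathfrak{T}^{(n)}(m)$ the shuffle representatives $\widetilde T$ run bijectively over the same set, the vertex permutation $\tau$ composes with the anti-symmetrization index $\rho$, and the cocycle identity $\chi(\tau;-)\chi(\rho;-)=\chi(\tau\rho;-)$ ensures the signs assemble correctly. After relabeling the summation indices one recovers exactly the original expression, proving that $l_n(x_1\ot\cdots\ot x_n)\sigma = l_n(x_1\ot\cdots\ot x_n)$ and hence that $l_n$ preserves $\calp_{\mathbb{S}}^{\prod}$.

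I expect the main obstacle to be precisely this last bookkeeping step: tracking the interaction of the anti-symmetrization permutation $\rho$, the tree-isomorphism permutation $\tau$, and the leafwise permutations $\sigma_i$, and checking that all Koszul and shuffle signs produced by conditions (i) and (ii) cancel against those in the definition of $l_n$. For the final assertion, when $\calp$ is a symmetric dg operad one has $m_T=0$ for every $T$ with $\omega(T)\geqslant 3$, so $l_n=0$ for $n\geqslant 3$; by Remark~\ref{Rem: L-infinity for small n} the surviving pair $(l_1,l_2)$ makes $\calp_{\mathbb{S}}^{\prod}$ a dg Lie algebra, with $l_1$ the internal differential and $l_2$ the anti-symmetrization of the sum of the partial compositions.
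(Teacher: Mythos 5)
Your proposal is correct and takes essentially the same route as the paper: the paper states this proposition as a citation to \cite{MV09a, VdLa} and justifies it only by the phrase ``by restricting $\{l_n\}_{n\geqslant 1}$ of $\calp^{\prod}$ to $\calp_{\mathbb{S}}^{\prod}$'', and your closure argument --- passing from $m_T(-)\sigma$ to $m_{T\sigma}$ via condition (i) of Definition~\ref{Def: homotopy operad S}, then to the shuffle representative via condition (ii), with the $\mathbb{S}$-invariance of the inputs absorbing the leafwise permutations and the Koszul cocycle identity absorbing $\tau$ into the anti-symmetrization --- supplies exactly the verification the paper leaves implicit. The handling of the dg Lie algebra case ($m_T=0$ for $\omega(T)\geqslant 3$, hence $l_n=0$ for $n\geqslant 3$) is likewise correct.
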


\begin{defn} \label{Def: brace operation of S operad}
	Let $\calp$ be a (nonunital) symmetric dg operad. For any $f\in \calp(m)$ and $g_1\in\calp(k_1), \dots, g_n\in \calp(k_n) $ with $1\leqslant n\leqslant m$, define 
	$$ f\{g_1, \dots, g_n\}:=\sum_{T} m^T_\calp (f \ot g_1 \ot \cdots \ot g_n), $$
	where the sum is taken over $T \in \mathrm{S}\mathfrak{T}^{(n+1)}$ of height $2$, i.e., whose underlying planar trees form
	$$ \begin{tikzpicture}[scale=0.9, descr/.style={fill=white}]
	\tikzstyle{every node}=[thick, minimum size=3pt, inner sep=1pt]
	\node(r) at (0, -0.5)[minimum size=0pt, circle]{};
	\node(v0) at (0, 0)[fill=black, circle, label=left: \tiny $m$]{};
	\node(v1-1) at (-2, 1){};
	\node(v1-2) at (-1, 1)[fill=black, circle, label=left: \tiny $k_{1}$]{};
	\node(v1-3) at(0, 1){};
	\node(v1-4) at (1, 1)[fill=black, circle, label=right: \tiny $k_{n}$]{};
	\node(v1-5) at(2, 1){};
	\node(v1-6) at(2.5, 0.7){};
	\node(v2-1)at (-1.5, 1.7){};
	\node(v2-2)at (-0.5, 1.7){};
	\node(v2-3)at (0.5, 1.7){};
	\node(v2-4)at (1.5, 1.7){};
	\draw(r)--(v0);
	\draw(v0)--(v1-1);
	\draw(v0)--(v1-2);
	\draw(v0)--(v1-3);
	\draw(v0)--(v1-4);
	\draw(v0)--(v1-5);
	\draw(v1-2)--(v2-1);
	\draw(v1-2)--(v2-2);
	\draw(v1-4)--(v2-3);
	\draw(v1-4)--(v2-4);
	\draw[dotted](-1.1, 1.5)--(-0.9, 1.5);
	\draw[dotted](0.9, 1.5)--(1.1, 1.5);
	\draw[dotted](-0.4, 1.5)--(0.4, 1.5);
	\draw[dotted](-0.9, 0.5)--(-0.1, 0.5);
	\draw[dotted](0.1, 0.5)--(0.9, 0.5);
	\path[-, font=\scriptsize] (v0) edge node[descr]{{\tiny$i_{1}$}} (v1-2);
	\path[-, font=\scriptsize] (v0) edge node[descr]{{\tiny$i_{n}$}} (v1-4);
	\end{tikzpicture} $$
	with $1 \leqslant i_1 < \dots < i_n \leqslant m$.
	It is called the shuffle brace operation on $\calp^{\prod}$. For $f\in \calp(m), g\in \calp(n)$, define a bracket
	$$ [f, g] := f\{g\}-(-1)^{|f||g|}g\{f\}\in \calp(m+n-1). $$
\end{defn}

When $\calp$ is a symmetric dg operad, the operation $l_2$ on the dg Lie algebra $\calp^{\prod}_{\mathbb{S}}$ is exactly the bracket defined above.

\begin{remark} \label{Rem. relation between two braces}
	Let $\calp$ be a symmetric homotopy operad. One may not expect the shuffle brace operation to satisfy the pre-Jacobi identity as the nonsymmetric case, see~\cite[Equation (8)]{SWZZ2024a}, \cite[Proposition 1.10]{Wang2024} or \cite[Proposition 3.7]{CGWZ24}. 
	Denote
	\begin{equation} \label{Eq. symmetric brace definition}
		f [g_1, \dots, g_n ] := \sum_{\sigma \in \mathbb{S}_{n}} \epsilon(\sigma; g_{1}, \dots, g_{n}) f\{g_{\sigma(1)}, \cdots, g_{\sigma(n)}\}
	\end{equation}
	for any homogeneous elements in $\calp_{\mathbb{S}}^{\prod}$ with $f\in \calp(m)$ and $g_1\in\calp(k_1), \dots, g_n\in \calp(k_n) $ with $1\leqslant n\leqslant m$, this define a symmetric brace operation $-[-]$ in sense of Lada and Markl \cite{LM05}. The symmetric brace operation satisfies the following identity:
	\begin{equation}\label{Eq. symmetric brace pre-jacobi}
		\begin{aligned} 
			& \ \Big(f [g_1, \dots, g_m ]\Big) [ h_1, \dots, h_n ]  = \\
			  & \sum\limits_{\sigma \in \Sh(t_{1},\dots,t_{m}, t_{m+1})} \epsilon \
			f \big[ g_{1} [ h_{\sigma(1)}, \dots, h_{\sigma(t_{1})} ], \dots, g_{m} [ h_{\sigma(t_{1}+\dots+t_{m-1}+1)}, \dots, h_{\sigma(t_{1}+\dots+t_{m})} ], h_{\sigma(t_{1}+\dots+t_{m}+1)}, \dots, h_{\sigma(n)} \big],
		\end{aligned}
	\end{equation}
	where $t_{m+1} = n-t_{1}-\dots-t_{m}$, $\epsilon$ is the corresponding Koszul sign. In particular, we have $f\{g\} = f [g]$ and the following identity
	\begin{align} \label{Eq: pre-jacobi1 S}
		(f \{g\})\{h\} = f\{g\{h\}\}+f\{g, h\}+(-1)^{|g||h|}f\{h, g\}.
	\end{align}
\end{remark}

Next, we introduce the notion of convolution symmetric homotopy operads.

\begin{prop-def} \cite{MV09a, VdLa} \label{prop-def. convolution symmetric homotopy operad}
	Let $\calc$ be a symmetric homotopy cooperad and $\calp$ be a symmetric dg operad. Then the graded collection $\mathbf{Hom}(\calc, \calp)$ with $\mathbf{Hom}(\calc, \calp)(n):= \Hom(\calc(n), \calp(n))$ for $n\geqslant 1$ has a natural symmetric homotopy operad structure, called convolution symmetric homotopy operad, which is defined as follows:
	\begin{enumerate}
		\item for any $ f\in\mathbf{Hom}(\calc, \calp)(n) $, $ \sigma \in \mathbb{S}_{n} $ and $ x \in \calc (n) $, the right $\mathbb{S}_n$ action on $\mathbf{Hom}(\calc, \calp)(n)$ is defined as
		$$ (f \sigma)(x) := f(x\sigma^{-1}) \sigma; $$		
		
		\item for any $T \sigma \in \mathfrak{T}^{(1)}(n)$ with trivial labeled tree $T$, $f \in \mathbf{Hom}(\calc, \calp)(n)$ and $ c \in \calc(n)$, 
		$$
		\big(m_{T \sigma}(f)\big)(c) := m^{\calp}_{T \sigma} \big(f(c \sigma^{-1})\big)-(-1)^{|f|}(f\sigma)\big( \Delta_{T \sigma}^{\calc}(c)\big);
		$$
				
		\item for any $T \in \mathfrak{T}^{(n)}$ with $n \geqslant 2$ and $ f_1 \otimes \cdots \otimes f_n \in \mathbf{Hom}(\calc, \calp)^{\otimes T}$
		$$ m_T(f_1\ot \cdots \ot f_n) := (-1)^{\frac{n(n-1)}{2}+1+n(\sum\limits_{i=1}^n |f_i|)}m_{\calp}^T\circ(f_1\ot \cdots\ot f_n)\circ \Delta_T^{\calc}, $$ 
		where $m_\calp^T$ is the composition in $\calp$ along $T$.
	\end{enumerate}
\end{prop-def}


\begin{prop} \cite{MV09a, VdLa} \label{Prop: Linfinity give MC S} 
	Let $\calc$ be a coaugmented symmetric homotopy cooperad and $\calp$ be an unital symmetric dg operad. Then there is a natural bijection:
	$$\Hom_{udgOp}(\Omega\calc, \calp)\cong \calm\calc\big(\mathbf{Hom}(\overline{\calc}, \calp)_\mathbb{S}^{\prod}\big), $$
	where the left-handed side is the set of morphisms of unital symmetric dg operads from $\Omega C$ to $\calp$ and the right-handed side is the set of Maurer-Cartan elements in the $L_\infty$-algebra $\mathbf{Hom}(\overline{\calc}, \calp)_{\mathbb{S}}^{\prod}$.
\end{prop}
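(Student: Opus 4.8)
The plan is to identify both sides of the claimed bijection with one and the same set of degree $-1$, $\mathbb{S}$-equivariant families of maps $\overline{\calc}(n)\to\calp(n)$, and then to match the two defining conditions against each other. Since $\Omega\calc$ is, by Definition~\ref{defn: cobar construction symmetric}, the free symmetric graded operad on the graded $\mathbb{S}$-module $s^{-1}\overline{\calc}$, the universal property of free operads supplies a bijection between morphisms of graded symmetric operads $\Phi\colon\Omega\calc\to\calp$ and morphisms of graded $\mathbb{S}$-modules $s^{-1}\overline{\calc}\to\calp$; preservation of the unit is automatic because the unit of $\Omega\calc$ lies in the summand $\cali$. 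Precomposing with $s^{-1}$, a degree-$0$ generating map corresponds to a map $\alpha\colon\overline{\calc}\to\calp$ of degree $-1$, and $\mathbb{S}$-equivariance of the generating map is exactly the condition that $\alpha$ lie in the invariant subspace, so that $\alpha\in\mathbf{Hom}(\overline{\calc},\calp)_{\mathbb{S}}^{\prod}$. The first task is thus to record this underlying bijection of sets precisely, keeping track of the sign conventions of \eqref{Eq: first can isom}--\eqref{Eq: second can isom}.

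Next I would show that, under this correspondence, the graded operad morphism $\Phi_\alpha$ is a morphism of \emph{dg} operads if and only if $\alpha$ is a Maurer--Cartan element. Because $\Phi_\alpha$ is an operad morphism and the cobar differential $\partial$ is by construction the unique derivation extending its restriction to generators, the difference $d_{\calp}\circ\Phi_\alpha-\Phi_\alpha\circ\partial$ is a $\Phi_\alpha$-derivation of $\Omega\calc$; such a derivation vanishes identically as soon as it vanishes on the generating space $s^{-1}\overline{\calc}$. Hence it suffices to test the identity $\Phi_\alpha\circ\partial=d_{\calp}\circ\Phi_\alpha$ on an arbitrary generator $s^{-1}f$ with $f\in\overline{\calc}(n)$.

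The heart of the proof is then a direct computation of both sides on $s^{-1}f$. The term $d_{\calp}\bigl(\Phi_\alpha(s^{-1}f)\bigr)$ together with the weight-one part of $\Phi_\alpha\bigl(\partial(s^{-1}f)\bigr)$, coming from $\overline{\Delta}_T$ with $\omega(T)=1$, assembles into the convolution operation $m_1$ of $\mathbf{Hom}(\overline{\calc},\calp)$ as in Proposition-Definition~\ref{prop-def. convolution symmetric homotopy operad}(ii); the higher-weight terms of $\partial(s^{-1}f)=-\sum_{T}(s^{-1})^{\otimes\omega(T)}\circ\overline{\Delta}_T(f)$, pushed through the operad morphism $\Phi_\alpha$, produce exactly the iterated compositions $m^{T}_{\calp}\circ(\alpha^{\otimes\omega(T)})\circ\overline{\Delta}_T$ of Proposition-Definition~\ref{prop-def. convolution symmetric homotopy operad}(iii). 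Summing over all shuffle trees of fixed weight $n$ yields $m_n=\sum_{T\in\mathrm{S}\mathfrak{T}^{(n)}}m_T$. Since $\alpha$ has odd degree $-1$, one has $\chi(\sigma;\alpha,\dots,\alpha)=1$ for every $\sigma\in\mathbb{S}_n$, whence $l_n(\alpha^{\otimes n})=n!\,m_n(\alpha^{\otimes n})$; substituting this into the Maurer--Cartan equation \eqref{Eq: mc-equation} cancels the factor $\tfrac{1}{n!}$ and reduces it to $\sum_{n\geqslant 1}(-1)^{\frac{n(n-1)}{2}}\sum_{T\in\mathrm{S}\mathfrak{T}^{(n)}}m_T(\alpha^{\otimes n})=0$, which is precisely the reorganized form of $\Phi_\alpha\circ\partial=d_{\calp}\circ\Phi_\alpha$ on generators. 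Naturality in $\calc$ and $\calp$ is then immediate from the functoriality of every construction involved.

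I expect the main obstacle to be the sign and combinatorial bookkeeping in this last step. One must verify that the Koszul signs introduced by the two suspension isomorphisms, the overall sign $-1$ and the prefactor $(-1)^{\frac{n(n-1)}{2}+1+n\sum_i|f_i|}$ in the convolution operations, the sign in $\partial$, and the permutation signs $\sgn(\sigma(T,T'))$ arising from the decomposition all conspire to reproduce the global sign $(-1)^{\frac{n(n-1)}{2}}$ of the Maurer--Cartan equation. A secondary point demanding care is the matching of summation ranges: the cobar differential is organized by divisors $T'\subset T$, whereas the $L_\infty$-structure is organized by weight through $m_n$, so one must confirm that passing to shuffle trees and antisymmetrizing is compatible with the bijection between these indexing sets.
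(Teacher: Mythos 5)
The paper does not prove this proposition; it is quoted from \cite{MV09a, VdLa}, and the standard proof in those references is exactly the one you outline: use freeness of $\Omega\calc$ as a graded operad to identify unital graded-operad morphisms with degree $-1$ invariant elements of $\mathbf{Hom}(\overline{\calc},\calp)_{\mathbb{S}}^{\prod}$, reduce compatibility with differentials to a check on generators via the derivation argument, and match the resulting equation with the Maurer--Cartan equation after observing that $\chi(\sigma;\alpha,\dots,\alpha)=1$ for the odd element $\alpha$, so that $l_n(\alpha^{\otimes n})=n!\,m_n(\alpha^{\otimes n})$ and the factorials cancel. Your plan is correct and takes essentially the same route; the only remaining work is the sign bookkeeping you already flag.
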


At last, we recall the notions of minimal models and Koszul dual symmetric homotopy cooperads of symmetric operads and explain how they are related to deformation complexes and $L_\infty$-algebra structures on deformation complexes.

For an $\mathbb{S}$-module $M=\{M(n)\}_{n\geqslant 1} $, denote by $ \mathcal{F}(M)$ the free symmetric graded operad generated by $M$. Recall that a symmetric dg operad is called {\bf quasi-free} if its underlying symmetric graded operad is free.

\begin{defn}\cite{DCV13} \label{symmetric minimal model} 
	A minimal model for a symmetric dg operad $\calp$ is a quasi-free symmetric dg operad $ (\mathcal{F}(M), d)$ together with a surjective quasi-isomorphism of symmetric dg operads $(\mathcal{F}(M), \partial)\overset{\sim}{\twoheadrightarrow}\calp$, where the symmetric dg operad $(\mathcal{F}(M), \partial)$ satisfies the following conditions:
	\begin{enumerate}
		\item the differential $\partial$ is decomposable, i.e., $\partial$ takes $M$ to $\mathcal{F}(M)^{(\geqslant 2)}$, the subspace of $\mathcal{F}(M)$ consisting of elements with weight $\geqslant 2$; \label{it:min1}
		\item the generating collection $M$ admits a decomposition $M=\bigoplus\limits_{i\geqslant 1}M_{(i)}$ such that $\partial(M_{(k+1)})\subset \mathcal{F}\big(\bigoplus\limits_{i=1}^kM_{(i)}\big)$ for all $k\geqslant 1$. \label{it:min2}
	\end{enumerate}
\end{defn}

\begin{thm} \cite{DCV13}
	When a symmetric dg operad $\calp$ admits a minimal model, it is unique up to isomorphisms.
\end{thm}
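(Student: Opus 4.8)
The plan is to follow the classical three-step scheme for uniqueness of minimal models, adapted to the symmetric (shuffle-tree) setting of Definition~\ref{symmetric minimal model}. Suppose $p\colon(\mathcal{F}(M),\partial_M)\overset{\sim}{\twoheadrightarrow}\calp$ and $q\colon(\mathcal{F}(N),\partial_N)\overset{\sim}{\twoheadrightarrow}\calp$ are two minimal models. The goal is to produce a morphism of symmetric dg operads $f\colon(\mathcal{F}(M),\partial_M)\to(\mathcal{F}(N),\partial_N)$ with $q\circ f=p$, to observe that $f$ is then automatically a quasi-isomorphism, and finally to upgrade this quasi-isomorphism to an isomorphism.

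First I would construct $f$ by obstruction theory, exploiting both defining conditions of a minimal model. Since $\mathcal{F}(M)$ is free, $f$ is determined by its restriction to the generators $M$, and I build this restriction inductively along the exhaustive filtration $M=\bigoplus_{i\geqslant1}M_{(i)}$ of condition~(\ref{it:min2}). Assume $f$ has been defined compatibly on $\mathcal{F}(\bigoplus_{i=1}^{k}M_{(i)})$; for a generator $m\in M_{(k+1)}$ condition~(\ref{it:min2}) guarantees $\partial_M m\in\mathcal{F}(\bigoplus_{i=1}^{k}M_{(i)})$, so $f(\partial_M m)$ is already defined and is a $\partial_N$-cycle mapping under $q$ to $\partial_\calp\,p(m)$. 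Because $q$ is a \emph{surjective} quasi-isomorphism, its kernel is acyclic; choosing any $q$-preimage $u$ of $p(m)$ and correcting it by a $\partial_N$-primitive of the cycle $\partial_N u-f(\partial_M m)\in\ker q$ yields an element $f(m)$ satisfying simultaneously $\partial_N f(m)=f(\partial_M m)$ and $q\,f(m)=p(m)$. Decomposability~(\ref{it:min1}) is what makes this recursion well founded: it forces the linear part of $\partial_M$ to vanish, so the base case is unobstructed, and it keeps each obstruction class living one filtration level below. Once $f$ is built, the relation $q\circ f=p$ and the two-out-of-three property force $f$ to be a quasi-isomorphism.

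The main obstacle is the final rigidity step: a quasi-isomorphism $f$ between two minimal models must be an isomorphism. The delicate point is that the indecomposables functor does not preserve quasi-isomorphisms in general, so one cannot simply pass to generators. Instead I would filter both free operads by weight (the number of vertices of the underlying shuffle trees) and record that decomposability~(\ref{it:min1}) makes $\partial_M,\partial_N$ \emph{strictly} raise weight; hence on the associated graded the differentials vanish and $f$ induces a morphism $\mathrm{gr}\,f$ whose weight-one component is precisely the linear part $f_1\colon M\to N$. A spectral-sequence comparison, using that $f$ is a quasi-isomorphism of the total complexes together with minimality, shows that $f_1$ is a quasi-isomorphism of complexes carrying zero differential, i.e.\ an isomorphism of graded $\mathbb{S}$-modules. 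Finally, since $f$ is a morphism of free operads that is an isomorphism on generators and is compatible with the exhaustive, weight-bounded filtration, a standard ``isomorphism on associated graded implies isomorphism'' argument promotes the invertibility of $f_1$ to that of $f$; a symmetric choice of comparison map $\mathcal{F}(N)\to\mathcal{F}(M)$ then identifies the two minimal models up to isomorphism. Throughout, the only genuinely new bookkeeping relative to the nonsymmetric case is the careful tracking of the shuffle-tree combinatorics and the Koszul signs governing $\partial$, $f$, and the weight filtration.
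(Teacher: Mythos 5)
First, a remark on the comparison itself: the paper does not prove this theorem — it is quoted with a citation to [DCV13] — so there is no internal proof to measure yours against, and I can only assess your argument on its own terms. Your lifting step is the standard obstruction-theoretic construction and is essentially correct (over a field of characteristic $0$ you should average your choices of $f(m)$ over $\mathbb{S}_n$ so that $f$ is a map of $\mathbb{S}$-modules on generators, a point worth recording), and the two-out-of-three deduction that $f$ is a quasi-isomorphism is fine.

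The genuine gap is in the rigidity step. You deduce that the linear part $f_1\colon M\to N$ is an isomorphism from ``a spectral-sequence comparison, using that $f$ is a quasi-isomorphism of the total complexes.'' This invokes the comparison theorem in the wrong direction: an isomorphism on some page implies (under convergence hypotheses) a quasi-isomorphism of totalizations, but a quasi-isomorphism of totalizations implies nothing about the $E_0$ or $E_1$ pages — a filtered complex quasi-isomorphic to zero can have arbitrarily large associated graded. Here the situation is exactly of that type: because $\partial$ strictly raises weight, $E_1=\mathcal{F}(M)$ in its entirety, all cancellation is postponed to higher differentials, and $E_\infty$ computes the much smaller $\mathrm{gr}\,\calp$; nothing about $f_1$ can be read off this page. (Convergence is also not automatic: a degree-$0$ arity-one generator, such as $P_1$ in $\NL_\infty$, makes the weight filtration unbounded within a fixed arity and degree.) The standard repair is to identify $(M,0)$ as the derived indecomposables (operadic Quillen homology) of $\calp$: the indecomposables functor is left adjoint to the square-zero extension and hence left Quillen, condition~(\ref{it:min2}) of Definition~\ref{symmetric minimal model} makes both minimal models cofibrant, and Ken Brown's lemma then shows $Q(f)=f_1\colon(M,0)\to(N,0)$ is a weak equivalence of complexes with zero differential, i.e.\ an isomorphism. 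Alternatively one can run a direct induction on the filtration $M=\bigoplus_i M_{(i)}$ using the weight-graded homology of the mapping cone of $f$. Once $f_1$ is known to be invertible, your final ``isomorphism on associated graded implies isomorphism'' promotion is correct.
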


For a symmetric operad $\calp$, assume that its minimal model $\calp_\infty$ exists. Since $\calp_\infty$ is quasi-free, 
there exists a coaugmented symmetric homotopy cooperad $\calc$ such that $\Omega\calc\cong \calp_\infty$. So $\calc$ is called the Koszul dual symmetric homotopy cooperad of $\calp$, denoted by $\calp^\ac$.

Let $V$ be a complex. Denote by $\End_V$ the symmetric endomorphism dg operad for $V$. Then the underlying complex of $\mathbf{Hom}(\overline{\calp^\ac}, \End_V)_{\mathbb{S}}^{\prod}$ 
is called the deformation complex of $\calp$ on the complex $V$.
An element of $\Hom_{udgOp}( \calp_\infty, \End_V)$ is exactly a symmetric homotopy $\calp$-structure on $V$. So Proposition~\ref{Prop: Linfinity give MC S} gives a bijection between the set of symmetric homotopy $\calp$-structures on $V$ and that of Maurer-Cartan elements in the $L_\infty$-algebra on the deformation complex.

\medskip

\section{Cohomology and homotopy theory of Lie algebras} \label{Section: Cohomology and homotopy theory of Lie algebras}

In this section, we recall some basics of the cohomology of Lie algebras and Richardson-Nijenhuis brackets. We also define $L_\infty$-algebras via Maurer-Cartan elements.



Let $M$ be a representation over a Lie algebra $ (\frakg, \mu=[-, -]_{\mu}) $. Recall that the Chevalley-Eilenberg cochain complex of Lie algebra $(\frakg, \mu)$ with coefficients in representation $M$ is the cochain complex
$$ \C_{\Lie}^{\bullet}(\frakg, M) := \bigoplus_{n = 0}^{\infty} \C_{\Lie}^{n}(\frakg, M), $$
where, for $n \geqslant 0$, $\C_{\Lie}^{n}(\frakg, M) = \Hom(\frakg^{\wg n}, M)$ and its differential 
$$ \delta_{\Lie, M}: \C_{\Lie}^{n}(\frakg, M) \rightarrow \C_{\Lie}^{n+1}(\frakg, M)$$
is given by
\begin{align*} 
	\delta_{\Lie, M} (f) (a_{1}, \dots, a_{n+1}) 
	=& \sum_{i=1}^{n+1}(-1)^{i-1}a_{i} f(a_{1}, \dots, \widehat{a_{i}}, \dots, a_{n+1})\\
	&+ \sum_{1 \leqslant i<j\leqslant n+1}(-1)^{i+j}f([a_{i}, a_{j}]_{\mu}, a_{1}, \dots, \widehat{a_{i}}, \dots, \widehat{a_{j}}, \dots, a_{n+1}) 
\end{align*}
for any $f \in \C_{\Lie}^{n}(\frakg, M)$ and $a_{1}, \dots, a_{n} \in \frakg$, where $\widehat{a_i}$ means deleting element $a_i$. 

The cohomology of the cochain complex, denoted by $ \rmH^{\bullet}_{\Lie}(\frakg, M) $, is called the Chevalley-Eilenberg cohomology of Lie algebra $(\frakg, \mu)$ with coefficients in representation $M$. 
When $ M $ is the adjoint representation $ \frakg $ itself, we simply write the differential $\delta_{\Lie, \frakg}$ as $\delta_{\Lie}$, and also simply denote $\C^\bullet_\Lie(\frakg, \frakg)$ by $\C^\bullet_\Lie(\frakg) $ and $\rmH^\bullet_\Lie(\frakg, \frakg)$ by $\rmH^\bullet_\Lie(\frakg)$, called the Chevalley-Eilenberg cochain complex of Lie algebra $(\frakg, \mu)$ and the Chevalley-Eilenberg cohomology of Lie algebra $(\frakg, \mu)$, respectively.

Let $V$ be a graded space. The cofree conilpotent symmetric coalgebra generated by $V$ is defined as
$$S^c(sV) :=\bfk\oplus sV\oplus (sV)^{\odot 2}\oplus \cdots \oplus (sV)^{\odot n} \oplus \cdots $$
with the following shuffle coproduct:
$$\Delta(sv_1\odot \cdots \odot sv_n) := \sum_{i=0}^{n}\sum_{\sigma\in \rm{Sh}(i, n-i)}\epsilon(\sigma; sv_1, \dots, sv_{n})sv_{\sigma(1)}\odot \cdots\odot sv_{\sigma(i)}\otimes sv_{\sigma(i+1)}\odot \cdots \odot sv_{\sigma(n)}$$ 
for homogeneous elements $v_{1}, \dots, v_{n} \in V$.
We identify $\wedge^\bullet V$ with $S^{c}(sV)$.
The reduced cofree conilpotent symmetric coalgebra is defined as
$$ \overline{S^c}(sV) := sV\oplus (sV)^{\odot 2}\oplus \cdots \oplus (sV)^{\odot n} \oplus \cdots $$ 
with the following shuffle coproduct:
$$\Delta(sv_1\odot\cdots \odot sv_n) := \sum_{i=1}^{n-1}\sum_{\sigma\in \rm{Sh}(i, n-i)}\epsilon(\sigma; sv_1, \dots, sv_{n})sv_{\sigma(1)}\odot\cdots\odot sv_{\sigma(i)}\otimes sv_{\sigma(i+1)}\odot \cdots \odot sv_{\sigma(n)}$$ 
for homogeneous elements $v_{1}, \dots, v_{n} \in V$.

Consider the symmetric endomorphism graded operad $\End_{sV}$.
By Definition~\ref{Def: brace operation of S operad}, $(\End_{sV}^{\prod})_{\mathbb{S}}$ can be endowed with the shuffle brace operation and a Lie bracket, which in this case is called the Richardson-Nijenhuis bracket.
Let's write down the explicit formulas of the shuffle brace operation on $\mathfrak{C}_{\Lie}(V) := \Hom(\overline{S^c} (sV), sV)$. We always consider $\frak{C}_\Lie(V)$ as a subspace of $\Hom(\overline{T^c}(sV), sV)$ by identifying $\Hom((sV)^{\odot n}, sV)$ with the $\mathbb{S}_n$-invariant subspace of $\Hom((sV)^{\ot n}, sV)$. 
For $m \geqslant 1$ and homogeneous elements $sf\in \Hom((sV)^{\odot m}, sV)$, $sg_i\in \Hom((sV)^{\odot k_i}, sV)$ with $ 1\leqslant i\leqslant n \leqslant m$, $k_{i} \geqslant 1$, the shuffle brace operation 
$$sf \{sg_1, \dots, sg_n\}\in \Hom((sV)^{\odot (m+k_1+\cdots+k_n-n)}, sV)$$ 
is defined as
\begin{align*} \label{Eq. shuffle brace}
	sf \{sg_1, \dots, sg_n\}
	:=
	\sum\limits_{
		\substack{ i_{1}+ i_{2} +\cdots + i_{n+1} + n = m
			\\ i_{p} \geqslant 0, 1 \leqslant p \leqslant n+1}
	}
	\sum_{\sigma \in \sh} 
	\big(
		sf \circ (
	\id^{\ot i_{1}} \ot sg_1 \ot
	\id^{\ot i_{2}} \ot sg_2 \ot
	\cdots \ot
	\id^{\ot i_{n}} \ot sg_n \ot
	\id^{i_{n+1}})
	\big)
	\ \sigma^{-1}, 
\end{align*}
where the abbreviated notation
$$\sh = \sh(\underbrace{1, \dots, 1}_{i_{1}}, k_{1}, \underbrace{1, \dots, 1}_{i_{2}}, k_{2}, \dots, \underbrace{1, \dots, 1}_{i_{n}}, k_{n}, \underbrace{1, \dots, 1}_{i_{n+1}})$$ 
was introduced in Section~\ref{Notations}.
Given homogeneous elements $sf\in \Hom((sV)^{\odot m}, sV)$ and $sg\in \Hom((sV)^{\odot n}, sV)$, $m, n \geqslant 1$, their Lie bracket is defined as
\begin{eqnarray} \label{Eq: Richardson-Nijenhuis}
	[sf, sg]_{\RNA} := sf\{sg\}-(-1)^{(|f|+1)(|g|+1)}sg\{sf\}, 
\end{eqnarray}
called the Richardson-Nijenhuis bracket of $sf$ and $sg$. 
In fact, the shuffle brace operation and the Richardson-Nijenhuis bracket can be extended to the graded space $\Hom({S^c}(sV), sV) $ in the natural way.

At this point, the following results can be obtained directly:
\begin{thm} \cite{NR67} 
	For a graded space $V$, the Richardson-Nijenhuis bracket endows with the graded space $\mathfrak{C}_{\Lie}(V)$ into a graded Lie algebra.
\end{thm}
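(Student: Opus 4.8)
The plan is to deduce this as a special case of the general operadic machinery developed above, applied to the endomorphism operad of $sV$. First I would observe that the symmetric endomorphism graded operad $\End_{sV}$ is in particular a (nonunital) symmetric dg operad whose differential is zero: its only nonzero structure operations $m_T$ are those attached to trees of weight $2$, namely the partial compositions of $\End_{sV}$, while $m_T=0$ for $\omega(T)=1$ and for $\omega(T)\geqslant 3$. Consequently Proposition~\ref{prop: S homotopy operad L infty} applies and endows $(\End_{sV})_{\mathbb{S}}^{\prod}=\prod_{n\geqslant 1}\End_{sV}(n)^{\mathbb{S}_n}$ with the structure of a dg Lie algebra $(\,\cdot\,,\{l_n\}_{n\geqslant 1})$ whose bracket $l_2$ is, by the remark following Definition~\ref{Def: brace operation of S operad}, exactly the bracket $[f,g]=f\{g\}-(-1)^{|f||g|}g\{f\}$ built from the shuffle brace operation. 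Since the differential of $\End_{sV}$ vanishes, the induced $l_1$ vanishes as well, so this dg Lie algebra is genuinely a graded Lie algebra.

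Next I would identify this graded Lie algebra with $\mathfrak{C}_{\Lie}(V)$. For each $n$, the Koszul-signed right $\mathbb{S}_n$-action on $\End_{sV}(n)=\Hom((sV)^{\ot n}, sV)$ has invariant subspace $\End_{sV}(n)^{\mathbb{S}_n}\cong \Hom((sV)^{\odot n}, sV)$, the graded-symmetric multilinear maps; taking the product over $n\geqslant 1$ and using that $\Hom$ out of a direct sum is a product yields precisely $\Hom(\overline{S^c}(sV), sV)=\mathfrak{C}_{\Lie}(V)$, under the identification of $\Hom((sV)^{\odot n}, sV)$ with the $\mathbb{S}_n$-invariants of $\Hom((sV)^{\ot n}, sV)$ already fixed in the text. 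Under this isomorphism the abstract brace operation restricts to the explicit shuffle brace formula recorded above, and the bracket $l_2$ restricts to the Richardson-Nijenhuis bracket $[-,-]_{\RNA}$ of Equation~\eqref{Eq: Richardson-Nijenhuis}. The graded antisymmetry of $[-,-]_{\RNA}$ is immediate from its definition, while the graded Jacobi identity is inherited from the dg Lie structure of $(\End_{sV})_{\mathbb{S}}^{\prod}$, so $(\mathfrak{C}_{\Lie}(V), [-,-]_{\RNA})$ is a graded Lie algebra.

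The one point requiring genuine care is the verification that the abstract brace operation of Definition~\ref{Def: brace operation of S operad}, when unwound for $\calp=\End_{sV}$, agrees on the nose with the explicit shuffle brace formula for $\mathfrak{C}_{\Lie}(V)$, including all Koszul signs. The abstract brace $f\{g_1,\dots,g_n\}$ sums the tree compositions $m_\calp^T$ over shuffle trees $T$ of height $2$; in the endomorphism operad each such composition is exactly an insertion of $g_1,\dots,g_n$ into $f$ at the slots dictated by the shuffle, and the right action $r_\sigma$ contributes the permutations $\sigma^{-1}$ appearing in the displayed formula. The main subtlety is the bookkeeping of suspension and desuspension: the operadic degree of an element $sf\in\Hom((sV)^{\odot m}, sV)$ differs by $1$ from the degree $|f|$ of its desuspended avatar $\widetilde{f}\colon V^{\ot m}\to V$ via the isomorphisms \eqref{Eq: first can isom} and \eqref{Eq: second can isom}, which is precisely what turns the abstract sign exponent $|f||g|$ into the shifted exponent $(|f|+1)(|g|+1)$ of the Richardson-Nijenhuis bracket. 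Once the sign conventions from Section~\ref{Notations} are tracked through this shift, everything matches and no further computation is needed.
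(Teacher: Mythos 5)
Your proposal is correct and follows essentially the same route as the paper, which obtains this theorem "directly" by applying Proposition~\ref{prop: S homotopy operad L infty} (and the remark after Definition~\ref{Def: brace operation of S operad}) to the symmetric endomorphism graded operad $\End_{sV}$, identifying $\mathfrak{C}_{\Lie}(V)$ with $(\End_{sV})^{\prod}_{\mathbb{S}}$ via the $\mathbb{S}_n$-invariants $\Hom((sV)^{\odot n}, sV)\subset\Hom((sV)^{\ot n}, sV)$. Your sign bookkeeping, in particular the observation that the exponent $(|f|+1)(|g|+1)$ is just $|sf||sg|$ for the suspended elements, is exactly the intended reading of Equation~\eqref{Eq: Richardson-Nijenhuis}.
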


\begin{prop} \cite{NR67} \label{Prop. relation between differential and MC Lie} 
	Let $(\frakg, \mu)$ be a Lie algebra. For all $n \geqslant 0$ and $f \in \C_{\Lie}^n(\frakg)$, we have $ \delta_{\Lie}(f) = (-1)^{n-1}[\mu, f]_{\RNA} = -[f, \mu]_{\RNA}$. 
\end{prop}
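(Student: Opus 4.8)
The plan is to prove the first equality $\delta_{\Lie}(f) = (-1)^{n-1}[\mu, f]_{\RNA}$ by directly expanding the shuffle-brace operations that define the Richardson--Nijenhuis bracket, and then to obtain the second equality $(-1)^{n-1}[\mu, f]_{\RNA} = -[f, \mu]_{\RNA}$ as a formal consequence of graded antisymmetry. Throughout I use the identifications \eqref{Eq: first can isom} and \eqref{Eq: second can isom}, under which $\C^{n}_{\Lie}(\frakg) = \Hom(\frakg^{\wg n}, \frakg)$ is identified with $\Hom((s\frakg)^{\odot n}, s\frakg) \subseteq \frakC_{\Lie}(\frakg)$, $f \mapsto sf$, and $\mu$ corresponds to $s\mu \in \Hom((s\frakg)^{\odot 2}, s\frakg)$. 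Since $\frakg$ is concentrated in degree $0$, the element $sf = s \circ f \circ (s^{-1})^{\ot n}$ has internal degree $|sf| = 1-n$ and $s\mu$ has internal degree $|s\mu| = -1$; these are the degrees that control every Koszul sign below.

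The second equality requires no computation: the Richardson--Nijenhuis bracket is a graded Lie bracket on $\frakC_{\Lie}(\frakg)$, hence graded antisymmetric, so $[s\mu, sf]_{\RNA} = -(-1)^{|s\mu|\,|sf|}[sf, s\mu]_{\RNA}$. Substituting $|s\mu| = -1$ and $|sf| = 1-n$ turns this into $[s\mu, sf]_{\RNA} = -(-1)^{n-1}[sf, s\mu]_{\RNA}$, which is exactly $(-1)^{n-1}[\mu, f]_{\RNA} = -[f, \mu]_{\RNA}$. It therefore suffices to verify the first equality.

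For the first equality I would expand $[\mu, f]_{\RNA} = s\mu\{sf\} - (-1)^{(|\mu|+1)(|f|+1)} sf\{s\mu\}$ via \eqref{Eq: Richardson-Nijenhuis} and treat the two shuffle-brace terms separately, using the explicit formula for $-\{-\}$ recorded before \eqref{Eq: Richardson-Nijenhuis}. Because $\mu$ has arity $2$, in $s\mu\{sf\}$ the internal partition satisfies $i_{1} + i_{2} = 1$, so only the two insertions of $sf$ into the first or second slot of $s\mu$ occur; after applying the desuspension isomorphisms and evaluating on $a_{1}, \dots, a_{n+1}$, these two families of local shuffles reproduce the adjoint-action terms $\sum_{i=1}^{n+1}(-1)^{i-1} a_{i}\, f(a_{1}, \dots, \widehat{a_{i}}, \dots, a_{n+1})$, the shuffle selecting which argument is acted upon. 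Dually, since $f$ has arity $n$, in $sf\{s\mu\}$ the partition satisfies $i_{1} + i_{2} = n-1$, so $s\mu$ is plugged into each of the $n$ slots of $sf$; the same bookkeeping converts this term into the bracket terms $\sum_{1 \leqslant i < j \leqslant n+1}(-1)^{i+j} f([a_{i}, a_{j}]_{\mu}, a_{1}, \dots, \widehat{a_{i}}, \dots, \widehat{a_{j}}, \dots, a_{n+1})$, the local shuffles now selecting the unordered pair $\{a_{i}, a_{j}\}$ fed into $\mu$. Summing the two blocks is then designed to yield precisely $\delta_{\Lie}(f)$.

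The genuine work, and the step I expect to be the main obstacle, is the sign bookkeeping. Each of the maps $s$, $s^{-1}$, and $(s^{-1})^{\ot k}$ appearing when one passes between $\Hom((s\frakg)^{\odot k}, s\frakg)$ and $\Hom(\frakg^{\wg k}, \frakg)$ contributes Koszul signs as the degree-$1$ suspensions are commuted past the (degree-$1$) inputs $sa_{1}, \dots, sa_{n+1}$; each local shuffle $\sigma$ contributes $\epsilon(\sigma; \dots)$; and the right action $\sigma^{-1}$ in the brace formula permutes the arguments. The crux is to check that, once all these contributions are combined with the prefactor $(-1)^{n-1}$, the block coming from $s\mu\{sf\}$ collapses exactly to the signs $(-1)^{i-1}$ and the block coming from $sf\{s\mu\}$ collapses exactly to the signs $(-1)^{i+j}$ of the Chevalley--Eilenberg differential. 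Verifying this termwise---most delicately, that the passage from local shuffles on suspended generators to the ordered Chevalley--Eilenberg summands introduces no spurious signs---is the only nontrivial part; granting it, the two equalities follow.
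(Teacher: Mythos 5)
The paper itself supplies no proof of this proposition --- it is quoted from Nijenhuis--Richardson \cite{NR67} with only the citation --- so there is no in-paper argument to compare against; your direct verification is the natural thing to attempt. Your treatment of the second equality is complete and correct: with $\frakg$ concentrated in degree $0$ one has $|s\mu|=-1$ and $|sf|=1-n$, the exponent $(|\mu|+1)(|f|+1)$ in \eqref{Eq: Richardson-Nijenhuis} reduces to $(n-1)$, and graded antisymmetry of $[-,-]_{\RNA}$ then gives $(-1)^{n-1}[\mu,f]_{\RNA}=-[f,\mu]_{\RNA}$. Your structural analysis of the first equality is also right: since $\mu$ has arity $2$, the partition $i_1+i_2=1$ in $s\mu\{sf\}$ leaves exactly the two insertions of $sf$ into $\mu$, which after desuspension and antisymmetry of $\mu$ yield the adjoint-action terms, while $sf\{s\mu\}$ with $i_1+i_2=n-1$ yields the terms $f([a_i,a_j]_{\mu},\dots)$.

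The gap is that for the first equality you never perform the computation you yourself single out as ``the only nontrivial part.'' The entire content of $\delta_{\Lie}(f)=(-1)^{n-1}[\mu,f]_{\RNA}$ is the matching of the Chevalley--Eilenberg signs $(-1)^{i-1}$ and $(-1)^{i+j}$ against the Koszul signs of the (de)suspensions, the local-shuffle signs $\epsilon(\sigma;sa_1,\dots,sa_{n+1})=\sgn(\sigma)$ (all inputs have degree $1$), and the prefactors $(-1)^{n-1}$ and $-(-1)^{(|\mu|+1)(|f|+1)}$; asserting that these ``collapse exactly'' to the right signs restates the claim rather than proving it. Nothing in your plan would fail --- this is the classical d\'ecalage between the Nijenhuis--Richardson bracket on $\Hom(\frakg^{\wedge\bullet},\frakg)$ and the shuffle-brace bracket on $\Hom((s\frakg)^{\odot\bullet},s\frakg)$ --- but as written the argument for the first equality is an outline, not a verification. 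To close it, evaluate $s\mu\{sf\}(sa_1\otimes\cdots\otimes sa_{n+1})$ on a single local shuffle and track explicitly the sign $(-1)^{i-1}$ produced by commuting the suspensions past $sa_1,\dots,sa_{i-1}$, then do the analogous bookkeeping for $sf\{s\mu\}$ (where reordering the two selected arguments to the front of $f$ produces the $(-1)^{i+j}$).
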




One can give an equivalent definition of $L_{\infty}$-algebras as follows:

\begin{defn} \label{Def: L infinity algebras 2}
	An $L_\infty$-algebra structure on graded space $V$ is defined to be a Maurer-Cartan element in graded Lie algebra $ {\mathfrak{C}_{\Lie}}(V)$.
\end{defn}

By above definition, an $L_\infty$-algebra structure on $V$ is given by a family of operators $b_n:(sV)^{\odot n}\rightarrow sV, n\geqslant1$ with $|b_n|=-1$ satisfying: for any $n \geqslant 1$, 
\begin{equation}\label{Eq: L infinity algebras sV}
	\sum_{j=1}^{n}b_{n-j+1}\{ b_{j}\}=0.
\end{equation}
This is also called an $L_\infty[1]$-algebra structure on $sV$.
If one define operators $l_n := s^{-1}\circ b_n\circ s^{\ot n}: V^{\wg n}\rightarrow V$ for $n \geqslant 1$, then one can find that Equation~\eqref{Eq: L infinity algebras sV} is equivalent to the generalized Jacobi identity in Definition~\ref{Def: L-infty}.

\bigskip

\part{Nijenhuis Lie algebras}\label{Part2}

\medskip

\section{Nijenhuis Lie algebras and Nijenhuis representations} \label{Section: Nijenhuis Lie algebras and their representations}

In this section, we recall some basic concepts and facts about Nijenhuis Lie algebras and their Nijenhuis representations.

\begin{defn} \label{Def:Nijenhuis Lie algebras}
	Let $(\frakg, \mu=[-, -]_{\mu})$ be a Lie algebra. A linear operator $P: \frakg \rightarrow \frakg$ is said to be a Nijenhuis operator if it satisfies
	\begin{equation*}
		[P(a), P(b)]_{\mu} = P  \big( [P(a), b]_{\mu} + [a, P(b)]_{\mu} - P([a, b]_{\mu})  \big)
	\end{equation*}	
	for any $a, b \in \frakg$, or in terms of maps
	\begin{equation} \label{Eq: Nijenhuis relation in terms of maps}
		\mu \circ (P \ot P) = P \circ  \big(\mu \circ( P \ot \Id ) + \mu \circ ( \Id \ot P ) - P \circ \mu  \big).
	\end{equation}
	In this case, $\frakg = (\frakg, \mu, P)$ is called a Nijenhuis Lie algebra. 
\end{defn}

\begin{defn}
	Let $\frakg_{1} = (\frakg_{1}, \mu_{1}, P_{1})$ and $\frakg_{2} = (\frakg_{2}, \mu_{2}, P_{2})$ be two Nijenhuis Lie algebras. A Lie algebra morphism $f: \frakg_{1} \rightarrow \frakg_{2}$ is called a Nijenhuis Lie algebra morphism, or morphism of Nijenhuis Lie algebras, if $ P_{2} \circ f = f \circ P_{1}$.
\end{defn}

\begin{defn} \label{Def: Nijenhuis Lie representation}
	Let $ \frakg = (\frakg, \mu, P) $ be a Nijenhuis Lie algebra and $M$ be a representation over the Lie algebra $ (\frakg, \mu) $. Then $M$ is said to be a Nijenhuis representation over Nijenhuis Lie algebra $ \frakg $, 
	if $ M $ is endowed with a linear operator $ P_{M}: M \rightarrow M $ such that the following equation
	\begin{eqnarray*} \label{Eq: Nijenhuis Lie representation}
		P(a) P_{M}(x) = P_{M} \big( P(a) x + a P_{M}(x) - P_{M}(a x) \big)
	\end{eqnarray*}
	holds for any $ a \in \frakg $ and $ x \in M $. 
\end{defn}

Obviously, a Nijenhuis Lie algebra $ \frakg = (\frakg, \mu, P) $ is a Nijenhuis representation over itself, called the adjoint Nijenhuis representation.

Let $(\frakg, \mu)$ be a Lie algebra and $M$ be a representation over it. It is well known that $ \frakg \oplus M $ becomes a Lie algebra whose Lie bracket $ \overline{\mu} = [-, -]_{\overline{\mu}} $ is given by
\begin{equation*}
	[(a, x), (b, y)]_{\overline{\mu}} := ( [a, b]_{\mu}, ay - bx ),
\end{equation*}
for any $a, b \in \frakg$ and $x, y \in M$. This is called the semi-direct product (or trivial extension) of Lie algebra $(\frakg, \mu)$ by representation $M$, denoted by $\frakg \ltimes M$.
Write $ \iota : \frakg \rightarrow \frakg \oplus M $, $ a \mapsto (a, 0) $ and $ \pi : \frakg \oplus M \rightarrow \frakg $, $ (a, x)\mapsto a $, which are obviously Lie algebra morphisms.

The following results are direct consequences of the definitions, whose proofs are left to the reader. 

\begin{prop} \label{Prop: Nijenhuis Lie algebra trivial extension}
	Let $ \frakg = (\frakg, \mu, P) $ be a Nijenhuis Lie algebra and $M$ be a representation over Lie algebra $ (\frakg, \mu) $. Then the semi-direct product $ \frakg \ltimes M $ of $\frakg$ by $M$ is a Nijenhuis Lie algebra such that $ \iota $ and $ \pi $ are both morphisms of Nijenhuis Lie algebras if and only if $ M $ is a Nijenhuis representation over $ \frakg $.

	This new Nijenhuis Lie algebra will still be denoted by $ \frakg \ltimes M $, called the (Nijenhuis) semi-direct product (or trivial extension) of Nijenhuis Lie algebra $ \frakg $ by Nijenhuis representation $ M $.
\end{prop}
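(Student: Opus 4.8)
The plan is to pin down the only possible Nijenhuis operator on $\frakg \oplus M$ compatible with $\iota$ and $\pi$, and then to reduce the Nijenhuis relation for it to a pair of componentwise identities. First I would show that the requirement that both $\iota$ and $\pi$ be morphisms of Nijenhuis Lie algebras completely determines the shape of an admissible operator $\overline{P}$ on $\frakg \oplus M$. Writing $\overline{P}(a, x) = (u(a, x), v(a, x))$ with $u, v$ linear, the condition $P \circ \pi = \pi \circ \overline{P}$ forces $u(a, x) = P(a)$ for all $x$, while $\overline{P} \circ \iota = \iota \circ P$ forces $v(a, 0) = 0$; by linearity $v(a, x) = v(0, x)$ depends on $x$ alone, so $\overline{P}(a, x) = (P(a), P_M(x))$ for a linear operator $P_M \colon M \to M$. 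Conversely, for any linear $P_M$ this $\overline{P}$ makes $\iota$ and $\pi$ into Nijenhuis morphisms automatically. Thus specifying an admissible $\overline{P}$ is the same as specifying a linear endomorphism $P_M$ of $M$, and it remains to determine exactly when $(\frakg \oplus M, \overline{\mu}, \overline{P})$ satisfies the Nijenhuis relation.

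Next I would substitute $\overline{P}(a, x) = (P(a), P_M(x))$ into the Nijenhuis relation \eqref{Eq: Nijenhuis relation in terms of maps} and expand both sides using $[(a, x), (b, y)]_{\overline{\mu}} = ([a, b]_\mu, ay - bx)$. The equation splits along the direct sum $\frakg \oplus M$ into a $\frakg$-component and an $M$-component. The $\frakg$-component reads $[P(a), P(b)]_\mu = P\big([P(a), b]_\mu + [a, P(b)]_\mu - P[a, b]_\mu\big)$, which holds automatically since $P$ is a Nijenhuis operator on $\frakg$. Hence $\frakg \ltimes M$ is a Nijenhuis Lie algebra if and only if the $M$-component vanishes for all $a, b \in \frakg$ and $x, y \in M$.

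Finally I would analyze the $M$-component. After expansion its left-hand side is $P(a) P_M(y) - P(b) P_M(x)$, and its right-hand side is $P_M$ applied to $P(a) y - b P_M(x) + a P_M(y) - P(b) x - P_M(ay - bx)$. Grouping the terms involving the pair $(a, y)$ separately from those involving $(b, x)$, each group is precisely the Nijenhuis representation identity $P(c) P_M(z) = P_M\big(P(c) z + c P_M(z) - P_M(cz)\big)$ of Definition~\ref{Def: Nijenhuis Lie representation}. This settles both implications simultaneously: if $M$ is a Nijenhuis representation then the two groups cancel and the $M$-component vanishes, while conversely specializing the vanishing $M$-component to $x = 0$ recovers the representation identity for $(a, y)$ (and symmetrically, setting $y = 0$, for $(b, x)$). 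I anticipate no real obstacle; the only delicate point is keeping track of which action term belongs to which instance of the Nijenhuis representation identity, so that the equivalence can be read off cleanly in both directions.
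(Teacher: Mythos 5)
Your proposal is correct: the characterization of admissible operators as $\overline{P}(a,x)=(P(a),P_M(x))$ via the $\iota$- and $\pi$-compatibility, followed by the componentwise expansion of the Nijenhuis relation and the specialization $x=0$ for the converse, is exactly the direct verification the paper has in mind (it explicitly leaves this proof to the reader as a consequence of the definitions). No gaps.
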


Nijenhuis Lie algebras and Nijenhuis representations have some descendant properties.

\begin{prop} \cite{Kos90} 
	\label{Prop: Star Lie bracket}
	Let $ \frakg = (\frakg, \mu, P) $ be a Nijenhuis Lie algebra. Define a new binary operation $\mu_{P} = [-,-]_{P}$ over $\frakg$ as:
	\begin{equation*}
		[a, b]_{P} := [P(a), b]_{\mu} + [a, P(b)]_{\mu} - P([a, b]_{\mu}),
	\end{equation*}
	for any $a, b\in \frakg$. Then
	\begin{enumerate}		
		\item the pair $(\frakg, \mu_{P})$ is a new Lie algebra;
		\item the triple $(\frakg, \mu_{P}, P)$ also forms a Nijenhuis Lie algebra and denote it by $\frakg_{P}$;
		\item the map $P: \frakg_{P} = (\frakg, \mu_{P}, P) \rightarrow \frakg = (\frakg, \mu, P)$ is a morphism of Nijenhuis Lie algebras.
	\end{enumerate}
\end{prop}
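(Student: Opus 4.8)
The plan is to isolate the single algebraic identity that drives everything and then treat the three assertions in the order (iii), (i), (ii). Rewriting the Nijenhuis relation \eqref{Eq: Nijenhuis relation in terms of maps} as
$$P([a,b]_P) = [P(a), P(b)]_{\mu} \qquad (\star)$$
for all $a,b \in \frakg$, we see that $(\star)$ is nothing but the defining relation read backwards. This immediately yields assertion (iii): by $(\star)$ the map $P$ is a Lie algebra morphism $(\frakg,\mu_P)\to(\frakg,\mu)$, and it trivially intertwines the two (identical) Nijenhuis operators since $P\circ P = P\circ P$; so once (i) and (ii) are in hand, $P\colon \frakg_P \to \frakg$ is a morphism of Nijenhuis Lie algebras. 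The antisymmetry half of (i) is immediate, as each of the three terms defining $[a,b]_P$ is antisymmetric in $a,b$.

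For the Jacobi identity in (i) I would argue inside the graded Lie algebra $(\mathfrak{C}_{\Lie}(\frakg), [-,-]_{\RNA})$ rather than expand by hand. A direct comparison with the Chevalley--Eilenberg formula shows $\mu_P = \delta_{\Lie}(P)$, i.e. the deformed bracket is exactly the coboundary of $P$ regarded as a $1$-cochain; by Proposition~\ref{Prop. relation between differential and MC Lie} one has $\delta_{\Lie} = \pm[\mu,-]_{\RNA}$, so $\delta_{\Lie}$ is a graded derivation of $[-,-]_{\RNA}$ and $\delta_{\Lie}^2 = 0$ (this last being the Jacobi identity for $\mu$). Now consider the self-bracket $[\mu_P,\mu_P]_{\RNA}$, whose vanishing is equivalent to the Jacobi identity for $\mu_P$. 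Using the derivation property together with $\delta_{\Lie}\mu_P = \delta_{\Lie}^2 P = 0$ and $\delta_{\Lie}P = \mu_P$, one gets $[\mu_P,\mu_P]_{\RNA} = \pm\,\delta_{\Lie}\big([\mu_P, P]_{\RNA}\big)$. The crucial point is that $[\mu_P, P]_{\RNA}$ is itself a coboundary: expanding the shuffle brace and applying the Nijenhuis relation to the pairs $(P(a),b)$ and $(a,P(b))$ collapses it (up to sign) to the $P^2$-deformed bracket, $[\mu_P, P]_{\RNA} = \pm\mu_{P^2} = \pm\delta_{\Lie}(P^2)$. Hence $[\mu_P,\mu_P]_{\RNA} = \pm\,\delta_{\Lie}^2(P^2) = 0$, which proves the Jacobi identity.

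For (ii) I would verify the Nijenhuis relation for the pair $(\mu_P, P)$ directly, and this is the most computational step. Writing out $[P(a),P(b)]_P$ and the candidate right-hand side $P\big([P(a),b]_P + [a,P(b)]_P - P[a,b]_P\big)$ purely in terms of $\mu$, one uses $(\star)$ to replace every occurrence of $[P(-),P(-)]_{\mu}$, and then applies the original Nijenhuis relation to the two pairs $(P(a),b)$ and $(a,P(b))$. After these substitutions all remaining terms cancel in pairs, giving the required identity; I have checked that the cancellation is complete.

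The main obstacle is the Jacobi identity of (i): handled naively it is a lengthy expansion of the Jacobiator into seven groups of terms, and the genuine content --- the place where the Nijenhuis hypothesis is actually consumed --- is precisely the relation $[\mu_P,P]_{\RNA} = \pm\delta_{\Lie}(P^2)$; everything downstream is formal ($\delta_{\Lie}^2 = 0$). The one thing requiring real care in the conceptual argument is the bookkeeping of Koszul signs coming from the suspension in $\mathfrak{C}_{\Lie}(\frakg) = \Hom(\overline{S^c}(s\frakg), s\frakg)$ together with the sign $(-1)^{n-1}$ relating $\delta_{\Lie}$ to $[\mu,-]_{\RNA}$; I would fix these once and for all by passing to the $b_n$-picture on $s\frakg$, where the differential is literally $[b_2,-]$ with no auxiliary signs.
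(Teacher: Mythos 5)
Your proposal is correct, and all three parts check out. Note that the paper itself gives no proof of this proposition --- it is imported from \cite{Kos90} --- so the only comparison available is with the standard direct verification, relative to which your route for (i) is the classical Nijenhuis--Richardson argument and is sound. I verified the two places where real content enters. For (i): the identities $\mu_P=\delta_{\Lie}(P)$ and $[\mu_P,P]_{\RNA}=\pm\,\mu_{P^2}=\pm\,\delta_{\Lie}(P^2)$ (where $\mu_{P^2}(a,b):=[P^2(a),b]_\mu+[a,P^2(b)]_\mu-P^2([a,b]_\mu)$) both hold: expanding $\mu_P(P(a),b)+\mu_P(a,P(b))-P(\mu_P(a,b))$ yields $[P^2(a),b]_\mu+[a,P^2(b)]_\mu+2[P(a),P(b)]_\mu-2P([P(a),b]_\mu)-2P([a,P(b)]_\mu)+P^2([a,b]_\mu)$, and the Nijenhuis relation \eqref{Eq: Nijenhuis relation in terms of maps} collapses the last four terms to $-P^2([a,b]_\mu)$, leaving exactly $\delta_{\Lie}(P^2)(a,b)$; since $\delta_{\Lie}=-[-,\mu]_{\RNA}$ (Proposition~\ref{Prop. relation between differential and MC Lie}) is a square-zero derivation of $[-,-]_{\RNA}$, one gets $[\mu_P,\mu_P]_{\RNA}=\pm\,\delta_{\Lie}^2(P^2)=0$, i.e.\ the Jacobi identity. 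For (ii), your expansion does close up, and it can be shortened: the inner argument $[P(a),b]_P+[a,P(b)]_P-P([a,b]_P)$ is again exactly $\mu_{P^2}(a,b)$ (one application of the Nijenhuis relation), so (ii) reduces to $[P(a),P(b)]_P=P(\mu_{P^2}(a,b))$, which follows from the Nijenhuis relation applied to the pairs $(P(a),b)$ and $(a,P(b))$ together with $P$ applied to the relation for $(a,b)$ --- pleasantly, the same tensor $\mu_{P^2}$ that drives (i). Two minor remarks: the ``Lie algebra morphism'' content of (iii) presupposes (i), which you correctly acknowledge; and the sign bookkeeping you defer is genuinely harmless here because $\frakg$ is concentrated in degree $0$, so the degree of an element of $\mathfrak{C}_{\Lie}(\frakg)$ is determined by its arity and no hidden Koszul signs can interfere.
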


One can also obtain new Nijenhuis representations from old ones.

\begin{prop} \label{Prop: new Lie representation}
	Let $\frakg = (\frakg, \mu, P)$ be a Nijenhuis Lie algebra and $M = (M, P_{M})$ be a Nijenhuis representation over it. We define a left action $``\rhd"$ of $\frakg$ on $M$ as follows:
	\begin{eqnarray*}
		a \rhd x:= P(a) x
	\end{eqnarray*}
	for any $a\in \frakg$ and $x \in M$. Then this action makes $M$ into a Nijenhuis representation over $ \frakg_{P}$, and we denote this new representation by $M_{P}$.
\end{prop}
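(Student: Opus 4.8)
The plan is to check the two things the statement asserts, in sequence: first, that $\rhd$ turns $M$ into a representation of the Lie algebra $(\frakg,\mu_P)$ underlying $\frakg_P$; and second, that together with the \emph{same} operator $P_M$ this representation satisfies the Nijenhuis compatibility of Definition~\ref{Def: Nijenhuis Lie representation} relative to $\frakg_P=(\frakg,\mu_P,P)$. To keep things unambiguous I would write $ax$ for the original action of $\frakg$ on $M$ (as in Definition~\ref{Def: Nijenhuis Lie representation}) and reserve $\rhd$ for the new one, so that $a\rhd x=P(a)x$.

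For the first point I would verify the module axiom $[a,b]_P\rhd x=a\rhd(b\rhd x)-b\rhd(a\rhd x)$ directly. Expanding the left-hand side gives $[a,b]_P\rhd x=P\big([a,b]_P\big)x$, and since $[a,b]_P=[P(a),b]_\mu+[a,P(b)]_\mu-P([a,b]_\mu)$, the Nijenhuis relation~\eqref{Nijenhuis relation} for $\frakg$ yields $P([a,b]_P)=[P(a),P(b)]_\mu$. Expanding the right-hand side and using that the original action is a representation of $(\frakg,\mu)$ gives $P(a)\big(P(b)x\big)-P(b)\big(P(a)x\big)=[P(a),P(b)]_\mu\,x$, so the two sides coincide. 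A conceptually cleaner packaging of this step is to note that by Proposition~\ref{Prop: Star Lie bracket}(iii) the map $P\colon(\frakg,\mu_P)\to(\frakg,\mu)$ is in particular a morphism of the underlying Lie algebras, and $\rhd$ is exactly the pullback of the $\frakg$-module $M$ along $P$; pulling back a representation along a Lie algebra morphism always produces a representation, which is the content of the first point.

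For the second point the key observation is that, after unwinding $\rhd$, the required identity is literally the original Nijenhuis representation condition specialized to arguments of the form $P(a)$. Since the Nijenhuis operator of $\frakg_P$ is again $P$, the condition to verify reads $P(a)\rhd P_M(x)=P_M\big(P(a)\rhd x+a\rhd P_M(x)-P_M(a\rhd x)\big)$. Replacing each $\rhd$ by $P(-)(-)$ turns this into $P(P(a))\,P_M(x)=P_M\big(P(P(a))\,x+P(a)\,P_M(x)-P_M(P(a)\,x)\big)$, which is precisely the defining relation of the Nijenhuis representation $(M,P_M)$ over $\frakg$ with the element $a$ replaced by $P(a)$. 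Hence it holds automatically, and no new hypothesis is needed.

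I do not anticipate a genuine obstacle: the proposition is a direct unwinding of the definitions. The only points demanding care are bookkeeping—keeping the two actions $ax$ and $a\rhd x$ strictly separate throughout, and recognizing in the second point that the new compatibility is nothing but the old one evaluated at $P(a)$ rather than $a$. The one place where an actual identity is invoked (as opposed to a substitution) is the first point, where the Nijenhuis relation for $\frakg$ is exactly what forces $P([a,b]_P)=[P(a),P(b)]_\mu$.
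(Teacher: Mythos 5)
Your proposal is correct and matches the paper's intent: the paper states this proposition without proof (leaving it as a direct consequence of the definitions), and your verification — that $\rhd$ is the pullback of the original action along the Lie algebra morphism $P\colon\frakg_P\to\frakg$ (equivalently, that $P([a,b]_P)=[P(a),P(b)]_\mu$ by the Nijenhuis relation), and that the new Nijenhuis-representation identity is exactly the old one evaluated at $P(a)$ — is precisely that direct unwinding.
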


Let’s describe the symmetric operad for Nijenhuis Lie algebras.
\begin{defn} \label{def: operad NjL}
	The symmetric operad for Nijenhuis Lie algebras, denoted by $\NL$, is generated by a unary operator $P$ and a binary operator $\mu$, which satisfies $\mu (1 \ 2) = - \mu$ for the unique nontrivial permutation $(1 \ 2) \in \bbS_{2}$, with the operadic relations:
	\begin{align} \label{Eq: operadic Lie}
		\sum_{\sigma \in \sh(2, 1)} \sgn (\sigma) \ (\mu \circ_{1} \mu)  \sigma^{-1} - \mu \circ_{2} \mu
	\end{align}
	and
	\begin{align} \label{Eq: operadic Nijenhuis relation}
		(\mu\circ_1 P)\circ_2 P-(P \circ_1\mu)\circ_1 P-(P \circ_1\mu)\circ_2 P + P \circ_1 (P \circ_1 \mu).
	\end{align}
\end{defn}

In other words, $\NL=\mathcal{F}(\bfk \mu \oplus \bfk P)/I$, the quotient operad of the free symmetric operad generated by the $\bbS$-module $\bfk \mu \oplus \bfk P$, by the operadic ideal $I$ which is generated by Relations~\eqref{Eq: operadic Lie} and \eqref{Eq: operadic Nijenhuis relation}.
Here, Relation~\eqref{Eq: operadic Lie} is the Jacobi identity axiom for the bracket $\mu$ and Relation~\eqref{Eq: operadic Nijenhuis relation} corresponds to the defining relation \eqref{Eq: Nijenhuis relation in terms of maps} of the Nijenhuis operator $P$.

\begin{remark}
	Since the defining relation \eqref{Eq: operadic Nijenhuis relation} of the Nijenhuis operator is cubic, it is obvious that the operad $\NL$ is not Koszul.
	So the classical Koszul duality theory of operads \cite{GJ94, GK94} could not be applied directly to develop the operadic cohomology theory, minimal model, controlling $L_\infty$-structures, etc., for Nijenhuis Lie algebras.
\end{remark}

\medskip

\section{Cohomology theory of Nijenhuis Lie algebras} \label{Sect: Cohomology theory of Nijenhuis Lie algebras}

In this section, we will introduce a cohomology theory of Nijenhuis Lie algebras. We will see later that this cohomology theory controls the deformations of Nijenhuis Lie algebras.

\smallskip

\subsection{Cohomology of Nijenhuis operators} \label{Subsect: cohomology NjL operator} \

Firstly, let's introduce the cohomology theory of Nijenhuis operators.

Let $\frakg = (\frakg, \mu, P)$ be a Nijenhuis Lie algebra and $M = (M, P_M)$ be a Nijenhuis representation over it. 
According to Propositions~\ref{Prop: Star Lie bracket} and \ref{Prop: new Lie representation}, one can define a new Nijenhuis Lie algebra $\frakg_{P} = (\frakg, \mu_{P}, P) $ and a new Nijenhuis representation $M_{P} = (M, \rhd, P_{M})$ over $\frakg_{P} $. 
Consider the Chevalley-Eilenberg cochain complex of Lie algebra $(\frakg_{P}, \mu_{P})$ with coefficients in representation $M_{P}$: 
$$ \C_{\Lie}^{\bullet}(\frakg_{P}, M_{P}) = \bigoplus_{n = 0}^{\infty} \C_{\Lie}^{n}(\frakg_{P}, M_{P}), $$
where, for $n \geqslant 0$, $\C^n_{\mathrm{Lie}}(\frakg_{P}, M_{P})=\Hom (\frakg^{\wedge n}, M)$ and its differential 
$$\partial: \C^n_{\mathrm{Lie}}(\frakg_{P}, M_{P})\rightarrow \C^{n+1}_{\mathrm{Lie}}(\frakg_{P}, M_{P}) $$ 
is given by
\begin{align*} \label{Eq: Nijenhuis operator cohomology}
	\partial(f) (a_{1}, \dots, a_{n+1})
	=&\ \sum_{i=1}^{n+1}(-1)^{i-1}a_{i} \rhd f(a_{1}, \dots, \widehat{a_{i}}, \dots, a_{n+1})\\
	&\ + \sum_{1\leqslant i<j\leqslant n+1}(-1)^{i+j}f([a_{i}, a_{j}]_{P}, a_{1}, \dots, \widehat{a_{i}}, \dots, \widehat{a_{j}}, \dots, a_{n+1})\\
	=&\ \sum_{i=1}^{n+1}(-1)^{i-1} P(a_{i}) f(a_{1}, \dots, \widehat{a_{i}}, \dots, a_{n+1})\\
	&\ + \sum_{1\leqslant i<j\leqslant n+1}(-1)^{i+j}f([P(a_{i}), a_{j}]_{\mu}, a_{1}, \dots, \widehat{a_{i}}, \dots, \widehat{a_{j}}, \dots, a_{n+1})\\
	&\ + \sum_{1\leqslant i<j\leqslant n+1}(-1)^{i+j}f([a_{i}, P(a_{j})]_{\mu}, a_{1}, \dots, \widehat{a_{i}}, \dots, \widehat{a_{j}}, \dots, a_{n+1})\\
	&\ - \sum_{1\leqslant i<j\leqslant n+1}(-1)^{i+j}f(P([a_{i}, a_{j}]_{\mu}), a_{1}, \dots, \widehat{a_{i}}, \dots, \widehat{a_{j}}, \dots, a_{n+1})
\end{align*}
for any $f\in \C^n_{\Lie}(\frakg_{P}, M_{P})$ and $a_1, \dots, a_{n+1}\in \frakg$.

\smallskip

\begin{defn} \label{defn: Nijenhuis operator complex Lie}
	Let $\frakg = (\frakg, \mu, P)$ be a Nijenhuis Lie algebra and $M = (M, P_M)$ be a Nijenhuis representation over it. Define the cochain complex of Nijenhuis operator $P$ with coefficients in Nijenhuis representation $M$, denoted by $(\C_{\NjO}^\bullet(\frakg, M), \delta_{\NjO, M})$, 
	as follows: 
	for any $n\geqslant 0 $, 
	$$\C_{\NjO}^n(\frakg, M) :=\Hom(\frakg^{\wedge n}, M)$$ 
	and its differential 
	$$\delta_{\NjO, M}: \C^n_{\NjO}(\frakg, M )\rightarrow \C^{n+1}_{\NjO}(\frakg, M) $$ 
	is defined to be
	$$\delta_{\NjO, M}(f) := - P_M\circ \delta_{\Lie, M}(f)+\partial(f)$$
	for any $f\in \C_{\NjO}^n(\frakg, M)$. 
	
	The cohomology of $\C_{\NjO}^\bullet(\frakg, M)$, denoted by $\mathrm{H}_{\NjO}^\bullet(\frakg, M)$, is called the cohomology of Nijenhuis operator $P$ with coefficients in Nijenhuis representation $M$.	
	When $M$ is the adjoint Nijenhuis representation $\frakg$ itself, we simply denote the differential $\delta_{\NjO, \frakg}$ by $\delta_{\NjO}$. We also simply denote $\C^\bullet_{\NjO}(\frakg, \frakg)$ by $\C^\bullet_{\NjO}(\frakg)$ and $\rmH^\bullet_{\NjO}(\frakg, \frakg)$ by $\rmH^\bullet_{\NjO}(\frakg)$, called the cochain complex of Nijenhuis operator $P$ and the cohomology of Nijenhuis operator $P$, respectively.
\end{defn}

The above definition is justified by the following lemma, which can be verified through direct inspection. However, we will first deduce the specific case when $M$ is the adjoint Nijenhuis representation $\frakg$ itself using a particular $L_\infty$-algebra (see Proposition~\ref{Prop: justifying Cohomology theory of Nijenhuis operator} (ii)). Building on this result, the general cases will naturally follow as a direct corollary from the results presented in Subsection~\ref{Subsec: cohomology NjL}.

\begin{lem} \label{Lem: cochain complex of Lie Nijenhuis operator}
	$(\C_{\NjO}^\bullet(\frakg, M), \delta_{\NjO, M})$ is a cochain complex.
\end{lem}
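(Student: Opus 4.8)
The goal is to show $\delta_{\NjO,M}\circ\delta_{\NjO,M}=0$. Writing $\delta_{\NjO,M}=\partial-Q\circ\delta_{\Lie,M}$, where $Q$ denotes post-composition with $P_M$, I would first record the two square-zero relations already at hand: $\delta_{\Lie,M}^{2}=0$ because $\delta_{\Lie,M}$ is the Chevalley--Eilenberg differential of the representation $M$ over $(\frakg,\mu)$, and $\partial^{2}=0$ because $\partial$ is the Chevalley--Eilenberg differential of the representation $M_{P}$ over $(\frakg_{P},\mu_{P})$, the latter being a bona fide Lie algebra with a bona fide representation by Propositions~\ref{Prop: Star Lie bracket} and \ref{Prop: new Lie representation}. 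Expanding the square gives
\begin{align*}
\delta_{\NjO,M}^{2}=\partial^{2}-\partial\,Q\delta_{\Lie,M}-Q\delta_{\Lie,M}\,\partial+Q\delta_{\Lie,M}\,Q\delta_{\Lie,M}=-\partial\,Q\delta_{\Lie,M}-Q\delta_{\Lie,M}\,\partial+(Q\delta_{\Lie,M})^{2},
\end{align*}
so everything reduces to the cross-term identity $\partial\,Q\delta_{\Lie,M}+Q\delta_{\Lie,M}\,\partial=(Q\delta_{\Lie,M})^{2}$.

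To cut down the bookkeeping I would pass to the adjoint representation. By Proposition~\ref{Prop: Nijenhuis Lie algebra trivial extension} the semidirect product $\frakg\ltimes M$ is a Nijenhuis Lie algebra with operator $\widehat P=(P,P_M)$; identifying $\Hom(\frakg^{\wedge\bullet},M)$ with the subspace of $\C^{\bullet}_{\NjO}(\frakg\ltimes M)$ of cochains that vanish on any argument from $M$ and take values in $M$, one checks that this subspace is preserved by both Chevalley--Eilenberg differentials of $\frakg\ltimes M$ and by post-composition with $\widehat P$, and that the restriction of $\delta_{\NjO}$ is exactly $\delta_{\NjO,M}$. It therefore suffices to prove the lemma for the adjoint Nijenhuis representation, which eliminates the separate operator $P_M$.

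For the adjoint case I would establish two auxiliary identities and then combine them. The first measures the failure of post-composition by $P$ to commute with $\delta_{\Lie}$; it is governed by the Nijenhuis representation condition of Definition~\ref{Def: Nijenhuis Lie representation}, which in the adjoint situation is literally the Nijenhuis relation \eqref{Eq: Nijenhuis relation in terms of maps}. The second expresses the deformed differential $\partial$ as $\delta_{\Lie}$ corrected by ``insert-$P$-into-an-input'' operators, obtained by substituting $[-,-]_{P}=[P(-),-]_{\mu}+[-,P(-)]_{\mu}-P[-,-]_{\mu}$ and $P(a_i)\cdot(-)$ into the defining formula for $\partial$. Feeding both into the cross-term identity above, each of the three terms becomes a sum of Chevalley--Eilenberg-type expressions in which $P$ appears either post-composed or inserted into an argument; after collecting, the surviving obstruction is precisely a contraction of the Nijenhuis relation \eqref{Eq: operadic Nijenhuis relation} against the inputs, and hence vanishes.

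The main obstacle is this last assembly: keeping track of the Koszul/shuffle signs while matching the three cross terms, and recognizing that what remains is exactly the \emph{cubic} Nijenhuis relation and not some lower-order consequence of it. This is unavoidable because the operadic relation \eqref{Eq: operadic Nijenhuis relation} is cubic (the operad $\NL$ is not Koszul), so no purely quadratic cancellation can close the computation; one must use the full Nijenhuis relation together with the Jacobi identity for $\mu$. A cleaner but heavier alternative, which I would adopt if the direct bookkeeping becomes unwieldy, is to realize $\delta_{\NjO}$ through the Richardson--Nijenhuis bracket and brace operations of Proposition~\ref{Prop. relation between differential and MC Lie}, reducing the vanishing of $\delta_{\NjO}^{2}$ to the vanishing of the Nijenhuis torsion of $(\mu,P)$, which is again the Nijenhuis relation.
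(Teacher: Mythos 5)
Your proposal is correct, and its load-bearing components coincide with the paper's actual argument: the reduction of the coefficient case to the adjoint case via the semi-direct product $\frakg\ltimes M$ is exactly what the paper does at the end of Subsection~\ref{Subsec: cohomology NjL} (the paper performs it one level up, on $\C^{\bullet}_{\NjL}$, so that the chain-map property of $\Psi_{M}$ drops out simultaneously), and your ``cleaner but heavier alternative'' for the adjoint case --- realizing $\delta_{\NjO}$ through the Richardson--Nijenhuis bracket and reducing $\delta_{\NjO}^{2}=0$ to the Maurer--Cartan condition for $(\mu,P)$, i.e.\ to the Nijenhuis torsion vanishing --- is precisely Proposition~\ref{Prop: justifying Cohomology theory of Nijenhuis operator}\,(ii), which is how the paper actually closes the argument. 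The one genuinely different ingredient is your primary route: the decomposition $\delta_{\NjO,M}=\partial-Q\circ\delta_{\Lie,M}$ together with $\partial^{2}=0=\delta_{\Lie,M}^{2}$, which isolates the entire content of the lemma in the single cross-term identity $\partial\,Q\delta_{\Lie,M}+Q\delta_{\Lie,M}\,\partial=(Q\delta_{\Lie,M})^{2}$. This is sound and genuinely elementary (the paper merely remarks the lemma ``can be verified through direct inspection'' without carrying it out): in low degree the only place the cubic Nijenhuis relation enters is the cancellation of $[P(a),P(b)]\cdot f$ against $P([a,b]_{P})\cdot f$, with everything else collapsing via the Nijenhuis representation axiom of Definition~\ref{Def: Nijenhuis Lie representation} and the Jacobi identity, so your diagnosis of where the cubic relation is consumed is accurate. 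The trade-off is as you describe: the operadic route gets square-zero for free from the twisting procedure at the cost of the convolution $L_\infty$-machinery, while your direct route is self-contained at the cost of the shuffle-sign bookkeeping you correctly flag as the main obstacle.
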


\smallskip

\subsection{Cohomology of Nijenhuis Lie algebras} \label{Subsec: cohomology NjL} \

Now, we define a cohomology theory of Nijenhuis Lie algebras by combining the Chevalley-Eilenberg cohomology of Lie algebras and the cohomology of Nijenhuis operators. 

Let $M = (M, P_M)$ be a Nijenhuis representation over a Nijenhuis Lie algebra $\frakg = (\frakg, \mu, P)$. Let's construct a chain map 
$$\Psi_{M}:\C^\bullet_{\Lie}(\frakg, M) \rightarrow \C_{\NjO}^\bullet(\frakg, M), $$ 
i.e., the following commutative diagram:
$$\xymatrix{
	\C^0_{\Lie}(\frakg, M)\ar[r]^-{\delta_{\Lie, M}}\ar[d]^-{\Psi_{M}}& \C^1_{\Lie}(\frakg, M)\ar[r]\ar[d]^-{\Psi_{M}} &\cdots \ar[r] &\C^n_{\Lie}(\frakg, M)\ar[r]^-{\delta_{\Lie, M}}\ar[d]^-{\Psi_{M}}&\C^{n+1}_{\Lie}(\frakg, M)\ar[d]^{\Psi_{M}}\ar[r] & \cdots\\
	\C^0_{\NjO}(\frakg, M)\ar[r]^-{\delta_{\NjO, M}}&\C^1_{\NjO}(\frakg, M)\ar[r] &\cdots \ar[r]& \C^n_{\NjO}(\frakg, M)\ar[r]^-{\delta_{\NjO, M}}&\C^{n+1}_{\NjO}(\frakg, M)\ar[r]&\cdots
	,}$$
where $\Psi_{M}: \C^0_{\Lie}(\frakg, M) \to \C^0_{\Lie}(\frakg, M)$ is $\Id_{M}$, and for $n \geqslant 1$, $ f \in \C^n_{\Lie}(\frakg, M)$, 
$\Psi_{M}(f)\in \C^n_{\NjO}(\frakg, M)$ 
is defined as:
\begin{align*}
	\Psi_{M}(f)&(a_1, \dots, a_n) := \sum_{k=0}^n 
	\sum_{1\leqslant i_1<i_2<\cdots<i_k\leqslant n}(-1)^{n-k} 
	P_M^{n-k} \circ f(a_{1, i_1-1}, P(a_{i_1}), a_{i_1+1, i_2-1}, P(a_{i_2}), \dots, P(a_{i_k}), a_{i_k+1, n}),
\end{align*}
where for $1 \leqslant i \leqslant j \leqslant n$, $a_{i, j}=a_i\otimes \cdots \otimes a_j$ but for $i>j$, $a_{i, j}$ is by convention empty.

In particular, when $M$ is the adjoint Nijenhuis representation $\frakg$ itself, we simply denote $\Psi_{\frakg}$ by $\Psi$.

\begin{prop} \label{Prop: Chain map Psi}
	The map $\Psi_{M}: \C^\bullet_\Lie(\frakg, M)\rightarrow \C^\bullet_{\NjO}(\frakg, M)$ is a chain map.
\end{prop}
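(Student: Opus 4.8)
The plan is to verify directly that the square commutes, i.e.\ that $\delta_{\NjO, M}\circ\Psi_M=\Psi_M\circ\delta_{\Lie, M}$ as maps $\C^n_\Lie(\frakg,M)\to\C^{n+1}_{\NjO}(\frakg,M)$ for every $n$. First I would unfold the definition $\delta_{\NjO,M}=-P_M\circ\delta_{\Lie,M}+\partial$, so the target identity becomes
$$
\partial\bigl(\Psi_M(f)\bigr)-P_M\circ\delta_{\Lie,M}\bigl(\Psi_M(f)\bigr)=\Psi_M\bigl(\delta_{\Lie,M}(f)\bigr),
$$
and then evaluate both sides on generic arguments $a_1,\dots,a_{n+1}\in\frakg$. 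The only structural inputs will be the Nijenhuis relation in the form \eqref{Eq: Nijenhuis relation in terms of maps}, together with the Nijenhuis representation relation $P(a)P_M(x)=P_M\bigl(P(a)x+aP_M(x)-P_M(ax)\bigr)$; everything else is bookkeeping.

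The key organizing observation is the subset description of $\Psi_M$. For $S\subseteq\{1,\dots,n\}$ write $f_S$ for the cochain obtained from $f$ by inserting $P$ into every slot indexed by $S$; then
$$
\Psi_M(f)=\sum_{S\subseteq\{1,\dots,n\}}(-1)^{n-|S|}\,P_M^{\,n-|S|}\,f_S .
$$
Equivalently, $\Psi_M=\prod_{i=1}^n\bigl(R_i-P_M\bigr)$, where $R_i$ replaces the $i$-th argument by its image under $P$ and $P_M$ denotes post-composition with $P_M$ on the output; these operators commute (the $R_i$ act on disjoint inputs and $P_M$ acts on the target), which is exactly what produces the sign $(-1)^{n-|S|}$ and the power $P_M^{\,n-|S|}$. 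This factorization makes the powers of $P_M$ transparent and suggests carrying out the comparison by induction on the number of arguments, peeling off one factor $R_i-P_M$ at a time.

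With this in hand I would expand both composites and sort the resulting terms by their combinatorial type: whether a given term arises from an action $a_i\rhd(-)$ or from a bracket $[-,-]$, which subset of the remaining arguments carries a $P$, and how many copies of $P_M$ are applied to the output. On the right-hand side, $\delta_{\Lie,M}(f)$ produces action terms $a_if(\dots)$ and bracket terms $f([a_i,a_j]_\mu,\dots)$, which $\Psi_M$ then decorates with $P$'s and $P_M$'s; on the left-hand side, $\partial$ produces the deformed action terms $P(a_i)(-)$ and the deformed bracket terms $f([a_i,a_j]_P,\dots)=f([P a_i,a_j]_\mu+[a_i,Pa_j]_\mu-P[a_i,a_j]_\mu,\dots)$, while $-P_M\circ\delta_{\Lie,M}$ supplies the complementary contributions. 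The heart of the matching is this: when $\Psi_M$ is applied after $\delta_{\Lie,M}$, the subset sum includes the configuration in which both bracket entries $a_i,a_j$ receive a $P$, yielding a term $f([Pa_i,Pa_j]_\mu,\dots)$; the Nijenhuis relation \eqref{Eq: Nijenhuis relation in terms of maps} rewrites exactly this as $P$ applied to the deformed bracket, which is precisely the term generated by $\partial$ together with one fewer power of $P_M$. Symmetrically, the Nijenhuis representation relation converts the ``doubly-$P$'' action term $P(a_i)\,P_M(\cdots)$ into the deformed action $\rhd$ modulo lower $P_M$-terms, matching the action part of $\partial$.

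The hard part will be the sign and combinatorial accounting: I must check that, grading every contribution by its $P$-decoration pattern and its number of $P_M$-factors, the terms from $\Psi_M\circ\delta_{\Lie,M}$, from $\partial\circ\Psi_M$, and from $P_M\circ\delta_{\Lie,M}\circ\Psi_M$ cancel in exact groups, using the two Nijenhuis relations as the only rewriting moves. I expect the delicate point to be the interaction of the Koszul/alternating signs coming from the Chevalley--Eilenberg differential with the signs $(-1)^{n-|S|}$ in the subset expansion, since a single bracket $[a_i,a_j]$ couples two slots that the subset sum treats independently. Once the ``both entries carry $P$'' terms are absorbed by the Nijenhuis identities and the remaining single-$P$ and no-$P$ terms are shown to reproduce $\partial$ and $-P_M\delta_{\Lie,M}$ respectively, the identity follows, establishing that $\Psi_M$ is a chain map.
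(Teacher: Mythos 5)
Your plan is a viable route to the statement, but it is genuinely different from the one the paper takes, and it is worth being aware that you are signing up for exactly the computation the paper deliberately avoids. The paper remarks that a direct inspection "is not difficult" but never performs it: instead it first proves the adjoint case $M=\frakg$ by identifying $(\delta_{\Lie},\Psi,\delta_{\NjO})$ with the components of the twisted differential $l_1^{\alpha}$ of the $L_\infty$-algebra $\frakC_{\NjL}(\frakg)$ (Proposition~\ref{Prop: cohomology complex as the underlying complex of L infinity algebra NjL}), so that the chain-map identity is the off-diagonal component of $(l_1^{\alpha})^2=0$, which holds automatically by the twisting procedure of Proposition~\ref{Prop: deformed-L-infty}; it then reduces general coefficients to the adjoint case by embedding $\C^{\bullet}_{\NjL}(\frakg,M)$ as a subcomplex of $\C^{\bullet}_{\NjL}(\frakg\ltimes M)$ via the semi-direct product of Proposition~\ref{Prop: Nijenhuis Lie algebra trivial extension}. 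What the paper's route buys is that all Koszul signs are handled once and for all by the operadic machinery; what your route buys is a self-contained, elementary argument that works directly for arbitrary Nijenhuis representations without the detour through $\frakg\ltimes M$. Your structural observations are sound: the subset expansion $\Psi_M=\sum_S(-1)^{n-|S|}P_M^{n-|S|}f_S$, the factorization $\prod_{i}(R_i-P_M)$ (which is legitimate since $R_i$ acts on inputs and $P_M$ on the output, so these operators commute), and the identification of the "both slots carry $P$" bracket terms and the "$P(a_i)P_M(\cdot)$" action terms as exactly the places where the Nijenhuis relation~\eqref{Eq: Nijenhuis relation in terms of maps} and the Nijenhuis representation relation must be invoked. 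The one caveat is that your write-up defers the entire sign and cancellation bookkeeping, which for a direct proof is not a formality but the actual content; in particular the interaction between the Chevalley--Eilenberg signs $(-1)^{i+j}$ (where a single bracket couples two slots that your subset sum treats independently) and the signs $(-1)^{n-|S|}$ must be checked explicitly before the proof can be considered complete.
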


The proof of Proposition~\ref{Prop: Chain map Psi} can be done by direct inspection, which is not difficult. But we will deduce Proposition~\ref{Prop: Chain map Psi} in the particular case when $M$ is the adjoint Nijenhuis representation $\frakg$ itself, from a specific $L_\infty$-algebra, see Proposition~\ref{Prop: cohomology complex as the underlying complex of L infinity algebra NjL}. 


\begin{defn} \label{Def: definition of Nijenhuis cohomology}
	Let $M = (M, P_M)$ be a Nijenhuis representation over a Nijenhuis Lie algebra $\frakg = (\frakg, \mu, P)$. We define the cochain complex of Nijenhuis Lie algebra $\frakg$ with coefficients in Nijenhuis representation $M$, denoted by $(\C^\bullet_{\NjL}(\frakg, M), \delta_{\NjL, M})$, to be the the mapping cone of 
	$\Psi_{M}$ shifted by $-1$, that is, 
	$$\C^0_{\NjL}(\frakg, M) := \C^0_\Lie(\frakg, M) \quad \mathrm{and} \quad \C^n_{\NjL}(\frakg, M):=\C^n_\Lie(\frakg, M)\oplus \C^{n-1}_{\NjO}(\frakg, M), \text{ for } n\geqslant 1, $$ and 
	the differential $\delta_{\NjL, M}: \C^n_{\NjL}(\frakg, M)\rightarrow \C^{n+1}_{\NjL}(\frakg, M)$ is given by
	\begin{align*} 
		\delta_{\NjL, M}(f, g)
		=&\ (\delta_{\Lie, M}(f), -\Psi_{M}(f) -\delta_{\NjO, M}(g))
	\end{align*}
	for any $f\in \C^n_\Lie(\frakg, M)$ and $g\in \C^{n-1}_{\NjO}(\frakg, M)$.
	
	The cohomology of $\C_{\NjL}^\bullet(\frakg, M)$, denoted by $\mathrm{H}_{\NjL}^\bullet(\frakg, M)$, is called the cohomology of Nijenhuis Lie algebra $\frakg$ with coefficients in Nijenhuis representation $M$.	
	When $M$ is the adjoint Nijenhuis representation $\frakg$ itself, we simply denote the differential $\delta_{\NjL, \frakg}$ by $\delta_{\NjL}$. We also simply denote $\C^\bullet_{\NjL}(\frakg, \frakg)$ by $\C^\bullet_{\NjL}(\frakg)$ and $\rmH^\bullet_{\NjL}(\frakg, \frakg)$ by $\rmH^\bullet_{\NjL}(\frakg)$, called the cochain complex of Nijenhuis Lie algebra $\frakg$ and the cohomology of Nijenhuis Lie algebra $\frakg$, respectively.
\end{defn}

By properties of the mapping cone, there is a short exact sequence of cochain complexes:
\begin{eqnarray*} 
	0 \to s^{-1}\C^\bullet_{\NjO}(\frakg, M) \to \C^\bullet_{\NjL}(\frakg, M) \to \C^\bullet_{\Lie}(\frakg, M) \to 0, 
\end{eqnarray*}
which induces a long exact sequence of cohomology groups
$$0 \to \rmH^{0}_{\NjL}(\frakg, M) \to \mathrm{H}_{\Lie}^0(\frakg, M) \to \rmH^0_{\NjO}(\frakg, M) \to \rmH^{1}_{\NjL}(\frakg, M) \to \mathrm{H}_{\Lie}^1(\frakg, M) \to\cdots$$
$$\cdots \to \mathrm{H}_{\Lie}^p(\frakg, M) \to \rmH^p_{\NjO}(\frakg, M) \to \rmH^{p+1}_{\NjL}(\frakg, M) \to \mathrm{H}_{\Lie}^{p+1}(\frakg, M) \to \cdots.$$
Here the symbol $s^{-1}$ represents the desuspension of a cohomologically graded vector space.

\medskip


Let $M=(M, P_M)$ be a Nijenhuis representation over a Nijenhuis Lie algebra $\frakg=(\frakg, \mu, P)$.
According to Proposition~\ref{Prop: Nijenhuis Lie algebra trivial extension}, the semi-direct product $\frakg \ltimes M$ of Nijenhuis Lie algebra $\frakg$ by its Nijenhuis representation $M$ is also a Nijenhuis Lie algebra. Hence, we get the cochain complex $(\C^{\bullet}_{\NjL}(\frakg \ltimes M), \delta_{\NjL})$ of the Nijenhuis Lie algebra $\frakg \ltimes M$, where
\begin{align*}
	\C^0_{\NjL}(\frakg \ltimes M) &= \C^0_{\Lie}(\frakg \ltimes M) = \frakg \oplus M,\\
	\C^n_{\NjL}(\frakg \ltimes M) &= \C^n_{\Lie}(\frakg \ltimes M) \oplus \C^{n-1}_{\NjO}(\frakg \ltimes M) \\
	&= \Hom((\frakg \oplus M)^{\wg n}, \frakg \oplus M) \oplus \Hom((\frakg \oplus M)^{\wg n-1}, \frakg \oplus M), \text{ for } n\geqslant 1.
\end{align*}

In order to obtain the cochain complex of Nijenhuis Lie algebra $\frakg$ with coefficients in Nijenhuis representation $M$, one also needs to consider the obvious embedding
$$\iota: \C^{\bullet}_{\NjL}(\frakg, M) \hookrightarrow \C^{\bullet}_{\NjL}(\frakg \ltimes M), \ f \mapsto \overline{f}.$$

\begin{prop}
	With above notations, the image $\mathrm{Im}(\iota)$ is a subcomplex of $(\C^{\bullet}_{\NjL}(\frakg \ltimes M), \delta_{\NjL})$.
\end{prop}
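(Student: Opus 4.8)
The plan is to prove the sharper statement that $\iota$ is a morphism of cochain complexes, i.e. $\delta_{\NjL}\circ\iota=\iota\circ\delta_{\NjL,M}$, from which the subcomplex claim is immediate. Recall that $\iota$ sends a cochain $f$ to the cochain $\overline f$ characterised by $\overline f((a_1,x_1),\dots,(a_n,x_n))=(0,f(a_1,\dots,a_n))$, that is, $\overline f=\iota_M\circ f\circ(\pi\wg\cdots\wg\pi)$ with $\iota_M\colon M\hookrightarrow\frakg\oplus M$ the inclusion and $\pi\colon\frakg\oplus M\to\frakg$ the projection. I would first isolate the two structural features of such cochains that drive everything: they read only the $\frakg$-components of their inputs, and they take values in $\{0\}\oplus M$. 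By Proposition~\ref{Prop: Nijenhuis Lie algebra trivial extension} the Nijenhuis operator on $\frakg\ltimes M$ is $\widetilde P=P\oplus P_M$, so $\widetilde P(a,x)=(P(a),P_M(x))$ and in particular $\widetilde P(0,y)=(0,P_M(y))$.

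Since $\delta_{\NjL}(f,g)=(\delta_{\Lie}(f),-\Psi(f)-\delta_{\NjO}(g))$ is built from $\delta_{\Lie}$, $\Psi$ and $\delta_{\NjO}$, I would reduce the whole statement to three intertwining identities, namely
$$\delta_{\Lie}(\overline f)=\overline{\delta_{\Lie,M}(f)},\qquad \Psi(\overline f)=\overline{\Psi_M(f)},\qquad \delta_{\NjO}(\overline g)=\overline{\delta_{\NjO,M}(g)},$$
where the left sides use the adjoint structures of $\frakg\ltimes M$ and the right sides the structures with coefficients in $M$. Once these hold, $\delta_{\NjL}(\iota(f,g))=(\overline{\delta_{\Lie,M}(f)},-\overline{\Psi_M(f)}-\overline{\delta_{\NjO,M}(g)})=\iota(\delta_{\NjL,M}(f,g))$ and the proof is complete.

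The first identity is the classical statement that $\C^\bullet_\Lie(\frakg,M)$ sits inside the adjoint complex $\C^\bullet_\Lie(\frakg\ltimes M)$ as a subcomplex; I would verify it directly by inserting $\overline f$ into $\delta_{\Lie}$ and using $[(a_i,x_i),(0,y)]_{\overline\mu}=(0,a_i\cdot y)$ together with the blindness of $\overline f$ to $M$-components. For the second identity I would inspect the defining sum of $\Psi_M$ preceding Proposition~\ref{Prop: Chain map Psi}: each $\widetilde P$ applied to an argument $(a_{i_\ell},x_{i_\ell})$ contributes only its $\frakg$-component $P(a_{i_\ell})$, while the outer power $\widetilde P^{\,n-k}$ applied to $(0,f(\dots))\in\{0\}\oplus M$ returns $(0,P_M^{\,n-k}(f(\dots)))$; summing term by term yields $\Psi(\overline f)=\overline{\Psi_M(f)}$.

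The hardest part will be the third identity, handled through $\delta_{\NjO}=-\widetilde P\circ\delta_{\Lie}+\partial$. Its first summand reduces to $\overline{-P_M\circ\delta_{\Lie,M}(g)}$ by the first identity and $\widetilde P(0,y)=(0,P_M(y))$. The delicate point is $\partial(\overline g)$, assembled from the twisted bracket $[-,-]_{\widetilde P}$ and the twisted action $z\rhd w=[\widetilde P(z),w]_{\overline\mu}$ on $\frakg\ltimes M$: I must show the a priori complicated $M$-components of $(\frakg\ltimes M)_{\widetilde P}$ never enter. They do not, because $\overline g$ reads only the $\frakg$-component $[a_i,a_j]_P$ of each twisted bracket, and each action term satisfies $(a_i,x_i)\rhd\overline g(\dots)=[(P(a_i),P_M(x_i)),(0,g(\dots))]_{\overline\mu}=(0,P(a_i)\cdot g(\dots))=\overline{a_i\rhd g(\dots)}$, so $\partial(\overline g)=\overline{\partial(g)}$. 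The real content of this last step, and where I expect to spend the most care, is that the blindness of $\overline g$ to the $M$-components of its inputs makes the difference between $(\frakg\ltimes M)_{\widetilde P}$ and $\frakg_P\ltimes M_P$ invisible, so that the twisted Chevalley-Eilenberg pieces restrict exactly as required.
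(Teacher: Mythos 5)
Your proposal is correct and follows essentially the same route as the paper: the paper's proof is a four-line ``direct calculation'' asserting exactly the identity $\delta_{\NjL}(\overline f,\overline g)=\iota(\delta_{\Lie,M}(f),-\Psi_M(f)-\delta_{\NjO,M}(g))$, which you establish by splitting it into the three intertwining identities for $\delta_{\Lie}$, $\Psi$ and $\delta_{\NjO}$ and verifying each. Your verification (in particular that $\overline g$ sees only the $\frakg$-component $[a_i,a_j]_P$ of the twisted bracket and that $\widetilde P(0,y)=(0,P_M(y))$) correctly supplies the details the paper leaves implicit.
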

\begin{proof}
	Let $(f,g) \in \C^n_{\NjL}(\frakg, M)$ for $n \geqslant 0$. Direct calculation can obtain:
	\begin{align*}
		\delta_{\NjL}(\iota(f,g))
		=&\ \delta_{\NjL}(\overline{f}, \overline{g})\\
		=&\ (\delta_{\Lie}(\overline{f}), -\Psi(\overline{f})-\delta_{\NjO}(\overline{g})) \\
		=&\ \iota(\delta_{\Lie, M}(f), -\Psi_{M}(f)-\delta_{\NjO, M}(g))\\
		=&\ \iota(\delta_{\NjL, M}(f,g)).
	\end{align*}
\end{proof}

Let $p: \mathrm{Im}(\iota) \twoheadrightarrow \C^{\bullet}_{\NjL}(\frakg, M), \overline{f} \mapsto f$, be the obvious projection. Using the fact $\iota \circ p = \id_{\mathrm{Im}(\iota)}$, we find that $\delta_{\NjL, M} = p \circ \delta_{\NjL} \circ \iota$ is indeed a differential on the space $\C^{\bullet}_{\NjL}(\frakg, M)$, which justify the Definition~\ref{Def: definition of Nijenhuis cohomology}. Moreover, Lemma~\ref{Lem: cochain complex of Lie Nijenhuis operator} and Proposition~\ref{Prop: Chain map Psi} are direct corollaries as well.

\medskip

\section{The Koszul dual homotopy cooperad of the operad for Nijenhuis Lie algebras} \label{Section: Koszul dual cooperad NjL}

In this section, we construct a symmetric homotopy cooperad $\NL^\ac$, which serves as the Koszul dual of $\NL$, the symmetric operad for Nijenhuis Lie algebras. This is justified by the fact that the cobar construction of $\NL^\ac$ is exactly $\NL_\infty$, the minimal model of $\NL$, as will be proved in Section~\ref{Section: minimal model NjL}. 

Define a graded $\mathbb{S}$-module $\mathscr{S}(\NL^\ac)$ by $$\mathscr{S}(\NL^\ac)(n):=\bfk u_n\oplus \bfk v_n$$ with $|u_n|=0, |v_n|=1$ for $n\geqslant 1$, and the action of $\mathbb{S}_{n}$ on $u_{n}$ and $v_{n}$ are trivial. Now, we put a coaugmented symmetric homotopy cooperad structure on $\mathscr{S}(\NL^\ac)$. Firstly, consider shuffle trees of arity $n\geqslant 1$ in the following list:
\begin{enumerate}
	\item Shuffle trees of type (I) which are of weight $2$: for each $1\leqslant j\leqslant n$, $1\leqslant i\leqslant n-j+1$, there exists such shuffle trees, which can be visualized as
	\begin{eqnarray*}
		\begin{tikzpicture}[scale=0.75, descr/.style={fill=white}]
			\tikzstyle{every node}=[thick, minimum size=3pt, inner sep=1pt]
			
			\node(sigma-1) at (2.2, 1){$\cdot  \sigma^{-1} $};
			
			\node(r) at (0, -0.5)[minimum size=0pt, circle]{};
			\node(v0) at (0, 0)[fill=black, circle, label=right:{\tiny $ n-j+1$}]{};
			\node(v1-1) at (-1.5, 1){\tiny$ 1 $};
			\node(v1-2) at(0, 1)[fill=black, circle, label=right:{\tiny $\tiny j$}]{};
			\node(v1-3) at(1.5, 1){\tiny$ n $};
			\node(v2-1)at (-1, 2){\tiny$ i $};
			\node(v2-2) at(1, 2){\tiny$ i+j-1 $};
			\draw(v0)--(v1-1);
			\draw(v0)--(v1-3);
			\draw(v1-2)--(v2-1);
			\draw(v1-2)--(v2-2);
			\draw[dotted](-0.4, 1.5)--(0.4, 1.5);
			\draw[dotted](-0.5, 0.5)--(-0.1, 0.5);
			\draw[dotted](0.1, 0.5)--(0.5, 0.5);
			\path[-, font=\scriptsize]
			(v0) edge node[descr]{{\tiny$i$}} (v1-2);
			
			\node(vroot) at (0, -0.4){};
			\draw(v0)--(vroot);
		\end{tikzpicture}
	\end{eqnarray*}
	for all $ \sigma \in \sh $.
	
	\item Shuffle trees of type (II) which are of weight $p+1\geqslant 3$ and height $ t+2 $ with $0 \leqslant t \leqslant p \leqslant n$, and there exists a unique vertex in the first $ t+1 $ levels: in these trees, there are numbers $1\leqslant k_t<\cdots<k_{p-1}\leqslant p$, 
	$r_1, \dots, r_p\geqslant 1$ and $1\leqslant i_{h} \leqslant r_{h} $ for all $1\leqslant h \leqslant t $, such that
	the $ q $-th vertex has arity $ r_{q} $ for all $1 \leqslant q \leqslant t $, the $ m $-th vertex is connected to the $ i_{m-1} $-th leaf of the $ (m-1) $-th vertex for all $ 2 \leqslant m \leqslant t $, the $ (t+1) $-th vertex has arity $ p $ and is connected to the $ i_{t} $-th leaf of the $ t $-th vertex, and the other vertices connected to the $k_t$-th (resp. $k_{t+1}$-th, $\dots, k_{p-1}$-th) leaf of the $ (t+1) $-th vertex and with arity $r_{t+1}$ (resp. $r_{t+2}, \dots, r_p$), so $r_1+\cdots+r_p=n$. These  shuffle trees can be visualized as:
	\begin{eqnarray*}
		\begin{tikzpicture}[scale=1.1, descr/.style={fill=white}]
			\tikzstyle{every node}=[thick, minimum size=3pt, inner sep=1pt]
			
			\node(sigma-1) at (2.6, 1.2){$\cdot  \sigma^{-1} $};
			
			\begin{scope}[shift={(0,0.8)}, scale=1]  
				\node(v-1) at (0, -2.2)[circle, fill=black, label=right:{\tiny $r_1$}]{}; 
				\node(v0-1) at (-2, -1){\tiny $ 1 $}; 
				\node(v0-3) at(2, -1){\tiny $ n $};	
				
				\node(vroot) at (0, -2.5){};
				\draw(v-1)--(vroot);
				
				\node(v-1-2) at (0, -1.6)[circle, fill=black, label=right:{\tiny $r_2$}]{}; 
				\node(v0-1-2) at (-2, -0.4){\tiny $ \eta_{1} $}; 
				\node(v0-3-2) at(2, -0.4){\tiny $ \xi_{2} $};
				\node(v0-2-1-2) at(0, -0.9)[]{}; 
				\node(v0-2-2) at(0, -0.6)[]{}; 
				\draw(v-1-2)--(v0-1-2);
				\draw(v-1-2)--(v0-2-1-2);
				\draw(v-1-2)--(v0-3-2);
				\draw(v-1)--(v-1-2);
				\draw[dotted](v0-2-1-2)--(v0-2-2);
				\path[-, font=\scriptsize]
				
				(v-1) edge node[descr]{{\tiny$i_{1}$}} (v-1-2);
				\path[-, font=\scriptsize]
				(v-1-2) edge node[descr]{{\tiny$i_{2}$}} (v0-2-1-2);
				
				\draw[dotted](-0.5, -1.8)--(-0.1, -1.8);
				\draw[dotted](0.1, -1.8)--(0.5, -1.8);
				
				\draw[dotted](-0.5, -1.2)--(-0.1, -1.2);
				\draw[dotted](0.1, -1.2)--(0.5, -1.2);
			\end{scope}
			\node(v0) at (0, 0.5)[circle, fill=black, label=right:{\tiny $r_t$}]{};
			\node(v1-1) at (-2, 1.8){\tiny $ \eta_{t-1} $};
			\node(v1-2) at(0, 1.2)[circle, fill=black, label=right:{\tiny $p$}]{};
			\draw[dotted](-0.6, 1)--(-0.1, 1);
			\draw[dotted](0.1, 1)--(0.6, 1);
			
			\node(v1-3) at(2, 1.8){\tiny $ \xi_{t} $};
			\node(v2-1) at(-1.9, 2.6){\tiny $ \eta_{t} $};
			\node(v2-2) at (-0.9, 2.8)[circle, fill=black, label=right:{\tiny $r_{t+1}$}]{};
			\node(v2-3) at (0, 2.9){};
			\node(v2-4) at(0.9, 2.8)[circle, fill=black, label=right:{\tiny $r_{p}$}]{};
			\node(v2-5) at(1.9, 2.6){\tiny $ \xi_{t+1} $};
			\node(v3-1) at (-1.6, 3.5){\tiny $ \eta_{t+1} $};
			\node(v3-2) at (-0.4, 3.5){\tiny $ \eta_{t+1} + r_{t+1} - 1 $};
			\node(v3-3) at (0.4, 3.5){\tiny $ \eta_{p} $};
			\node(v3-4) at(1.6, 3.5){\tiny $ \eta_{p} + r_{p} - 1 $};
			
			\draw(v0-2-2)--(v0);
			\draw(v0)--(v1-1);
			\draw(v0)--(v1-3);
			\draw(v-1)--(v0-1);
			\draw(v-1)--(v0-3);
			
			\path[-, font=\scriptsize]
			(v0) edge node[descr]{{\tiny$i_{t}$}} (v1-2);
			
			\draw(v1-2)--(v2-1);
			\draw(v1-2)--(v2-3);
			\draw(v1-2)--(v2-5);
			
			\path[-, font=\scriptsize]
			(v1-2) 	edge node[descr]{{\tiny$k_t$}} (v2-2)
			edge node[descr]{{\tiny$k_{p-1}$}} (v2-4);
			\draw(v2-2)--(v3-1);
			\draw(v2-2)--(v3-2);
			\draw(v2-4)--(v3-3);
			\draw(v2-4)--(v3-4);
			
			\draw[dotted](-0.5, 2.4)--(-0.1, 2.4);
			\draw[dotted](0.1, 2.4)--(0.5, 2.4);
			\draw[dotted](-1.4, 2.4)--(-0.8, 2.4);
			\draw[dotted](1.4, 2.4)--(0.8, 2.4);
			\draw[dotted](-1.2, 3.2)--(-0.7, 3.2);
			\draw[dotted](1.2, 3.2)--(0.7, 3.2);
		\end{tikzpicture}
	\end{eqnarray*}
	for all $ \sigma \in \sh $, 
	where
	\begin{equation*} \label{minimal module NjL eta}
			\eta_{j}
			= \begin{dcases}
					i_{1} + \cdots + i_{j} - j + 1, & 1 \leqslant j \leqslant t, \\
					k_{j-1} + \sum_{\alpha=t+1}^{j-1}r_{\alpha} -j+t+\eta_{t}, & t+1 \leqslant j \leqslant p, 
				\end{dcases}
		\end{equation*}
	\begin{equation*} \label{minimal module NjL xi}
			\xi_{j} = \eta_{p} + r_{p} + p - k_{p-1} - 1 + \sum_{\alpha=j}^{t}(r_{\alpha}-i_{\alpha}), \ 2 \leqslant j \leqslant t+1.
		\end{equation*}
\end{enumerate}

Now, we define a family of operations $\{\Delta_T: \mathscr{S}(\NL^\ac)\rightarrow \mathscr{S}({\NL^\ac})^{\ot T}\}_{T\in \mathrm{S}\frakT}$ as follows:
\begin{enumerate}
	\item For a shuffle tree $T$ of type $\mathrm{(I)}$,
	define $\Delta_T(u_n)=u_{n-j+1}\ot u_j$, which can be drawn as
	\begin{eqnarray*}
		\begin{tikzpicture}[scale=0.8, descr/.style={fill=white}]
			\tikzstyle{every node}=[thick, minimum size=5pt, inner sep=1pt]
			
			\node(sigma-1) at (2.3, 1){$\cdot  \sigma^{-1}. $};
			
			\node(r) at (0, -0.5)[minimum size=0pt, rectangle]{};
			\node(v-2) at(-2, 1)[minimum size=0pt, label=left:{ $\Delta_T(u_n)=$}]{};
			\node(v0) at (0, 0)[draw, rectangle]{{\small $u_{n-j+1}$}};
			\node(v1-1) at (-1.5, 1){\tiny $ 1 $};
			\node(v1-2) at(0, 1)[draw, rectangle]{\small$u_j$};
			\node(v1-3) at(1.5, 1){\tiny $ n $};
			\node(v2-1)at (-1, 2){\tiny $ i $};
			\node(v2-2) at(1, 2){\tiny $ i+j-1 $};
			\draw(v0)--(v1-1);
			\draw(v0)--(v1-3);
			\draw(v1-2)--(v2-1);
			\draw(v1-2)--(v2-2);
			\draw[dotted](-0.4, 1.5)--(0.4, 1.5);
			\draw[dotted](-0.5, 0.5)--(-0.1, 0.5);
			\draw[dotted](0.1, 0.5)--(0.5, 0.5);
			\path[-, font=\scriptsize]
			(v0) edge node[descr]{{\tiny$i$}} (v1-2);
			
			\node(vroot) at (0, -0.5){};
			\draw(v0)--(vroot);
		\end{tikzpicture}
	\end{eqnarray*}
	Define
	$$\Delta_T(v_n)=
	\left\{\begin{array}{ll} v_n\ot u_1, & j=1, \\
		0, & 2\leqslant j\leqslant n-1, \\
		u_1\ot v_n, & j=n, 
	\end{array}\right.$$
	which can be pictured as
	\begin{enumerate}
		\item when $j=1$,
		$$\begin{tikzpicture}[scale=0.8, descr/.style={fill=white}]
			\tikzstyle{every node}=[thick, minimum size=5pt, inner sep=1pt]
			\node(r) at (0, -0.5)[minimum size=0pt, rectangle]{};
			\node(v-1) at(-2, 0.5)[minimum size=0pt, label=left:{ $\Delta_T(v_n)=$}]{};
			\node(dd) at(1.5, 0.5){;};
			\node(v0) at (0, 0)[draw, rectangle]{\small$v_n$};
			\node(v1-1) at (-1.3, 1){\tiny$ 1 $};
			\node(v1-2) at(0, 1)[draw, rectangle]{\small$u_1$};
			\node(v1-3) at(1.3, 1){\tiny$ n $};
			\node(v2-1)at (0, 1.8){\tiny$ i $};
			\draw(v0)--(v1-1);
			\draw(v0)--(v1-3);
			\draw(v1-2)--(v2-1);
			\draw[dotted](-0.5, 0.5)--(-0.1, 0.5);
			\draw[dotted](0.1, 0.5)--(0.5, 0.5);
			\path[-, font=\scriptsize]
			(v0) edge node[descr]{{\tiny$i$}} (v1-2);
			
			\node(vroot) at (0, -0.5){};
			\draw(v0)--(vroot);
		\end{tikzpicture}$$
		\item when $2\leqslant j\leqslant n-1$, $\Delta_T(v_n)=0$;
		\item when $j=n$,
		$$\begin{tikzpicture}[scale=0.8, descr/.style={fill=white}]
			\tikzstyle{every node}=[thick, minimum size=5pt, inner sep=1pt]
			\node(r) at (0, -0.5)[minimum size=0pt, rectangle]{};
			\node(va) at(2, 0.7)[minimum size=0pt, label=left:{ $\Delta_T(v_n)=$}]{};
			\node(dd) at(4.7, 0.6){.};
			\node(ve) at (3.5, 0)[draw, rectangle]{\small $u_1$};
			\node(ve1) at (3.5, 1)[draw, rectangle]{\small $v_n$};
			\node(ve2-1) at(2.5, 2){\tiny$ 1 $};
			\node(ve2-2) at(4.5, 2){\tiny$ n $};
			\draw(ve)--(ve1);
			\draw(ve1)--(ve2-1);
			\draw(ve1)--(ve2-2);
			\draw[dotted](3.1, 1.5)--(3.9, 1.5);
			
			\node(vroot) at (3.5, -0.5){};
			\draw(ve)--(vroot);
		\end{tikzpicture}$$
	\end{enumerate}
	
	\item For a shuffle tree $T$ of type $\mathrm{(II)}$,
	define
	$$\begin{tikzpicture}[scale=1.2, descr/.style={fill=white}]
		\tikzstyle{every node}=[thick, minimum size=3pt, inner sep=1pt]
		
		\node(v-2) at (-2.5, 1.2)[minimum size=0pt, label=left:{$\Delta_T(v_n) = (-1)^\frac{p(p-1)}{2}$}]{};
		
		\node(sigma-1) at (2.8, 1.2){$\cdot  \sigma^{-1}. $};
		
		\begin{scope}[shift={(0,0.8)}, scale=1]  
			\node(v-1) at (0, -2.2)[rectangle, draw]{\tiny $v_{r_{1}}$}; 
			\node(v0-1) at (-2, -1){\tiny $ 1 $}; 
			\node(v0-3) at(2, -1){\tiny $ n $};	
			
			\node(v-1-2) at (0, -1.6)[rectangle, draw]{\tiny $v_{r_{2}}$}; 
			\node(v0-1-2) at (-2, -0.4){\tiny $ \eta_{1} $}; 
			\node(v0-3-2) at(2, -0.4){\tiny $ \xi_{2} $};
			\node(v0-2-1-2) at(0, -0.9)[]{}; 
			\node(v0-2-2) at(0, -0.6)[]{}; 
			\draw(v-1-2)--(v0-1-2);
			\draw(v-1-2)--(v0-2-1-2);
			\draw(v-1-2)--(v0-3-2);
			\draw(v-1)--(v-1-2);
			\draw[dotted](v0-2-1-2)--(v0-2-2);
			\path[-, font=\scriptsize]
			
			(v-1) edge node[descr]{{\tiny$i_{1}$}} (v-1-2);
			\path[-, font=\scriptsize]
			(v-1-2) edge node[descr]{{\tiny$i_{2}$}} (v0-2-1-2);
			
			\draw[dotted](-0.5, -1.8)--(-0.1, -1.8);
			\draw[dotted](0.1, -1.8)--(0.5, -1.8);
			
			\draw[dotted](-0.5, -1.2)--(-0.1, -1.2);
			\draw[dotted](0.1, -1.2)--(0.5, -1.2);
			
			\node(vroot) at (0, -2.5){};
			\draw(v-1)--(vroot);
		\end{scope}
		\node(v0) at (0, 0.5)[rectangle, draw]{\tiny $v_{r_{t}}$};
		\node(v1-1) at (-2, 1.8){\tiny $ \eta_{t-1} $};
		\node(v1-2) at(0, 1.2)[rectangle, draw]{\tiny $u_{p}$};
		\draw[dotted](-0.6, 1)--(-0.1, 1);
		\draw[dotted](0.1, 1)--(0.6, 1);
		
		\node(v1-3) at(2, 1.8){\tiny $ \xi_{t} $};
		\node(v2-1) at(-1.9, 2.6){\tiny $ \eta_{t} $};
		\node(v2-2) at (-0.9, 2.8)[rectangle, draw]{\tiny $v_{r_{t+1}}$};
		\node(v2-3) at (0, 2.9){};
		\node(v2-4) at(0.9, 2.8)[rectangle, draw]{\tiny $v_{r_{p}}$};
		\node(v2-5) at(1.9, 2.6){\tiny $ \xi_{t+1} $};
		\node(v3-1) at (-1.6, 3.5){\tiny $ \eta_{t+1} $};
		\node(v3-2) at (-0.4, 3.5){\tiny $ \eta_{t+1} + r_{t+1} - 1 $};
		\node(v3-3) at (0.4, 3.5){\tiny $ \eta_{p} $};
		\node(v3-4) at(1.6, 3.5){\tiny $ \eta_{p} + r_{p} - 1 $};
		
		\draw(v0-2-2)--(v0);
		\draw(v0)--(v1-1);
		\draw(v0)--(v1-3);
		\draw(v-1)--(v0-1);
		\draw(v-1)--(v0-3);
		
		\path[-, font=\scriptsize]
		(v0) edge node[descr]{{\tiny$i_{t}$}} (v1-2);
		
		\draw(v1-2)--(v2-1);
		\draw(v1-2)--(v2-3);
		\draw(v1-2)--(v2-5);
		
		\path[-, font=\scriptsize]
		(v1-2) 	edge node[descr]{{\tiny$k_t$}} (v2-2)
		edge node[descr]{{\tiny$k_{p-1}$}} (v2-4);
		\draw(v2-2)--(v3-1);
		\draw(v2-2)--(v3-2);
		\draw(v2-4)--(v3-3);
		\draw(v2-4)--(v3-4);
		
		\draw[dotted](-0.5, 2.4)--(-0.1, 2.4);
		\draw[dotted](0.1, 2.4)--(0.5, 2.4);
		\draw[dotted](-1.4, 2.4)--(-0.8, 2.4);
		\draw[dotted](1.4, 2.4)--(0.8, 2.4);
		\draw[dotted](-1.2, 3.2)--(-0.7, 3.2);
		\draw[dotted](1.2, 3.2)--(0.7, 3.2);
	\end{tikzpicture}$$
	
	\item For any other shuffle tree $T$ in $\mathrm{S}\frakT$, define $\Delta_T = 0$.
\end{enumerate}

\begin{prop} \label{prop: homotopy cooperad cobar construction NjL}
	The graded $\mathbb{S}$-module $\mathscr{S}(\NL^\ac)$ endowed with the operations $\{\Delta_T\}_{T\in \mathrm{S}\frakT}$ introduced above forms a coaugmented symmetric homotopy cooperad, whose strict counit is the natural projection $\varepsilon:\mathscr{S}(\NL^\ac)\twoheadrightarrow \bfk u_1\cong \cali$ and the coaugmentation is just the natural embedding $\eta:\cali\cong \bfk u_1\hookrightarrow \mathscr{S}(\NL^\ac)$. 	
\end{prop}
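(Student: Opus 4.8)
The plan is to check, one by one, the defining conditions of a coaugmented symmetric homotopy cooperad, isolating the coherence relation (iii) as the only substantial point and treating the rest as bookkeeping. First I would confirm the degree constraint $|\Delta_T|=\omega(T)-2$: for a type (I) tree $\omega(T)=2$ and the outputs $u_{n-j+1}\ot u_j$, $v_n\ot u_1$, $u_1\ot v_n$ all carry the same degree as the input; for a type (II) tree $\omega(T)=p+1$ and the output $v_{r_1}\ot\cdots\ot v_{r_t}\ot u_p\ot v_{r_{t+1}}\ot\cdots\ot v_{r_p}$ contains exactly $p$ copies of a degree-one generator, so its degree is $p=|v_n|+(\omega(T)-2)$. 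Local finiteness is immediate, since there are only finitely many reduced trees of a given arity $n$ and $\Delta_T$ vanishes on all trees outside types (I) and (II).

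Next I would dispose of the equivariance conditions (i) and (ii) together with the coaugmentation. Since the $\mathbb{S}_n$-action on every $u_n$ and $v_n$ is trivial, condition (i) is precisely the prescription by which the values displayed on the shuffle trees (the families indexed by $\sigma\in\sh$ through the factor $\cdot\,\sigma^{-1}$) are transported to arbitrary labeled trees, and condition (ii) records their compatibility with tree isomorphisms; as each isomorphism class contains a unique shuffle representative and shuffle trees have no nontrivial automorphisms, the only thing to verify is that the Koszul signs $\chi(\tau;-)$ and $\epsilon(\sigma;-)$ generated by permuting the degree-one generators agree with the signs already built into $\eta_j$, $\xi_j$ and the prefactor $(-1)^{p(p-1)/2}$. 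For the coaugmentation, the counit $\varepsilon$ annihilates every $v_\bullet$ and every $u_m$ with $m\geqslant 2$ and sends $u_1\mapsto\id$; the right counit axiom then follows from the $j=1$ values $\Delta_{T_{1,i}}(u_n)=u_n\ot u_1$ and $\Delta_{T_{1,i}}(v_n)=v_n\ot u_1$, the left counit axiom from the $j=n$ values $\Delta_{T_2}(u_n)=u_1\ot u_n$ and $\Delta_{T_2}(v_n)=u_1\ot v_n$, and for a tree of weight $\geqslant 3$ every tensor factor of $\Delta_T(v_n)$ is some $v_\bullet$ or $u_p$ with $p\geqslant 2$, each killed by $\varepsilon$, whence $(\id^{\ot i-1}\ot\varepsilon\ot\id^{\ot\omega(T)-i})\circ\Delta_T=0$.

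The essential point is condition (iii), which I would establish through the equivalent reformulation recalled after Definition~\ref{defn: cobar construction symmetric}: the family $\{\Delta_T\}$ defines a degree $-1$ derivation $\partial$ on the free symmetric operad $\mathcal{F}(s^{-1}\overline{\mathscr{S}(\NL^\ac)})$, and $\mathscr{S}(\NL^\ac)$ is a homotopy cooperad precisely when $\partial^2=0$. As $\partial$ is a derivation, so is $\partial^2$, hence it suffices to verify $\partial^2(s^{-1}u_n)=0$ and $\partial^2(s^{-1}v_n)=0$, which are relation (iii) applied to $u_n$ and to $v_n$. On $u_n$ only type (I) trees intervene and the two-fold decomposition involves only the $u_\bullet$; the resulting identity is the coassociativity of the cocommutative cooperad structure carried by $\{u_n\}$---the same coherence that underlies the Chevalley--Eilenberg differential and $L_\infty$-algebras---and is settled by the standard shuffle-and-sign computation.

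The computation on $v_n$ is where I expect the real difficulty. Here $\overline\Delta(v_n)$ is a sum over type (II) trees producing a string with a single $u_p$-corolla and several $v_r$-corollas, and $\partial^2(s^{-1}v_n)$ collects two families of terms: those in which the inner $\partial$ splits the $u_p$ through a type (I) tree, and those in which it splits one of the $v_r$ through a further type (II) tree. The mechanism is the usual one for cobar differentials---each doubly-nested decomposition of $v_n$ arises from exactly two orders of performing a nested pair of partial decompositions, so the terms should cancel in pairs---but the crux, and the main obstacle, is to show that for each matched pair the total sign $(-1)^{i-1+jk}\sgn(\sigma(T,T')^{-1})$, the prefactors $(-1)^{p(p-1)/2}$ coming from the two type (II) operations, and the reindexing encoded by $\eta_j,\xi_j$ and the shuffle $\sigma(T,T')$ combine to exact opposites. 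I would carry this out by fixing the combinatorial type of the ambient tree (the data $t,p,\{k_\bullet\},\{r_\bullet\},\{i_\bullet\}$), pairing each admissible divisor with the unique other divisor yielding the same element of $\mathscr{S}(\NL^\ac)^{\ot T}$, and comparing the two signs; this sign bookkeeping is the homotopy-coherent form of the compatibility between the Lie bracket and the cubic Nijenhuis relation, and is the heart of the argument.
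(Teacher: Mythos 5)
Your overall strategy coincides with the paper's: reduce everything to the statement that the induced derivation $\partial$ on the cobar construction squares to zero on the generators $s^{-1}u_n$ and $s^{-1}v_n$, handle the counit/coaugmentation and equivariance axioms as routine bookkeeping, and treat $\partial^2(s^{-1}u_n)=0$ by the standard brace/pre-Jacobi computation (the paper does this explicitly via Equation~\eqref{Eq: pre-jacobi1 S}). Where you genuinely diverge is on the hard case $\partial^2(s^{-1}v_n)=0$. You propose the direct route: fix the combinatorial type $(t,p,\{k_\bullet\},\{r_\bullet\},\{i_\bullet\})$ of the ambient tree, match each divisor with the unique partner producing the same decorated tree, and verify that the signs $(-1)^{i-1+jk}\sgn(\sigma(T,T')^{-1})$, the prefactors $(-1)^{p(p-1)/2}$, and the reindexing through $\eta_j,\xi_j$ combine to opposites. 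The paper explicitly declines to carry out this computation ("due to the complexity of the computation, we do not pursue this method") and instead transfers the cancellation from the nonsymmetric case: it invokes \cite[Proposition 3.1]{SWZZ2024a} for Nijenhuis associative algebras, where the pairwise cancellation was established via the nonsymmetric pre-Jacobi identity, and observes that the expansion of $\partial^2(y_n)$ in the Lie setting is obtained from the associative one by applying all local shuffles to the planar trees, so each shuffle tree again occurs exactly twice with opposite signs. Your approach buys self-containedness at the cost of a substantial sign verification that your write-up identifies as ``the heart of the argument'' but does not actually execute; the paper's approach buys brevity at the cost of depending on the prior nonsymmetric result and on the (itself nontrivial, though plausible) claim that symmetrization by local shuffles preserves the sign-matched pairing. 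To turn your proposal into a complete proof you would have to either carry out the pairing-and-sign computation in full or, as the paper does, reduce to the associative case.
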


\begin{proof} 
	Equivalently, one needs to show that the induced derivation $\partial$ on the cobar construction of $\mathscr{S}(\NL^\ac)$, i.e., the free symmetric operad generated by $s^{-1}\overline{\mathscr{S}(\NL^\ac)}$, is a differential, that is, $\partial^2=0$.
	
	Denote $s^{-1}u_n, n\geqslant 2$ (resp. $s^{-1}v_n, n\geqslant 1$) by $x_n$ (resp. $y_n$) which are the generators of $s^{-1}\overline{\mathscr{S}(\NL^\ac)}$. Notice that $|x_n|=-1$ and $|y_n|=0$. By the definition of cobar construction of coaugmented symmetric homotopy cooperads, the action of differential $\partial$ on generators $x_n, y_n$ is given by the following formulas:
	\begin{align} \label{Eq: partial xn NjL}
		\partial(x_n) = -\sum\limits_{j=2}^{n-1}x_{n-j+1}\{x_j\}, \ n\geqslant 2,
	\end{align}
	\begin{align} \label{Eq: partial yn NjL}
		\partial(y_n) =
		-\sum_{\substack{ r_1+\cdots+r_p =n\\ r_1, \dots, r_p \geqslant 1 \\	2 \leqslant p \leqslant n}}
		\sum_{t=0}^{p}
		(-1)^{t}
		y_{r_{1}} \bigg\{ \cdots \Big\{ y_{r_{t}} \big \{ x_{p} \{ y_{r_{t+1}}, \dots, y_{r_{p}} \} \big\} \Big\} \cdots \bigg\},\ n \geqslant 1.
	\end{align}
	Note that $\partial(x_{2})=0$, $\partial(y_1)=0$.
	
	We just need to proof that $\partial^2=0$ holds on generators $x_n, n\geqslant 2$ and $y_n, n\geqslant 1$, which can be checked by direct computations.
	As $\partial$ is a derivation with respect to $-\{-\}$, we have
	\begin{eqnarray*}
		\partial^2(x_n) 
		&\stackrel{\eqref{Eq: partial xn NjL}}{=}&\ \partial\Big(-\sum\limits_{j=2}^{n-1}x_{n-j+1}\{x_j\}\Big)\\
		&=&-\sum\limits_{j=2}^{n-1}\partial(x_{n-j+1})\{x_j\}+\sum_{j=2}^{n-1}x_{n-j+1}\{\partial(x_j)\}\\
		&\stackrel{\eqref{Eq: partial xn NjL}}{=}&\ \sum_{i+j+k-2=n \atop
			2\leqslant i, j, k\leqslant n-2}(x_i\{x_j\})\{x_k\}-\sum_{i+j+k-2=n \atop
			2\leqslant i, j, k\leqslant n-2}x_i\{x_j\{x_k\}\}\\
		&\stackrel{\eqref{Eq: pre-jacobi1 S}}{=}&\ \sum_{i+j+k-2=n \atop
			2\leqslant i, j, k\leqslant n-2}x_i\{x_j\{x_k\}\}+\sum_{i+j+k-2=n \atop
			2\leqslant i, j, k\leqslant n-2}x_i\{x_j, x_k\}-\sum_{i+j+k-2=n \atop
			2\leqslant i, j, k\leqslant n-2}x_i\{x_k, x_j\}-\sum_{i+j+k-2=n \atop
			2\leqslant i, j, k\leqslant n-2}x_i\{x_j\{x_k\}\}\\
		&=&\ 0.
	\end{eqnarray*}
	
	 
	Now, we prove that $\partial^{2}(y_{n}) = 0$. This can also be verified directly using the shuffle brace operation. This approach relies on the relationship between the symmetric and shuffle brace operations, as described in Remark~\ref{Rem. relation between two braces}. However, due to the complexity of the computation, we do not pursue this method.
	 
	Instead, we derive $\partial^2(y_n)=0$ from the corresponding result  for Nijenhuis associative algebras established in \cite[Proposition 3.1]{SWZZ2024a}. In that case, $\partial^{2}(y_{n}) = 0$ was proven using the pre-Jacobi identity \cite[Equation (8)]{SWZZ2024a} for the nonsymmetric brace operation. Specifically, in the expansion of $\partial^{2}(y_{n})$ there, each tree appears exactly twice with opposite signs, leading to cancellation.
	  
	Similarly, in the case of Nijenhuis Lie algebras, the expansion of $\partial^2(y_n)$ consists of shuffle trees. This expansion is obtained by applying all possible local shuffles to the planar trees appearing in \cite[Proposition 3.1]{SWZZ2024a}. As a result, in the case of Nijenhuis Lie algebras, each shuffle tree also appears twice with opposite signs, ensuring their cancellation and confirming that $\partial^{2}(y_{n}) = 0$.

\end{proof}

We will justify the following definition by showing its cobar construction is exactly the minimal model of $\NL$, see Definition~\ref{defn: cobar minimal model NjL} and Theorem~\ref{Thm: Minimal model NjL}, hence the name ``Koszul dual symmetric homotopy cooperad".

\begin{defn}
	The symmetric homotopy cooperad $\mathscr{S}(\NL^\ac)\ot_{\mathrm{H}} \cals^{-1}$ is called the Koszul dual symmetric homotopy cooperad of $\NL$, denoted by $\NL^\ac$.
\end{defn}

Precisely, the underlying graded collection of ${\NL^\ac}$ is 
$${\NL^\ac}(n)=\bfk e_n\oplus \bfk o_n$$ 
with $e_n=u_n\ot \varepsilon_n$ and $o_n=v_n\ot \varepsilon_n$ for $n\geqslant 1$, thus $|e_n|=n-1$ and $|o_n|=n$.
The defining operations $\{\Delta_T\}_{T\in \mathrm{S}\frakT}$ is given by the following formulas:
\begin{enumerate}
	\item For a shuffle tree $T$ of type $\mathrm{(I)}$,
	\begin{eqnarray*}
		\begin{tikzpicture}[scale=0.8, descr/.style={fill=white}]
			\tikzstyle{every node}=[thick, minimum size=5pt, inner sep=1pt]
			
			\node(sigma-1) at (2.3, 1){$\cdot  \sigma^{-1}. $};
			
			\node(r) at (0, -0.5)[minimum size=0pt, rectangle]{};
			\node(v-2) at(-1.8, 1)[minimum size=0pt, label=left:{$\Delta_T(e_n)=(-1)^{(j-1)(n-i+1)} \sgn(\sigma)$}]{};
			\node(v0) at (0, 0)[draw, rectangle]{{\small $e_{n-j+1}$}};
			\node(v1-1) at (-1.5, 1){\tiny $ 1 $};
			\node(v1-2) at(0, 1)[draw, rectangle]{\small$e_j$};
			\node(v1-3) at(1.5, 1){\tiny $ n $};
			\node(v2-1)at (-1, 2){\tiny $ i $};
			\node(v2-2) at(1, 2){\tiny $ i+j-1 $};
			\draw(v0)--(v1-1);
			\draw(v0)--(v1-3);
			\draw(v1-2)--(v2-1);
			\draw(v1-2)--(v2-2);
			\draw[dotted](-0.4, 1.5)--(0.4, 1.5);
			\draw[dotted](-0.5, 0.5)--(-0.1, 0.5);
			\draw[dotted](0.1, 0.5)--(0.5, 0.5);
			\path[-, font=\scriptsize]
			(v0) edge node[descr]{{\tiny$i$}} (v1-2);
			
			\node(vroot) at (0, -0.5){};
			\draw(v0)--(vroot);
		\end{tikzpicture}
	\end{eqnarray*}
	We consider the following three distinct cases when introducing $\Delta_T(o_n)$. 
	\begin{enumerate}
		\item when $j=1$,
		$$\begin{tikzpicture}[scale=0.8, descr/.style={fill=white}]
			\tikzstyle{every node}=[thick, minimum size=5pt, inner sep=1pt]
			\node(r) at (0, -0.5)[minimum size=0pt, rectangle]{};
			\node(v-1) at(-2, 0.5)[minimum size=0pt, label=left:{ $\Delta_T(o_n)=$}]{};
			\node(dd) at(1.5, 0.5){;};
			\node(v0) at (0, 0)[draw, rectangle]{\small$o_n$};
			\node(v1-1) at (-1.3, 1){\tiny$ 1 $};
			\node(v1-2) at(0, 1)[draw, rectangle]{\small$e_1$};
			\node(v1-3) at(1.3, 1){\tiny$ n $};
			\node(v2-1)at (0, 1.8){\tiny$ i $};
			\draw(v0)--(v1-1);
			\draw(v0)--(v1-3);
			\draw(v1-2)--(v2-1);
			\draw[dotted](-0.5, 0.5)--(-0.1, 0.5);
			\draw[dotted](0.1, 0.5)--(0.5, 0.5);
			\path[-, font=\scriptsize]
			(v0) edge node[descr]{{\tiny$i$}} (v1-2);
			
			\node(vroot) at (0, -0.5){};
			\draw(v0)--(vroot);
		\end{tikzpicture}$$
		\item when $2\leqslant j\leqslant n-1$, $\Delta_T(o_n)=0$;
		\item when $j=n$,
		$$\begin{tikzpicture}[scale=0.8, descr/.style={fill=white}]
			\tikzstyle{every node}=[thick, minimum size=5pt, inner sep=1pt]
			\node(r) at (0, -0.5)[minimum size=0pt, rectangle]{};
			\node(va) at(2, 0.7)[minimum size=0pt, label=left:{ $\Delta_T(o_n)=$}]{};
			\node(dd) at(4.7, 0.6){.};
			\node(ve) at (3.5, 0)[draw, rectangle]{\small $e_1$};
			\node(ve1) at (3.5, 1)[draw, rectangle]{\small $o_n$};
			\node(ve2-1) at(2.5, 2){\tiny$ 1 $};
			\node(ve2-2) at(4.5, 2){\tiny$ n $};
			\draw(ve)--(ve1);
			\draw(ve1)--(ve2-1);
			\draw(ve1)--(ve2-2);
			\draw[dotted](3.1, 1.5)--(3.9, 1.5);
			
			\node(vroot) at (3.5, -0.5){};
			\draw(ve)--(vroot);
		\end{tikzpicture}$$
	\end{enumerate}
	
	\item For a tree $T$ of type $\mathrm{(II)}$,
	
	\begin{eqnarray*}
		\begin{tikzpicture}[scale=1.2, descr/.style={fill=white}]
			\tikzstyle{every node}=[thick, minimum size=3pt, inner sep=1pt]
			
			\node(v-2) at (-2.5, 1.2)[minimum size=0pt, label=left:{$\Delta_T(o_n)=(-1)^\gamma \sgn (\sigma)$}]{};
			
			\node(sigma-1) at (2.8, 1.2){$\cdot  \sigma^{-1}, $};
			
			\begin{scope}[shift={(0,0.8)}, scale=1]  
				\node(v-1) at (0, -2.2)[rectangle, draw]{\tiny $o_{r_{1}}$}; 
				\node(v0-1) at (-2, -1){\tiny $ 1 $}; 
				\node(v0-3) at(2, -1){\tiny $ n $};	
				
				\node(v-1-2) at (0, -1.6)[rectangle, draw]{\tiny $o_{r_{2}}$}; 
				\node(v0-1-2) at (-2, -0.4){\tiny $ \eta_{1} $}; 
				\node(v0-3-2) at(2, -0.4){\tiny $ \xi_{2} $};
				\node(v0-2-1-2) at(0, -0.9)[]{}; 
				\node(v0-2-2) at(0, -0.6)[]{}; 
				\draw(v-1-2)--(v0-1-2);
				\draw(v-1-2)--(v0-2-1-2);
				\draw(v-1-2)--(v0-3-2);
				\draw(v-1)--(v-1-2);
				\draw[dotted](v0-2-1-2)--(v0-2-2);
				\path[-, font=\scriptsize]
				
				(v-1) edge node[descr]{{\tiny$i_{1}$}} (v-1-2);
				\path[-, font=\scriptsize]
				(v-1-2) edge node[descr]{{\tiny$i_{2}$}} (v0-2-1-2);
				
				\draw[dotted](-0.5, -1.8)--(-0.1, -1.8);
				\draw[dotted](0.1, -1.8)--(0.5, -1.8);
				
				\draw[dotted](-0.5, -1.2)--(-0.1, -1.2);
				\draw[dotted](0.1, -1.2)--(0.5, -1.2);

				\node(vroot) at (0, -2.6){};
				\draw(v-1)--(vroot);
			\end{scope}
			\node(v0) at (0, 0.5)[rectangle, draw]{\tiny $o_{r_{t}}$};
			\node(v1-1) at (-2, 1.8){\tiny $ \eta_{t-1} $};
			\node(v1-2) at(0, 1.2)[rectangle, draw]{\tiny $e_{p}$};
			\draw[dotted](-0.6, 1)--(-0.1, 1);
			\draw[dotted](0.1, 1)--(0.6, 1);
			
			\node(v1-3) at(2, 1.8){\tiny $ \xi_{t} $};
			\node(v2-1) at(-1.9, 2.6){\tiny $ \eta_{t} $};
			\node(v2-2) at (-0.9, 2.8)[rectangle, draw]{\tiny $o_{r_{t+1}}$};
			\node(v2-3) at (0, 2.9){};
			\node(v2-4) at(0.9, 2.8)[rectangle, draw]{\tiny $o_{r_{p}}$};
			\node(v2-5) at(1.9, 2.6){\tiny $ \xi_{t+1} $};
			\node(v3-1) at (-1.6, 3.5){\tiny $ \eta_{t+1} $};
			\node(v3-2) at (-0.4, 3.5){\tiny $ \eta_{t+1} + r_{t+1} - 1 $};
			\node(v3-3) at (0.4, 3.5){\tiny $ \eta_{p} $};
			\node(v3-4) at(1.6, 3.5){\tiny $ \eta_{p} + r_{p} - 1 $};
			
			\draw(v0-2-2)--(v0);
			\draw(v0)--(v1-1);
			\draw(v0)--(v1-3);
			\draw(v-1)--(v0-1);
			\draw(v-1)--(v0-3);
			
			\path[-, font=\scriptsize]
			(v0) edge node[descr]{{\tiny$i_{t}$}} (v1-2);
			
			\draw(v1-2)--(v2-1);
			\draw(v1-2)--(v2-3);
			\draw(v1-2)--(v2-5);
			
			\path[-, font=\scriptsize]
			(v1-2) 	edge node[descr]{{\tiny$k_t$}} (v2-2)
			edge node[descr]{{\tiny$k_{p-1}$}} (v2-4);
			\draw(v2-2)--(v3-1);
			\draw(v2-2)--(v3-2);
			\draw(v2-4)--(v3-3);
			\draw(v2-4)--(v3-4);
			
			\draw[dotted](-0.5, 2.4)--(-0.1, 2.4);
			\draw[dotted](0.1, 2.4)--(0.5, 2.4);
			\draw[dotted](-1.4, 2.4)--(-0.8, 2.4);
			\draw[dotted](1.4, 2.4)--(0.8, 2.4);
			\draw[dotted](-1.2, 3.2)--(-0.7, 3.2);
			\draw[dotted](1.2, 3.2)--(0.7, 3.2);
		\end{tikzpicture}
	\end{eqnarray*}
	where
	\begin{align*}
		\gamma
		= \sum_{h=1}^{t} \Big(1-n-h+ \sum_{j=1}^{h} r_{j}\Big) ( r_{h} - i_{h})
		+ \sum_{j=t+1}^{p}(r_{j}-1) (p-k_{j-1})
		+ \sum_{j=1}^{p-1}(p-j)r_{j}
		+ (p-t)(p-1).
	\end{align*}
	\item For any other shuffle tree $T$ in $\mathrm{S}\frakT$, $\Delta_T = 0$.
\end{enumerate}

\medskip

\section{The Minimal model of the operad for Nijenhuis Lie algebras} \label{Section: minimal model NjL}


In this section, we will prove that the cobar construction of the symmetric homotopy cooperad $\NL^\ac$ is exactly the minimal model for $\NL$, the symmetric operad of Nijenhuis Lie algebras. Then we will introduce the notion of homotopy Nijenhuis Lie algebras. 

\smallskip

\subsection{The symmetric dg operad for homotopy Nijenhuis Lie algebras}\

\begin{defn} \label{defn: cobar minimal model NjL}
	The symmetric dg operad for homotopy Nijenhuis Lie algebras, denoted by $\NL_{\infty}$, is defined to be the cobar construction $\Omega(\NL^\ac)$.
\end{defn}

By Definition~\ref{defn: cobar construction symmetric}, the symmetric dg operad $\NL_{\infty} = \Omega(\NL^\ac)$ is the free symmetric operad generated by the graded $\mathbb{S}$-module $s^{-1} \overline{\NL^\ac}$ endowed with differential induced from the symmetric homotopy cooperad structure on $\NL^\ac$. More precisely,
$$ s^{-1} \overline{\NL^\ac}(1) = \bfk s^{-1}o_{1} \quad \mathrm{and} \quad s^{-1} \overline{\NL^\ac}(n) = \bfk s^{-1}e_{n} \oplus \bfk s^{-1}o_{n},\ n \geqslant 2. $$
Denote $\mu_{n} = s^{-1}e_{n}$ for $n \geqslant 2$ and $P_{n} = s^{-1} o_{n}$ for $n \geqslant 1$ respectively, so $|\mu_{n}| = n-2$ and $|P_{n}| = n-1$. A direct calculation shows that the action of differential on these generators in $\Omega(\NL^\ac)$ is given by, for $ n \geqslant 2 $, 
\begin{equation} \label{Eq: defining HNjL 1}
	\begin{aligned}
		\partial(\mu_n) 
		=& \ \sum_{j=2}^{n-1}\sum_{i=1}^{n-j+1}\sum_{\sigma \in \sh} \sgn (\sigma)
		(-1)^{i+j(n-i)}(\mu_{n-j+1}\circ_i \mu_j) \sigma^{-1}, 
	\end{aligned}
\end{equation}
and for $ n \geqslant 1 $, 
\begin{equation} \label{Eq: defining HNjL 2}
	\begin{aligned}
			\partial(P_{n}) 
			= & \sum_{ \substack{ r_1+\cdots+r_p =n
					\\	r_1, \dots, r_p \geqslant 1
					\\	2 \leqslant p \leqslant n} }
			\sum_{ t=0}^{p}
			\sum_{\substack{ 1 \leqslant i_{1} \leqslant r_{1} \\ \cdots \\ 1 \leqslant i_{t} \leqslant r_{t} }}
			\sum_{1 \leqslant k_{t} < \cdots < k_{p-1} \leqslant p}
			\sum_{\sigma \in \sh} \sgn (\sigma)
			(-1)^{\alpha'} \\
			& \Bigg(P_{r_{1}} \circ_{i_{1}}
			\bigg(
			P_{r_{2}} \circ_{i_{2}}
			\bigg(
			\cdots
			\circ_{i_{t-1}}
			\Big(
			P_{r_{t}} \circ_{i_{t}}
			\big(
			(
			\cdots
			((\mu_{p} \circ_{k_{t}} P_{r_{t+1}}) \circ_{k_{t+1}+r_{t+1}-1} P_{r_{t+2}})
			\cdots
			) \circ_{\beta} P_{r_{p}}
			\big)
			\Big)
			\cdots
			\bigg)
			\bigg)
			\Bigg) \sigma^{-1}, 
	\end{aligned}
\end{equation}
where
\begin{align}
	\beta &= k_{p-1}+r_{t+1} + \cdots + r_{p-1}-(p-1-t), \label{Eq. equation of beta}\\
	\alpha'
	&= 1+ \sum^{t}_{q=1} \big(i_{q} + (\sum_{s=q+1}^{p} r_{s}) (r_{q}-i_{q}) - q(r_{q}-i_{q})\big)
	+ \sum_{i=t+1}^{p}(k_{i-1} -p )(r_{i}-1). \label{Eq. equation of alpha}
\end{align}

Note that, the underlying graded space of $ \NL_{\infty} $ is spanned by the set of all shuffle trees generated by corollas.
We use the corolla with $n$ leaves and a solid vertex to represent generators $\mu_n$ for $ n\geqslant 2$, and the corolla with $n$ leaves and a hollow vertex to represent generators $P_n$ for $n\geqslant 1$:
\begin{eqnarray*}
	\begin{tikzpicture}[scale=0.7]
		\tikzstyle{every node}=[thick, minimum size=6pt, inner sep=1pt]
		\node(r)[fill=black, circle, label=right:$\mu_n$] at (0,0){};
		\node (a1) at (-1.3,1.3){$1$};
			\draw (r)--(a1);
		\node (a2) at (0,1.3){};
			\draw (r)--(a2);
		\node (a2) at (1.3,1.3){$n$};
			\draw (r)--(a2);
		\draw [dotted, line width=1pt] (-0.8, 1)--(-0.1, 1);
		\draw [dotted, line width=1pt] (0.8, 1)--(0.1, 1);

		\node(vroot) at (0, -0.6){};
		\draw(r)--(vroot);
	\end{tikzpicture}
	\hspace{8mm}
	\begin{tikzpicture}[scale=0.7]
		\tikzstyle{every node}=[thick, minimum size=6pt, inner sep=1pt]
		\node(r)[draw, circle, label=right: $P_n$] at (0,0){};
		\node (a1) at (-1.3,1.3){$1$};
			\draw (r)--(a1);
		\node (a2) at (0,1.3){};
			\draw (r)--(a2);
		\node (a2) at (1.3,1.3){$n$};
			\draw (r)--(a2);
		\draw [dotted, line width=1pt] (-0.8, 1)--(-0.1, 1);
		\draw [dotted, line width=1pt] (0.8, 1)--(0.1, 1);

		\node(vroot) at (0, -0.6){};
		\draw(r)--(vroot);
	\end{tikzpicture}
\end{eqnarray*}

A (planar labeled) tree with all vertices dyed solid or hollow (such a tree will be called a tree monomial) gives an element in $\NL_\infty$ by composing its vertices in the planar order.
Conversely, any element in $\NL_\infty$ can be represented by such a tree monomial in this way. 
In this means, the action of the differential $\partial$ on generators, i.e., Equations~\eqref{Eq: defining HNjL 1}-\eqref{Eq: defining HNjL 2}, can be expressed by trees as follows:
$$
\begin{tikzpicture}[scale=0.8]
	\tikzstyle{every node}=[thick, minimum size=6pt, inner sep=1pt]
	\node(a) at (-9.7, 0){\large $\partial$};
	\node[circle, fill=black, label=right:$\mu_n$] (b0) at (-8.5, -0.45) {};
	\node (b1) at (-9.3, 0.5) [minimum size=0pt, label=above:$1$]{};
	\node (b2) at (-8.5, 0.5) [minimum size=0pt]{};
	\node (b3) at (-7.7, 0.5) [minimum size=0pt, label=above:$n$]{};
	\draw (b0)--(b1);
	\draw (b0)--(b2);
	\draw (b0)--(b3);
	\draw [dotted, line width=1pt] (-9, 0.3)--(-8.6, 0.3);
	\draw [dotted, line width=1pt] (-8.4, 0.3)--(-8, 0.3);

	\node(vroot) at (-8.5, -0.9){};
	\draw(b0)--(vroot);
	\node(eq) at (-4.5, 0){$=\ \sum\limits_{j=2}^{n-1} \sum\limits_{i=1}^{n-j+1}
		\sum\limits_{\sigma \in \sh} \sgn (\sigma)
		(-1)^{i+j(n-i)} $};
	\begin{scope}[scale=0.9]
		\node(sigma) at (2.5, 0){$\cdot  \sigma^{-1} $};
		\node(e0) at (0, -1.5)[circle, fill=black, label=right:$\mu_{n-j+1}$]{};
		\node(e1) at(-1.5, 0){{\tiny$1$}};
		\node(e2-0) at (0, -0.5){{\tiny$i$}};
		\node(e3) at (1.5, 0){{\tiny{$n$}}};
		\node(e2-1) at (0, 0.5) [circle, fill=black, label=right: $\mu_j$]{};
		\node(e2-1-1) at (-1, 1.5){{\tiny$i$}};
		\node(e2-1-2) at (1, 1.5){{\tiny $i+j-1$}};
		\draw [dotted, line width=1pt] (-0.7, -0.5)--(-0.2, -0.5);
		\draw [dotted, line width=1pt] (0.3, -0.5)--(0.8, -0.5);
		\draw [dotted, line width=1pt] (-0.4, 1.2)--(0.4, 1.2);
		\draw (e0)--(e1);
		\draw (e0)--(e3);
		\draw (e0)--(e2-0);
		\draw (e2-0)--(e2-1);
		\draw (e2-1)--(e2-1-1);
		\draw (e2-1)--(e2-1-2);

		\node(vroot) at (0, -2){};
		\draw(e0)--(vroot);
	\end{scope}
	
\end{tikzpicture} 
$$
$$
\begin{tikzpicture}[scale=0.6]
	\tikzstyle{every node}=[thick, minimum size=6pt, inner sep=1pt]
	\begin{scope}[shift={(1,0)}, scale=0.8]
		\node(a) at (-4, 0){\large$\partial$};
		\node[circle, draw, label=right:$P_n$] (b0) at (-2, -0.7) {};
		\node (b1) at (-3.5, 1) [minimum size=0pt, label=above:$1$]{};
		\node (b2) at (-2, 1) [minimum size=0pt]{};
		\node (b3) at (-0.5, 1) [minimum size=0pt, label=above:$n$]{};
		\draw (b0)--(b1);
		\draw (b0)--(b2);
		\draw (b0)--(b3);
		\draw [dotted, line width=1pt] (-2.9, 0.6)--(-2.2, 0.6);
		\draw [dotted, line width=1pt] (-1.8, 0.6)--(-1.1, 0.6);

		\node(vroot) at (-2, -1.5){};
		\draw(b0)--(vroot);
	\end{scope}
	
	\node (eq2) at (7, -0.5){$=\ \sum\limits_{ \substack{ r_1+\cdots+r_p =n
				\\	r_1, \dots, r_p \geqslant 1
				\\	2 \leqslant p \leqslant n} }
		\sum\limits_{ t=0}^{p}
		\sum\limits_{\substack{ 1 \leqslant i_{1} \leqslant r_{1} \\ \cdots \\ 1 \leqslant i_{t} \leqslant r_{t} }}
		\sum\limits_{1 \leqslant k_{t} < \cdots < k_{p-1} \leqslant p}
		\sum\limits_{\sigma \in \sh} \sgn (\sigma)
		(-1)^{\alpha'} $};
\begin{scope}[shift={(8.5,-12)}, scale=1.2]  
	\tikzstyle{every node}=[thick, minimum size=6pt, inner sep=1pt]
	\node (sigma-1) at (6, 4){$
	\cdot \sigma^{-1}
		$};
	\node(1) at(0, 0)[circle, draw ]{};
	\node(1+) at(0.4, -0.4){\tiny $ P_{r_{1}} $};
	\node(1-1) at (-4, 1.5){\tiny $ 1 $};
	\node(1-2) at (0, 0.9){\tiny $ i_{1} $};
	\node(1-3) at (4, 1.5){\tiny $ n $};
	\draw(1)--(1-1);
	\draw(1)--(1-2);
	\draw(1)--(1-3);
	\draw[dotted, line width=1pt](-1, 0.6)--(1, 0.6);
	
	\node(2) at(0, 1.5)[circle, draw ]{};
	\node(2+) at(0.4, 1.1){\tiny $ P_{r_{2}} $};
	\node(2-1) at (-4, 3){\tiny $ \eta_{1} $};
	\node(2-2) at (0, 2.4){\tiny $ i_{2} $};
	\node(2-3) at (4, 3){\tiny $ \xi_{2} $};
	\draw(2)--(2-1);
	\draw(2)--(2-2);
	\draw(2)--(2-3);
	\draw(1-2)--(2);
	\draw[dotted, line width=1pt](-1, 2.1)--(1, 2.1);
	
	\node(3) at (0, 4)[circle, draw ]{};
	\node(3+) at(0.4, 3.6){\tiny $ P_{r_{t}} $};
	\node(3-) at (0, 3.45){};
	\draw[dotted, line width=1pt](2-2)--(3-);
	\draw(3-)--(3);
	\node(3-1) at (-4, 5.5){\tiny $ \eta_{t-1} $};
	\node(3-2) at (0, 4.9){\tiny $ i_{t} $};
	\node(3-3) at (4, 5.5){\tiny $ \xi_{t} $};
	\draw(3)--(3-1);
	\draw(3)--(3-2);
	\draw(3)--(3-3);
	\draw[dotted, line width=1pt](-1, 4.6)--(1, 4.6);
	
	\node(4) at (0, 5.5)[circle, fill=black]{};
	\draw(3-2)--(4);
	\node(3+) at(0.4, 5.1){\tiny $ \mu_{p} $};
	\node(4-0) at (-4, 7){\tiny $ \eta_{t} $};
	\node(4-1) at (-2.5, 7)[circle, draw, label=left:\tiny$ P_{r_{t+1}} $]{};
	\node(4-2) at (-0.5, 7)[circle, draw, label=left:\tiny$ P_{r_{t+2}} $]{};
	\node(4-3) at (2.5, 7)[circle, draw, label=left:\tiny$ P_{r_{p}} $]{};
	\node(4-4) at (4, 7){\tiny $ \xi_{t+1} $};
	
	\node(4-1-) at (-1.48, 6.4){\tiny$ k_{t} $};
	\node(4-2-) at (-0.3, 6.4){\tiny$ k_{t+1} $};
	\node(4-3-) at (1.5, 6.4){\tiny$ k_{p-1} $};
	
	\draw(4)--(4-0);
	\draw(4)--(4-1-);
	\draw(4)--(4-2-);
	\draw(4)--(4-3-);
	\draw(4)--(4-4);
	
	\draw(4-1-)--(4-1);
	\draw(4-2-)--(4-2);
	\draw(4-3-)--(4-3);
	\draw[dotted, line width=1pt](-1.2, 6)--(1.2, 6);
	
	\node(4-1-1) at (-5, 8){\tiny $ \eta_{t+1} $};
	\node(4-1-2) at (-2.9, 8){\tiny $ \eta_{t+1} + r_{t+1} - 1$};
	\draw(4-1)--(4-1-1);
	\draw(4-1)--(4-1-2);
	\draw[dotted, line width=1pt](4-1-1)--(4-1-2);
	
	\node(4-2-1) at (-0.8, 8){\tiny $ \eta_{t+2} $};
	\node(4-2-2) at (1.3, 8){\tiny $ \eta_{t+2} + r_{t+2} - 1$};
	\draw(4-2)--(4-2-1);
	\draw(4-2)--(4-2-2);
	\draw[dotted, line width=1pt](4-2-1)--(4-2-2);
	
	\node(4-3-1) at (3.1, 8){\tiny $ \eta_{p} $};
	\node(4-3-2) at (5, 8){\tiny $ \eta_{p} + r_{p} - 1 $};
	\draw(4-3)--(4-3-1);
	\draw(4-3)--(4-3-2);
	\draw[dotted, line width=1pt](4-3-1)--(4-3-2);

	\node(vroot) at (0, -0.8){};
	\draw(1)--(vroot);
\end{scope}
\end{tikzpicture}
$$

\smallskip

\subsection{The minimal model}\

The following result is the main result of this section.

\begin{thm} \label{Thm: Minimal model NjL}
	The symmetric dg operad $\NL_\infty$ is the minimal model of the symmetric operad $\NL$.
\end{thm}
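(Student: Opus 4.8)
The plan is to verify directly the three requirements in Definition~\ref{symmetric minimal model}, the only nontrivial one being the quasi-isomorphism. By Definition~\ref{defn: cobar minimal model NjL} the operad $\NL_\infty=\Omega(\NL^\ac)$ is quasi-free, generated by $M=\bigoplus_{n\geqslant 2}\bfk\mu_n\oplus\bigoplus_{n\geqslant 1}\bfk P_n$, so quasi-freeness is automatic. For decomposability, every tree monomial occurring in \eqref{Eq: defining HNjL 1} and \eqref{Eq: defining HNjL 2} has weight $\geqslant 2$, whence $\partial(M)\subset\mathcal F(M)^{(\geqslant 2)}$. For the filtration condition I would set $M_{(k)}:=\bfk\mu_{k+1}\oplus\bfk P_k$ for $k\geqslant 1$; then every factor $\mu_j$ appearing in $\partial(\mu_{k+2})$ has $j\leqslant k+1$, and every factor $\mu_p, P_{r_i}$ appearing in $\partial(P_{k+1})$ has $p\leqslant k+1$ and $r_i\leqslant k$ (since $p\geqslant 2$ forces each $r_i<k+1$), so that indeed $\partial(M_{(k+1)})\subset\mathcal F\big(\bigoplus_{i=1}^kM_{(i)}\big)$.

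It then remains to produce a surjective quasi-isomorphism. Regarding $\NL$ as a dg operad concentrated in degree $0$ with zero differential, define $\pi\colon\NL_\infty\to\NL$ to be the operad morphism determined by $\pi(\mu_2)=\mu$, $\pi(P_1)=P$, and $\pi(\mu_n)=0$, $\pi(P_n)=0$ for all higher generators. To see that $\pi$ is a chain map I would check $\pi\circ\partial=0$ on generators: for $n\geqslant 4$ every monomial in $\partial(\mu_n)$ contains a factor $\mu_j$ with $j\geqslant 3$, and for $n\geqslant 3$ every monomial in $\partial(P_n)$ contains a factor $\mu_p$ with $p\geqslant 3$ or a factor $P_{r_i}$ with $r_i\geqslant 2$; all such factors lie in $\ker\pi$. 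The remaining cases $\pi(\partial(\mu_3))$ and $\pi(\partial(P_2))$ recover exactly the Jacobi relation \eqref{Eq: operadic Lie} and the Nijenhuis relation \eqref{Eq: operadic Nijenhuis relation}, both of which vanish in $\NL$. Surjectivity is clear since $\mu,P$ generate $\NL$.

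To prove that $\pi$ is a quasi-isomorphism I would argue arity by arity, using that $|\mu_n|=n-2$ and $|P_n|=n-1$ are non-negative, so $\NL_\infty(N)$ is a non-negatively graded complex whose degree-$0$ part is the sub-operad $\mathcal F(\mu_2,P_1)$ freely generated by $\mu_2,P_1$. Since $\partial(\mu_2)=\partial(P_1)=0$, a degree-$1$ element is a combination of tree monomials having a single degree-$1$ vertex ($\mu_3$ or $P_2$), and $\partial$ differentiates only that vertex; hence the image of $\partial\colon\NL_\infty(N)_1\to\NL_\infty(N)_0$ is precisely the set of all insertions of the relations \eqref{Eq: operadic Lie} and \eqref{Eq: operadic Nijenhuis relation} into contexts built from $\mu_2,P_1$, i.e.\ the operadic ideal $I$ defining $\NL$. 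This yields $H_0(\NL_\infty)\cong\mathcal F(\mu_2,P_1)/I=\NL$, compatibly with $\pi$. The whole content of the theorem is thus the acyclicity $H_d(\NL_\infty)=0$ for $d\geqslant 1$.

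The acyclicity is the main obstacle. I would attack it through the increasing filtration of $\NL_\infty(N)$ by the number of Nijenhuis vertices $P_{(-)}$: by \eqref{Eq: defining HNjL 1} the Lie part $\partial_\mu$ preserves this number, while by \eqref{Eq: defining HNjL 2} the remaining part strictly increases it, so the $E^0$-differential of the associated spectral sequence is exactly the $L_\infty$-operad differential $\partial_\mu$ acting on the $\mu$-vertices, the $P$-vertices being inert. Using the classical minimal model $\mathbf{Lie}_\infty=\Omega(\mathbf{Lie}^{\ac})\xrightarrow{\ \sim\ }\mathbf{Lie}$, the $E^1$-page collapses onto the monomials whose maximal $\mu$-subtrees are reduced Lie trees, leaving the induced differential $d^1$ coming from the $P$-splitting terms of \eqref{Eq: defining HNjL 2}. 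I expect the cleanest route to be to exhibit an explicit contracting homotopy on this reduced complex, transporting the combinatorial cancellation already exploited for Nijenhuis associative algebras in \cite[Proposition~3.1]{SWZZ2024a} and in the proof of Proposition~\ref{prop: homotopy cooperad cobar construction NjL}, where each relevant tree appears twice with opposite signs. The degeneration of the spectral sequence would then force $E^\infty$ to be concentrated in degree $0$ and equal to $\NL$, so that $\pi$ is a quasi-isomorphism and $\NL_\infty$ is the minimal model by Definition~\ref{symmetric minimal model}.
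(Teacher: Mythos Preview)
Your verification of the minimal-model conditions (quasi-freeness, decomposability, the filtration $M_{(k)}=\bfk\mu_{k+1}\oplus\bfk P_k$) and your construction of $\pi$ together with the computation $H_0(\NL_\infty)\cong\NL$ are correct and agree with the paper's argument.

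The divergence is in the acyclicity step. The paper does \emph{not} use a spectral sequence; instead it introduces an explicit monomial order $\Xi$ on tree monomials (by arity, then by a weighted path-lexicographic comparison, then by labeling), identifies the leading terms of $\partial(\mu_n)$ and $\partial(P_n)$, and observes they coincide with those in the Rota--Baxter case \cite[Theorem~3.5]{Wang2024}. The homotopy is then imported verbatim from that reference. Your filtration by the number of $P$-vertices is a different organizing principle, and your computation of $E^1$ via Koszulness of $\mathbf{Lie}$ is plausible (after justifying the K\"unneth decomposition along maximal $\mu$-subtrees, which you do not spell out).

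There is, however, a genuine gap at the point where you write ``I expect the cleanest route to be to exhibit an explicit contracting homotopy on this reduced complex, transporting the combinatorial cancellation already exploited \ldots\ in \cite[Proposition~3.1]{SWZZ2024a} and in the proof of Proposition~\ref{prop: homotopy cooperad cobar construction NjL}.'' Both of those references establish $\partial^2=0$ (the pairwise sign cancellation in the \emph{square} of the differential), which is logically independent of the acyclicity of $(\NL_\infty,\partial)$; no contracting homotopy is produced there, and the cancellation mechanism does not transport to one. Moreover, your spectral sequence does not degenerate at $E^2$: since $\partial(P_n)$ increases the $P$-count by $p-1$ with $p$ ranging over $2,\ldots,n$, the differentials $d^r$ for $r\geqslant 2$ are generally nonzero, and you would still need to control all higher pages. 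As written, the acyclicity argument is not complete; the paper's monomial-order approach bypasses this by reducing to an already-proven case.
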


\begin{proof}
	We follow the line of proof for the minimal model of Rota-Baxter associative operad \cite{Wang2024} or the minimal model of Nijenhuis associative operad \cite{SWZZ2024a}. However, there are important differences.
	
	It can be easily seen that the differential $\partial$ on $\NL_\infty$ satisfies Conditions (i) and (ii) in Definition~\ref{symmetric minimal model}, the definition of minimal models.
	In order to prove Theorem~\ref{Thm: Minimal model NjL}, we only need to construct a surjective quasi-isomorphism of symmetric dg operads from $\NL_\infty$ to $\NL$, where $\NL$, defined in Definition~\ref{def: operad NjL}, is considered as a symmetric dg operad concentrated in degree $0$.
		
	Introduce a morphism of symmetric dg operads $\phi: \NL_\infty \twoheadrightarrow \NL$, which sends generators $\mu_{2}$ to $\mu$, $P_{1}$ to $P$, and all other generators to $0$.
	The degree zero part of $\NL_\infty$ is the free symmetric graded operad generated by $\mu_{2}$ and $P_{1}$. The image of $\partial$ in this degree zero part is the operadic ideal generated by $\partial (\mu_{3})$ and $\partial (P_{2}) $. By definition, we have:
	\begin{align*}
		\partial(\mu_3) &= \sum_{i=1}^{2} \sum_{\sigma \in \sh} \sgn (\sigma) (-1)^{i} (\mu_{2} \circ_{i} \mu_{2}) \sigma^{-1}, \\
		\partial (P_2)
		&= - (\mu_{2} \circ_{1} P_{1}) \circ_{2} P_{1}
		+ P_{1} \circ_{1} (\mu_{2} \circ_{1} P_{1})
		+ P_{1} \circ_{1} (\mu_{2} \circ_{2} P_{1})
		- (P_{1} \circ_{1} P_{1}) \circ_{1} \mu_{2}.
	\end{align*}
	Thus the map $\phi$ induces the isomorphism $\rmH_0(\NL_\infty, \partial) \cong \NL$.
	
	To prove the map $\phi: \NL_\infty \twoheadrightarrow \NL$ is a quasi-isomorphism, we just need to prove that $\rmH_i(\NL_\infty, \partial)=0$ for all $i\geqslant 1$. This will be achieved by constructing a homotopy map. To this end, we need to establish an (arity-graded) monomial order on $\NL_{\infty}$. The main difference from \cite{Wang2024} is that we need a different monomial order in order that the leading terms are the same as the case of Rota-Baxter associative operad. A more direct approach is to make minor modifications to the order in \cite{SWZZ2024a}, the case of Nijenhuis associative operad, specifically by ordering the labels of the leaves as well.
	
	By \cite[Chapter 5]{BD16}, each tree monomial $\mathcal{T}$ in $ \NL_{\infty} $ with $ n $ leaves can be represented by $\mathcal{T} = (X_{1}, X_{2}, \dots, X_{n};\sigma)$, where $ X_{i} $ is the word formed by generators of $ \NL_{\infty} $ corresponding to the vertices along the unique path from the root of $ \mathcal{T} $ to its leaf labeled by $ i $, and $\sigma \in \sh$ is the label of $\mathcal{T}$, i.e., $ \mathcal{T} $ is labeled by $\{ \sigma(1), \dots, \sigma(n)\}$.

Define a function $ \varphi $ on corollas as follows:
$$\begin{tikzpicture}[scale=0.45]
	\tikzstyle{every node}=[thick, minimum size=6pt, inner sep=1pt]
	\node(a) at (-4, 0.5){\large$\varphi$};
	\node[circle, fill=black, label=right:$\mu_n$] (b0) at (-2, -0.4) {};
	\node (b1) at (-3.5, 1.5) [minimum size=0pt, label=above:$1$]{};
	\node (b2) at (-2, 1.5) [minimum size=0pt]{};
	\node (b3) at (-0.5, 1.5) [minimum size=0pt, label=above:$n$]{};
	\draw (b0)--(b1);
	\draw (b0)--(b2);
	\draw (b0)--(b3);
	\draw [dotted, line width=1pt] (-3, 1)--(-2.2, 1);
	\draw [dotted, line width=1pt] (-1.8, 1)--(-1, 1);
	\node(0) at (3,0.5){$:=n-1(n\geqslant2)$,};

	\node(vroot) at (-2, -1.4){};
	\draw(b0)--(vroot);
\end{tikzpicture}
\quad
\begin{tikzpicture}[scale=0.45]
	\tikzstyle{every node}=[thick, minimum size=6pt, inner sep=1pt]
	\node(a) at (-4, 0.5){\large$\varphi$};
	\node[circle, draw, label=right:$P_n$] (b0) at (-2, -0.4) {};
	\node (b1) at (-3.5, 1.5) [minimum size=0pt, label=above:$1$]{};
	\node (b2) at (-2, 1.5) [minimum size=0pt]{};
	\node (b3) at (-0.5, 1.5) [minimum size=0pt, label=above:$n$]{};
	\draw (b0)--(b1);
	\draw (b0)--(b2);
	\draw (b0)--(b3);
	\draw [dotted, line width=1pt] (-3, 1)--(-2.2, 1);
	\draw [dotted, line width=1pt] (-1.8, 1)--(-1, 1);
	\node(0) at (3,0.5){{$:=2n-1(n\geqslant1)$.}};

	\node(vroot) at (-2, -1.4){};
	\draw(b0)--(vroot);
\end{tikzpicture}
$$

Now, we establish a monomial order $ \Xi $ as we need. For two tree monomials $\mathcal{T} = (X_1, \dots, X_n; \sigma)$, $\mathcal{T}' = (Y_1, \dots, Y_m ; \sigma^{\prime})$, we compare $\mathcal{T}, \mathcal{T}'$ in the following way:
\begin{enumerate}
	\item Compare arity. If $n > m$, then $\mathcal{T}>\mathcal{T}'$; 
	\item Compare the first path. If $n = m$, we compare $\mathcal{T}, \mathcal{T}'$ as follows: 
	Let $ X_{1}=x_{1} x_{2} \cdots x_{r} $ and $ Y_{1}= y_{1} y_{2} \cdots y_{s} $ be the paths from the root of $ \mathcal{T} $ and $ \mathcal{T}' $, respectively, to the leaf labeled by $ 1 $.
	\begin{enumerate}
		\item Compare $ \varphi(X_{1}):= \varphi(x_{1}) + \cdots + \varphi(x_{r}) $ and $ \varphi(Y_{1}):= \varphi(y_{1}) + \cdots + \varphi(y_{s}) $, that is, if $ \varphi(X_{1}) > \varphi(Y_{1}) $, then $ \mathcal{T} > \mathcal{T}' $;
		\item If $ \varphi(X_{1}) = \varphi(Y_{1}) $, then compare the following words:
		\begin{eqnarray*}
			\underbrace{x_{1} \cdots x_{1}}_{\varphi(x_{1})} \underbrace{x_{2} \cdots x_{2}}_{\varphi(x_{2})} \cdots \underbrace{x_{r} \cdots x_{r}}_{\varphi(x_{r})}
			\quad \mathrm{and} \quad
			\underbrace{y_{1} \cdots y_{1}}_{\varphi(y_{1})} \underbrace{y_{2} \cdots y_{2}}_{\varphi(y_{2})} \cdots \underbrace{y_{s} \cdots y_{s}}_{\varphi(y_{s})},
		\end{eqnarray*}
		that is if the former is greater than the latter with respect to the lexicographic order induced by the following total order:
		$$ P_{1} < \mu_{2} < P_{2} < \mu_{3} < \cdots < P_{n} < \mu_{n+1} < P_{n+1} < \cdots, $$
		then $ \mathcal{T} > \mathcal{T}' $.
	\end{enumerate}
	\item Compare the other paths. If we can't compare $ \mathcal{T} $ and $ \mathcal{T}' $ using paths ending at the leaves labeled by $ 1 $, then repeat the process, outlined in step (ii), for their leaves labeled by $ 2, 3, \dots $, in turn until the size is distinguished.
	\item Compare label. If we still can't compare $ \mathcal{T} $ and $ \mathcal{T}' $ according to the steps above, then compare their labels $ \sigma $ and $ \sigma^{\prime} $ by the natural lexicographic order, that is, if $ \sigma > \sigma^{\prime} $, then $ \mathcal{T} < \mathcal{T}' $.
\end{enumerate}
	
	It is ready to see that this is indeed a monomial order.
	Under this order, the leading term in the expansion of $\partial(\mu_n)$, $\partial(P_n)$ are the following tree monomials respectively:
$$\begin{tikzpicture}[scale=0.7]
		\tikzstyle{every node}=[thick, minimum size=4pt, inner sep=1pt]
		\node(1) at (0, 0) [draw, circle, fill=black, label=right:$\ \mu_{n-1}$]{};
		\node(2-1) at (-1, 1) [draw, circle, fill=black, label=right: $\ \mu_2$]{};
		\node(3-1) at (-2, 2){$ 1 $};
		\node(3-2) at (0, 2){$ 2 $};
		\node(2-2) at (0, 1){};
		\node(2-3) at (1, 1){$ n $};
		\draw (1)--(2-1);
		\draw (1)--(2-2);
		\draw (1)--(2-3);
		\draw (2-1)--(3-1);
		\draw (2-1)--(3-2);
		\draw [dotted, line width=1pt](-0.4, 0.5)--(0.4, 0.5);

		\node(vroot) at (0,-0.4){};
		\draw(1)--(vroot);
	\end{tikzpicture}
	\hspace{8mm}
	\begin{tikzpicture}[scale=0.7]
		\tikzstyle{every node}=[thick, minimum size=4pt, inner sep=1pt]
		\node(1) at (0, 0) [draw, circle, label=right:$\ P_{n-1}$]{};
		\node(2-1) at (-1, 1) [draw, circle, fill=black, label=right: $\ \mu_2$]{};
		\node(3-1) at (-2, 2)[draw, circle, label=right: $P_1$]{};
		\node(3-2) at (0, 2){$ 2 $};
		\node(2-2) at (0, 1){};
		\node(2-3) at (1, 1){$ n $};
		\node(4) at (-3, 3){$ 1 $};
		\draw (1)--(2-1);
		\draw (1)--(2-2);
		\draw (1)--(2-3);
		\draw (2-1)--(3-1);
		\draw (2-1)--(3-2);
		\draw (3-1)--(4);
		\draw [dotted, line width=1pt](-0.4, 0.5)--(0.4, 0.5);

		\node(vroot) at (0,-0.4){};
		\draw(1)--(vroot);
	\end{tikzpicture}$$

	Once the leading terms are seen to be the same as the case of Rota-Baxter associative operad, the remaining part of the proof carries verbatim as that of \cite[Theorem 3.5]{Wang2024}.	
\end{proof}

\smallskip

\subsection{Homotopy Nijenhuis Lie algebras} \label{Homotopy Nijenhuis Lie Algebras}\

We can now introduce the notion of homotopy Nijenhuis Lie algebras.

\begin{defn} \label{defn. homotopy Nijenhuis Lie algebra via dg operad}
	Let $(V,d_V)$ be a complex. Then a homotopy Nijenhuis Lie algebra (or a $\NL_\infty$-algebra) on $V$ is defined to be a morphism of symmetric dg operads from $\NL_\infty$ to the symmetric endomorphism dg operad $\End_V$.
\end{defn}

Let $(V, d_V)$ be an algebra over the symmetric dg operad $\NL_\infty$.
Still denote by
$ \mu_{n}: V^{\ot n} \rightarrow V, n \geqslant 2$ (resp. $ P_{n}: V^{\ot n} \rightarrow V, n \geqslant 1$) the image of $\mu_{n} \in \NL_\infty$ (resp. $P_{n}\in \NL_\infty$).
We also rewrite $\mu_{1} := d_V$.
For each $n \geqslant 1$, Equations~\eqref{Eq: defining HNjL 1}-\eqref{Eq: defining HNjL 2} are respectively equivalent to:
\begin{eqnarray} \label{Eq: jacobi-id NjL}
	\sum_{ i+j+k= n \atop
		i, k\geqslant 0, j\geqslant 1 } \sum_{\sigma \in \sh} \sgn (\sigma)
	(-1)^{i+jk}
	\big( \mu_{i+1+k}\circ (\id^{\ot i}\ot \mu_j\ot \id^{\ot k} ) \big) \sigma^{-1} =0
\end{eqnarray} 
and
\begin{equation} \label{Eq: homotopy NjL-operator-version-2}
	\begin{aligned}
		\sum_{ r_1+\cdots+r_p =n
			\atop
			r_1, \dots, r_p \geqslant 1, p \geqslant 1 }
		\sum_{ t=0}^{p}
		\sum_{\substack{i_{1}+k_{1}+1=r_{1} \\ \dots \\ i_{t}+k_{t}+1=r_{t} \\ i_{1}, \dots, i_{t} \geqslant 0 \\ k_{1}, \dots, k_{t} \geqslant 0 }}
		\sum_{\substack{h_{t}+ \cdots + h_{p} =t \\ h_{t}, \dots, h_{p} \geqslant 0}}
		\sum_{\sigma \in \sh} \sgn (\sigma)
		(-1)^{\alpha} &\\
		\bigg(
		P_{r_{1}} \circ \Big( \id^{\ot i_{1}} \ot P_{r_{2}} \circ \Big( \id^{\ot i_{2}} \ot \cdots \ot P_{r_{t}} \circ \big(\id^{\ot i_{t}} \ot & \\
		\mu_{p} \circ (\id^{h_{t}} \ot P_{r_{t+1}} \ot \id^{\ot h_{t+1}} \ot P_{r_{t+2}} \ot \cdots \ot P_{r_{p-1}} \ot & \id^{h_{p-1}} \ot P_{r_{p}} \ot \id^{\ot h_{p}}) \\
		& \ot \id^{k_{t}}\big) \cdots \ot \id^{\ot k_{2}}\Big)\ot \id^{\ot k_{1}}\Big)
		\bigg) \sigma^{-1} =0, 
	\end{aligned}
\end{equation}
where
\begin{eqnarray*}
	\alpha
	= \sum^{t}_{q=1} \big(r_{q} + k_{q} + (\sum_{s=q+1}^{p} r_{s}) k_{q} - qk_{q}\big)
	+ \sum_{i=t+1}^{p} \big(\sum_{j=t}^{i-1} h_{j} + i - t -p \big)(r_{i}-1).
\end{eqnarray*}

Equation~(\ref{Eq: jacobi-id NjL}) is exactly the generalized Jacobi identity as introduced in Definition~\ref{Def: L-infty}. In particular, the operator $\mu_1$ is a differential on $V$, and the operator $\mu_2$ is a Lie bracket up to homotopy, in other words, $\mu_2$ induces a Lie algebra structure on the homology $\rmH_{\bullet}(V, \mu_{1})$.

For $n=1, 2$, Equation~(\ref{Eq: homotopy NjL-operator-version-2}) gives
\begin{eqnarray} \label{operator-differential NjL} 
	\mu_1\circ P_1=P_1\circ \mu_1
\end{eqnarray}
and 
\begin{eqnarray} \label{NjO-homotopy NjL} 
	& \mu_2\circ(P_1\ot P_1)-P_1\circ \mu_2\circ (\id\ot P_1)-P_1\circ \mu_2\circ (P_1\ot \id) + P_1\circ P_1\circ \mu_2 \\
	\notag &= - (\mu_1\circ P_2+P_2\circ (\id\ot \mu_1)+P_2\circ(\mu_1\ot \id)).
\end{eqnarray}
Equation~(\ref{operator-differential NjL}) implies that $P_1: (V, \mu_1) \rightarrow (V, \mu_1)$ is a chain map, thus $P_1$ is well-defined on $\rmH_\bullet(V, \mu_1)$; Equation~(\ref{NjO-homotopy NjL}) indicates that $P_1$ is a Nijenhuis operator with respect to $\mu_2$ up to homotopy, whose obstruction is exactly the operator $P_2$. As a consequence, $(\rmH_\bullet(V, \mu_1), \mu_2, P_1)$ is a Nijenhuis Lie algebra.

\medskip

\section{From minimal model to $L_\infty$-algebras} \label{Section: From minimal model to Linifnity algebras NjL}

In this section, we will use the minimal model $\NL_\infty$, or more precisely the Koszul dual symmetric homotopy cooperad ${\NL^{\ac}}$, to determine the deformation complex as well as the $L_\infty$-algebra structure on it for Nijenhuis Lie algebras. 

\smallskip

\subsection{The $L_\infty$-algebra structure on deformation complex}\ \label{subSec. The L-algebra structure on deformation complex}

According to Proposition-Definition~\ref{prop-def. convolution symmetric homotopy operad}, the construction of convolution symmetric homotopy operad, and Proposition~\ref{prop: S homotopy operad L infty}, we obtain the following $L_{\infty}$-algebra:

\begin{defn} \label{Defn. L-infty algebra as we need}
	Let $V$ be a graded space. Introduce an $L_{\infty}$-algebra associated to $V$ as $\frakC_{\NjL}(V):=\mathbf{Hom}({\NL^\ac}, \End_V)^{\prod}_{\mathbb{S}}$. 
\end{defn}

Now, let's determine the $L_\infty$-algebra $\frakC_{\NjL}(V)$ explicitly. The sign rules in the symmetric homotopy cooperad ${\NL^\ac}$ are complicated, so we need some transformations. Notice that there is a natural isomorphism of symmetric graded operad
$ \mathbf{Hom}(\cals, \End_{sV}) \cong \End_V. $
Explicitly, any $h \in \End_V(n)$ corresponds to an element $\hbar \in\mathbf{Hom}(\cals, \End_{sV})(n)$ which is defined as 
$$\big(\hbar(\delta_n)\big)(sv_1\ot\cdots\ot sv_n) := (-1)^{\sum\limits_{k=1}^{n-1}\sum\limits_{j=1}^k|v_j|}(-1)^{(n-1)|h|}sh(v_1\ot \cdots \ot v_n)$$ 
for any homogeneous elements $v_1, \dots, v_n\in V$.

Thus we have the following isomorphisms of symmetric homotopy operads:
\begin{align*}
	\mathbf{Hom}\big({\NL^\ac}, \End_V\big)
	\cong&\ \mathbf{Hom}\big({\NL^\ac}, \mathbf{Hom}(\cals, \End_{sV})\big)
	\\
	\cong&\ \mathbf{Hom}\big({\NL^\ac}\ot_{\mathrm{H}}\cals, \End_{sV}\big)
	\\
	=&\ \mathbf{Hom}\big({\mathscr{S}({\NL^\ac})}, \End_{sV}\big).
\end{align*}
We obtain 
$$\frakC_{\NjL}(V)\cong \mathbf{Hom}\big({\mathscr{S}({\NL^\ac})}, \End_{sV}\big)_{\mathbb{S}}^{\prod}.$$
Recall that ${\mathscr{S}({\NL^\ac})}(n)=\bfk u_n\oplus \bfk v_n$ with $|u_n|=0$ and $|v_n|=1$.
By definition 
$$\mathbf{Hom}\big({\mathscr{S}({\NL^\ac})}, \mathrm{End}_{sV}\big)_{\mathbb{S}}(n)
=\Hom(\bfk u_n\oplus \bfk v_n, \Hom((sV)^{\odot n}, sV)).$$
Each $f\in \Hom((sV)^{\odot n}, sV)$ and $g\in \Hom((sV)^{\odot n}, V)$ determines bijectively a map
$\widetilde{f} \in \Hom(\bfk u_n, \Hom((sV)^{\odot n}, sV)) $ and $ \widehat{g} \in \Hom(\bfk v_n, \Hom((sV)^{\odot n}, sV))$, respectively, 
such that $\widetilde{f}(u_n)=f$ and $\widehat{g}(v_n)=(-1)^{|g|}sg$.

Denote
$$\frakC_{\Lie}(V) = \Hom(\overline{S^c} (sV), sV) \quad \mathrm{and} \quad \frakC_{\NjO}(V) = \Hom(\overline{S^c} (sV), V).$$
In this way, we identify $\frakC_{\NjL}(V)$ with $\frakC_{\Lie}(V)\oplus \frakC_{\NjO}(V)$.

Notice that for a Nijenhuis Lie algebra $\frakg=(\frakg, m, P)$, considered as a graded space concentrated in degree zero, $\frakC_{\NjL}(\frakg)$ is just the underlying space of the cochain complex of Nijenhuis Lie algebra $\frakg$ up to shift.

By the general theory recalled in Section~\ref{Subsection: Symmetric homotopy (co)operads}, direct inspections of Proposition~\ref{prop: S homotopy operad L infty} gives the $L_\infty$-algebra structure $\{l_n\}_{n\geqslant 1}$ on $\frakC_{\NjL}(V)$ as follows:
\begin{enumerate}
	
	\item For homogeneous elements $sf, sh\in \mathfrak{C}_{\Lie}(V)$, define 
	$$l_2(sf\ot sh):= [sf, sh]_{\RNA}\in\mathfrak{C}_{\Lie}(V), $$
	where the operation $[-, -]_{\RNA}$ is the Richardson-Nijenhuis bracket defined by Equation~(\ref{Eq: Richardson-Nijenhuis}).
	
	\item \label{Defn. define l b+1}
	Let $n\geqslant 1$. For homogeneous elements $sh\in \Hom((sV)^{\odot n}, sV)\subset \mathfrak{C}_{\Lie}(V)$ and $g_1, \dots, g_n\in \mathfrak{C}_{\NjO}(V)$, 	define 
	$$l_{n+1}(sh\ot g_1\ot \cdots \ot g_n)\in \mathfrak{C}_{\NjO}(V)$$ 
	as :
	\begin{align*}
		\qquad l_{n+1} & (sh\ot g_1\ot \cdots \ot g_n) := 
		\sum_{\sigma\in \mathbb{S}_n}(-1)^{\eta} \Big( \sum_{k=0}^n (-1)^{\xi} s^{-1} \circ (sg_{\sigma (1)})
		\{ \cdots
		\{
		sg_{\sigma(k)}
		\{sh
		\{sg_{\sigma(k+1)}, \dots, sg_{\sigma(n)}\}
		\}
		\}
		\cdots \}
		\Big), 
	\end{align*}
	where 
	$(-1)^{\eta} = \chi(\sigma; g_1, \dots, g_n) (-1)^{ n(|h|+1)+ \sum\limits_{p=1}^{n-1} \sum\limits_{j=1}^{p} |g_{\sigma(j)}|}$,
	$(-1)^{\xi} = (-1)^{(|h|+1) ( \sum\limits_{i=1}^{k}(|g_{\sigma(i)}|+1) ) + k}$,
	and the operation $ -\{-\} $ is the shuffle brace operation introduced in Section~\ref{Section: Cohomology and homotopy theory of Lie algebras}.	
	
	\item Let $n\geqslant 1$. For homogeneous elements $sh\in \Hom((sV)^{\odot n}, sV)\subset \mathfrak{C}_{\Lie}(V)$ and $g_1, \dots, g_n\in \mathfrak{C}_{\NjO}(V)$, for $1\leqslant k\leqslant n$, define $$l_{n+1}(g_1\ot \cdots\ot g_k\ot sh \ot g_{k+1}\ot \cdots\ot g_n)\in \mathfrak{C}_{\NjO}(V)$$ to be
	\begin{align*}
		l_{n+1}(g_1\ot \cdots\ot g_k\ot sh \ot g_{k+1}\ot \cdots\ot g_n):=(-1)^{(|h|+1)(\sum\limits_{j=1}^k|g_j|)+k}l_{n+1}(sh\ot g_1\ot \cdots \ot g_n),
	\end{align*}
	where the RHS has been introduced in \eqref{Defn. define l b+1}.	
	
	\item All other components of operators $\{l_n\}_{n\geqslant 1}$ vanish.
\end{enumerate}

	
	
With this $L_\infty$-algebra, we can use the Maurer-Cartan method to redefine homotopy Nijenhuis Lie algebras, which is equivalent to Definition~\ref{defn. homotopy Nijenhuis Lie algebra via dg operad} according to Proposition~\ref{Prop: Linfinity give MC S}, as follows:

\begin{defn} \label{Def: homotopy NjL algebras}
	Let $V$ be a graded space. A homotopy Nijenhuis Lie algebra structure (or a $\NL_\infty$-algebra structure) on $V$ is defined to be a Maurer-Cartan element in the $L_\infty$-algebra ${\frakC_{\NjL}}(V)$.
\end{defn}

Let's make the definition explicit. Given an element 
$
\alpha=(\{b_i\}_{i\geqslant 1}, \{R_i\}_{i\geqslant 1})\in {\mathfrak{C}_{\NjL}}(V)_{-1}
$
with $b_i:(sV)^{\odot i}\rightarrow sV$ and $R_i:(sV)^{\odot i}\rightarrow V$, then $\alpha$ satisfies the Maurer-Cartan equation if and only if for each $n\geqslant 1$, the following equations hold:
\begin{eqnarray} \label{Eq: L-infinity NjL}
	\sum_{i=1}^{n} b_{n-i+1}\{b_{i}\} 
	=0, 
\end{eqnarray}
\begin{eqnarray} \label{Eq: homotopy-NjL-operator NjL}
	\sum_{ r_1+\cdots+r_p =n \atop
		r_1, \dots, r_p \geqslant 1, p \geqslant 1 } \sum_{ t=0}^{p} (-1)^{t}
	(sR_{r_{1}})
	\{ \cdots
	\{
	sR_{r_{t}}
	\{b_{p}
	\{sR_{r_{t+1}}, \dots, sR_{r_{p}}\}
	\}
	\}
	\cdots \} = 0.
\end{eqnarray}

Note that, two fixed isomorphisms \eqref{Eq: first can isom} and \eqref{Eq: second can isom} give the equivalences between Equation~\eqref{Eq: L-infinity NjL} and Equation~\eqref{Eq: jacobi-id NjL}, and between Equation~\eqref{Eq: homotopy-NjL-operator NjL} and Equation~\eqref{Eq: homotopy NjL-operator-version-2}, when we denote
$$ \mu_{n} = \widetilde{b_{n}} = s^{-1} \circ b_{n} \circ s^{\ot n}: V^{\wg n} \rightarrow V 
\quad \mathrm{and} \quad 
P_{n} = \widehat{R_{n}} = R_{n} \circ s^{\ot n}: V^{\wg n} \rightarrow V.$$


\smallskip

\subsection{Realising Nijenhuis Lie algebra structures as Maurer-Cartan elements}\

In this subsection, we will see that the cohomology of Nijenhuis Lie algebras introduced in Subsection \ref{Subsec: cohomology NjL} can be obtained using the twisting procedure in the $L_\infty$-algebra introduced in previous subsection.

The following result can be directly verified by definition. It can also be deduced from Proposition~\ref{Prop: Linfinity give MC S}.

\begin{prop} \label{Prop: NjL is MC element} 
	Let $V$ be an ungraded space considered as a graded space concentrated in degree 0. Then a Nijenhuis Lie algebra structure on $V$ is equivalent to a Maurer-Cartan element in the $L_\infty$-algebra ${\mathfrak{C}_{\NjL}}(V)$.
\end{prop}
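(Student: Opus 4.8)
The plan is to combine the explicit description of Maurer--Cartan elements recorded in the previous subsection with a degree count. By Definition~\ref{Def: homotopy NjL algebras} an MC element of $\frakC_{\NjL}(V)$ is a pair $\alpha = (\{b_i\}_{i\geqslant 1}, \{R_i\}_{i\geqslant 1})$ of degree $-1$ satisfying equations \eqref{Eq: L-infinity NjL} and \eqref{Eq: homotopy-NjL-operator NjL}; equivalently, through the canonical isomorphisms \eqref{Eq: first can isom} and \eqref{Eq: second can isom}, it amounts to a family of operators $\mu_n = \widetilde{b_n}\colon V^{\wedge n}\to V$ and $P_n = \widehat{R_n}\colon V^{\wedge n}\to V$ with $|\mu_n| = n-2$ and $|P_n| = n-1$, subject to the operator forms \eqref{Eq: jacobi-id NjL} and \eqref{Eq: homotopy NjL-operator-version-2}. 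So it suffices to identify these data when $V$ is concentrated in degree $0$.

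First I would run the degree argument. Since every homogeneous map $V^{\wedge n}\to V$ is necessarily of degree $0$, the constraints $|\mu_n| = n-2$ and $|P_n| = n-1$ force $\mu_n = 0$ for $n\neq 2$ and $P_n = 0$ for $n\neq 1$; in particular $\mu_1 = 0$, so there is no differential. Hence the only surviving data are a skew-symmetric bilinear map $\mu := \mu_2$ and a linear endomorphism $P := P_1$. Next I would specialize the two operator equations. In \eqref{Eq: jacobi-id NjL} a summand is nonzero only when the inner $\mu_j$ has $j=2$ and the outer $\mu_{i+1+k}$ has $i+k=1$, which occurs solely at $n=3$ and is exactly the Jacobi identity, so $(V,\mu)$ is a Lie algebra. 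In \eqref{Eq: homotopy NjL-operator-version-2} (equivalently the brace form \eqref{Eq: homotopy-NjL-operator NjL}) a summand survives only when every $r_i = 1$ and $p=2$, which occurs solely at $n=2$ and is precisely equation \eqref{NjO-homotopy NjL}; since $\mu_1 = 0$ and $P_2 = 0$ here, its right-hand side vanishes, leaving the Nijenhuis relation \eqref{Eq: Nijenhuis relation in terms of maps}. Thus $\alpha$ yields a Nijenhuis Lie algebra $(V,\mu,P)$ in the sense of Definition~\ref{Def:Nijenhuis Lie algebras}. Alternatively, one may simply invoke that $(\rmH_\bullet(V,\mu_1),\mu_2,P_1)$ is always a Nijenhuis Lie algebra, as derived in the previous subsection, and note that $\rmH_\bullet(V,0)=V$.

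For the converse, given a Nijenhuis Lie algebra $(V,\mu,P)$ I would define $b_2$ and $R_1$ as the preimages of $\mu$ and $P$ under \eqref{Eq: first can isom} and \eqref{Eq: second can isom}, set all other components to zero, and observe that the specializations above show \eqref{Eq: L-infinity NjL} and \eqref{Eq: homotopy-NjL-operator NjL} hold. These two assignments are mutually inverse, which gives the claimed bijection and also matches the deduction from Proposition~\ref{Prop: Linfinity give MC S}. The only step requiring genuine attention is confirming that the Koszul signs $(-1)^t$ and the shuffle-brace expansions in the $n=2$ and $n=3$ specializations reproduce the classical Jacobi and Nijenhuis relations with unit coefficients rather than spurious constants; but this is exactly what the derivations of \eqref{Eq: jacobi-id NjL}--\eqref{NjO-homotopy NjL} in the preceding subsection already secure, so I expect no substantial obstacle and no genuinely new computation.
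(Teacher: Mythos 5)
Your proposal is correct and is precisely the ``direct verification by definition'' that the paper invokes for this proposition (the paper gives no further detail, merely noting it can also be deduced from Proposition~\ref{Prop: Linfinity give MC S}); your degree count forcing $\mu_n=0$ for $n\neq 2$ and $P_n=0$ for $n\neq 1$, followed by the specialization of Equations~\eqref{Eq: L-infinity NjL} and \eqref{Eq: homotopy-NjL-operator NjL} to the Jacobi and Nijenhuis relations, matches the computation the paper carries out explicitly in the proof of Proposition~\ref{Prop: justifying Cohomology theory of Nijenhuis operator}(i). No gaps.
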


Let us justify the cohomology theory of Nijenhuis Lie algebras introduced in Section~\ref{Sect: Cohomology theory of Nijenhuis Lie algebras}.

\begin{prop} \label{Prop: cohomology complex as the underlying complex of L infinity algebra NjL}
	Let $\frakg = (\frakg, \mu, P)$ be a Nijenhuis Lie algebra. Twist the $L_\infty$-algebra $\mathfrak{C}_{\NjL}(\frakg)$ by the Maurer-Cartan element corresponding to the Nijenhuis Lie algebra structure, then its underlying complex is isomorphic to
	$s\C^\bullet_{\NjL}(\frakg)$, the shift of the cochain complex of Nijenhuis Lie algebra $\frakg$ defined in Subsection \ref{Subsec: cohomology NjL}.
\end{prop}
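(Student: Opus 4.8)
The plan is to compute the twisted differential $l_1^\alpha$ explicitly on each summand of $\frakC_{\NjL}(\frakg) = \frakC_{\Lie}(\frakg)\oplus\frakC_{\NjO}(\frakg)$ and match it, block by block, with the differential $\delta_{\NjL}$ of Definition~\ref{Def: definition of Nijenhuis cohomology} transported through the suspension. First I would pin down the Maurer--Cartan element $\alpha$ attached to $\frakg$ by Proposition~\ref{Prop: NjL is MC element}: since $\frakg$ is concentrated in degree $0$, only the arity-$2$ Lie component and the arity-$1$ Nijenhuis component survive, so $\alpha = b_2 + R_1$ with $\widetilde{b_2}=\mu$ and $\widehat{R_1}=P$ under the isomorphisms \eqref{Eq: first can isom}--\eqref{Eq: second can isom}. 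The untwisted $L_\infty$-algebra has $l_1=0$ (there is no $d_V$ on the ungraded space $\frakg$, and $\NL^\ac$ carries no weight-$1$ cooperation), so the twisting formula \eqref{Eq: twisted L infinity algebra} of Proposition~\ref{Prop: deformed-L-infty} specialises for $n=1$ to $l_1^\alpha(x)=\sum_{i\geqslant 1}(-1)^{i+\frac{i(i-1)}{2}}\tfrac{1}{i!}\,l_{1+i}(\alpha^{\otimes i}\otimes x)$, and it suffices to evaluate this on homogeneous $x$ in each summand. The resulting map is automatically ``lower triangular'' with respect to the decomposition, since the only brackets with a $\frakC_{\Lie}$-output are $l_2$ on two Lie inputs, while every bracket with a $\frakC_{\NjO}$-output takes exactly one Lie input — this is precisely the mapping-cone shape of $\delta_{\NjL}$.

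For $x=sf\in\frakC_{\Lie}(\frakg)$ I would separate the contributions of $b_2$ and $R_1$. The Lie-to-Lie part comes solely from $i=1$ using the Lie part of $\alpha$, giving $l_2(b_2\otimes sf)=[b_2, sf]_{\RNA}$, which is identified with $\delta_{\Lie}(f)$ up to sign via Proposition~\ref{Prop. relation between differential and MC Lie} (in the suspended Richardson--Nijenhuis form). The Lie-to-Nijenhuis part comes from the terms $l_{1+i}(sf\otimes R_1^{\otimes i})$, all of whose Nijenhuis inputs are equal to $R_1$; the symmetrisation over $\mathbb{S}_i$ built into $l_{1+i}$ then absorbs the factor $1/i!$, and unwinding the nested shuffle-brace expressions of the form $s^{-1}\circ\big(sf\{sR_1,\dots,sR_1\}\big)$ is expected to reproduce exactly the alternating sum $\sum_k(-1)^{n-k}P^{\,n-k}\circ f(\dots,P(a_{i_j}),\dots)$ defining $\Psi(f)$ (Proposition~\ref{Prop: Chain map Psi}). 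Thus $l_1^\alpha(sf)=(\pm\delta_{\Lie}(f),\,\pm\Psi(f))$.

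For $x=g\in\frakC_{\NjO}(\frakg)$ the only nonvanishing brackets require a single Lie input, which must be $b_2$, the remaining inputs being $g$ together with $i-1$ copies of $R_1$; all such terms land in $\frakC_{\NjO}$. Expanding $l_{1+i}(b_2\otimes g\otimes R_1^{\otimes(i-1)})$ and its shuffles, I expect the arrangements in which $b_2$ is braced directly with the copies of $P$ on its arguments to assemble into the deformed bracket $[-,-]_P$ and the deformed action $\rhd$ appearing in $\partial$, while the arrangement carrying one outer copy of $R_1=P$ assembles into $-P\circ\delta_{\Lie}$; together these give $\delta_{\NjO}(g)=-P\circ\delta_{\Lie}(g)+\partial(g)$ of Definition~\ref{defn: Nijenhuis operator complex Lie}, up to sign. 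Combining the two cases yields $l_1^\alpha(sf,g)=s\,\delta_{\NjL}(f,g)$ once all data are transported through the suspension and the maps $\widetilde{(-)}$, $\widehat{(-)}$, which is the asserted isomorphism of complexes.

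The main obstacle will be the sign bookkeeping together with the combinatorial identification in the Lie-to-Nijenhuis block. One must verify that the Koszul signs $(-1)^\eta$, $(-1)^\xi$ in the definition of $l_{n+1}$ given in Subsection~\ref{subSec. The L-algebra structure on deformation complex}, the twisting signs $(-1)^{i+\frac{i(i-1)}{2}}$, the desuspension $s^{-1}$, and the $1/i!$-versus-symmetrisation cancellation conspire to produce precisely the weights $(-1)^{n-k}P^{\,n-k}$ in $\Psi$ and the signs in $\delta_{\NjO}$. I would fix all conventions by first checking the match in low arity, recovering \eqref{operator-differential NjL} and \eqref{NjO-homotopy NjL} specialised to $\alpha=b_2+R_1$, and then carry out the general bookkeeping along the lines already used for the Nijenhuis associative case in \cite{SWZZ2024a}.
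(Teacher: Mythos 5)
Your proposal is correct and follows essentially the same route as the paper: identify the Maurer--Cartan element via Proposition~\ref{Prop: NjL is MC element}, expand the twisted differential $l_1^\alpha$ block by block on $\frakC_{\Lie}(\frakg)\oplus\frakC_{\NjO}(\frakg)$, and match the three resulting components with $\delta_{\Lie}$, $\Psi$ and $\delta_{\NjO}$ under the isomorphisms \eqref{Eq: first can isom}--\eqref{Eq: second can isom}. The only point to watch in the sign bookkeeping you defer is that the paper's convention takes the Lie part of the Maurer--Cartan element to be $\nu=-s\circ\mu\circ(s^{-1})^{\ot 2}$ (so $\widetilde{\nu}=-\mu$ rather than $\mu$), which feeds into the signs $-(-1)^{n+1}\delta_{\Lie}$ and $(-1)^{n}\delta_{\NjO}$ appearing in the paper's computation.
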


\begin{proof}
	By Proposition~\ref{Prop: NjL is MC element}, the Nijenhuis Lie algebra structure on $\frakg$ is equivalent to a Maurer-Cartan element $\alpha=(\nu, \tau)$ in the $L_\infty$-algebra $\mathfrak{C}_{\NjL}(\frakg)$ with
	$$\nu=-s\circ\mu\circ (s^{-1})^{\ot 2}: (s\frakg)^{\odot 2} \rightarrow s\frakg \quad \mathrm{and} \quad \tau=P\circ s^{-1}: s\frakg\rightarrow \frakg.$$
	Via twisting procedure Proposition~\ref{Prop: deformed-L-infty}, this Maurer-Cartan element induces a new $L_\infty$-algebra structure $\{l_n^\alpha\}_{n\geqslant 1}$ on the graded space $\mathfrak{C}_{\NjL}(\frakg)$.
	More precisely, for any $(sf, g)\in\Hom((s\frakg)^{\odot n}, s\frakg) \oplus \Hom((s\frakg)^{\odot n-1}, \frakg) \subset \mathfrak{C}_{\NjL}(\frakg)$,
	we have
	\begin{align*}
		l_1^\alpha(sf)
		&=l_{1}(sf) -l_2(\alpha \ot sf) + \sum_{i=2}^n (-1)^{\frac{i(i+1)}{2}} \frac{1}{i!} l_{i+1}(\alpha^{\ot i}\ot sf)\\
		&=-l_2(\nu \ot sf) -l_2(\tau \ot sf) + \sum_{i=2}^n (-1)^{\frac{i(i+1)}{2}} \frac{1}{i!} l_{i+1}(\tau^{\ot i}\ot sf)\\
		&=-[\nu, sf]_{\RNA} + (-1)^{\frac{n(n+1)}{2}} \frac{1}{n!} l_{n+1}(\tau^{\ot n}\ot sf).
	\end{align*}
	Note that $-[\nu, sf]_{\RNA}$ corresponds to
	$-(-1)^{n+1}\delta_{\Lie}(\widetilde{sf})$
	in $s\C^\bullet_\Lie(\frakg)$ under the fixed isomorphism (\ref{Eq: first can isom}). And 
	\begin{align*}
		(-1)^{\frac{n(n+1)}{2}} \frac{1}{n!} l_{n+1}(\tau^{\ot n}\ot sf)
		&= (-1)^{\frac{n(n+1)}{2}} (-1)^{n(|f|+1)+n} \frac{1}{n!} l_{n+1}( sf \ot \tau^{\ot n})\\
		&= \sum_{k=0}^{n} (-1)^{k} s^{-1} \underbrace{(s\tau) \circ \cdots \circ (s\tau)}_{k} \circ sf \{\underbrace{s\tau, \dots, s\tau}_{n-k}\}, 
	\end{align*}
	which corresponds to $\Psi(\widetilde{sf})$
	under the fixed isomorphism (\ref{Eq: second can isom}). We also have
	\begin{align*}
		l_1^{\alpha}(g)
		=&\ l_1(g) - l_2(\alpha \ot g) - \frac{1}{2!} l_3(\alpha \ot \alpha \ot g)\\
		=&\ -l_3(\nu \ot \tau\ot g)\\
		=&\ s^{-1}\nu \{s\tau, sg\} - \tau\{\nu\{sg\}\} + (-1)^{n} \tau \{sg\{\nu\}\} + s^{-1}\nu \{sg, s\tau\} - (-1)^{n}g\{\nu\{s\tau\}\} + (-1)^{n} g \{s\tau\{\nu\}\}, 
	\end{align*}
	which corresponds to $ (-1)^{n}\delta_{\NjO}(\widehat{g})$ 
	under the fixed isomorphism (\ref{Eq: second can isom}).
	
	In conclusion, 	we have that the underlying complex of the twisted $L_\infty$-algebra $\mathfrak{C}_{\NjL}(\frakg)$ by Maurer-Cartan element $\alpha$ is isomorphic to the cochain complex $s\C^\bullet_{\NjL}(\frakg)$. 
\end{proof}

Although $\mathfrak{C}_{\NjL}(\frakg)$ is an $L_\infty$-algebra, the next proposition shows that once the Lie algebra structure $\mu$ over $\frakg$ is fixed, the graded space $\mathfrak{C}_{\NjO}(\frakg)$, which, after twisting procedure, controls deformations of Nijenhuis operators, is a genuine graded Lie algebra. 

\begin{prop} \label{Prop: justifying Cohomology theory of Nijenhuis operator}
	Let $(\frakg, \mu)$ be a Lie algebra. Then we have the following statements:
	\begin{enumerate}
		\item The graded space $\mathfrak{C}_{\NjO}(\frakg)$ can be endowed with a graded Lie algebra structure, and a Nijenhuis operator on $(\frakg, \mu)$ is equivalent to a Maurer-Cartan element in this graded Lie algebra.
		
		\item Given a Nijenhuis operator $P$ on Lie algebra $(\frakg, \mu)$, the underlying complex of the twisted dg Lie algebra $\mathfrak{C}_{\NjO}(\frakg)$ by the corresponding Maurer-Cartan element is isomorphic to the cochain complex $\C_{\NjO}^\bullet(\frakg)$ of Nijenhuis operator $P$ defined in Subsection \ref{Subsect: cohomology NjL operator}.
	\end{enumerate}
\end{prop}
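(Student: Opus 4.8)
The plan is to obtain both statements by restricting the twisting procedure on the $L_\infty$-algebra $\frakC_{\NjL}(\frakg)=\frakC_{\Lie}(\frakg)\oplus\frakC_{\NjO}(\frakg)$ from Subsection~\ref{subSec. The L-algebra structure on deformation complex} to the summand $\frakC_{\NjO}(\frakg)$. Write $\nu=-s\circ\mu\circ(s^{-1})^{\ot 2}\in\frakC_{\Lie}(\frakg)$ for the degree $-1$ element encoding the bracket $\mu$. First I would observe that $\nu$ is a Maurer-Cartan element of $\frakC_{\NjL}(\frakg)$: since $\nu$ lies entirely in the $\frakC_{\Lie}$-summand and every nonzero operation $l_m$ with $m\geqslant 3$ requires at least one input from $\frakC_{\NjO}(\frakg)$, the Maurer-Cartan equation for $\nu$ collapses to $l_2(\nu\ot\nu)=[\nu,\nu]_{\RNA}=0$, which is exactly the Jacobi identity for $\mu$. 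Applying Proposition~\ref{Prop: deformed-L-infty} then yields a twisted $L_\infty$-structure $\{l_n^\nu\}$ on $\frakC_{\NjL}(\frakg)$.

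The heart of (i) is an arity count showing that $\frakC_{\NjO}(\frakg)$ is a sub-$L_\infty$-algebra on which only the binary operation survives. For homogeneous $x_1,\dots,x_n\in\frakC_{\NjO}(\frakg)$ the twisted operation $l_n^\nu(x_1\ot\cdots\ot x_n)$ is a signed sum of terms $l_{n+i}(\nu^{\ot i}\ot x_1\ot\cdots\ot x_n)$, each carrying $i$ inputs from $\frakC_{\Lie}$ and $n$ inputs from $\frakC_{\NjO}$. By the description of $\{l_m\}$, a term with at least one $\frakC_{\NjO}$-input is nonzero only for the mixed operations $l_{k+1}$, which demand exactly one $\frakC_{\Lie}$-input whose arity equals the number $k$ of $\frakC_{\NjO}$-inputs. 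Since $\alpha(\nu)=2$, this forces $i=1$ and $n=2$; in particular $l_1^\nu$ and all $l_n^\nu$ with $n\neq 2$ vanish on $\frakC_{\NjO}(\frakg)$, while $l_2^\nu(x_1\ot x_2)$ is a scalar multiple of $l_3(\nu\ot x_1\ot x_2)\in\frakC_{\NjO}(\frakg)$. Hence $(\frakC_{\NjO}(\frakg),l_2^\nu)$ is a genuine graded Lie algebra. Next I would identify its Maurer-Cartan elements: a degree count shows that the degree $-1$ part of $\frakC_{\NjO}(\frakg)=\prod_n\Hom((s\frakg)^{\odot n},\frakg)$ is concentrated in arity $1$ and consists exactly of the maps $\tau=P\circ s^{-1}$ for linear operators $P$ on $\frakg$. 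Since twisting is additive with respect to Maurer-Cartan elements, $\tau$ is Maurer-Cartan in $(\frakC_{\NjO}(\frakg),l_2^\nu)$ if and only if $\nu+\tau$ is Maurer-Cartan in $\frakC_{\NjL}(\frakg)$; by Proposition~\ref{Prop: NjL is MC element} the latter amounts to $(\frakg,\mu,P)$ being a Nijenhuis Lie algebra, that is, $P$ being a Nijenhuis operator. Alternatively one unwinds $\tfrac12 l_2^\nu(\tau\ot\tau)=0$ directly through the shuffle brace formula and recovers the Nijenhuis relation \eqref{Eq: Nijenhuis relation in terms of maps}. This proves (i).

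For (ii), I would twist the graded Lie algebra $(\frakC_{\NjO}(\frakg),l_2^\nu)$ by the Maurer-Cartan element $\tau$. As $l_1^\nu=0$ on $\frakC_{\NjO}(\frakg)$, Remark~\ref{Lem: twist dgla} gives twisted differential $g\mapsto -l_2^\nu(\tau\ot g)=\pm l_3(\nu\ot\tau\ot g)$ and unchanged bracket. This differential is precisely the $\frakC_{\NjO}$-component already computed in the proof of Proposition~\ref{Prop: cohomology complex as the underlying complex of L infinity algebra NjL}, where the same expression $-l_3(\nu\ot\tau\ot g)$ was shown, under the fixed isomorphism \eqref{Eq: second can isom}, to correspond to $\pm\delta_{\NjO}$ of Definition~\ref{defn: Nijenhuis operator complex Lie}. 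Thus the underlying complex of the twisted dg Lie algebra is isomorphic to $\C_{\NjO}^\bullet(\frakg)$.

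The main obstacle throughout is not conceptual but the careful tracking of Koszul signs: I expect the hard part to be verifying that the scalar relating $l_2^\nu$ to $l_3(\nu\ot-\ot-)$, the signs in the shuffle brace expansion, and the isomorphisms \eqref{Eq: first can isom} and \eqref{Eq: second can isom} conspire to reproduce exactly the Nijenhuis relation and the differential $\delta_{\NjO}$. The arity-count vanishing in the second paragraph must also be phrased with enough care to cover every operation $l_m$, since it is what reduces the a priori $L_\infty$-structure on $\frakC_{\NjO}(\frakg)$ to a strict graded Lie algebra and thereby justifies treating (ii) as a twist of a dg Lie algebra rather than of a full $L_\infty$-algebra.
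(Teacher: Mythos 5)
Your proposal is correct and follows essentially the same route as the paper: realize $\nu$ as a Maurer--Cartan element of $\frakC_{\NjL}(\frakg)$, use the arity constraint $\alpha(\nu)=2$ together with the structure of the mixed operations to see that the twisted operations restrict to $\frakC_{\NjO}(\frakg)$ with only $l_2^\nu=l_3(\nu\ot-\ot-)$ surviving, identify Maurer--Cartan elements with Nijenhuis operators, and twist once more to recover $\delta_{\NjO}$. The only cosmetic difference is that you invoke additivity of twisting plus Proposition~\ref{Prop: NjL is MC element} to identify the Maurer--Cartan elements, whereas the paper unwinds $-\tfrac12 l_2^\nu(\tau\ot\tau)=0$ directly into the Nijenhuis relation --- an alternative you also note.
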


\begin{proof}
	(i) Consider $\frakg$ as a graded space concentrated in degree 0. Define
	$$\nu=- s\circ \mu\circ (s^{-1})^{\ot 2}: (s\frakg)^{\odot 2}\rightarrow s\frakg.$$
	Note that $\alpha=(\nu, 0)$ is naturally a Maurer-Cartan element in $L_\infty$-algebra $\mathfrak{C}_{\NjL}(\frakg)$.
	By the construction of $\{l_n\}_{n\geqslant 1}$ on $\mathfrak{C}_{\NjL}(\frakg)$, the graded subspace $\mathfrak{C}_{\NjO}(\frakg)$ is closed under the action of $\{l_n^\alpha\}_{n\geqslant 1}$. Note that $ l_1^\alpha \equiv 0 $ on $ \mathfrak{C}_{\NjO}(\frakg) $, and the restriction of $l_n^\alpha$ on $\mathfrak{C}_{\NjO}(\frakg)$ is $0$ for $n\geqslant 3$ because the arity of $\nu$ is 2. Thus $(\mathfrak{C}_{\NjO}(\frakg), l_2^\alpha)$ forms a graded Lie algebra.
	More explicitly, for any $f\in \Hom((s\frakg)^{\odot n}, \frakg)$, $g\in \Hom((s\frakg)^{\odot k}, \frakg)$, we have
	\begin{align*}
		l_2^\alpha(f\ot g)
		=&\ l_{2}(f, g)+l_{3}(\alpha, f, g)\\
		=&\ l_{3}(\nu, f, g)\\
		=&\ (-1)^{|f|}\big(s^{-1}\nu\{sf, sg\}-(-1)^{|f|+1} f\{\nu \{sg\} \} + (-1)^{|f|+1+|g|+1}f\{sg\{\nu\}\}\big)\\
		&+(-1)^{|f||g|+1+|g|}\big(s^{-1}\nu\{sg, sf\}-(-1)^{|g|+1} g\{\nu \{sf\} \}+ (-1)^{|g|+1+|f|+1}g\{sf\{\nu\}\}\big)\\
		=&\ (-1)^{n}s^{-1}\nu\{sf, sg\} + f\{\nu \{sg\}\} + (-1)^{k}f\{sg\{\nu\}\}\\
		&+(-1)^{nk+k+1}s^{-1}\nu\{sg, sf\}-(-1)^{nk} g\{\nu \{sf\} \}+ (-1)^{nk+n+1}g\{sf\{\nu\}\}.
	\end{align*}
	
	Since $\frakg$ is concentrated in degree 0, we have $\mathfrak{C}_{\NjO}(\frakg)_{-1}=\Hom(s\frakg, \frakg)$. Take an element $\tau\in \Hom(s\frakg, \frakg) $, then $\tau$
	satisfies the Maurer-Cartan equation
	$$-\frac{1}{2}l_2^{\alpha}(\tau\ot \tau)=0 $$
	if and only if
	$$s^{-1}\nu\{s\tau, s\tau\}-\tau\{\nu\{s\tau\}\}+\tau\{s\tau\{\nu\}\}=0.$$
	Define $P=\tau\circ s: \frakg \rightarrow \frakg$. The above equation can be rewritten as
	$$ \mu \circ (P \ot P) = P \circ \mu \circ (P \ot \id) + P \circ \mu \circ (\id \ot P) - P \circ P \circ \mu, $$
	which means that $P$ is a Nijenhuis operator on Lie algebra $(\frakg, \mu)$.
	
	(ii) Now let $P$ be a Nijenhuis operator on Lie algebra $(\frakg, \mu)$. By the first statement, $P$ corresponds to a Maurer-Cartan element $\beta=P \circ s^{-1}$ in the graded Lie algebra $(\mathfrak{C}_{\NjL}(\frakg), l_2^\alpha)$.
	For $f\in \Hom((s\frakg)^{\odot n}, \frakg) \subset \mathfrak{C}_{\NjO}(\frakg)$, we compute $(l_1^\alpha)^\beta(f)$ as follows:
	\begin{align*}
		(l_1^\alpha)^\beta(f)
		=&\ l_1^\alpha(f)-l_2^\alpha(\beta\ot f)\\
		=&\ -l_2^\alpha(\beta\ot f)\\
		=&\ s^{-1}\nu\{s\beta, sf\} - \beta\{\nu\{sf\}\} - (-1)^{n} \beta \{sf\{\nu\}\} + s^{-1}\nu\{sf, s\beta\} +(-1)^{n}f\{\nu\{s\beta\}\} - (-1)^{n} f \{s\beta\{\nu\}\}, 
	\end{align*}
	which corresponds to $ (-1)^{n+1}\delta_{\NjO}(\widehat{f})$ 
	under the fixed isomorphism (\ref{Eq: second can isom}). So
	the underlying complex of the twisted dg Lie algebra $\mathfrak{C}_{\NjO}(\frakg)$ by the corresponding Maurer-Cartan element $\beta$ is isomorphic to the cochain complex $\C_{\NjO}^\bullet(\frakg)$ of Nijenhuis operator $P$.
\end{proof}

\begin{remark}
	In particular, when one considers the well-known Lie algebra structure on the vector space $\mathfrak{X}(M)$ of vector fields of a manifold $M$ and a Nijenhuis operator $P$ on it, the classical dg Lie algebra structure on the Fr\"olicher-Nijenhuis complex of $P$ endowed with the Fr\"olicher-Nijenhuis bracket is a dg Lie subalgebra of the dg Lie algebra structure on $\mathfrak{C}_{\NjO}(\mathfrak{X}(M))$, introduced as above, which we will discuss in Section~\ref{The relation of cohomology of Nijenhuis operators between algebraic and geometric versions} in detail, see Theorem~\ref{Prop. relation between algebra and geometry Nijenhuis}.
\end{remark}

\bigskip

\part{Deformations of geometric Nijenhuis structures}\label{Part3}

\medskip

\section{Basic concepts of Lie algebroids} \label{Section: concepts of Lie algebroids}

In this section, we briefly review some basic concepts and facts about Lie algebroids from \cite{Marle2008,Xu1999}. In what follows all the vector bundles will be assumed to be locally trivial and of finite rank.

\begin{defn}
	Let $M$ be a smooth manifold. A Lie algebroid is a vector bundle $A$ over $M$ together with a Lie algebra structure $[-,-]_{A}$ on $\Gamma(A)$, the space of smooth sections, and a bundle map $\rho: A \to T_{M}$, called the anchor, extended to a map $\rho: \Gamma(A) \to \frakX(M) := \Gamma(T_{M})$, such that
	$$[X_{1}, fX_{2}]_{A} = f[X_{1}, X_{2}]_{A} + (\rho(X_{1})f)X_{2},$$
	for any $f \in \ci(M)$ and $X_{1}, X_{2} \in \Gamma(A)$. 
\end{defn}

\begin{remark}
	The definition above implies the fact that $\rho: \Gamma(A) \to \frakX(M)$ is a Lie algebra morphism, in other words, for any $X_{1}, X_{2} \in \Gamma(A)$, 
	$$\rho([X_{1}, X_{2}]_{A}) = [\rho(X_{1}), \rho(X_{2})].$$
\end{remark}

Denote the dual bundle of $A$ by $A^{\vee}$. 
We say that elements in $\Gamma(\wg^{\bullet}A^{\vee})$, the exterior algebra of $\Gamma(A^{\vee})$, are forms, and elements in $\Gamma(\wg^{\bullet}A)$, the exterior algebra of $\Gamma(A)$, are multivectors. 

\begin{defn}
	Let $A$ be a vector bundle over $M$. Assume that $X \in \Gamma(A)$ and $\theta \in \Gamma(\wg^{p}A^{\vee})$, $p \in \mathbb{Z}$, there exists $i_{X}\theta \in \Gamma(\wg^{p-1}A^{\vee})$, called interior product of $\theta$ by $X$, defined by the following formulas:
	\begin{enumerate}
		\item for $p \leqslant 0$, $i_{X}\theta = 0$;
		\item for $p = 1$, $i_{X}\theta = \langle \theta, X \rangle = \theta(X) \in \ci(M)$, which is given by duality coupling;
		\item for $p > 1$, $i_{X}\theta$ is the $(p-1)$-form on $A$, such that, for all $X_{1}, \dots, X_{p-1} \in \Gamma(A)$,
		$$(i_{X}\theta)(X_{1}, \dots, X_{p-1}) = \theta(X, X_{1}, \dots, X_{p-1}).$$
	\end{enumerate}
\end{defn}

For any $X \in \Gamma(A)$, the map $\theta \mapsto i_{X}\theta$ extends, by linearity, as a derivation of degree $-1$ of the exterior algebra $\Gamma(\wg^{\bullet}A^{\vee})$, that is, for any $\theta \in \Gamma(\wg^{p}A^{\vee})$, $\eta \in \Gamma(\wg^{q}A^{\vee})$,
$$i_{X}(\theta \wg \eta) = (i_{X}\theta) \wg \eta + (-1)^{p} \theta \wedge (i_{X}\eta) \in \Gamma(\wg^{p+q-1}A^{\vee}).$$

Let $\eta \in \Gamma(\wg^{p}A^{\vee})$ and $X \in \Gamma(\wg^{q}A)$. We set their pairing $\langle \eta, X \rangle$ as follows: if $p \neq q$, or if $p < 0$, or if $q < 0$, we set 
$\langle \eta, X \rangle = 0$; if $p = q = 0$, i.e., $\eta$ and $X$ are smooth functions on $M$, we set $\langle \eta, X \rangle = \eta X$; if $p = q \geqslant 1$ and $\eta = \eta^{1} \wg \cdots \wg \eta^{p}$, $X = X_{1} \wg \cdots \wg X_{p}$, we set $\langle \eta, X \rangle = \det(\langle \eta^{i}, X_{j} \rangle)$. One can see that $\langle \eta, X \rangle$ depends only on $X$ and $\eta$, not on the way in which they are expressed as exterior products of elements of degree $1$. The map $(\eta, X) \mapsto \langle \eta, X \rangle$ extends in a unique way as a $\ci(M)$-linear map $\Gamma(\wg^{\bullet}A^{\vee}) \ot \Gamma(\wg^{\bullet}A) \to \ci(M)$, still denoted by $(\eta, X) \mapsto \langle \eta, X \rangle$. 

In particular, let $\eta \in \Gamma(\wg^{p}A^{\vee})$, $X_{1}, \dots, X_{p} \in \Gamma(A)$. The pairing $\langle \eta, X_{1} \wg \cdots \wg X_{p} \rangle$ is related to the value taken by $\eta$, considered as a $p$-form on $\Gamma(A)$, on the set $(X_{1}, \dots, X_{p})$. We have 
$$\langle \eta, X_{1} \wg \cdots \wg X_{p} \rangle = \eta(X_{1}, \dots, X_{p}).$$

\begin{prop-def} \cite{Marle2008}
	Let $(A, [-,-]_{A}, \rho)$ be a Lie algebroid over $M$. For any $X \in \Gamma(A)$, there is a unique derivation of degree $0$ of $\Gamma(\wg^{\bullet}A^{\vee})$, called the Lie derivative with respect to $X$ and denoted by $\calLr_{X}$, which satisfies the following properties:
	\begin{enumerate}
		\item For any $f \in \ci(M)$, 
		$$\calLr_{X}f = i_{\rho(X)} \dd f = \langle \dd f, \rho(X)\rangle = \calL_{\rho(X)}f,$$ 
		where $\calL_{\rho(X)}$ denotes the classical Lie derivative with respect to vector field $\rho(X)$.
		\item For any form $\eta \in \Gamma(\wg^{p}A^{\vee})$ and $X_{1}, \dots, X_{p} \in \Gamma(A)$, $p \geqslant 1$,
		$$(\calLr_{X}\eta)(X_{1}, \dots, X_{p}) = \calLr_{X}(\eta(X_{1}, \dots, X_{p})) - \sum_{i=1}^{p} \eta(X_{1}, \dots, X_{i-1}, [X, X_{i}]_{A}, X_{i+1}, \dots, X_{p}).$$
	\end{enumerate}
\end{prop-def}

The next result shows that, for any $X \in \Gamma(A)$, the Lie derivative $\calLr_{X}$ can also be extended into a derivation of degree $0$ of $\Gamma(\wg^{\bullet}A)$.

\begin{prop-def} \cite{Marle2008}
	Let $(A, [-,-]_{A}, \rho)$ be a Lie algebroid over $M$. For any $X \in \Gamma(A)$, there is a unique derivation of degree $0$ of $\Gamma(\wg^{\bullet}A)$, called the Lie derivative with respect to $X$ and also denoted by $\calLr_{X}$, which satisfies the following properties:
	\begin{enumerate}
		\item For any $f \in \ci(M)$, 
		$$\calLr_{X}f = i_{\rho(X)} \dd f = \langle \dd f, \rho(X)\rangle = \calL_{\rho(X)}f.$$ 
		\item For any multivector $P \in \Gamma(\wg^{p}A)$ with $p \geqslant 1$, $\calLr_{X}P$ is the unique element in $\Gamma(\wg^{p}A)$ such that, for any $\eta \in \Gamma(\wg^{p}A^{\vee})$,
		$$\langle \eta, \calLr_{X} P\rangle = \calLr_{X} \langle \eta, P \rangle - \langle \calLr_{X}\eta, P\rangle.$$
	\end{enumerate}
\end{prop-def}

Note that, in above result, when $p = 1$, we obtain that $\calLr_{X}V = [X, V]_{A}$ for any $X, V \in \Gamma(A)$.

\begin{prop-def}\cite{Marle2008}
	Let $(A, [-,-]_{A}, \rho)$ be a Lie algebroid over $M$. There is a unique derivation of degree $1$ of $\Gamma(\wg^{\bullet}A^{\vee})$, called the exterior derivative and denoted by $\dd_{\rho}$, which satisfies the following properties:
	\begin{enumerate}
		\item For any $f \in \ci(M)$, $\dd_{\rho}f$ is the unique element in $\Gamma(A^{\vee})$ such that, for each $X \in \Gamma(A)$,
		$$\langle \dd_{\rho}f, X \rangle = \calLr_{X}f = \langle \dd f, \rho(X) \rangle.$$
		\item For any $\eta \in \Gamma(\wg^{p}A^{\vee})$ with $p \geqslant 1$, $\dd_{\rho} \eta$ is the unique element in $\Gamma(\wg^{p+1}A^{\vee})$ such that, for any $X_{1}, \dots, X_{p+1} \in \Gamma(A)$,
		\begin{align*}
			(\dd_{\rho} \eta)(X_{1}, \dots, X_{p+1}) 
			=&\ \sum_{i=1}^{p+1}(-1)^{i} \calLr_{X_{i}}(\eta(X_{1}, \dots, \widehat{X_{i}}, \dots, X_{p+1}))\\
			&+ \sum_{1 \leqslant i < j \leqslant p+1} (-1)^{i+j} \eta([X_{i}, X_{j}]_{A}, X_{1}, \dots, \widehat{X_{i}}, \dots, \widehat{X_{j}}, \dots, X_{p+1}).
		\end{align*}
	\end{enumerate}
\end{prop-def}

Direct verification yields $(\dd_{\rho})^{2} = 0$, so $\dd_{\rho}$ makes $\Gamma(\wg^{\bullet}A^{\vee})$ into a dg algebra, which defines the Lie algebroid cohomology \cite{Marle2008,Xu1999,Mackenzie1987,Mackenzie1994}.

Finally, for subsequent use, we simply list some of the properties of interior product, Lie derivative, and exterior derivative as follows, and more of their properties are in the references \cite{Marle2008}.

\begin{prop}\cite{Marle2008} \label{Prop. Some properties of Lie derivative}
	Let $(A, [-,-]_{A}, \rho)$ be a Lie algebroid over $M$ and $X, Y \in \Gamma(A)$.
	\begin{enumerate}
		\item In both case, we all have $\calLr_{[X,Y]_{A}} = [\calLr_{X}, \calLr_{Y}] = \calLr_{X} \circ \calLr_{Y} - \calLr_{Y} \circ \calLr_{X}$.
		\item For $f \in \ci(M)$ and $\eta \in \Gamma(\wg^{\bullet}A^{\vee})$, $$\calLr_{fX}\eta = f \calLr_{X}\eta + \dd_{\rho}f \wg i_{X}\eta.$$
		\item The interior product $i_{X}$, the exterior derivative $\dd_{\rho}$, and the Lie derivative $\calLr_{X}$ are related by the formula $$\calLr_{X} = [i_{X}, \dd_{\rho}] = i_{X} \dd_{\rho} + \dd_{\rho} i_{X}.$$
		\item The Lie derivative $\calLr_{X}$ and the exterior derivative $\dd_{\rho}$ are commutative, that is $$[\calLr_{X}, \dd_{\rho}] = \calLr_{X} \dd_{\rho} - \dd_{\rho} \calLr_{X} = 0.$$
	\end{enumerate}
\end{prop}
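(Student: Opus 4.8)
The plan is to exploit the structural fact that all four identities are equalities of \emph{degree-preserving graded derivations} of the exterior algebras $\Gamma(\wg^\bullet A^\vee)$ and $\Gamma(\wg^\bullet A)$, and that such a derivation is completely determined by its restriction to the algebra generators, namely the functions $\ci(M)=\Gamma(\wg^0 A^\vee)$ together with the degree-one part $\Gamma(A^\vee)$ (resp.\ $\Gamma(A)$ in the multivector case). I would therefore organise the argument around a logical skeleton in which the Cartan magic formula (iii) is proved first, since (ii) and (iv) then follow by purely formal manipulation, while (i) is handled independently by the same ``generators suffice'' principle. Throughout, the only external inputs are the defining properties of $i_X$, $\calLr_X$, $\dd_\rho$ recalled above, the relation $(\dd_\rho)^2=0$, and the fact that $\rho$ is a Lie algebra morphism.

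For (iii): both $\calLr_X$ and the graded commutator $[i_X,\dd_\rho]=i_X\dd_\rho+\dd_\rho i_X$ are degree-$0$ derivations of $\Gamma(\wg^\bullet A^\vee)$ — the former by its defining property, the latter because the graded commutator of the degree-$(-1)$ derivation $i_X$ with the degree-$(+1)$ derivation $\dd_\rho$ is again a derivation. Hence it suffices to check equality on generators. On $f\in\ci(M)$ one has $(i_X\dd_\rho+\dd_\rho i_X)(f)=i_X\dd_\rho f=\langle \dd_\rho f, X\rangle=\calLr_X f$, using $i_X f=0$ and the defining property of $\dd_\rho f$. On a $1$-form $\eta\in\Gamma(A^\vee)$ I would expand $(\dd_\rho\eta)(X,Y)$ through the explicit Koszul-type formula for $\dd_\rho$ and $(\calLr_X\eta)(Y)$ through property (ii) of $\calLr_X$, then match the two expressions term by term.

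Granting (iii), identities (iv) and (ii) are formal. For (iv), compute $\calLr_X\dd_\rho=(i_X\dd_\rho+\dd_\rho i_X)\dd_\rho=\dd_\rho i_X\dd_\rho=\dd_\rho(i_X\dd_\rho+\dd_\rho i_X)=\dd_\rho\calLr_X$, where both middle equalities use $(\dd_\rho)^2=0$. For (ii), $\calLr_{fX}=i_{fX}\dd_\rho+\dd_\rho i_{fX}=f\,i_X\dd_\rho+\dd_\rho(f\,i_X)$, and expanding the last term by the Leibniz rule for the degree-$1$ derivation $\dd_\rho$ gives $\dd_\rho f\wg i_X+f\,\dd_\rho i_X$, so that the sum is $f\calLr_X+\dd_\rho f\wg i_X$. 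Identity (i) is again treated by reduction to generators, since both $\calLr_{[X,Y]_A}$ and $[\calLr_X,\calLr_Y]$ are degree-$0$ derivations. On functions one uses that $\rho$ is a Lie algebra morphism together with the classical identity $\calL_{[\rho(X),\rho(Y)]}=[\calL_{\rho(X)},\calL_{\rho(Y)}]$ for vector fields. In the multivector case, on a degree-one generator $V\in\Gamma(A)$ one has $\calLr_X V=[X,V]_A$, so the identity on generators is exactly the Jacobi identity for $[-,-]_A$; in the form case, on $\eta\in\Gamma(A^\vee)$ I would evaluate both sides on an arbitrary section and again reduce to the Jacobi identity via the defining formula (ii) of $\calLr_X$.

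The main obstacle is the sign and term bookkeeping in the $1$-form verification of (iii) and the form case of (i), where the explicit coordinate-free formulas for $\dd_\rho$ and $\calLr_X$ must be reconciled with the correct Koszul signs; the structural reductions (derivation property plus agreement on generators) are routine but must be carried out separately for the form algebra $\Gamma(\wg^\bullet A^\vee)$ and the multivector algebra $\Gamma(\wg^\bullet A)$, and in the latter one additionally invokes the pairing $\langle -,-\rangle$ to transport the defining property of $\calLr_X$ from forms to multivectors.
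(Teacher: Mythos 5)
The paper gives no proof of this proposition at all---it is quoted from Marle's notes with a citation---so there is no ``paper's own proof'' to diverge from; your argument is the standard one and is essentially complete. All of the structural reductions you invoke are valid: $[i_{X},\dd_{\rho}]$ is a degree-$0$ derivation because it is the graded commutator of derivations of degrees $-1$ and $+1$; a degree-$0$ derivation of $\Gamma(\wg^{\bullet}A^{\vee})$ (resp.\ of $\Gamma(\wg^{\bullet}A)$) is local and hence determined by its values on $\ci(M)$ and $\Gamma(A^{\vee})$ (resp.\ $\Gamma(A)$); the deductions of (iv) and (ii) from (iii) together with $(\dd_{\rho})^{2}=0$ and $i_{fX}=f\,i_{X}$ are exactly right; and (i) reduces on generators to the fact that $\rho$ is a Lie algebra morphism (degree $0$) and the Jacobi identity for $[-,-]_{A}$ (degree $1$, in both the form and multivector cases).

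One concrete warning about the ``term-by-term matching'' you defer in the proof of (iii): the formula for $(\dd_{\rho}\eta)(X_{1},\dots,X_{p+1})$ displayed in this paper carries the sign $(-1)^{i}$ on the first sum, which is inconsistent with the normalisation $\langle\dd_{\rho}f,X\rangle=\calLr_{X}f$ appearing in the same Proposition-Definition (with that sign one finds $(\dd_{\rho})^{2}f\neq 0$ and the Cartan identity fails already on $1$-forms). The internally consistent sign is $(-1)^{i+1}$, and with it the check goes through: for $\eta\in\Gamma(A^{\vee})$ one gets $(i_{X}\dd_{\rho}\eta)(Y)=\calLr_{X}(\eta(Y))-\calLr_{Y}(\eta(X))-\eta([X,Y]_{A})$ and $(\dd_{\rho}i_{X}\eta)(Y)=\calLr_{Y}(\eta(X))$, whose sum is precisely $(\calLr_{X}\eta)(Y)=\calLr_{X}(\eta(Y))-\eta([X,Y]_{A})$. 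So use the corrected sign when you carry out the bookkeeping; this is a typo in the paper, not a gap in your argument.
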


\medskip

\section{Nijenhuis Lie algebroids and cohomology theory of Nijenhuis operators on Lie algebroids} \label{Section: Nijenhuis Lie algebroids}

In classical Nijenhuis geometry, the space of vector-valued differential forms $\Omega^{\bullet}(M, T_M) = \Gamma(\wg^{\bullet}T_{M}^{\vee} \ot T_{M})$ is endowed with a graded Lie bracket of degree $0$, called the Fr\"olicher-Nijenhuis bracket and denoted by $[-, -]_{\FN}$, which is a generalization of the usual Lie bracket on vector fields \cite{Mic87, BK21-16, BKM22, DVM95}. The Maurer-Cartan elements of this graded Lie algebra are exactly Nijenhuis tensors on $M$. When one fixes a Nijenhuis tensor $P$, the underlying cochain complex of the twisted dg Lie algebra $(\Omega^{\bullet}(M, T_M), [-, -]_{\FN}, \dd_{\FN}=[P, -]_{\FN})$, is called the Fr\"olicher-Nijenhuis cochain complex of Nijenhuis tensor $P$, whose cohomology is called the Fr\"olicher-Nijenhuis cohomology of Nijenhuis tensor $P$ \cite{Mic87, BK21-16}. In this section, we develop the corresponding theory for general Lie algebroids, generalizing the classical theory. 

Let $A$ be a vector bundle over $M$. We say that elements in $\psi^{\bullet}(M,A) := \Gamma(\wg^{\bullet}A^{\vee} \ot A)$ are vector-valued forms. Specifically, elements in $\psi^{k}(M,A) := \Gamma(\wg^{k}A^{\vee} \ot A)$ are antisymmetric tensors of type $(1,k)$, which are also called $(1,k)$-forms, for simplicity.

\begin{defn} \label{Defn. Nijenhuis torsion}
	Let $(A, [-,-]_{A}, \rho)$ be a Lie algebroid over $M$, $P \in \psi^{1}(M,A) = \Gamma(A^{\vee} \ot A)$ be an $(1,1)$-form, viewed as a $\ci(M)$-linear map $P: \Gamma(A) \to \Gamma(A)$. Define the Nijenhuis torsion of $P$ as 
	$$\calN_{P}(X,Y) := [P(X),P(Y)]_{A} - P([P(X),Y]_{A}) - P([X,P(Y)]_{A}) + P^{2}([X,Y]_{A}),$$
	for any $X, Y \in \Gamma(A)$.
\end{defn}

\begin{prop} \label{Prop. Nijenhuis torsion is a tensor}
	The Nijenhuis torsion $\calN_{P}$ is a tensor of type $(1,2)$, moreover, is an $(1,2)$-form in $\psi^{2}(M,A)$.
\end{prop}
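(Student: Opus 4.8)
The plan is to establish two properties of $\calN_{P}$: first, that it is $\ci(M)$-bilinear, so that its value at a point depends only on the pointwise values of $X$ and $Y$ and it genuinely defines a type $(1,2)$ tensor rather than merely a bracket-like operator on sections; and second, that it is antisymmetric, so that it lands in $\psi^{2}(M,A) = \Gamma(\wg^{2}A^{\vee} \ot A)$. The antisymmetry is immediate from the defining formula in Definition~\ref{Defn. Nijenhuis torsion}: interchanging $X$ and $Y$ sends each of the four summands to its own negative, by antisymmetry of $[-,-]_{A}$, so that $\calN_{P}(Y,X) = -\calN_{P}(X,Y)$. Hence the whole content of the proposition is the tensoriality.

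For the tensoriality, I would exploit the antisymmetry just observed: it suffices to verify $\ci(M)$-linearity in the second slot, namely $\calN_{P}(X, fY) = f\,\calN_{P}(X,Y)$ for all $f \in \ci(M)$, since linearity in the first slot then follows from $\calN_{P}(fX,Y) = -\calN_{P}(Y,fX)$. The two ingredients are the Leibniz rule $[Z, fW]_{A} = f[Z,W]_{A} + (\rho(Z)f)\,W$ characterizing a Lie algebroid, together with the $\ci(M)$-linearity of $P$ (built into the hypothesis that $P$ is a $(1,1)$-form), which gives $P(fW) = f\,P(W)$ and hence $P\big(f[Z,W]_{A}\big) = f\,P([Z,W]_{A})$.

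Expanding the four terms of $\calN_{P}(X,fY)$ with these rules produces, besides the expected $f\,\calN_{P}(X,Y)$, four anomalous first-order terms, which I expect to cancel in two pairs: the contributions $(\rho(P(X))f)\,P(Y)$ arising from $[P(X),P(fY)]_{A}$ and from $-P([P(X),fY]_{A})$ cancel, and the contributions $(\rho(X)f)\,P^{2}(Y)$ arising from $-P([X,P(fY)]_{A})$ and from $P^{2}([X,fY]_{A})$ cancel. What remains is exactly $f\,\calN_{P}(X,Y)$.

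The computation is entirely routine; the only point requiring care is the bookkeeping of which anomalous term arises in which summand and, in particular, that $P$ is applied the correct number of times so that the two cancelling pairs genuinely match — for instance, that the extra factor of $P$ produced by differentiating the second slot of $[X,P(fY)]_{A}$ yields $P^{2}(Y)$ rather than $P(Y)$. I do not anticipate any real obstacle here: once the two cancellations are checked, $\ci(M)$-linearity in both variables together with the antisymmetry is precisely the assertion that $\calN_{P} \in \psi^{2}(M,A)$.
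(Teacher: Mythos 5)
Your proposal is correct and follows essentially the same route as the paper's proof: antisymmetry is read off from the defining formula, and $\ci(M)$-linearity in the second slot is verified via the Leibniz rule and the $\ci(M)$-linearity of $P$, with the anomalous terms cancelling in exactly the two pairs you identify. No gaps.
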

\begin{proof}
	For any $X, Y \in \Gamma(A)$, it is obvious that $\calN_{P}(X,Y) = -\calN_{P}(Y,X)$. So we only need to show that, for any $f \in \ci(M)$, $\calN_{P}(X,fY) = f\calN_{P}(X,Y)$.
	\begin{align*}
		\calN_{P}(X,fY)
		=&\ [P(X),P(fY)]_{A} - P([P(X),fY]_{A}) - P([X,P(fY)]_{A}) + P^{2}([X,fY]_{A})\\
		=&\ [P(X),fP(Y)]_{A} - P([P(X),fY]_{A}) - P([X,fP(Y)]_{A}) + P^{2}([X,fY]_{A})\\
		=&\ f[P(X),P(Y)]_{A} + (\rho(P(X))f)P(Y) - P(f[P(X),Y]_{A} + (\rho(P(X))f)Y)\\
		&\ - P(f[X,P(Y)]_{A} + (\rho(X)f)P(Y)) + P^{2}(f[X,Y]_{A} + (\rho(X)f)Y)\\
		=&\ f([P(X),P(Y)]_{A} - P([P(X),Y]_{A}) - P([X,P(Y)]_{A}) + P^{2}([X,Y]_{A}))\\
		=&\ f\calN_{P}(X,Y).
	\end{align*}
\end{proof}

In 1997, Grabowski and Urbanski \cite{Grabowski1997} introduced the concept of Nijenhuis operators on a pre-Lie algebroid which satisfies all conditions of a Lie algebroid except Jacobi identity that the bracket $[-,-]_{A}$ satisfies. In this paper, we will not consider such a broad concept for the time being, but only focus on Nijenhuis operators on Lie algebroids, which are defined as follows.

\begin{defn} \label{Defn. Nijenhuis Lie algebroid}
	Let $(A, [-,-]_{A}, \rho)$ be a Lie algebroid over $M$. An $(1,1)$-form $P \in \Gamma(A^{\vee} \ot A)$ is called a Nijenhuis tensor, or Nijenhuis operator, if its Nijenhuis torsion $\calN_{P}$ vanishes, that is,
	$$[P(X),P(Y)]_{A} = P([P(X),Y]_{A}) + P([X,P(Y)]_{A}) - P^{2}([X,Y]_{A}),$$
	for any $X, Y \in \Gamma(A)$.
	In this case, $(A, [-,-]_{A}, \rho, P)$ is called a Nijenhuis Lie algebroid over $M$.
\end{defn}


\begin{defn}
	Let $A_{1} = (A_{1}, [-,-]_{A_{1}}, \rho_{1}, P_{1})$ and $A_{2} = (A_{2}, [-,-]_{A_{2}}, \rho_{2}, P_{2})$ be two Nijenhuis Lie algebroids over same manifold $M$. 
	A Lie algebroid morphism $\phi: A_{1} \to A_{2}$ is call a Nijenhuis Lie algebroid morphism, or morphism of Nijenhuis Lie algebroids, if $ P_{2} \circ \phi = \phi \circ P_{1}$.
\end{defn}

The following result is similar to Proposition~\ref{Prop: Star Lie bracket}, and the proof is a direct verification of the definition, which is left to the reader.

\begin{prop}
	Let $A = (A, [-,-]_{A}, \rho, P)$ be a Nijenhuis Lie algebroid over $M$. Define a new binary operation as
	$$[X,Y]^{P}_{A} := [P(X),Y]_{A} + [X,P(Y)]_{A} - P[X,Y]_{A},$$
	for any $X, Y \in \Gamma(A)$.
	Then 
	\begin{enumerate}
		\item the pair $([-,-]^{P}_{A}, \rho P)$ defines a new Lie algebroid structure on $A$;
		\item the tuple $(A, [-,-]^{P}_{A}, \rho P, P)$ also forms a Nijenhuis Lie algebroids and denote it by $A_{P}$;
		\item the map $P: A_{P} \to A$ is a Nijenhuis Lie algebroid morphism.
	\end{enumerate}
\end{prop}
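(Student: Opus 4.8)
The plan is to reduce the three assertions to their purely algebraic counterparts for Nijenhuis Lie algebras, which are already recorded in Proposition~\ref{Prop: Star Lie bracket}, and then to supply only the genuinely geometric ingredient, namely the Leibniz rule. First I would observe that $(\Gamma(A), [-,-]_A)$ is a Lie algebra over $\RR$ and that, by Definition~\ref{Defn. Nijenhuis Lie algebroid}, the $\ci(M)$-linear map $P \colon \Gamma(A) \to \Gamma(A)$ satisfies on sections exactly the Nijenhuis relation of Definition~\ref{Def:Nijenhuis Lie algebras}. Hence $(\Gamma(A), [-,-]_A, P)$ is a Nijenhuis Lie algebra, and the bracket $[-,-]^P_A$ appearing in the statement is precisely its deformed bracket. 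Proposition~\ref{Prop: Star Lie bracket} then delivers, with no further computation, the antisymmetry and Jacobi identity of $[-,-]^P_A$ (its part (i)), the fact that $P$ is again a Nijenhuis operator for $[-,-]^P_A$ (its part (ii)), and that $P$ is a Lie algebra morphism $(\Gamma(A), [-,-]^P_A) \to (\Gamma(A), [-,-]_A)$ (its part (iii)).

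The only assertion not covered above is that $([-,-]^P_A, \rho P)$ is compatible with the $\ci(M)$-module structure, i.e.\ the Leibniz rule among the Lie algebroid axioms. I would check this by a direct expansion of $[X, fY]^P_A$ for $f \in \ci(M)$, using $\ci(M)$-linearity $P(fY) = f P(Y)$ together with the Leibniz rule for $[-,-]_A$. The three anchor-derivative terms produced are $(\rho(P(X))f)Y$, $(\rho(X)f)P(Y)$ and $-(\rho(X)f)P(Y)$; the last two cancel, leaving $f[X,Y]^P_A + ((\rho P)(X)f)Y$, which is exactly the Leibniz rule with anchor $\rho P$. Combined with the previous paragraph this proves part (i); the requirement that $\rho P$ be a Lie algebra morphism is then automatic, or may be read off from the intertwining identity below.

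For parts (ii) and (iii) the decisive observation is the intertwining identity
$$P([X,Y]^P_A) = [P(X),P(Y)]_A, \qquad X, Y \in \Gamma(A),$$
which follows by expanding the left-hand side and substituting the Nijenhuis relation of Definition~\ref{Defn. Nijenhuis Lie algebroid}. Part (ii) is the conjunction of part (i) with the Nijenhuis property of $P$ for $[-,-]^P_A$ from the first paragraph (noting $P$ remains $\ci(M)$-linear), so $(A, [-,-]^P_A, \rho P, P)$ is a Nijenhuis Lie algebroid. For part (iii), the bundle map $P \colon A_P \to A$ over $\id_M$ intertwines anchors, since $\rho \circ P = \rho P$ is the anchor of $A_P$ by construction, and intertwines brackets by the displayed identity; hence $P$ is a Lie algebroid morphism, and the compatibility $P \circ P = P \circ P$ demanded of a Nijenhuis Lie algebroid morphism is trivial.

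No step presents a real obstacle: the algebraic content is entirely absorbed into Proposition~\ref{Prop: Star Lie bracket}, so the only place demanding care is the Leibniz-rule computation, where one must track the anchor-derivative terms and verify that exactly the two extraneous ones cancel. The main conceptual point to isolate cleanly is the intertwining identity, which simultaneously yields the anchor compatibility of the new structure and the bracket-preservation needed for $P$ to be a morphism.
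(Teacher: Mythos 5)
Your proposal is correct and follows exactly the route the paper intends: the paper leaves the proof to the reader with the remark that the result is ``similar to Proposition~\ref{Prop: Star Lie bracket}'', and your argument simply makes that reduction explicit (the Lie-algebra-level claims are absorbed into Proposition~\ref{Prop: Star Lie bracket} applied to $(\Gamma(A),[-,-]_A,P)$) while supplying the one genuinely geometric verification, the Leibniz rule for $[-,-]^P_A$ with anchor $\rho P$, whose cancellation of the two terms $\pm(\rho(X)f)P(Y)$ you carry out correctly. The intertwining identity $P([X,Y]^P_A)=[P(X),P(Y)]_A$ you isolate is indeed the right device for both the anchor compatibility and part (iii).
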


From now on, we always assume that $(A, [-,-]_{A}, \rho)$ is a Lie algebroid with vector bundle $A$ of rank $n$ over smooth manifold $M$ of dimension $m$.
For any $-1 \leqslant d \leqslant n-1$, denote $\Der^{d}(M,A)$ the space of all derivations of exterior algebra $\Gamma(\wg^{\bullet}A^{\vee})$ of degree $d$, and denote
$$\Der^{\bullet}(M,A) := \bigoplus_{d=-1}^{n-1} \Der^{d}(M,A).$$

Consider any derivation $D$ of degree $-1$. By definition, $D: \ci(M) = \Gamma(\wg^{0}A^{\vee}) \to 0$, so $D(f \alpha) = fD(\alpha)$ for any $\alpha \in \Gamma(\wg^{\bullet}A^{\vee})$ and $f \in \ci(M)$.
This means $D: \Gamma(A^{\vee}) \to \ci(M)$ is a $\ci(M)$-linear map. So $D$ corresponds to a tensor of type $(1,0)$, i.e., a vector in $\Gamma(A)$, says $X$. Thus $D = i_{X}$ on $\ci(M)$ and $\Gamma(A^{\vee})$. By the properties of derivations, this implies that $D = i_{X}$ as operators on the entire $\Gamma(\wg^{\bullet}A^{\vee})$.
Note that, the map $X \mapsto i_{X}$ is injective, so we have $\Der^{-1}(M,A) \cong \Gamma(A)$ as vector spaces.

\begin{defn}
	A derivation $D \in \Der^{d}(M,A)$ is called an algebraic derivation if $D$ takes $\ci(M)$ to $0$, i.e., $D$ is $\ci(M)$-linear.
\end{defn}

Let $D$ be an algebraic derivation of degree $d$. It is not difficult to see that the map $D: \Gamma(A^{\vee}) \to \Gamma(\wg^{d+1}A^{\vee})$ is a tensor map, i.e., is $\ci(M)$-linear. Thus, there exists a unique $(1,d+1)$-form $K \in \psi^{d+1}(M,A) = \Gamma(\wg^{d+1}A^{\vee} \ot A)$ such that
$$(D\alpha)(X_{1}, \dots, X_{d+1}) = \alpha(K(X_{1}, \dots, X_{d+1})),$$
for any $\alpha \in \Gamma(A^{\vee})$ and $X_{1}, \dots, X_{d+1} \in \Gamma(A)$.
Denote the operator $D$ on $\ci(M)$ and $\Gamma(A^{\vee})$ by $i_{K}$, then
$$(i_{K}\alpha)(X_{1}, \dots, X_{d+1}) = \alpha(K(X_{1}, \dots, X_{d+1})).$$
We also denote $D$ on the entire $\Gamma(\wg^{\bullet}A^{\vee})$ by $i_{K}$.
On the other hand, any elements in $\psi^{d+1}(M,A)$ can define an algebraic derivation as above. So the space of algebraic derivations of degree $d$ is isomorphic to $\psi^{d+1}(M,A)$.

We can give more specific characterizations of $i_{K}$, as follows.
\begin{prop}
	Let $K = \omega \ot X \in \psi^{d+1}(M,A)$ and $\alpha \in \Gamma(A^{\vee})$. We have
	$$i_{K}\alpha = \omega \wg i_{X}\alpha = \langle \alpha, X \rangle \omega \in \Gamma(\wg^{d+1}A^{\vee}).$$
\end{prop}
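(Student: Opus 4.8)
The plan is to establish both equalities by directly unwinding the definition of the algebraic derivation $i_{K}$ on $\Gamma(A^{\vee})$ and exploiting the decomposable form of $K=\omega\ot X$. Since an algebraic derivation is $\ci(M)$-linear and determined by its action on $\Gamma(A^{\vee})$ (as recalled just before the statement), it suffices to evaluate $i_{K}\alpha$ on an arbitrary tuple $X_{1},\dots,X_{d+1}\in\Gamma(A)$ and to identify the resulting $(d+1)$-form.

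First I would record the identification used implicitly in the definition: under the isomorphism $\psi^{d+1}(M,A)=\Gamma(\wg^{d+1}A^{\vee}\ot A)$ with the space of antisymmetric $\ci(M)$-multilinear maps $\Gamma(A)^{\times(d+1)}\to\Gamma(A)$, the decomposable element $K=\omega\ot X$ corresponds to the map $K(X_{1},\dots,X_{d+1})=\omega(X_{1},\dots,X_{d+1})\,X$, where $\omega(X_{1},\dots,X_{d+1})\in\ci(M)$. Plugging this into the defining formula $(i_{K}\alpha)(X_{1},\dots,X_{d+1})=\alpha\big(K(X_{1},\dots,X_{d+1})\big)$ and pulling the scalar $\omega(X_{1},\dots,X_{d+1})$ out of the $\ci(M)$-linear map $\alpha$ gives
$$(i_{K}\alpha)(X_{1},\dots,X_{d+1})=\omega(X_{1},\dots,X_{d+1})\,\alpha(X)=\langle\alpha,X\rangle\,\omega(X_{1},\dots,X_{d+1}),$$
so that $i_{K}\alpha=\langle\alpha,X\rangle\,\omega$ as elements of $\Gamma(\wg^{d+1}A^{\vee})$. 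For the remaining equality I would invoke the definition of the interior product in degree $1$, namely $i_{X}\alpha=\langle\alpha,X\rangle\in\ci(M)$; since this is a function (a form of degree $0$), wedging it with $\omega$ is just scalar multiplication, so $\omega\wg i_{X}\alpha=\langle\alpha,X\rangle\,\omega$, which matches the expression above.

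There is essentially no genuine obstacle here: the proof is a one-line verification once the conventions are fixed. The only points requiring care are bookkeeping rather than substance, namely (a) making sure the antisymmetrization/pairing conventions for $\omega(X_{1},\dots,X_{d+1})=\langle\omega,X_{1}\wg\cdots\wg X_{d+1}\rangle$ agree with those fixed earlier in the section, and (b) observing that because $i_{X}\alpha$ lands in degree $0$ the wedge degenerates to multiplication by a function, so no Koszul sign intervenes. I would therefore present the argument as the short chain of equalities displayed above, noting that the general (non-decomposable) case follows by $\ci(M)$-bilinearity of $i_{(-)}(-)$ in $K$ and $\alpha$ if it is needed later.
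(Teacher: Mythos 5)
Your proposal is correct and follows essentially the same route as the paper: evaluate $(i_{K}\alpha)(X_{1},\dots,X_{d+1})=\alpha(K(X_{1},\dots,X_{d+1}))=\alpha(\langle\omega,X_{1}\wg\cdots\wg X_{d+1}\rangle X)$ and pull out the scalar to get $\langle\alpha,X\rangle\,\omega$, with the identity $\omega\wg i_{X}\alpha=\langle\alpha,X\rangle\,\omega$ being immediate since $i_{X}\alpha$ is a function. No issues.
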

\begin{proof} For any $X_{1}, \dots, X_{d+1} \in \Gamma(A)$,
	\begin{align*}
		(i_{K}\alpha)(X_{1}, \dots, X_{d+1}) 
		=&\ \alpha(K(X_{1}, \dots, X_{d+1})) \\
		=&\ \alpha(\langle \omega, X_{1} \wg \cdots \wg X_{d+1} \rangle X) \\
		=&\ \langle \alpha, X \rangle \langle \omega, X_{1} \wg \cdots \wg X_{d+1} \rangle \\
		=&\ (\langle \alpha, X \rangle\omega)(X_{1}, \dots, X_{d+1}).
	\end{align*}
\end{proof}

\begin{prop} \label{Prop. formula of iK 1}
	Let $K = \omega \ot X \in \psi^{d+1}(M,A)$ and $\beta \in \Gamma(\wg^{k}A^{\vee})$. We have
	$$i_{K}\beta = \omega \wg i_{X}\beta \in \Gamma(\wg^{k+d}A^{\vee}).$$
\end{prop}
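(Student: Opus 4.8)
The plan is to recognize that both sides of the claimed identity are derivations of $\Gamma(\wg^{\bullet}A^{\vee})$ of the same degree $d$ and that they agree on a generating set of the exterior algebra, hence coincide. First I would keep $D := i_{K}$ and introduce the competitor operator $D'\colon \Gamma(\wg^{\bullet}A^{\vee}) \to \Gamma(\wg^{\bullet}A^{\vee})$ defined by $D'(\beta) := \omega \wg i_{X}\beta$. Since $\omega \in \Gamma(\wg^{d+1}A^{\vee})$ and $i_{X}$ is a derivation of degree $-1$, the operator $D'$ raises degree by $(d+1)+(-1) = d$, matching the degree $d$ of the algebraic derivation $i_{K}$.

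Second I would verify that $D'$ is itself a derivation of degree $d$. Using that $i_{X}$ satisfies the graded Leibniz rule $i_{X}(\beta \wg \gamma) = (i_{X}\beta)\wg\gamma + (-1)^{|\beta|}\beta\wg(i_{X}\gamma)$, a short computation expands $D'(\beta\wg\gamma) = \omega\wg i_{X}(\beta\wg\gamma)$, and after commuting $\omega$ past $\beta$ (which produces the sign $(-1)^{|\beta|(d+1)}$) one obtains $D'(\beta\wg\gamma) = (D'\beta)\wg\gamma + (-1)^{d|\beta|}\beta\wg(D'\gamma)$. The sign bookkeeping is the only delicate point: in the second term the factors $(-1)^{d|\beta|}$ and $(-1)^{|\beta|(d+1)}$ combine to $(-1)^{2d|\beta|+|\beta|} = (-1)^{|\beta|}$, which is exactly the sign required for $D'$ to be a degree-$d$ derivation.

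Third, since $\Gamma(\wg^{\bullet}A^{\vee})$ is generated as an algebra by $\ci(M) = \Gamma(\wg^{0}A^{\vee})$ together with $\Gamma(A^{\vee})$, it suffices to compare $D$ and $D'$ on these generators. On $\ci(M)$ both operators vanish: $i_{K}$ is algebraic by construction, and $\omega\wg i_{X}f = 0$ because $i_{X}f = 0$ for $f$ of degree $0$. On $\Gamma(A^{\vee})$ the preceding proposition gives precisely $i_{K}\alpha = \omega\wg i_{X}\alpha = D'(\alpha)$. Two derivations of the same degree that agree on a generating set of the algebra are equal, so $i_{K} = D'$ on all of $\Gamma(\wg^{\bullet}A^{\vee})$, which is the assertion.

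The main (and only genuine) obstacle is the sign computation in the second step; the rest is formal. Should one wish to avoid the generation argument, an alternative is a direct induction on $k$, writing $\beta$ locally as a sum of decomposable forms $\alpha_{1}\wg\cdots\wg\alpha_{k}$ and applying the Leibniz rules for $i_{K}$ and $i_{X}$ repeatedly; however, the derivation-comparison argument above is shorter and manifestly coordinate-free, so I would present that one.
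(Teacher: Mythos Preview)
Your proof is correct. The paper takes essentially the route you mention as an alternative: it writes $\beta = \beta^{1}\wg\cdots\wg\beta^{k}$ as a decomposable form, applies the Leibniz rule for the degree-$d$ derivation $i_{K}$ term by term, replaces each $i_{K}\beta^{i}$ by $\omega\wg i_{X}\beta^{i}$ using the preceding proposition, commutes $\omega$ to the front, and recognizes the result as $\omega\wg i_{X}\beta$. Your argument packages this into the cleaner observation that both $i_{K}$ and $\beta\mapsto\omega\wg i_{X}\beta$ are degree-$d$ derivations agreeing on the algebra generators $\ci(M)\cup\Gamma(A^{\vee})$; the two proofs are equivalent in content, yours being slightly more conceptual and the paper's more hands-on.

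One minor remark on exposition: your sentence about the factors $(-1)^{d|\beta|}$ and $(-1)^{|\beta|(d+1)}$ combining to $(-1)^{|\beta|}$ is a consistency check phrased in reverse and may confuse a reader. It is clearer to say directly that the sign $(-1)^{|\beta|}$ from the $i_{X}$-Leibniz rule, multiplied by the sign $(-1)^{|\beta|(d+1)}$ from commuting $\omega$ past $\beta$, yields $(-1)^{d|\beta|}$, which is exactly the sign a degree-$d$ derivation requires.
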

\begin{proof}
	By linearity, we may assume $\beta = \beta^{1} \wg \cdots \wg \beta^{k}$. Since $i_{K}$ is a derivation of degree $d$, we have
	\begin{align*}
		i_{K}\beta 
		=&\ \sum_{i=1}^{k}(-1)^{(i-1)d} \beta^{1} \wg \cdots \wg i_{K}\beta^{i} \wg \cdots \wg \beta^{k}\\
		=&\ \sum_{i=1}^{k}(-1)^{(i-1)d} \beta^{1} \wg \cdots \wg \omega \wg i_{X}\beta^{i} \wg \cdots \wg \beta^{k}\\
		=&\ \sum_{i=1}^{k}(-1)^{i-1} \omega \wg \beta^{1} \wg \cdots \wg i_{X}\beta^{i} \wg \cdots \wg \beta^{k}\\
		=&\ \omega \wg i_{X}\beta.
	\end{align*}
\end{proof}

\begin{prop} \label{Prop. formula of iK 2}
	Let $K = \omega \ot X \in \psi^{d+1}(M,A)$ and $\beta \in \Gamma(\wg^{k}A^{\vee})$. For any $X_{1}, \dots, X_{k+d} \in \Gamma(A)$, we have
	$$(i_{K}\beta)(X_{1}, \dots, X_{k+d}) = \sum_{\sigma \in \Sh(d+1,k-1)} \sgn(\sigma) \beta(K(X_{\sigma(1)}, \dots, X_{\sigma(d+1)}), X_{\sigma(d+2)}, \dots, X_{\sigma(d+k)}).$$
\end{prop}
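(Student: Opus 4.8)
The plan is to reduce at once to the case of a decomposable tensor $K = \omega \ot X$ and then to expand the relevant wedge product via its defining shuffle formula. Both sides of the claimed identity are linear in $K$: on the left, $i_K$ depends linearly on $K$ by construction of the algebraic derivation associated to $K$; on the right, $K(X_{\sigma(1)}, \dots, X_{\sigma(d+1)})$ is linear in $K$ and $\beta$ is linear in its first slot. Since every element of $\psi^{d+1}(M,A) = \Gamma(\wg^{d+1}A^\vee \ot A)$ is a finite sum of decomposable tensors $\omega \ot X$, it suffices to establish the formula when $K = \omega \ot X$.

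So I would fix $K = \omega \ot X$ with $\omega \in \Gamma(\wg^{d+1}A^\vee)$ and $X \in \Gamma(A)$, and invoke Proposition~\ref{Prop. formula of iK 1} to write $i_K\beta = \omega \wg i_X\beta$, where $i_X\beta \in \Gamma(\wg^{k-1}A^\vee)$. Evaluating this $(d+k)$-form on $X_1, \dots, X_{k+d}$ through the standard determinantal shuffle expansion of the wedge product of a $(d+1)$-form with a $(k-1)$-form gives
$$(\omega \wg i_X\beta)(X_1, \dots, X_{k+d}) = \sum_{\sigma \in \Sh(d+1, k-1)} \sgn(\sigma)\, \omega(X_{\sigma(1)}, \dots, X_{\sigma(d+1)})\, (i_X\beta)(X_{\sigma(d+2)}, \dots, X_{\sigma(d+k)}).$$
Then each summand is recognized by unwinding definitions: $(i_X\beta)(X_{\sigma(d+2)}, \dots, X_{\sigma(d+k)}) = \beta(X, X_{\sigma(d+2)}, \dots, X_{\sigma(d+k)})$, and since $K(X_{\sigma(1)}, \dots, X_{\sigma(d+1)}) = \langle \omega, X_{\sigma(1)} \wg \cdots \wg X_{\sigma(d+1)}\rangle X = \omega(X_{\sigma(1)}, \dots, X_{\sigma(d+1)})\, X$, multilinearity of $\beta$ in its first argument lets me absorb the scalar into the first slot:
$$\omega(X_{\sigma(1)}, \dots, X_{\sigma(d+1)})\, \beta(X, X_{\sigma(d+2)}, \dots, X_{\sigma(d+k)}) = \beta\big(K(X_{\sigma(1)}, \dots, X_{\sigma(d+1)}), X_{\sigma(d+2)}, \dots, X_{\sigma(d+k)}\big).$$
Substituting this back reproduces exactly the right-hand side of the claimed identity.

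The computation is essentially routine, and the only point genuinely requiring care is the bookkeeping of signs: I must make sure the sign convention underlying Proposition~\ref{Prop. formula of iK 1} and the wedge-product expansion is the determinantal one producing precisely $\sgn(\sigma)$ over $\Sh(d+1, k-1)$, with no residual Koszul factor coming from commuting $\omega$ (of degree $d+1$) past the arguments of $i_X\beta$. Since $i_X\beta$ is obtained as a derivation applied to $\beta$, this sign is already encoded in the derivation-based proof of Proposition~\ref{Prop. formula of iK 1}, so the verification amounts to checking that no such factor has been dropped. I expect this sign-tracking to be the sole subtlety, the rest being a direct substitution.
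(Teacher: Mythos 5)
Your proof is correct and follows essentially the same route as the paper's: apply Proposition~\ref{Prop. formula of iK 1} to get $i_K\beta = \omega\wg i_X\beta$, expand the wedge product over $(d+1,k-1)$-shuffles, identify $(i_X\beta)(\cdots)=\beta(X,\cdots)$, and absorb the scalar $\omega(X_{\sigma(1)},\dots,X_{\sigma(d+1)})$ into the first slot of $\beta$ to recover $K(X_{\sigma(1)},\dots,X_{\sigma(d+1)})$. The linearity reduction you mention is unnecessary since the statement already assumes $K=\omega\ot X$, but it is harmless.
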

\begin{proof}
	By Proposition~\ref{Prop. formula of iK 1}, we have $i_{K}\beta = \omega \wg i_{X}\beta$. Therefore,
	\begin{align*}
		(i_{K}\beta)(X_{1}, \dots, X_{k+d}) 
		=&\ (\omega \wg i_{X}\beta) (X_{1}, \dots, X_{k+d})\\
		=&\ \sum_{\sigma \in \Sh(d+1,k-1)} \sgn(\sigma) \omega(X_{\sigma(1)}, \dots, X_{\sigma(d+1)}) (i_{X}\beta)(X_{\sigma(d+2)}, \dots, X_{\sigma(d+k)})\\
		=&\ \sum_{\sigma \in \Sh(d+1,k-1)} \sgn(\sigma) \omega(X_{\sigma(1)}, \dots, X_{\sigma(d+1)}) \beta(X, X_{\sigma(d+2)}, \dots, X_{\sigma(d+k)})\\
		=&\ \sum_{\sigma \in \Sh(d+1,k-1)} \sgn(\sigma) \beta(\omega(X_{\sigma(1)}, \dots, X_{\sigma(d+1)}) X, X_{\sigma(d+2)}, \dots, X_{\sigma(d+k)})\\
		=&\ \sum_{\sigma \in \Sh(d+1,k-1)} \sgn(\sigma) \beta(K(X_{\sigma(1)}, \dots, X_{\sigma(d+1)}), X_{\sigma(d+2)}, \dots, X_{\sigma(d+k)}).
	\end{align*}
\end{proof}

The space $\Der^{\bullet}(M,A)$ of all derivations of $\Gamma(\wg^{\bullet}A^{\vee})$ is equipped with a natural Lie algebra structure, which is given by the graded commutator $[-,-]$. Note that, the commutator of any two algebraic derivations is also an algebraic derivation. Therefore, the space of all algebraic derivations $\psi^{\bullet}(M,A) \subset \Der^{\bullet}(M,A)$, endowed with the restriction of the Lie bracket, also forms a Lie algebra. This induces an operation on $\psi^{\bullet}(M,A)$, as follows. 

For any $K \in \psi^{k}(M,A)$ and $L \in \psi^{l}(M,A)$, define the Richardson-Nijenhuis bracket of $K$ and $L$, denoted by $[K,L]_{\RN} \in \psi^{k+l-1}(M,A)$, as
$$i_{[K,L]_{\RN}} := [i_{K},i_{L}] = i_{K}i_{L}-(-1)^{(k-1)(l-1)}i_{L}i_{K}.$$ 
We have a direct formula of Richardson-Nijenhuis bracket.

\begin{prop}
	Let $K = \omega \ot X \in \psi^{k}(M,A)$ and $L = \theta \ot Y \in \psi^{l}(M,A)$. We have
	$$[K,L]_{\RN} = i_{K}L - (-1)^{(k-1)(l-1)}i_{L}K,$$
	where the operation $i_{K}L$ is given by
	$$i_{K}L:= i_{K}\theta \ot Y = \omega \wg i_{X}\theta \ot Y \in \psi^{k+l-1}(M,A).$$
\end{prop}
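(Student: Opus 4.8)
The plan is to exploit the defining relation $i_{[K,L]_{\RN}} = [i_K, i_L] = i_K i_L - (-1)^{(k-1)(l-1)} i_L i_K$ together with the fact, established above, that the assignment $K \mapsto i_K$ is a linear isomorphism from $\psi^\bullet(M,A)$ onto the space of algebraic derivations of $\Gamma(\wg^\bullet A^\vee)$. Since an algebraic derivation kills $\ci(M)$ and is a derivation, it is completely determined by its restriction to the degree-one forms $\Gamma(A^\vee)$, which generate $\Gamma(\wg^\bullet A^\vee)$ as a $\ci(M)$-algebra. Hence it suffices to prove that the two algebraic derivations $[i_K, i_L]$ and $i_{i_K L - (-1)^{(k-1)(l-1)} i_L K}$ agree on $\Gamma(A^\vee)$; the claimed identity of $(1,k+l-1)$-forms then follows at once from the injectivity of $K \mapsto i_K$.

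By linearity of $K \mapsto i_K$, the second derivation restricted to $\Gamma(A^\vee)$ is $i_{i_K L} - (-1)^{(k-1)(l-1)} i_{i_L K}$, so the heart of the matter is the single identity $i_K i_L \alpha = i_{i_K L}\alpha$ for $\alpha \in \Gamma(A^\vee)$ (the companion identity $i_L i_K \alpha = i_{i_L K}\alpha$ being obtained by interchanging the roles of $K$ and $L$). First I would compute, using the formula $i_L\alpha = \langle \alpha, Y\rangle\theta$ for a one-form $\alpha$, that $i_K i_L \alpha = \langle \alpha, Y\rangle\, i_K\theta = \langle\alpha, Y\rangle\,(\omega \wg i_X\theta)$, where the last equality is Proposition~\ref{Prop. formula of iK 1}. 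On the other hand, since $i_K L = (\omega \wg i_X\theta)\ot Y$ is by definition an element of $\psi^{k+l-1}(M,A)$, the same one-form evaluation gives $i_{i_K L}\alpha = \langle \alpha, Y\rangle\,(\omega \wg i_X\theta)$. The two expressions coincide, which is exactly what is needed.

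Combining these computations yields $[i_K, i_L]\alpha = i_{i_K L}\alpha - (-1)^{(k-1)(l-1)} i_{i_L K}\alpha$ for every $\alpha \in \Gamma(A^\vee)$, and since both sides vanish on $\ci(M)$, the two algebraic derivations are equal; uniqueness of the form representing an algebraic derivation then gives $[K,L]_{\RN} = i_K L - (-1)^{(k-1)(l-1)} i_L K$. I expect no serious obstacle here: the only point requiring care is conceptual rather than computational, namely that $i_K i_L$ is not itself a derivation, so one cannot argue at the level of the whole exterior algebra but must restrict to the generators $\Gamma(A^\vee)$; keeping track of the Koszul sign $(-1)^{(k-1)(l-1)}$ (the degrees of $i_K$ and $i_L$ being $k-1$ and $l-1$) is the only bookkeeping involved.
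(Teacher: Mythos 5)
Your proposal is correct and follows essentially the same route as the paper: both reduce the identity to a check on the generators $\Gamma(A^{\vee})$, using that the graded commutator of two algebraic derivations is again algebraic and that an algebraic derivation is determined by (and in bijection with) its value on one-forms, and both then verify $i_{K}i_{L}\alpha = i_{i_{K}L}\alpha$ by the same one-line computation $i_{K}(\langle\alpha,Y\rangle\theta)=\langle\alpha,Y\rangle\, i_{K}\theta$. No gaps.
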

\begin{proof}
	Since $i_{[K,L]_{\RN}}$ is an algebraic derivation, so we only need to check the formula on $\Gamma(A^{\vee})$. For any $\alpha \in \Gamma(A^{\vee})$, by definition, we have
	$$i_{K}i_{L}\alpha = i_{K}(\theta \wg i_{Y}\alpha) = i_{Y}\alpha i_{K}\theta = i_{i_{K}\theta \ot Y}\alpha = i_{i_{K}L}\alpha.$$
	Therefore,
	$$i_{[K,L]_{\RN}}\alpha 
	= i_{K}i_{L}\alpha -(-1)^{(k-1)(l-1)} i_{L}i_{K}\alpha 
	= i_{i_{K}L}\alpha -(-1)^{(k-1)(l-1)} i_{i_{L}K}\alpha.$$
	This implies the result what we want.
\end{proof}

Here is a direct corollary of Proposition~\ref{Prop. formula of iK 2}.
\begin{cor}
	Let $K \in \psi^{k}(M,A)$, $L \in \psi^{l}(M,A)$ and $X_{1},\dots,X_{k+l-1} \in \Gamma(A)$. We have
	$$i_{K}L(X_{1},\dots,X_{k+l-1}) = \sum_{\sigma \in \Sh(k,l-1)} \sgn(\sigma) L(K(X_{\sigma(1)},\dots,X_{\sigma(k)}), X_{\sigma(k+1)},\dots,X_{\sigma(k+l-1)}),$$
	and
	\begin{align*}
		[K,L]_{\RN} (X_{1},\dots,X_{k+l-1})
		=&\ \sum_{\sigma \in \Sh(k,l-1)} \sgn(\sigma) L(K(X_{\sigma(1)},\dots,X_{\sigma(k)}), X_{\sigma(k+1)},\dots,X_{\sigma(k+l-1)})\\
		&-(-1)^{(k-1)(l-1)} \sum_{\sigma \in \Sh(l,k-1)} \sgn(\sigma) K(L(X_{\sigma(1)},\dots,X_{\sigma(l)}), X_{\sigma(l+1)},\dots,X_{\sigma(k+l-1)}).
	\end{align*}
\end{cor}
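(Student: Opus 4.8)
The plan is to treat this as an immediate consequence of Proposition~\ref{Prop. formula of iK 2} together with the description $i_{K}L = i_{K}\theta \ot Y$ recorded in the preceding proposition, so that essentially no new computation is required beyond matching indices. Both sides of each asserted identity are $\ci(M)$-multilinear and tensorial (that is, pointwise-defined), so it suffices to verify them at a single point of $M$; there the finite-dimensional exterior algebra lets me reduce to the decomposable case $K = \omega \ot X \in \psi^{k}(M,A)$ and $L = \theta \ot Y \in \psi^{l}(M,A)$, with $\omega \in \Gamma(\wg^{k}A^{\vee})$, $\theta \in \Gamma(\wg^{l}A^{\vee})$ and $X, Y \in \Gamma(A)$, the general formulas then following by bilinearity.

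For the first identity I would use that $i_{K}L = i_{K}\theta \ot Y$ is a vector-valued $(k+l-1)$-form whose vector part is $Y$, so that for any sections $X_{1}, \dots, X_{k+l-1} \in \Gamma(A)$ one has $(i_{K}L)(X_{1}, \dots, X_{k+l-1}) = (i_{K}\theta)(X_{1}, \dots, X_{k+l-1})\, Y$. Applying Proposition~\ref{Prop. formula of iK 2} with $d+1 = k$ (hence $d = k-1$) and $\beta = \theta$ of degree $l$ yields
\begin{align*}
	(i_{K}\theta)(X_{1}, \dots, X_{k+l-1}) = \sum_{\sigma \in \Sh(k,l-1)} \sgn(\sigma)\, \theta\big(K(X_{\sigma(1)}, \dots, X_{\sigma(k)}), X_{\sigma(k+1)}, \dots, X_{\sigma(k+l-1)}\big),
\end{align*}
where the index set $\Sh(d+1,k-1)$ appearing there becomes $\Sh(k,l-1)$ under the substitution. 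Multiplying through by $Y$ and recognizing $\theta(K(\cdots), \cdots)\, Y = (\theta \ot Y)(K(\cdots), \cdots) = L(K(\cdots), \cdots)$ gives exactly the claimed formula for $i_{K}L$.

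For the bracket identity I would simply substitute this into the formula $[K,L]_{\RN} = i_{K}L - (-1)^{(k-1)(l-1)}i_{L}K$ from the preceding proposition, applying the first formula once to $i_{K}L$ and once to $i_{L}K$ with the roles of $K$ and $L$ interchanged; the latter contributes the shuffle sum over $\Sh(l,k-1)$ with integrand $K(L(\cdots), \cdots)$, and the two pieces assemble into the stated expression.

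The computation has no genuine obstacle; the only point demanding care is the bookkeeping of parameters when invoking Proposition~\ref{Prop. formula of iK 2}, namely that its form-degree $d+1$ is our $k$ while its test-form degree is our $l$, so that its shuffle set $\Sh(d+1,k-1)$ and its argument range $X_{\sigma(d+2)}, \dots, X_{\sigma(d+k)}$ correctly reindex to $\Sh(k,l-1)$ and $X_{\sigma(k+1)}, \dots, X_{\sigma(k+l-1)}$. One should also confirm that no extra Koszul sign enters when pulling the section $Y$ out of the evaluation, which is immediate because evaluating the vector-valued form $i_{K}\theta \ot Y$ on sections is just multiplication of the scalar $(i_{K}\theta)(X_{1}, \dots, X_{k+l-1})$ by $Y$.
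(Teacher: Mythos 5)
Your proposal is correct and is exactly the derivation the paper intends: it states this result as a direct corollary of Proposition~\ref{Prop. formula of iK 2} combined with the formula $i_{K}L = i_{K}\theta \ot Y$ and $[K,L]_{\RN} = i_{K}L - (-1)^{(k-1)(l-1)}i_{L}K$ from the preceding proposition, leaving the reindexing ($d+1 \mapsto k$, test-form degree $\mapsto l$) implicit. Your bookkeeping of the shuffle sets and argument ranges checks out, so nothing is missing.
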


\begin{remark}
	View $K \in \Hom_{\ci(M)}(\Gamma(A)^{\wg k}, \Gamma(A))$ and $L \in \Hom_{\ci(M)}(\Gamma(A)^{\wg l}, \Gamma(A))$. When compared with the definition of the algebraic version of the Richardson-Nijenhuis bracket $[-,-]_{\RNA}$, see Equation~\eqref{Eq: Richardson-Nijenhuis}, we have $[K,L]_{\RN} = [L,K]_{\RNA}$.
\end{remark}




\begin{defn}
	Let $K \in \psi^{d+1}(M,A)$. Define the Lie derivative on $\psi^{\bullet}(M,A)$ along $K$ as
	$$\calLr_{K} := [i_{K}, \dd_{\rho}] = i_{K} \dd_{\rho} - (-1)^{d}\dd_{\rho} i_{K},$$
	which is of degree $d+1$. 

	In particular, for $d=-1$, $X \in \psi^{0}(M,A) = \Gamma(A)$, we get the aforementioned definition of Lie derivative alone $X \in \Gamma(A)$.
\end{defn}

Since $(\dd_{\rho})^{2} = 0$, i.e., $[\dd_{\rho}, \dd_{\rho}] = 0$, by the Jacobi identity, we obtain the following corollary.

\begin{cor}
	For any $K \in \psi^{\bullet}(M,A)$, we have $[\calLr_{K},\dd_{\rho}] = 0$.
\end{cor}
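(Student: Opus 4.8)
The plan is to deduce this purely formally from the facts already established, namely that $\Der^{\bullet}(M,A)$ equipped with the graded commutator is a graded Lie algebra, that $\dd_{\rho}$ is a derivation of degree $1$ with $(\dd_{\rho})^{2}=0$, and that by definition $\calLr_{K}=[i_{K},\dd_{\rho}]$ where $i_{K}$ is the algebraic derivation of degree $d$ associated to $K\in\psi^{d+1}(M,A)$. Thus the assertion $[\calLr_{K},\dd_{\rho}]=0$ is exactly the statement that $\big[[i_{K},\dd_{\rho}],\dd_{\rho}\big]=0$, i.e.\ that applying $\ad_{\dd_{\rho}}$ twice to $i_{K}$ yields zero.

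First I would record the graded Jacobi identity in $\Der^{\bullet}(M,A)$ for the triple $(i_{K},\dd_{\rho},\dd_{\rho})$; since $[\dd_{\rho},\dd_{\rho}]=2(\dd_{\rho})^{2}=0$, the identity collapses and forces $2\big[[i_{K},\dd_{\rho}],\dd_{\rho}\big]=0$. To avoid any reliance on invertibility of $2$, however, I would instead carry out the one-line direct expansion. Writing $D:=[i_{K},\dd_{\rho}]=i_{K}\dd_{\rho}-(-1)^{d}\dd_{\rho}i_{K}$, which has degree $d+1$, one computes $D\dd_{\rho}=-(-1)^{d}\dd_{\rho}i_{K}\dd_{\rho}$ and $\dd_{\rho}D=\dd_{\rho}i_{K}\dd_{\rho}$, where both simplifications use $(\dd_{\rho})^{2}=0$. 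Substituting into $[D,\dd_{\rho}]=D\dd_{\rho}-(-1)^{d+1}\dd_{\rho}D$ gives $-(-1)^{d}\dd_{\rho}i_{K}\dd_{\rho}+(-1)^{d}\dd_{\rho}i_{K}\dd_{\rho}=0$, as desired.

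The only subtlety in this argument is the sign bookkeeping dictated by the Koszul rule, so the main (and only minor) obstacle is simply keeping track of the degrees: $i_{K}$ has degree $d=\deg K-1$, $\dd_{\rho}$ has degree $1$, and hence $\calLr_{K}=[i_{K},\dd_{\rho}]$ has degree $d+1$, which is what produces the sign $(-1)^{d+1}$ in the outer commutator and makes the two surviving terms cancel rather than add. There is no geometric input beyond $(\dd_{\rho})^{2}=0$, established by direct verification earlier in this section; everything else is the formal identity $\ad_{\dd_{\rho}}^{2}=0$, valid in any graded Lie algebra for an odd square-zero element.
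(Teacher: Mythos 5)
Your argument is correct and is essentially the paper's own: the paper derives the corollary from the graded Jacobi identity applied to $(i_K,\dd_\rho,\dd_\rho)$ together with $[\dd_\rho,\dd_\rho]=2(\dd_\rho)^2=0$. Your additional direct expansion of $\big[[i_K,\dd_\rho],\dd_\rho\big]$ is just an explicit verification of the same identity (and the worry about dividing by $2$ is moot here since everything is over $\RR$), so there is no substantive difference in approach.
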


The following are more direct characterizations of Lie derivative.

\begin{prop}
	For any $K \in \omega \ot X \in \psi^{k}(M,A)$ and $\alpha \in \Gamma(\wg^{l}A^{\vee})$, we have
	$$\calLr_{K}\alpha = \omega \wg \calLr_{X}\alpha + (-1)^{k} \dd_{\rho}\omega \wg i_{X}\alpha.$$
\end{prop}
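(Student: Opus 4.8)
The plan is to reduce everything to the defining formula $\calLr_{K} = i_{K}\dd_{\rho} - (-1)^{k-1}\dd_{\rho}i_{K}$ (here $d = k-1$, since $K \in \psi^{k}(M,A)$) and then expand both terms using the explicit description of $i_{K}$ already established. Concretely, I would first invoke Proposition~\ref{Prop. formula of iK 1}, which gives $i_{K}\beta = \omega \wg i_{X}\beta$ for every form $\beta$; applying it to $\beta = \dd_{\rho}\alpha$ handles the first summand and yields $i_{K}\dd_{\rho}\alpha = \omega \wg i_{X}\dd_{\rho}\alpha$.

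For the second summand I would apply the same proposition to $\beta = \alpha$ and then differentiate: since $\dd_{\rho}$ is a derivation of degree $1$ and $\omega$ has degree $k$, the graded Leibniz rule gives
$$\dd_{\rho}i_{K}\alpha = \dd_{\rho}(\omega \wg i_{X}\alpha) = \dd_{\rho}\omega \wg i_{X}\alpha + (-1)^{k}\omega \wg \dd_{\rho}i_{X}\alpha.$$
Substituting both expressions into the definition of $\calLr_K$ and using the identity $(-1)^{k-1}(-1)^{k} = -1$, the two terms carrying a factor of the form $\omega \wg (\cdots)$ combine into $\omega \wg (i_{X}\dd_{\rho}\alpha + \dd_{\rho}i_{X}\alpha)$, while the remaining term is $(-1)^{k}\dd_{\rho}\omega \wg i_{X}\alpha$.

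The final step is to recognize $i_{X}\dd_{\rho}\alpha + \dd_{\rho}i_{X}\alpha = \calLr_{X}\alpha$, which is exactly the Cartan-type identity $\calLr_{X} = [i_{X}, \dd_{\rho}]$ recorded in Proposition~\ref{Prop. Some properties of Lie derivative}(iii) for $X \in \Gamma(A) = \psi^{0}(M,A)$. Collecting this with the leftover term then produces the claimed equality $\calLr_{K}\alpha = \omega \wg \calLr_{X}\alpha + (-1)^{k}\dd_{\rho}\omega \wg i_{X}\alpha$. Since both sides are $\ci(M)$-bilinear in $(\omega, X)$ and $K$ is already presented in the decomposable form $K = \omega \ot X$, no further reduction is needed.

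There is no genuine obstacle here beyond careful sign bookkeeping: the only place a sign could go wrong is the interplay between the degree convention $(-1)^{d}$ in the definition of $\calLr_{K}$ (with $d = k-1$) and the degree-$k$ Leibniz sign produced when $\dd_{\rho}$ passes the factor $\omega$. I would double-check that these two signs combine to $-1$, so that the term $\omega \wg \dd_{\rho}i_{X}\alpha$ adds to, rather than cancels against, $\omega \wg i_{X}\dd_{\rho}\alpha$, which is precisely what is required to reconstitute $\calLr_{X}\alpha$.
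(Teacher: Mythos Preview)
Your proposal is correct and follows essentially the same approach as the paper: both expand $\calLr_{K}\alpha = i_{K}\dd_{\rho}\alpha - (-1)^{k-1}\dd_{\rho}i_{K}\alpha$, apply $i_{K}\beta = \omega \wg i_{X}\beta$ to each summand, use the Leibniz rule for $\dd_{\rho}$ on $\omega \wg i_{X}\alpha$, and regroup via the Cartan identity $\calLr_{X} = i_{X}\dd_{\rho} + \dd_{\rho}i_{X}$.
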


\begin{proof}
	By definition,
	\begin{align*}
		\calLr_{K}\alpha 
		=&\ i_{K} \dd_{\rho} \alpha - (-1)^{k-1}\dd_{\rho} i_{K} \alpha\\
		=&\ \omega \wg i_{X}(\dd_{\rho} \alpha) - (-1)^{k-1}\dd_{\rho}(\omega \wg i_{X}\alpha)\\
		=&\ \omega \wg i_{X}(\dd_{\rho} \alpha) - (-1)^{k-1} \dd_{\rho}\omega \wg i_{X}\alpha + \omega \wg \dd_{\rho}(i_{X}\alpha)\\
		=&\ \omega \wg \calLr_{X} \alpha + (-1)^{k} \dd_{\rho} \omega \wg i_{X}\alpha.
	\end{align*}
\end{proof}

\begin{prop}
	For any $K \in \omega \ot X \in \psi^{k}(M,A)$, $\alpha \in \Gamma(\wg^{l}A^{\vee})$, and $X_{1}, \dots, X_{k+l} \in \Gamma(A)$, we have
	\begin{align*}
	(\calLr_{K}\alpha) (X_{1}, \dots, X_{k+l})
	=&\ \sum_{\sigma \in \Sh(k,l)} \sgn(\sigma) \calLr_{K(X_{\sigma(1)}, \dots, X_{\sigma(k)})}(\alpha(X_{\sigma(k+1)},\dots,X_{\sigma(k+l)}))\\
	&\ - \sum_{\sigma \in \Sh(k,1,l-1)} \sgn(\sigma) \alpha([K(X_{\sigma(1)}, \dots, X_{\sigma(k)}),X_{\sigma(k+1)}]_{A},X_{\sigma(k+2)},\dots,X_{\sigma(k+l)})\\
	&\ -(-1)^{k} \sum_{\sigma \in \Sh(2,k-1,l-1)} \sgn(\sigma) \alpha(K([X_{\sigma(1)}, X_{\sigma(2)}]_{A},X_{\sigma(3)},\dots,X_{\sigma(k+1)}),X_{\sigma(k+2)},\dots,X_{\sigma(k+l)}).
	\end{align*}
\end{prop}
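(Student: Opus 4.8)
The plan is to deduce the pointwise formula directly from the preceding proposition, which expresses $\calLr_K\alpha = \omega \wg \calLr_X\alpha + (-1)^k\,\dd_\rho\omega \wg i_X\alpha$ for $K = \omega \ot X$. I would evaluate each of the two wedge products on $(X_1,\dots,X_{k+l})$ through the shuffle expansion of the wedge product, then expand the inner operators $\calLr_X\alpha$ and $\dd_\rho\omega$ via their defining formulas, obtaining a collection of elementary terms which I regroup to match the three sums in the statement.

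First I would expand $(\omega \wg \calLr_X\alpha)(X_1,\dots,X_{k+l}) = \sum_{\sigma\in\Sh(k,l)}\sgn(\sigma)\,\omega(X_{\sigma(1)},\dots,X_{\sigma(k)})\,(\calLr_X\alpha)(X_{\sigma(k+1)},\dots,X_{\sigma(k+l)})$ and substitute the defining property of the Lie derivative on forms, splitting $(\calLr_X\alpha)(\cdots)$ into the term $\calLr_X(\alpha(\cdots))$ and the sum $-\sum_i \alpha(\cdots,[X,X_{\sigma(k+i)}]_A,\cdots)$. The first piece yields the leading sum of the statement once I note that $\omega(X_{\sigma(1)},\dots,X_{\sigma(k)})\,\calLr_X = \calLr_{\omega(\cdots)X} = \calLr_{K(X_{\sigma(1)},\dots,X_{\sigma(k)})}$ on functions, using $\calLr_{fX}g = f\calLr_X g$ for $f,g\in\ci(M)$. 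The second piece I would rewrite as a sum over $\Sh(k,1,l-1)$ by invoking the antisymmetry of $\alpha$ to move the bracketed slot $[X,X_{\sigma(k+i)}]_A$ to the front and absorbing the resulting sign into the reindexed shuffle.

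Next I would expand $(-1)^k(\dd_\rho\omega \wg i_X\alpha)(X_1,\dots,X_{k+l})$ analogously, using $(i_X\alpha)(\cdots) = \alpha(X,\cdots)$ together with the defining formula for $\dd_\rho\omega$ applied to a shuffle block $(X_{\tau(1)},\dots,X_{\tau(k+1)})$ with $\tau\in\Sh(k+1,l-1)$. The Lie-derivative-on-functions terms of $\dd_\rho\omega$ become directional derivatives $\rho(X_{\tau(i)})(\omega(\cdots))$, since $\calLr_Y f = \rho(Y)f$ on $\ci(M)$, while the bracket terms contribute $\omega([X_{\tau(i)},X_{\tau(j)}]_A,\dots)$. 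The key accounting is that, via the Leibniz rule $[fX,Y]_A = f[X,Y]_A - (\rho(Y)f)X$, the second sum of the statement itself splits into an $[X,\cdot]_A$-part and a $\rho(\cdot)\omega$-part: the former coincides with the reindexed piece from the previous paragraph, and the latter is exactly supplied by the directional-derivative terms of $\dd_\rho\omega$. Finally, the bracket terms of $\dd_\rho\omega$, after reorganizing the $\Sh(k+1,l-1)$ summation together with the internal $i<j$ sum into a single sum over $\Sh(2,k-1,l-1)$, produce the third sum of the statement.

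The main obstacle will be purely combinatorial: verifying that the shuffle reindexings and the attendant permutation signs match precisely. Concretely, I must check that moving a bracketed argument to the front of $\alpha$ turns $\sum_{\sigma\in\Sh(k,l)}\sum_i$ into $\sum_{\Sh(k,1,l-1)}$ with sign $-1$; that the pairs $(\tau,i)$ with $\tau\in\Sh(k+1,l-1)$ and $1\leqslant i\leqslant k+1$ biject with $\Sh(k,1,l-1)$ so that the prefactor $(-1)^k$ combined with $(-1)^i$ from the $\dd_\rho$-formula delivers exactly the $\rho(\cdot)\omega$-part; and that the triples $(\tau,i<j)$ biject with $\Sh(2,k-1,l-1)$ so that $(-1)^k$ combined with $(-1)^{i+j}$ yields the displayed sign $-(-1)^k$. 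These sign verifications, though routine, are the delicate part, and I would carry them out by the standard concatenation-of-shuffles argument, which identifies each reindexing as a composition of the original shuffle with the cyclic or transposition permutation responsible for the relocation of the distinguished arguments.
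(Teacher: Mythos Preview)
Your proposal is correct and follows essentially the same approach as the paper's proof: start from the preceding identity $\calLr_K\alpha = \omega\wedge\calLr_X\alpha + (-1)^k\,\dd_\rho\omega\wedge i_X\alpha$, expand both wedge products by shuffles, unfold $\calLr_X\alpha$ and $\dd_\rho\omega$ via their defining formulas, reindex the iterated shuffle sums into $\Sh(k,1,l-1)$ and $\Sh(2,k-1,l-1)$, and use the Leibniz rule $[fX,Y]_A = f[X,Y]_A - (\rho(Y)f)X$ to merge the $\omega(\cdots)[X,\cdot]_A$ and $\rho(\cdot)(\omega(\cdots))X$ pieces into $[K(\cdots),\cdot]_A$. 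The paper carries out exactly these steps with the same sign bookkeeping you anticipate.
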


\begin{proof}
	Based on the previous proposition, we obtain
	\begin{align*}
		&\ (\calLr_{K}\alpha) (X_{1}, \dots, X_{k+l}) \\
		=&\ (\omega \wg \calLr_{X}\alpha) (X_{1}, \dots, X_{k+l}) + (-1)^{k} (\dd_{\rho}\omega \wg i_{X}\alpha) (X_{1}, \dots, X_{k+l})\\
		=&\ \sum_{\sigma \in \Sh(k,l)} \sgn(\sigma) 
		\omega(X_{\sigma(1)},\dots,X_{\sigma(k)}) (\calLr_{X}\alpha)(X_{\sigma(k+1)},\dots,X_{\sigma(k+l)})\\
		&\ + (-1)^{k} \sum_{\sigma \in \Sh(k+1,l-1)} \sgn(\sigma)
		(\dd_{\rho}\omega)(X_{\sigma(1)},\dots,X_{\sigma(k+1)}) (i_{X}\alpha)(X_{\sigma(k+2)},\dots,X_{\sigma(k+l)})\\
		=&\ \sum_{\sigma \in \Sh(k,l)} \sgn(\sigma) 
		\omega(X_{\sigma(1)},\dots,X_{\sigma(k)}) \calLr_{X}(\alpha(X_{\sigma(k+1)},\dots,X_{\sigma(k+l)}))\\
		&\ - \sum_{\sigma \in \Sh(k,l)} \sgn(\sigma) \sum_{\theta \in \Sh(1,l-1)} \sgn(\theta) 
		\omega(X_{\sigma(1)},\dots,X_{\sigma(k)}) \alpha([X,X_{\sigma(k+\theta(1))}]_{A},X_{\sigma(k+\theta(2))},\dots,X_{\sigma(k+\theta(l))})\\
		&\ + (-1)^{k} \sum_{\sigma \in \Sh(k+1,l-1)} \sgn(\sigma) \sum_{\theta \in \Sh(1,k)} \sgn(\theta)
		\calLr_{X_{\sigma\theta(1)}}(\omega(X_{\sigma\theta(2)},\dots,X_{\sigma\theta(k+1)})) \alpha(X, X_{\sigma(k+2)},\dots,X_{\sigma(k+l)})\\
		&\ - (-1)^{k} \sum_{\sigma \in \Sh(k+1,l-1)} \sgn(\sigma) \sum_{\theta \in \Sh(2,k-1)} \sgn(\theta)
		\omega([X_{\sigma\theta(1)},X_{\sigma\theta(2)}]_{A},X_{\sigma\theta(3)},\dots,X_{\sigma\theta(k+1)}) \alpha(X, X_{\sigma(k+2)},\dots,X_{\sigma(k+l)})\\
		=&\ \sum_{\sigma \in \Sh(k,l)} \sgn(\sigma) 
		\omega(X_{\sigma(1)},\dots,X_{\sigma(k)}) \calLr_{X}(\alpha(X_{\sigma(k+1)},\dots,X_{\sigma(k+l)}))\\
		&\ - \sum_{\sigma \in \Sh(k,1,l-1)} \sgn(\sigma) 
		\omega(X_{\sigma(1)},\dots,X_{\sigma(k)}) \alpha([X,X_{\sigma(k+1)}]_{A},X_{\sigma(k+2)},\dots,X_{\sigma(k+l)})\\
		&\ + (-1)^{k} \sum_{\sigma \in \Sh(1,k,l-1)} \sgn(\sigma) 
		\calLr_{X_{\sigma(1)}}(\omega(X_{\sigma(2)},\dots,X_{\sigma(k+1)})) \alpha(X, X_{\sigma(k+2)},\dots,X_{\sigma(k+l)})\\
		&\ - (-1)^{k} \sum_{\sigma \in \Sh(2,k-1,l-1)} \sgn(\sigma) 
		\omega([X_{\sigma(1)},X_{\sigma(2)}]_{A},X_{\sigma(3)},\dots,X_{\sigma(k+1)}) \alpha(X, X_{\sigma(k+2)},\dots,X_{\sigma(k+l)})\\
		=&\ \sum_{\sigma \in \Sh(k,l)} \sgn(\sigma) 
		\calLr_{\omega(X_{\sigma(1)},\dots,X_{\sigma(k)}) X}(\alpha(X_{\sigma(k+1)},\dots,X_{\sigma(k+l)}))\\
		&\ - \sum_{\sigma \in \Sh(k,1,l-1)} \sgn(\sigma) 
		\alpha(\omega(X_{\sigma(1)},\dots,X_{\sigma(k)})[X,X_{\sigma(k+1)}]_{A},X_{\sigma(k+2)},\dots,X_{\sigma(k+l)})\\
		&\ + \sum_{\sigma \in \Sh(k,1,l-1)} \sgn(\sigma) 
		\alpha(\calLr_{X_{\sigma(k+1)}}(\omega(X_{\sigma(1)},\dots,X_{\sigma(k)})) X, X_{\sigma(k+2)},\dots,X_{\sigma(k+l)})\\
		&\ - (-1)^{k} \sum_{\sigma \in \Sh(2,k-1,l-1)} \sgn(\sigma) 
		\alpha(\omega([X_{\sigma(1)},X_{\sigma(2)}]_{A},X_{\sigma(3)},\dots,X_{\sigma(k+1)}) X, X_{\sigma(k+2)},\dots,X_{\sigma(k+l)})\\
		=&\ \sum_{\sigma \in \Sh(k,l)} \sgn(\sigma) 
		\calLr_{\omega(X_{\sigma(1)},\dots,X_{\sigma(k)}) X}(\alpha(X_{\sigma(k+1)},\dots,X_{\sigma(k+l)}))\\
		&\ - \sum_{\sigma \in \Sh(k,1,l-1)} \sgn(\sigma) 
		\alpha([\omega(X_{\sigma(1)},\dots,X_{\sigma(k)}) X,X_{\sigma(k+1)}]_{A},X_{\sigma(k+2)},\dots,X_{\sigma(k+l)})\\
		&\ - (-1)^{k} \sum_{\sigma \in \Sh(2,k-1,l-1)} \sgn(\sigma) 
		\alpha(\omega([X_{\sigma(1)},X_{\sigma(2)}]_{A},X_{\sigma(3)},\dots,X_{\sigma(k+1)}) X, X_{\sigma(k+2)},\dots,X_{\sigma(k+l)})\\
		=&\ \sum_{\sigma \in \Sh(k,l)} \sgn(\sigma) 
		\calLr_{K(X_{\sigma(1)},\dots,X_{\sigma(k)})}(\alpha(X_{\sigma(k+1)},\dots,X_{\sigma(k+l)}))\\
		&\ - \sum_{\sigma \in \Sh(k,1,l-1)} \sgn(\sigma) 
		\alpha([K(X_{\sigma(1)},\dots,X_{\sigma(k)}),X_{\sigma(k+1)}]_{A},X_{\sigma(k+2)},\dots,X_{\sigma(k+l)})\\
		&\ - (-1)^{k} \sum_{\sigma \in \Sh(2,k-1,l-1)} \sgn(\sigma) 
		\alpha(K([X_{\sigma(1)},X_{\sigma(2)}]_{A},X_{\sigma(3)},\dots,X_{\sigma(k+1)}), X_{\sigma(k+2)},\dots,X_{\sigma(k+l)}).
		\end{align*}
\end{proof}

\begin{remark}
	In classical Nijenhuis geometry, i.e., when considering the trivial Lie algebroid $(T_{M},[-,-],\id)$, any derivation on the space of differential forms $\Omega^{\bullet}(M) = \Gamma(\wg^{\bullet}T_{M}^{\vee})$ can be uniquely decomposed into a Lie derivative and an algebraic derivation, and the map, $K \mapsto \calL^{\id}_{K}$, is injective. Therefore, this induces a Lie bracket on the space of vector-valued differential forms $\Omega^{\bullet}(M,T_{M}) = \Gamma(\wg^{\bullet}T_{M}^{\vee} \ot T_{M})$, called the Fr\"olicher-Nijenhuis bracket \cite{Mic87, BK21-16}. However, in the general case, a derivation may not be able to be decomposed into a Lie derivative and an algebraic derivation, and the map $K \mapsto \calLr_{K}$ may not be an injective. So the classic approach, which induces the Fr\"olicher-Nijenhuis bracket, doesn't work. But, we can define the Fr\"olicher-Nijenhuis bracket directly as follows.
\end{remark}

\begin{defn} \label{Defn. definition of FN bracket}
	Let $K = \alpha \ot X \in \psi^{k}(M,A)$ and $L = \beta \ot Y \in \psi^{l}(M,A)$. Define the Fr\"olicher-Nijenhuis bracket $[K,L]_{\FN} \in \psi^{k+l}(M,A)$ as:
	\begin{align*}
		[K,L]_{\FN} 
		:=&\ \alpha \wg \beta \ot [X,Y]_{A} + \alpha \wg (\calLr_{X} \beta) \ot Y - (\calLr_{Y} \alpha) \wg \beta \ot X  +(-1)^{k} \dd_{\rho}\alpha \wg i_{X}\beta \ot Y + (-1)^{k} i_{Y}\alpha \wg \dd_{\rho}\beta \ot X \\
		=&\ \alpha \wg \beta \ot [X,Y]_{A} + \calLr_{K} \beta \ot Y - (-1)^{kl} \calLr_{L} \alpha \ot X.
	\end{align*}
\end{defn}

It is clear that, for $K \in \psi^{k}(M,A)$ and $L \in \psi^{l}(M,A)$, we have $[K,L]_{\FN} = -(-1)^{kl}[L,K]_{\FN}$. We will later indirectly prove that the Fr\"olicher-Nijenhuis bracket defines a graded Lie algebra structure on the graded space $\psi^{\bullet}(M,A)$. For details, refer to Proposition~\ref{Prop. relation between algebra and geometry Nijenhuis}. Before that, we need to present the following result:

\begin{prop}
	Let $K \in \psi^{k}(M,A)$, $L \in \psi^{l}(M,A)$ and $X_{1}, \dots, X_{k+l} \in \Gamma(A)$. We have the following formula:
	\begin{align*}
		[K,L]_{\FN}(X_{1}, \dots, X_{k+l})
		=&\ \sum_{\sigma \in \Sh(k,l)} \sgn(\sigma) 
		[K(X_{\sigma(1)}, \dots, X_{\sigma(k)}),L(X_{\sigma(k+1)}, \dots, X_{\sigma(k+l)})]_{A}\\
		&\ - \sum_{\sigma \in \Sh(k,1,l-1)} \sgn(\sigma) 
		L([K(X_{\sigma(1)}, \dots, X_{\sigma(k)}),X_{\sigma(k+1)}]_{A}, X_{\sigma(k+2)}, \dots, X_{\sigma(k+l)})\\
		&\ + (-1)^{kl} \sum_{\sigma \in \Sh(l,1,k-1)} \sgn(\sigma)
		K([L(X_{\sigma(1)}, \dots, X_{\sigma(l)}),X_{\sigma(l+1)}]_{A}, X_{\sigma(l+2)}, \dots, X_{\sigma(k+l)})\\
		&\ - (-1)^{k} \sum_{\sigma \in \Sh(2,k-1,l-1)} \sgn(\sigma) 
		L(K([X_{\sigma(1)},X_{\sigma(2)}]_{A},X_{\sigma(3)},\dots,X_{\sigma(k+1)}), X_{\sigma(k+2)}, \dots, X_{\sigma(k+l)})\\
		&\ + (-1)^{(k-1)l} \sum_{\sigma \in \Sh(2,l-1,k-1)} \sgn(\sigma)
		K(L([X_{\sigma(1)},X_{\sigma(2)}]_{A},X_{\sigma(3)},\dots,X_{\sigma(l+1)}),X_{\sigma(l+2)}, \dots, X_{\sigma(k+l)}).
	\end{align*}
\end{prop}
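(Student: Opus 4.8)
The plan is to compute $[K,L]_{\FN}(X_1,\dots,X_{k+l})$ directly from the second expression in Definition~\ref{Defn. definition of FN bracket}, namely
$$[K,L]_{\FN} = \alpha\wg\beta\ot[X,Y]_A + \calLr_K\beta\ot Y - (-1)^{kl}\calLr_L\alpha\ot X,$$
by evaluating each of the three summands on $(X_1,\dots,X_{k+l})$ and then regrouping. Since a decomposable vector-valued form $\gamma\ot Z$ satisfies $(\gamma\ot Z)(X_1,\dots,X_m)=\gamma(X_1,\dots,X_m)Z$, the first summand contributes $(\alpha\wg\beta)(X_1,\dots,X_{k+l})[X,Y]_A$, which I expand by the standard shuffle formula for the wedge product as a sum over $\Sh(k,l)$. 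For the remaining two summands I apply the preceding proposition, which expresses $(\calLr_K\beta)(X_1,\dots,X_{k+l})$ (resp.\ $(\calLr_L\alpha)(X_1,\dots,X_{k+l})$) as a sum of three pieces: a ``Lie-derivative-of-the-coefficient'' piece over $\Sh(k,l)$ (resp.\ $\Sh(l,k)$), a piece involving $[K(\cdots),X_{\cdot}]_A$ over $\Sh(k,1,l-1)$ (resp.\ $\Sh(l,1,k-1)$), and an ``inner-bracket'' piece over $\Sh(2,k-1,l-1)$ (resp.\ $\Sh(2,l-1,k-1)$).

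Three of the five target terms then appear immediately. Multiplying the second and third pieces of $(\calLr_K\beta)(\cdots)$ by $Y$ and recognising $L=\beta\ot Y$ reproduces verbatim the second and fourth terms of the claimed formula; likewise the second and third pieces of $(\calLr_L\alpha)(\cdots)$, multiplied by $-(-1)^{kl}X$ and using $K=\alpha\ot X$, give the third and fifth terms, where the sign collapse $-(-1)^{kl}\cdot\big(-(-1)^l\big)=(-1)^{(k-1)l}$ yields exactly the prefactor on the last term.

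The only genuine content is to produce the first target term, $\sum_{\sigma\in\Sh(k,l)}\sgn(\sigma)[K(X_{\sigma(1)},\dots,X_{\sigma(k)}),L(X_{\sigma(k+1)},\dots,X_{\sigma(k+l)})]_A$, out of the three leftover contributions: the wedge-product term above, the first piece of $(\calLr_K\beta)(\cdots)Y$, and the first piece of $-(-1)^{kl}(\calLr_L\alpha)(\cdots)X$. For this I expand each bracket $[K(\cdots),L(\cdots)]_A=[\alpha(\cdots)X,\beta(\cdots)Y]_A$ by the Leibniz rule of the Lie algebroid,
$$[fX,gY]_A = fg[X,Y]_A + f(\calLr_X g)Y - g(\calLr_Y f)X,$$
valid because $\calLr_X g=\rho(X)g$ is $\ci(M)$-linear in the vector argument. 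The three resulting summands match, respectively, the wedge term, the first piece of $\calLr_K\beta$, and---after the block-swap reindexing $\Sh(k,l)\leftrightarrow\Sh(l,k)$ that introduces the factor $(-1)^{kl}$---the first piece of $\calLr_L\alpha$.

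The main obstacle will be the sign and shuffle bookkeeping: tracking the Koszul signs produced when $i_X$, $\dd_{\rho}$ and $\calLr_X$ are moved past wedge factors, and above all verifying that the change of summation variable between $(k,l)$- and $(l,k)$-shuffles contributes precisely $(-1)^{kl}$, so that the third summand of the Leibniz expansion is correctly identified with the first piece of $(\calLr_L\alpha)(\cdots)$. As all identities involved are $\ci(M)$-linear and additive in $K$ and $L$, it suffices to treat decomposable $K=\alpha\ot X$ and $L=\beta\ot Y$, the general case following by linearity.
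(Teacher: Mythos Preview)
Your proposal is correct and follows essentially the same strategy as the paper: reduce to decomposable $K=\alpha\ot X$, $L=\beta\ot Y$, expand each piece on $(X_1,\dots,X_{k+l})$, and use the Leibniz identity $[fX,gY]_A=fg[X,Y]_A+f(\calLr_Xg)Y-g(\calLr_Yf)X$ together with the block-swap $\Sh(l,k)\leftrightarrow\Sh(k,l)$ to assemble the first target term. The only difference is organisational: the paper starts from the five-term form of Definition~\ref{Defn. definition of FN bracket} and expands $\calLr_X\beta$, $\calLr_Y\alpha$, $\dd_\rho\alpha$, $\dd_\rho\beta$ from scratch, whereas you start from the three-term form and invoke the preceding proposition for $(\calLr_K\beta)(X_1,\dots,X_{k+l})$, which makes your argument somewhat shorter.
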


\begin{proof}
	Assume that $K = \alpha \ot X \in \psi^{k}(M,A)$ and $L = \beta \ot Y \in \psi^{l}(M,A)$, we have
	\begin{align*}
		&\ [K,L]_{\FN}(X_{1}, \dots, X_{k+l}) \\
		=&\ \big( \alpha \wg \beta \ot [X,Y]_{A} + \alpha \wg (\calLr_{X} \beta) \ot Y - (\calLr_{Y} \alpha) \wg \beta \ot X 
		+(-1)^{k} \dd_{\rho}\alpha \wg i_{X}\beta \ot Y + (-1)^{k} i_{Y}\alpha \wg \dd_{\rho}\beta \ot X \big) (X_{1}, \dots, X_{k+l})\\
		=&\ \sum_{\sigma \in \Sh(k,l)} \sgn(\sigma) 
		\alpha(X_{\sigma(1)}, \dots, X_{\sigma(k)}) \beta(X_{\sigma(k+1)}, \dots, X_{\sigma(k+l)}) [X,Y]_{A}\\
		&\ + \sum_{\sigma \in \Sh(k,l)} \sgn(\sigma)
		\alpha(X_{\sigma(1)}, \dots, X_{\sigma(k)}) (\calLr_{X}\beta)(X_{\sigma(k+1)}, \dots, X_{\sigma(k+l)}) Y\\
		&\ - \sum_{\sigma \in \Sh(k,l)} \sgn(\sigma)
		(\calLr_{Y}\alpha)(X_{\sigma(1)}, \dots, X_{\sigma(k)}) \beta(X_{\sigma(k+1)}, \dots, X_{\sigma(k+l)}) X\\
		&\ + (-1)^{k} \sum_{\sigma \in \Sh(k+1,l-1)} \sgn(\sigma)
		(\dd_{\rho}\alpha)(X_{\sigma(1)}, \dots, X_{\sigma(k+1)}) (i_{X}\beta)(X_{\sigma(k+2)}, \dots, X_{\sigma(k+l)}) Y\\
		&\ + (-1)^{k} \sum_{\sigma \in \Sh(k-1,l+1)} \sgn(\sigma)
		(i_{Y}\alpha)(X_{\sigma(1)}, \dots, X_{\sigma(k-1)}) (\dd_{\rho}\beta)(X_{\sigma(k)}, \dots, X_{\sigma(k+l)}) X\\
		=&\ \sum_{\sigma \in \Sh(k,l)} \sgn(\sigma) 
		\alpha(X_{\sigma(1)}, \dots, X_{\sigma(k)}) \beta(X_{\sigma(k+1)}, \dots, X_{\sigma(k+l)}) [X,Y]_{A}\\
		&\ + \sum_{\sigma \in \Sh(k,l)} \sgn(\sigma)
		\alpha(X_{\sigma(1)}, \dots, X_{\sigma(k)}) \calLr_{X}(\beta(X_{\sigma(k+1)}, \dots, X_{\sigma(k+l)})) Y\\
		&\ - \sum_{\sigma \in \Sh(k,l)} \sgn(\sigma) \sum_{\theta \in \Sh(1,l-1)} \sgn(\theta)
		\alpha(X_{\sigma(1)}, \dots, X_{\sigma(k)}) \beta([X,X_{\sigma(k+\theta(1))}]_{A}, X_{\sigma(k+\theta(2))}, \dots, X_{\sigma(k+\theta(l))}) Y\\
		&\ - \sum_{\sigma \in \Sh(k,l)} \sgn(\sigma)
		\calLr_{Y}(\alpha(X_{\sigma(1)}, \dots, X_{\sigma(k)})) \beta(X_{\sigma(k+1)}, \dots, X_{\sigma(k+l)}) X\\
		&\ + \sum_{\sigma \in \Sh(k,l)} \sgn(\sigma) \sum_{\theta \in \Sh(1,k-1)} \sgn(\theta)
		\alpha([Y,X_{\sigma\theta(1)}]_{A}, X_{\sigma\theta(2)}, \dots, X_{\sigma\theta(k)}) \beta(X_{\sigma(k+1)}, \dots, X_{\sigma(k+l)}) X\\
		&\ + (-1)^{k} \sum_{\sigma \in \Sh(k+1,l-1)} \sgn(\sigma) \sum_{\theta \in \Sh(1,k)} \sgn(\theta)
		\calLr_{X_{\sigma\theta(1)}}(\alpha(X_{\sigma\theta(2)}, \dots, X_{\sigma\theta(k+1)})) \beta(X, X_{\sigma(k+2)}, \dots, X_{\sigma(k+l)}) Y\\
		&\ - (-1)^{k} \sum_{\sigma \in \Sh(k+1,l-1)} \sgn(\sigma) \sum_{\theta \in \Sh(2,k-1)} \sgn(\theta)
		\alpha([X_{\sigma\theta(1)},X_{\sigma\theta(2)}]_{A},X_{\sigma\theta(3)},\dots,X_{\sigma\theta(k+1)}) \beta(X, X_{\sigma(k+2)}, \dots, X_{\sigma(k+l)}) Y\\
		&\ + (-1)^{k} \sum_{\sigma \in \Sh(k-1,l+1)} \sgn(\sigma) \sum_{\theta \in \Sh(1,l)} \sgn(\theta)
		\alpha(Y,X_{\sigma(1)}, \dots, X_{\sigma(k-1)}) \calLr_{X_{\sigma(k-1+\theta(1))}}(\beta(X_{\sigma(k-1+\theta(2))}, \dots, X_{\sigma(k-1+\theta(l+1))})) X\\
		&\ - (-1)^{k} \sum_{\sigma \in \Sh(k-1,l+1)} \sgn(\sigma) \sum_{\theta \in \Sh(2,l-1)} \sgn(\theta)
		\\
		&\ \quad 
		\alpha(Y,X_{\sigma(1)}, \dots, X_{\sigma(k-1)}) \beta([X_{\sigma(k-1+\theta(1))},X_{\sigma(k-1+\theta(2))}]_{A},X_{\sigma(k-1+\theta(3))},\dots,X_{\sigma(k-1+\theta(l+1))}) X\\
		=&\ \sum_{\sigma \in \Sh(k,l)} \sgn(\sigma) 
		\alpha(X_{\sigma(1)}, \dots, X_{\sigma(k)}) \beta(X_{\sigma(k+1)}, \dots, X_{\sigma(k+l)}) [X,Y]_{A}\\
		&\ + \sum_{\sigma \in \Sh(k,l)} \sgn(\sigma)
		\alpha(X_{\sigma(1)}, \dots, X_{\sigma(k)}) \calLr_{X}(\beta(X_{\sigma(k+1)}, \dots, X_{\sigma(k+l)})) Y\\
		&\ - \sum_{\sigma \in \Sh(k,1,l-1)} \sgn(\sigma) 
		\alpha(X_{\sigma(1)}, \dots, X_{\sigma(k)}) \beta([X,X_{\sigma(k+1)}]_{A}, X_{\sigma(k+2)}, \dots, X_{\sigma(k+l)}) Y\\
		&\ - \sum_{\sigma \in \Sh(k,l)} \sgn(\sigma)
		\calLr_{Y}(\alpha(X_{\sigma(1)}, \dots, X_{\sigma(k)})) \beta(X_{\sigma(k+1)}, \dots, X_{\sigma(k+l)}) X\\
		&\ + \sum_{\sigma \in \Sh(1,k-1,l)} \sgn(\sigma)
		\alpha([Y,X_{\sigma(1)}]_{A}, X_{\sigma(2)}, \dots, X_{\sigma(k)}) \beta(X_{\sigma(k+1)}, \dots, X_{\sigma(k+l)}) X\\
		&\ + (-1)^{k} \sum_{\sigma \in \Sh(1,k,l-1)} \sgn(\sigma) 
		\calLr_{X_{\sigma(1)}}(\alpha(X_{\sigma(2)}, \dots, X_{\sigma(k+1)})) \beta(X, X_{\sigma(k+2)}, \dots, X_{\sigma(k+l)}) Y\\
		&\ - (-1)^{k} \sum_{\sigma \in \Sh(2,k-1,l-1)} \sgn(\sigma) 
		\alpha([X_{\sigma(1)},X_{\sigma(2)}]_{A},X_{\sigma(3)},\dots,X_{\sigma(k+1)}) \beta(X, X_{\sigma(k+2)}, \dots, X_{\sigma(k+l)}) Y\\
		&\ + (-1)^{k} \sum_{\sigma \in \Sh(k-1,1,l)} \sgn(\sigma) 
		\alpha(Y,X_{\sigma(1)}, \dots, X_{\sigma(k-1)}) \calLr_{X_{\sigma(k)}}(\beta(X_{\sigma(k+1)}, \dots, X_{\sigma(k+l)})) X\\
		&\ - (-1)^{k} \sum_{\sigma \in \Sh(k-1,2,l-1)} \sgn(\sigma)
		\alpha(Y,X_{\sigma(1)}, \dots, X_{\sigma(k-1)}) \beta([X_{\sigma(k)},X_{\sigma(k+1)}]_{A},X_{\sigma(k+2)},\dots,X_{\sigma(k-l)}) X\\
		=&\ \sum_{\sigma \in \Sh(k,l)} \sgn(\sigma) 
		\alpha(X_{\sigma(1)}, \dots, X_{\sigma(k)}) \beta(X_{\sigma(k+1)}, \dots, X_{\sigma(k+l)}) [X,Y]_{A}\\
		&\ + \sum_{\sigma \in \Sh(k,l)} \sgn(\sigma)
		\alpha(X_{\sigma(1)}, \dots, X_{\sigma(k)}) \calLr_{X}(\beta(X_{\sigma(k+1)}, \dots, X_{\sigma(k+l)})) Y\\
		&\ - \sum_{\sigma \in \Sh(k,l)} \sgn(\sigma)
		\calLr_{Y}(\alpha(X_{\sigma(1)}, \dots, X_{\sigma(k)})) \beta(X_{\sigma(k+1)}, \dots, X_{\sigma(k+l)}) X\\
		&\ - \sum_{\sigma \in \Sh(k,1,l-1)} \sgn(\sigma) 
		\beta(\alpha(X_{\sigma(1)}, \dots, X_{\sigma(k)}) [X,X_{\sigma(k+1)}]_{A}, X_{\sigma(k+2)}, \dots, X_{\sigma(k+l)}) Y\\
		&\ + (-1)^{k} \sum_{\sigma \in \Sh(1,k,l-1)} \sgn(\sigma) 
		\beta(\calLr_{X_{\sigma(1)}}(\alpha(X_{\sigma(2)}, \dots, X_{\sigma(k+1)})) X, X_{\sigma(k+2)}, \dots, X_{\sigma(k+l)}) Y\\
		&\ + \sum_{\sigma \in \Sh(1,k-1,l)} \sgn(\sigma)
		\alpha(\beta(X_{\sigma(k+1)}, \dots, X_{\sigma(k+l)}) [Y,X_{\sigma(1)}]_{A}, X_{\sigma(2)}, \dots, X_{\sigma(k)})  X\\
		&\ + (-1)^{k} \sum_{\sigma \in \Sh(k-1,1,l)} \sgn(\sigma) 
		\alpha(\calLr_{X_{\sigma(k)}}(\beta(X_{\sigma(k+1)}, \dots, X_{\sigma(k+l)})) Y,X_{\sigma(1)}, \dots, X_{\sigma(k-1)}) X\\
		&\ - (-1)^{k} \sum_{\sigma \in \Sh(2,k-1,l-1)} \sgn(\sigma) 
		\beta(\alpha([X_{\sigma(1)},X_{\sigma(2)}]_{A},X_{\sigma(3)},\dots,X_{\sigma(k+1)}) X, X_{\sigma(k+2)}, \dots, X_{\sigma(k+l)}) Y\\
		&\ - (-1)^{k} \sum_{\sigma \in \Sh(k-1,2,l-1)} \sgn(\sigma)
		\alpha(\beta([X_{\sigma(k)},X_{\sigma(k+1)}]_{A},X_{\sigma(k+2)},\dots,X_{\sigma(k-l)}) Y,X_{\sigma(1)}, \dots, X_{\sigma(k-1)}) X\\
		=&\ \sum_{\sigma \in \Sh(k,l)} \sgn(\sigma) 
		\alpha(X_{\sigma(1)}, \dots, X_{\sigma(k)}) \beta(X_{\sigma(k+1)}, \dots, X_{\sigma(k+l)}) [X,Y]_{A}\\
		&\ + \sum_{\sigma \in \Sh(k,l)} \sgn(\sigma)
		\alpha(X_{\sigma(1)}, \dots, X_{\sigma(k)}) \calLr_{X}(\beta(X_{\sigma(k+1)}, \dots, X_{\sigma(k+l)})) Y\\
		&\ - \sum_{\sigma \in \Sh(k,l)} \sgn(\sigma)
		\calLr_{Y}(\alpha(X_{\sigma(1)}, \dots, X_{\sigma(k)})) \beta(X_{\sigma(k+1)}, \dots, X_{\sigma(k+l)}) X\\
		&\ - \sum_{\sigma \in \Sh(k,1,l-1)} \sgn(\sigma) 
		\beta(\alpha(X_{\sigma(1)}, \dots, X_{\sigma(k)}) [X,X_{\sigma(k+1)}]_{A}, X_{\sigma(k+2)}, \dots, X_{\sigma(k+l)}) Y\\
		&\ + \sum_{\sigma \in \Sh(k,1,l-1)} \sgn(\sigma) 
		\beta(\calLr_{X_{\sigma(k+1)}}(\alpha(X_{\sigma(1)}, \dots, X_{\sigma(k)})) X, X_{\sigma(k+2)}, \dots, X_{\sigma(k+l)}) Y\\
		&\ + (-1)^{kl} \sum_{\sigma \in \Sh(l,1,k-1)} \sgn(\sigma)
		\alpha(\beta(X_{\sigma(1)}, \dots, X_{\sigma(l)}) [Y,X_{\sigma(l+1)}]_{A}, X_{\sigma(l+2)}, \dots, X_{\sigma(k+l)})  X\\
		&\ - (-1)^{kl} \sum_{\sigma \in \Sh(l,1,k-1)} \sgn(\sigma) 
		\alpha(\calLr_{X_{\sigma(l+1)}}(\beta(X_{\sigma(1)}, \dots, X_{\sigma(l)})) Y,X_{\sigma(l+1)}, \dots, X_{\sigma(k+l)}) X\\
		&\ - (-1)^{k} \sum_{\sigma \in \Sh(2,k-1,l-1)} \sgn(\sigma) 
		\beta(\alpha([X_{\sigma(1)},X_{\sigma(2)}]_{A},X_{\sigma(3)},\dots,X_{\sigma(k+1)}) X, X_{\sigma(k+2)}, \dots, X_{\sigma(k+l)}) Y\\
		&\ + (-1)^{(k-1)l} \sum_{\sigma \in \Sh(2,l-1,k-1)} \sgn(\sigma)
		\alpha(\beta([X_{\sigma(1)},X_{\sigma(2)}]_{A},X_{\sigma(3)},\dots,X_{\sigma(l+1)}) Y,X_{\sigma(l+2)}, \dots, X_{\sigma(k+l)}) X\\
		=&\ \sum_{\sigma \in \Sh(k,l)} \sgn(\sigma) 
		[\alpha(X_{\sigma(1)}, \dots, X_{\sigma(k)})X,\beta(X_{\sigma(k+1)}, \dots, X_{\sigma(k+l)})Y]_{A}\\
		&\ - \sum_{\sigma \in \Sh(k,1,l-1)} \sgn(\sigma) 
		\beta([\alpha(X_{\sigma(1)}, \dots, X_{\sigma(k)})X,X_{\sigma(k+1)}]_{A}, X_{\sigma(k+2)}, \dots, X_{\sigma(k+l)}) Y\\
		&\ + (-1)^{kl} \sum_{\sigma \in \Sh(l,1,k-1)} \sgn(\sigma)
		\alpha([\beta(X_{\sigma(1)}, \dots, X_{\sigma(l)})Y,X_{\sigma(l+1)}]_{A}, X_{\sigma(l+2)}, \dots, X_{\sigma(k+l)})  X\\
		&\ - (-1)^{k} \sum_{\sigma \in \Sh(2,k-1,l-1)} \sgn(\sigma) 
		\beta(\alpha([X_{\sigma(1)},X_{\sigma(2)}]_{A},X_{\sigma(3)},\dots,X_{\sigma(k+1)}) X, X_{\sigma(k+2)}, \dots, X_{\sigma(k+l)}) Y\\
		&\ + (-1)^{(k-1)l} \sum_{\sigma \in \Sh(2,l-1,k-1)} \sgn(\sigma)
		\alpha(\beta([X_{\sigma(1)},X_{\sigma(2)}]_{A},X_{\sigma(3)},\dots,X_{\sigma(l+1)}) Y,X_{\sigma(l+2)}, \dots, X_{\sigma(k+l)}) X\\
		=&\ \sum_{\sigma \in \Sh(k,l)} \sgn(\sigma) 
		[K(X_{\sigma(1)}, \dots, X_{\sigma(k)}),L(X_{\sigma(k+1)}, \dots, X_{\sigma(k+l)})]_{A}\\
		&\ - \sum_{\sigma \in \Sh(k,1,l-1)} \sgn(\sigma) 
		L([K(X_{\sigma(1)}, \dots, X_{\sigma(k)}),X_{\sigma(k+1)}]_{A}, X_{\sigma(k+2)}, \dots, X_{\sigma(k+l)})\\
		&\ + (-1)^{kl} \sum_{\sigma \in \Sh(l,1,k-1)} \sgn(\sigma)
		K([L(X_{\sigma(1)}, \dots, X_{\sigma(l)}),X_{\sigma(l+1)}]_{A}, X_{\sigma(l+2)}, \dots, X_{\sigma(k+l)})\\
		&\ - (-1)^{k} \sum_{\sigma \in \Sh(2,k-1,l-1)} \sgn(\sigma) 
		L(K([X_{\sigma(1)},X_{\sigma(2)}]_{A},X_{\sigma(3)},\dots,X_{\sigma(k+1)}), X_{\sigma(k+2)}, \dots, X_{\sigma(k+l)})\\
		&\ + (-1)^{(k-1)l} \sum_{\sigma \in \Sh(2,l-1,k-1)} \sgn(\sigma)
		K(L([X_{\sigma(1)},X_{\sigma(2)}]_{A},X_{\sigma(3)},\dots,X_{\sigma(l+1)}),X_{\sigma(l+2)}, \dots, X_{\sigma(k+l)}).
	\end{align*}
\end{proof}

\begin{exam}
	Direct calculation gives:
	\begin{enumerate}
		\item For $K, L \in \psi^{0}(M,A) = \Gamma(A)$, we have
		$$[K,L]_{\FN} = [K,L]_{A} = [K,L]_{\mathrm{SN}},$$
		where the bracket $[-,-]_{\mathrm{SN}}$ is the Schouten-Nijenhuis bracket \cite{Marle2008} on the space $\Gamma(\wg^{\bullet}A)$ of multivectors associated with the Lie algebroid $(A,[-,-]_{A},\rho)$. This implies that both Schouten-Nijenhuis bracket and Fr\"olicher-Nijenhuis bracket are generalizations of the Lie bracket $[-,-]_{A}$ on $\Gamma(A)$.
		\item For $K \in \psi^{0}(M,A), L \in \psi^{l}(M,A)$, we have $[K,L]_{\FN} = \calLr_{K}L$.
		\item For $K, L \in \psi^{1}(M,A) = \Gamma(A^{\vee} \ot A)$, we have $[K,L]_{\FN} = \calN_{K+L} - \calN_{K} - \calN_{L}$.
	\end{enumerate}
\end{exam}

\begin{prop}
	Let $P \in \psi^{1}(M,A) = \Gamma(A^{\vee} \ot A)$. Then $P$ is a Nijenhuis operator on Lie algebroid $(A,[-,-]_{A},\rho)$ if and only if $[P,P]_{\FN} = 0$.
\end{prop}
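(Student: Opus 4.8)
The plan is to deduce the statement from the polarization identity recorded in the preceding example, namely $[K,L]_{\FN} = \calN_{K+L} - \calN_{K} - \calN_{L}$ for $K, L \in \psi^{1}(M,A)$. Taking $K = L = P$ gives $[P,P]_{\FN} = \calN_{2P} - 2\calN_{P}$. Since the Nijenhuis torsion is manifestly homogeneous of degree two in $P$ (each of the four terms in Definition~\ref{Defn. Nijenhuis torsion} is bilinear in the two occurrences of $P$), one has $\calN_{2P} = 4\calN_{P}$, and therefore $[P,P]_{\FN} = 2\calN_{P}$. As we work over a field of characteristic zero, $2$ is invertible, so $[P,P]_{\FN} = 0$ if and only if $\calN_{P} = 0$, which by Definition~\ref{Defn. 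Nijenhuis Lie algebroid} is exactly the condition that $P$ be a Nijenhuis operator.

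As a self-contained alternative that avoids invoking the example, I would instead specialize the explicit bracket formula of the preceding proposition to $k = l = 1$ and $K = L = P$, evaluated on $X_{1}, X_{2} \in \Gamma(A)$. The five shuffle sums then range over very small index sets: $\Sh(1,1)$ and $\Sh(1,1,0)$ each contribute the two permutations $\{\mathrm{id}, (1\,2)\}$, while $\Sh(2,0,0)$ contributes only the identity. Collecting terms, the first sum yields $2[P(X_{1}),P(X_{2})]_{A}$, the second and third sums (which coincide) yield $-2P([P(X_{1}),X_{2}]_{A}) - 2P([X_{1},P(X_{2})]_{A})$ after using the antisymmetry $[P(X_{2}),X_{1}]_{A} = -[X_{1},P(X_{2})]_{A}$, and the last two sums yield $2P^{2}([X_{1},X_{2}]_{A})$. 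Summing reproduces $[P,P]_{\FN}(X_{1},X_{2}) = 2\calN_{P}(X_{1},X_{2})$, and the argument concludes as above.

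There is essentially no conceptual obstacle here: the entire content of the proposition is the identity $[P,P]_{\FN} = 2\calN_{P}$. The only points requiring care are the sign and coefficient bookkeeping in the direct route --- in particular checking that the second and third shuffle sums genuinely agree and that the antisymmetry of $[-,-]_{A}$ is applied consistently --- and, in the polarization route, making explicit that $\calN$ is quadratic so that $\calN_{2P} = 4\calN_{P}$ rather than $2\calN_{P}$. Both are routine, so I would present the polarization argument as the main proof and relegate the direct computation to a remark, or omit it entirely.
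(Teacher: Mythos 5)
Your main argument is exactly the paper's proof: it applies the polarization identity $[K,L]_{\FN} = \calN_{K+L} - \calN_{K} - \calN_{L}$ with $K=L=P$ and uses quadraticity of $\calN$ to get $[P,P]_{\FN} = 2\calN_{P}$, so the proposal is correct and follows the same route (the paper merely leaves the step $\calN_{2P}=4\calN_{P}$ implicit). The direct computation you sketch as an alternative is a fine sanity check but is not needed.
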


\begin{proof}
	On can see that
	$$[P,P]_{\FN} = \calN_{P+P} - \calN_{P} - \calN_{P} = 2\calN_{P}.$$
	So the conclusion is obvious.
\end{proof}

Applying Proposition~\ref{Prop: deformed-L-infty}, which describes the twisting procedure of $L_{\infty}$-algebras, to the category of dg Lie algebras, we obtain the following main result of this section.

\begin{thm} \label{Thm. geometric version of cohomology of Nijenhuis operator}
	Let $(A,[-,-]_{A},\rho,P)$ be a Nijenhuis Lie algebroid. We redenote the graded space $\psi^{\bullet}(M,A)$ by $\C_{\FN}^{\bullet}(A)$, i.e., $\C_{\FN}^{k}(A) = \psi^{k}(M,A) = \Gamma(\wg^{k}A^{\vee} \ot A)$. Then $(\C_{\FN}^{\bullet}(A),\dd_{\FN} = [P,-]_{\FN}, [-,-]_{\FN})$ forms a dg Lie algebra. The underlying cochain complex $(\C_{\FN}^{\bullet}(A),\dd_{\FN})$ is called the Fr\"olicher-Nijenhuis cochain complex of Nijenhuis operator $P$, whose cohomology is called the Fr\"olicher-Nijenhuis cohomology of Nijenhuis operator $P$, denoted by $\rmH_{\FN}^{\bullet}(A)$.
\end{thm}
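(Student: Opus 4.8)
The plan is to reduce the statement to two inputs: that $(\psi^{\bullet}(M,A),[-,-]_{\FN})$ is a graded Lie algebra, and that $P$ is a square-zero element of it of the correct degree. Granting these, the theorem is a formal consequence of the twisting procedure of Proposition~\ref{Prop: deformed-L-infty}, specialized to (trivially-differential) graded Lie algebras as in Remark~\ref{Lem: twist dgla}. The graded skew-symmetry $[K,L]_{\FN}=-(-1)^{kl}[L,K]_{\FN}$ has already been recorded just after Definition~\ref{Defn. definition of FN bracket}, and the identity $[P,P]_{\FN}=0$ is exactly the content of the immediately preceding proposition, since $(A,[-,-]_{A},\rho,P)$ is a Nijenhuis Lie algebroid. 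Hence the only substantive point left is the graded Jacobi identity for $[-,-]_{\FN}$.

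For the Jacobi identity I would follow the route announced in the remark preceding the theorem rather than verify it directly. Regard $\Gamma(A)$ as a (real) Lie algebra under $[-,-]_{A}$ and form the $L_{\infty}$-algebra $\mathfrak{C}_{\NjL}(\Gamma(A))$ of Definition~\ref{Defn. L-infty algebra as we need}. By Proposition~\ref{Prop: justifying Cohomology theory of Nijenhuis operator}(i), the element $\alpha=(\nu,0)$ with $\nu=-s\circ[-,-]_{A}\circ(s^{-1})^{\ot 2}$ is Maurer--Cartan, and the twist $(\mathfrak{C}_{\NjO}(\Gamma(A)),l_2^{\alpha})$ is a genuine graded Lie algebra whose bracket $l_2^{\alpha}$ is given by the explicit formula in that proof. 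A $(1,k)$-form is precisely a $\ci(M)$-linear alternating cochain, so after the degree shift by $s$ one identifies $\psi^{\bullet}(M,A)$ with the subspace of tensorial cochains inside $\mathfrak{C}_{\NjO}(\Gamma(A))$. Comparing the explicit expression for $l_2^{\alpha}$ with the evaluation formula for $[-,-]_{\FN}$ in the long proposition above---both being assembled from a ``$\nu$-term'' $[K(\cdots),L(\cdots)]_{A}$, the mixed brace terms $L([K(\cdots),-]_{A},\cdots)$ and $K([L(\cdots),-]_{A},\cdots)$, and the Richardson--Nijenhuis terms $L(K([-,-]_{A},\cdots),\cdots)$ and $K(L([-,-]_{A},\cdots),\cdots)$---shows the two brackets coincide on tensorial cochains. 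This is the comparison stated as Theorem~\ref{Prop. relation between algebra and geometry Nijenhuis}, proved in the next section. Granting it, $[-,-]_{\FN}$ is the restriction to a closed subspace (closed since, by the long proposition, $[K,L]_{\FN}$ is again a vector-valued form) of a graded Lie bracket, hence is itself a graded Lie bracket, and in particular satisfies the graded Jacobi identity.

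With the graded Lie algebra $(\C_{\FN}^{\bullet}(A),[-,-]_{\FN})$ at hand, cohomologically graded so that $P\in\C_{\FN}^{1}(A)$ sits in degree $1$ (equivalently homological degree $-1$, matching the Maurer--Cartan convention of Definition~\ref{Def: MC element in L infinity algebra}), I regard $P$ as a Maurer--Cartan element, since $[P,P]_{\FN}=0$. Twisting by $P$ via Remark~\ref{Lem: twist dgla} leaves the bracket unchanged and produces the differential $\dd_{\FN}=[P,-]_{\FN}$ (the overall sign being a matter of convention and irrelevant to the conclusion). Its square vanishes by the standard computation $2[P,[P,K]_{\FN}]_{\FN}=[[P,P]_{\FN},K]_{\FN}=0$, which uses graded Jacobi with $|P|=1$ together with $\bfk$ having characteristic $0$, and the graded Leibniz rule $\dd_{\FN}[K,L]_{\FN}=[\dd_{\FN}K,L]_{\FN}+(-1)^{|K|}[K,\dd_{\FN}L]_{\FN}$ is again just a rewriting of graded Jacobi. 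Therefore $(\C_{\FN}^{\bullet}(A),\dd_{\FN},[-,-]_{\FN})$ is a dg Lie algebra.

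The main obstacle is the comparison step, Theorem~\ref{Prop. relation between algebra and geometry Nijenhuis}: one must verify both that the tensorial cochains are stable under $l_2^{\alpha}$ and that every shuffle-sum and sign contribution in the two explicit formulas matches term by term. A self-contained proof of the Jacobi identity through the graded commutator on $\Der^{\bullet}(M,A)$ is not available here, because for a general (non-tangent) Lie algebroid the assignment $K\mapsto\calLr_{K}$ need not be injective, as noted in the remark before Definition~\ref{Defn. definition of FN bracket}; this failure is exactly why the algebraic detour, and hence the delicate sign bookkeeping in the comparison, is unavoidable.
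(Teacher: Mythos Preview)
Your proposal is correct and follows essentially the same route as the paper: the sentence preceding the theorem explicitly invokes the twisting procedure of Proposition~\ref{Prop: deformed-L-infty} (specialized as in Remark~\ref{Lem: twist dgla}), while the graded Jacobi identity for $[-,-]_{\FN}$ is deferred to the comparison with the algebraic bracket $[-,-]_{\FNA}$ carried out in Section~\ref{The relation of cohomology of Nijenhuis operators between algebraic and geometric versions} (Proposition~\ref{Prop. relation between algebra and geometry Nijenhuis}), exactly as you outline. Your write-up is in fact more explicit than the paper's about how the pieces fit together and about why the derivation-algebra shortcut is unavailable for general Lie algebroids.
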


\medskip

\section{Comparising algebraic and geometric versions of cohomology of Nijenhuis operators} \label{The relation of cohomology of Nijenhuis operators between algebraic and geometric versions}

Let $(A,[-,-]_{A},\rho,P)$ be a Nijenhuis Lie algebroid.
Now, we introduce the relation of two versions, algebraic and geometric, of cochain complexs of Nijenhuis operators on the Lie algebra $(\Gamma(A), [-,-]_{A})$, introduced in Definition~\ref{defn: Nijenhuis operator complex Lie} and Theorem~\ref{Thm. geometric version of cohomology of Nijenhuis operator}, respectively.

According to Proposition~\ref{Prop: justifying Cohomology theory of Nijenhuis operator}, taking the Lie algebra $\frakg$ as $(\Gamma(A), \mu := [-,-]_{A})$, we obtain that $(\mathfrak{C}_{\NjO}(\Gamma(A)), (l_{1}^{\alpha})^{\beta}, l_{2}^{\alpha})$ forms a dg Lie algebra, where $\mathfrak{C}_{\NjO}(\Gamma(A))=\Hom_{\bbR}(S^c(s\Gamma(A)), \Gamma(A))$, and 
\begin{align*}
	\alpha &= (\nu,0), \nu = -s \circ \mu \circ (s^{-1})^{\ot 2}: (s\Gamma(A))^{\odot 2} \to \Gamma(A), \\
	\beta &= P \circ s^{-1}: s\Gamma(A) \to \Gamma(A).
\end{align*}
More explicitly, for any $f\in \Hom_{\bbR}((s\Gamma(A))^{\odot k}, \Gamma(A))$, $g\in \Hom_{\bbR}((s\Gamma(A))^{\odot l}, \Gamma(A))$, we have
	\begin{align*}
		l_2^\alpha(f\ot g)
		=&\ (-1)^{k}s^{-1}\nu\{sf, sg\} + f\{\nu \{sg\}\} + (-1)^{l}f\{sg\{\nu\}\}\\
		&+(-1)^{kl+l+1}s^{-1}\nu\{sg, sf\}-(-1)^{kl} g\{\nu \{sf\} \}+ (-1)^{kl+k+1}g\{sf\{\nu\}\},
	\end{align*}
	which corresponds to
	\begin{align*}
		(-1)^{kl} \Big(
			& \sum_{\sigma \in \Sh(k,l)} \sgn(\sigma) 
			- \sum_{\sigma \in \Sh(k,1,l-1)} \sgn(\sigma) 
			\widehat{g}(\mu \circ (\widehat{f} \ot \id) \ot \underbrace{\id \ot \cdots \ot \id}_{l-1}) \sigma^{-1}\\
			& - (-1)^{k} \sum_{\sigma \in \Sh(2,k-1,l-1)} \sgn(\sigma) 
			\widehat{g} \circ (\widehat{f} \circ (\mu \ot \underbrace{\id \ot \cdots \ot \id}_{k-1}) \ot \underbrace{\id \ot \cdots \ot \id}_{l-1}) \sigma^{-1}\\
			& + (-1)^{kl} \sum_{\sigma \in \Sh(l,1,k-1)} \sgn(\sigma)
			\widehat{f}(\mu \circ (\widehat{g} \ot \id) \ot \underbrace{\id \ot \cdots \ot \id}_{k-1}) \sigma^{-1}\\
			& + (-1)^{(k+1)l} \sum_{\sigma \in \Sh(2,l-1,k-1)} \sgn(\sigma)
			\widehat{f} \circ (\widehat{g} \circ (\mu \ot \underbrace{\id \ot \cdots \ot \id}_{l-1}) \ot \underbrace{\id \ot \cdots \ot \id}_{k-1}) \sigma^{-1} \Big),
	\end{align*}
	under the fixed isomorphism (\ref{Eq: second can isom}), denoted by $(-1)^{kl}[\widehat{f},\widehat{g}]_{\FNA}$, and we also have
	\begin{align*}
		(l_1^\alpha)^\beta(f)
		=&\ -l_2^\alpha(\beta\ot f)\\
		=&\ s^{-1}\nu\{s\beta, sf\} - \beta\{\nu\{sf\}\} - (-1)^{k} \beta \{sf\{\nu\}\}
		+ s^{-1}\nu\{sf, s\beta\} +(-1)^{k}f\{\nu\{s\beta\}\} - (-1)^{k} f \{s\beta\{\nu\}\}, 
	\end{align*}
	which corresponds to $(-1)^{k+1} \delta_{\NjO}(\widehat{f})$. 
	So we have $\delta_{\NjO}(\widehat{f}) = [P,\widehat{f}]_{\FNA}$ and $(\C_{\NjO}^{\bullet}(\Gamma(A)), \delta_{\NjO} = [P,-]_{\FNA}, [-,-]_{\FNA})$ forms a dg Lie algebra  that has been introduced in Proposition~\ref{Prop: justifying Cohomology theory of Nijenhuis operator}.

	Note that the graded space $\C_{\FN}^{\bullet}(A) = \Gamma(\wg^{\bullet}A^{\vee} \ot A)$ is a graded subspace of $\C_{\NjO}^{\bullet}(\Gamma(A)) = \bigoplus\limits_{k=0}^{n} \Hom_{\bbR}(\Gamma(A)^{\wg k}, \Gamma(A))$. Specifically, the former is $\ci(M)$-linear, whereas the latter is $\mathbb{R}$-linear. 

\begin{prop} \label{Prop. relation between algebra and geometry Nijenhuis}
	Let $(A,[-,-]_{A},\rho,P)$ be a Nijenhuis Lie algebroid. Then $(\C_{\FN}^{\bullet}(A),\dd_{\FN} = [P,-]_{\FN}, [-,-]_{\FN})$ is a dg Lie subalgebra of $(\C_{\NjO}^{\bullet}(\Gamma(A)), \delta_{\NjO} = [P,-]_{\FNA}$).
\end{prop}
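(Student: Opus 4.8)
The plan is to realize $(\C_\FN^\bullet(A), \dd_\FN, [-,-]_\FN)$ literally as the restriction of the ambient dg Lie algebra $(\C_\NjO^\bullet(\Gamma(A)), \delta_\NjO = [P,-]_\FNA, [-,-]_\FNA)$, which is already a dg Lie algebra by Proposition~\ref{Prop: justifying Cohomology theory of Nijenhuis operator}. Concretely, three things must be checked: that $\C_\FN^\bullet(A)$ sits inside $\C_\NjO^\bullet(\Gamma(A))$ as a graded subspace, that this subspace is closed under $[-,-]_\FNA$ with the restricted bracket being exactly $[-,-]_\FN$, and that it is closed under $\delta_\NjO$ with the restricted differential being exactly $\dd_\FN$. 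The inclusion is the one recorded before the statement: a vector-valued $k$-form, being $\ci(M)$-multilinear and alternating, is in particular $\bbR$-multilinear and alternating, hence defines an element of $\Hom_\bbR(\Gamma(A)^{\wg k}, \Gamma(A))$, and this assignment is injective and degree-preserving.

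The heart of the argument is the identification of brackets. Here I would place side by side the two explicit formulas the paper has already produced: the closed formula for $[K,L]_\FN(X_1,\dots,X_{k+l})$ established earlier (a sum of five shuffle terms over $\Sh(k,l)$, $\Sh(k,1,l-1)$, $\Sh(l,1,k-1)$, $\Sh(2,k-1,l-1)$ and $\Sh(2,l-1,k-1)$), and the expression for $l_2^\alpha(f\ot g)$ computed immediately before this proposition, which under the isomorphism~\eqref{Eq: second can isom} equals $(-1)^{kl}[\widehat{f},\widehat{g}]_\FNA$ and is itself a sum of five shuffle terms indexed by the very same partitions. Matching them term by term, the Lie-bracket term over $\Sh(k,l)$ and the $\Sh(k,1,l-1)$ term carry identical signs $+1$ and $-1$; the $\Sh(l,1,k-1)$ and $\Sh(2,k-1,l-1)$ terms of the geometric side pair with the correspondingly-typed algebraic terms carrying signs $(-1)^{kl}$ and $(-1)^{k+1}$; and the two $\Sh(2,l-1,k-1)$ terms agree because $(-1)^{(k-1)l}=(-1)^{(k+1)l}$. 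This yields $[K,L]_\FN = [\widehat{K},\widehat{L}]_\FNA$ for $K,L \in \C_\FN^\bullet(A)$. Since $[K,L]_\FN$ lies in $\psi^{k+l}(M,A)=\C_\FN^{k+l}(A)$ by Definition~\ref{Defn. definition of FN bracket}, the subspace is closed under $[-,-]_\FNA$ and the restriction is precisely $[-,-]_\FN$.

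The differential then comes essentially for free. As $P \in \psi^1(M,A)=\C_\FN^1(A)$, the bracket identity just proved gives $\delta_\NjO(\widehat{K})=[P,\widehat{K}]_\FNA=[P,K]_\FN=\dd_\FN(K)$ for every $K \in \C_\FN^\bullet(A)$, using $\delta_\NjO=[P,-]_\FNA$ as recorded above; and $\dd_\FN(K)=[P,K]_\FN \in \C_\FN^\bullet(A)$ again by Definition~\ref{Defn. definition of FN bracket}. Hence $\C_\FN^\bullet(A)$ is a sub-dg-Lie-algebra of $\C_\NjO^\bullet(\Gamma(A))$, and in particular $[-,-]_\FN$ inherits graded antisymmetry and the graded Jacobi identity from the ambient structure---this is the promised indirect proof that $(\psi^\bullet(M,A),[-,-]_\FN)$ is a graded Lie algebra.

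The one genuinely delicate point, and the step I expect to absorb most of the effort, is the term-by-term sign matching in the second paragraph: the geometric formula evaluates $K$ and $L$ directly on shuffled arguments, whereas the algebraic formula is phrased through partial compositions and the right $\sigma^{-1}$-action inherited from the convolution homotopy operad, together with the Koszul signs coming from the desuspension in~\eqref{Eq: second can isom}. Tracking the correspondence $K \leftrightarrow \widehat{K}$ through these conventions---and in particular confirming that the orientation convention $[K,L]_\RN=[L,K]_\RNA$ from the earlier remark does not introduce a spurious sign in the Fr\"olicher-Nijenhuis case---is where care is required. Because both explicit formulas are already available in the paper, however, this reduces to a finite and entirely mechanical verification rather than a fresh computation.
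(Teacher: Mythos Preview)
Your approach is correct and is essentially the paper's own: the proof there simply says ``based on the previous discussion and Theorem~\ref{Thm. geometric version of cohomology of Nijenhuis operator}'' one has the inclusion and the identities $[-,-]_{\FNA}|_{\C_\FN^\bullet(A)}=[-,-]_\FN$, $\delta_\NjO|_{\C_\FN^\bullet(A)}=\dd_\FN$, which is precisely your term-by-term comparison of the two five-term shuffle formulas followed by specializing one argument to $P$. Your caution in the last paragraph is slightly overstated: the paper has already carried out the passage through isomorphism~\eqref{Eq: second can isom} when writing down the $(-1)^{kl}[\widehat f,\widehat g]_\FNA$ formula, so the comparison really is a direct match of two displayed expressions with no further Koszul bookkeeping required.
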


\begin{proof}
	Based on the previous discussion and Theorem~\ref{Thm. geometric version of cohomology of Nijenhuis operator}, we know that there is an obvious inclusion of graded spaces from $\C_{\FN}^{\bullet}(A)$ to $\C_{\NjO}^{\bullet}(\Gamma(A))$, and $[-,-]_{\FNA} \mid_{\C_{\FN}^{\bullet}(A)} = [-,-]_{\FN}$, $\delta_{\NjO} \mid_{\C_{\FN}^{\bullet}(A)} = \dd_{\FN}$. Therefore, the conclusion of the theorem is valid.
\end{proof}

\medskip

\section{Basic concepts of dg manifolds} \label{Section. Basic concepts of dg manifolds} \

In this section, we briefly review some basic concepts of dg manifolds, and for more background knowledge, please refer to \cite{Seol2022, Stienon2021}.

Let $M$ be a finite-dimensional smooth manifold, and $\calO_{M}$ be the sheaf of smooth functions on $M$.

\begin{defn}
	A graded manifold $\calM$ with support $M$ consists of a sheaf $\calA$ of (cohomological) $\bbZ$-graded commutative $\calO_{M}$-algebras over $M$ such that there exists a finite-dimensional (cohomological) $\bbZ$-graded vector space $V$, for which
	\begin{align*}
		\calA(U) 
		\cong \calO_{M}(U) \ \hat{\ot} \ \Hom_{\bbR}(S(V), \mathbb{R})
		\cong \calO_{M}(U) \ \hat{\ot} \  \hs(V^{\vee}) 
	\end{align*}
	for any sufficiently small open set $U \subset M$, where $\calO_{M}(U) = C^{\infty}(U)$ and $\hs(V^{\vee})$ denotes the completed symmetric algebra generated by $V^{\vee}$, i.e., the algebra of formal power series on $V$.
\end{defn}


We denote by $C^{\infty}(\calM)$ the $\bbZ$-graded algebra of global sections of sheaf $\calA$, 
and denote by $\calI_{\calA}$ the sheaf of ideals of $\calA$ characterized by the property
	$$
	\calI_{\calA}(U) \cong \calO_{M}(U) \ \hat{\ot} \ \hs^{\geqslant 1}(V^{\vee})
	$$
for any sufficiently small open set $U \subset M$, i.e., all functions vanishing at the support $M$.

\begin{defn}
	Let $\calM = (M, \calA)$ and $\calN = (N, \calB)$ be two graded manifolds.
	A morphism $\phi: \calM \to \calN$ of a graded manifolds consists of a pair $(f,\psi)$, where $f: M \to N$ is a morphism of smooth manifolds, and $\psi: f^{*}\calB \to \calA$ is a morphism of sheaves of $\bbZ$-graded algebras such that $\psi(f^{*} \calI_{\calB}) \subset \calI_{\calA}$, i.e., $\psi$ continuous with respect to the $\calI$-adic topology.	
\end{defn}

	Vector bundles in the category of graded manifolds are called graded vector bundles. 

\begin{defn}
	A section of a graded vector bundle $\pi: \mathcal{E} \rightarrow \mathcal{M}$ is a morphism of graded manifolds $s: \mathcal{M} \rightarrow \mathcal{E}$ such that $\pi \circ s=\operatorname{id}_{\mathcal{M}}$. We denote the $C^{\infty}(\mathcal{M})$-module of sections of $\mathcal{E}$ over $\mathcal{M}$ by $\Gamma(\mathcal{E})=\Gamma(\mathcal{M}, \mathcal{E})$.
\end{defn}

\begin{defn}
	Let $\calM$ be a graded manifold. Its tangent bundle $T_{\mathcal{M}}$ is a graded manifold with support $T_M$ and is a graded vector bundle over $\mathcal{M}$.
	Sections of $T_{\mathcal{M}}$ are called vector fields on $\mathcal{M}$ and the space of vector fields $\mathfrak{X}(\mathcal{M}) := \Gamma\left(T_{\mathcal{M}}\right)$ can be identified with that of graded derivations of $C^{\infty}(\mathcal{M})$.
\end{defn}

Observe that $\mathfrak{X}(\mathcal{M})$ admits a graded Lie algebra structure, whose Lie bracket is given by the graded commutator
$$
[X, Y]=X \circ Y-(-1)^{|X| |Y|} Y \circ X
$$
for homogeneous elements $X, Y \in \mathfrak{X}(\mathcal{M})$, regarded as derivations of $C^{\infty}(\mathcal{M})$.

\begin{defn}
	A differential graded manifold (dg manifold) is a graded manifold $\calM$ together with a homological vector field, i.e., a vector field $Q$ in $\frakX(\calM)$ of degree $1$ satisfying $[Q,Q] = 2 Q \circ Q = 0$.
\end{defn}



A graded vector bundle over $M$ naturally induces a graded manifold structure with support $M$. Let
\begin{align} \label{Eq. [-m,n] graded bundle}
	E=E^{-m} \oplus \dots \oplus E^n
\end{align}
be a graded vector bundle over $M$. Then $\calM = (M, \mathcal{A})$ is a graded manifold, where $\mathcal{A}(U) :=\Gamma(U, \widehat{S}(E^{\vee}))$ for all open submanifolds $U \subset M$. 

\begin{defn}
	A dg manifold $(\mathcal{M}, Q)$ is said to be of amplitude $[-m, n]$, or be split, if $\mathcal{M}$ is a graded manifold associated with a graded vector bundle \eqref{Eq. [-m,n] graded bundle} concentrated in degrees $-m$ to $n$ under the following functor \eqref{Eq. Batchelor equivalence}.
\end{defn}


\begin{prop}[Batchelor] \cite{Deligne1999,Cattaneo2006}
	The following functor is an equivalence of categories:
	\begin{align} 
		(\text{graded vector bundles over } M) &\rightarrow (\text{graded manifolds with support } M) \label{Eq. Batchelor equivalence}\\
		(M,E)& \mapsto (M, \calA(U)=\Gamma(U, \widehat{S}(E^{\vee})) ) \notag
	\end{align}
\end{prop}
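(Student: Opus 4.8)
The statement is the smooth, $\mathbb{Z}$-graded Batchelor theorem, so the plan is to prove that the functor $\Phi$ sending $(M,E)$ to the graded manifold $(M,\calA_E)$ with $\calA_E(U)=\Gamma(U,\widehat{S}(E^{\vee}))$ is both essentially surjective and fully faithful. First I would check that $\Phi(M,E)$ is genuinely a graded manifold with support $M$: over a trivializing chart $U$ with typical fiber the finite-dimensional graded vector space $V=\bigoplus_i E^i_x$ one has $\calA_E(U)\cong\calO_M(U)\,\hat{\otimes}\,\widehat{S}(V^{\vee})$, which is exactly the local model in the definition of graded manifold, so no work is needed here beyond unwinding definitions.

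For essential surjectivity I would start from an abstract graded manifold $\calM=(M,\calA)$ and reconstruct a vector bundle $E$ with $\calM\cong\Phi(M,E)$ using the $\calI_\calA$-adic filtration $\calA\supseteq\calI_\calA\supseteq\calI_\calA^2\supseteq\cdots$. The local model shows $\calI_\calA/\calI_\calA^2$ is the sheaf of sections of a finite-rank graded vector bundle; I would set $E$ to be its dual, so that $\Gamma(E^{\vee})\cong\calI_\calA/\calI_\calA^2$ and the associated graded sheaf $\mathrm{gr}\,\calA=\bigoplus_{k\geqslant 0}\calI_\calA^k/\calI_\calA^{k+1}$ is isomorphic to $\Gamma(\widehat{S}(E^{\vee}))=\calA_E$. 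The substance of the theorem is to split this filtration globally, i.e.\ to produce an isomorphism of sheaves of graded algebras $\calA\cong\mathrm{gr}\,\calA$. Over each chart of a trivializing cover $\{U_\alpha\}$ such a splitting exists by the local model; to patch them I would choose a partition of unity $\{\chi_\alpha\}$ subordinate to $\{U_\alpha\}$ and lift a global frame of $E^{\vee}$ into $\calA$ by the $\chi_\alpha$-weighted combination of the local lifts. Since $\calA$ is graded-commutative, this frame generates an algebra morphism $\widehat{S}(\Gamma(E^{\vee}))\to C^{\infty}(\calM)$, which one verifies is an isomorphism by induction on the filtration degree, the induced map on each $\calI_\calA^k/\calI_\calA^{k+1}$ being the identity.

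For full faithfulness I would analyze a morphism $(f,\psi)\colon\Phi(M,E)\to\Phi(N,E')$ through the filtration: the continuity condition $\psi(f^{*}\calI_{\calB})\subset\calI_\calA$ forces $\psi$ to be filtered, its leading (degree-one) part is a graded bundle map $f^{*}E'\to E$, and the higher-order parts are determined by the lower data together with the base map $f$. Conversely, every bundle morphism extends to a morphism of graded manifolds by functoriality of $\widehat{S}(-)$, giving the desired bijection once the morphisms on the vector-bundle side are taken to include these higher-order components; the one delicate bookkeeping point on this side is to equip the category of graded vector bundles with the correct (not merely fiberwise-linear) notion of morphism so that the matching is exact.

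The hard part will be the global splitting in essential surjectivity. Local splittings are automatic, but the patched map is not a priori an algebra isomorphism, and the partition-of-unity averaging together with the inductive verification on $\mathrm{gr}\,\calA$ is precisely where the smoothness of $M$ is essential—the analogous statement fails in the holomorphic or algebraic categories. Beyond the classical super ($\mathbb{Z}_2$-graded) case, the additional technical care is to treat the positive and negative degree parts of $E=E^{-m}\oplus\cdots\oplus E^n$ uniformly and to keep the completed symmetric algebra $\widehat{S}$ well defined throughout the gluing.
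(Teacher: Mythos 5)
The paper does not actually prove this proposition — it is quoted from \cite{Deligne1999,Cattaneo2006} with no in-text argument — so there is nothing internal to compare your sketch against; I will assess it on its own. Your outline is the standard proof of the smooth Batchelor theorem and its architecture is sound: well-definedness is the local model; essential surjectivity comes from the $\calI_{\calA}$-adic filtration, the identification of $\calI_{\calA}/\calI_{\calA}^{2}$ with $\Gamma(E^{\vee})$ for a graded bundle $E$, a partition-of-unity average of local splittings, extension to an algebra morphism, and the completeness-of-filtration argument. Two remarks. First, a small imprecision: one should average the local $\calO_{M}$-linear splittings $\sigma_{\alpha}\colon \calI_{\calA}/\calI_{\calA}^{2}\to\calI_{\calA}$ themselves (the set of such splittings is an affine space over $\Hom(\calI_{\calA}/\calI_{\calA}^{2},\calI_{\calA}^{2})$, so $\sum_{\alpha}\chi_{\alpha}\sigma_{\alpha}$ is again a splitting), rather than lift ``a global frame of $E^{\vee}$'', which need not exist. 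Second, and more substantively, full faithfulness fails for the statement as literally written: with ordinary fiberwise-linear bundle morphisms the functor is faithful but not full once $E$ has components in more than one degree, since a morphism of graded manifolds may send a degree-$d$ generator to a sum involving products of lower-degree generators. You correctly flag this and propose enlarging the morphisms on the bundle side, which is how the cited sources turn the statement into an equivalence; but to close the argument you must either make that convention explicit in the source category or weaken the conclusion to essential surjectivity plus a bijection on isomorphism classes — which is in fact all the paper uses, as it immediately restates the result as ``any $\mathbb{Z}$-graded manifold is noncanonically diffeomorphic to a split one.''
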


Batchelor's theorem states that any $\bbZ$-graded manifold is noncanonically diffeomorphic to a split $\bbZ$-graded manifold.

The following proposition provides a very important example of a dg manifold, which will be repeatedly used in subsequent discussions, including the notation introduced in its proof.

\begin{prop} \cite{Vaintrob1997, Kiselev2011} \label{Prop. relation between Lie algebroids and dg manifolds}
	Let $A$ be a vector bundle over $M$. Then $A[1]$ is a graded manifold with the algebra of functions $\ci(A[1]) = \Gamma(\wedge^{\bullet}A^{\vee})$, and $A$ is a Lie algebroid over $M$ if and only if $A[1]$ is a dg manifold with support $M$.
\end{prop}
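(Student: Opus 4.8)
First I would invoke the Batchelor equivalence \eqref{Eq. Batchelor equivalence}. The graded vector bundle underlying $A[1]$ is $A$ placed in degree $-1$, so its dual is concentrated in degree $+1$, i.e.\ purely odd. Since the completed graded-symmetric algebra on a purely odd space coincides with its exterior algebra, for every sufficiently small $U\subset M$ one has $\hs\big((A[1])^{\vee}\big)|_{U}\cong\wg^{\bullet}A^{\vee}|_{U}$, whence $\ci(A[1])=\Gamma(\wg^{\bullet}A^{\vee})$ as a $\bbZ$-graded commutative $\ci(M)$-algebra, with $\Gamma(\wg^{k}A^{\vee})$ in degree $k$.

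\textbf{Lie algebroid $\Rightarrow$ dg manifold.} Given a Lie algebroid structure $([-,-]_{A},\rho)$, I would take $Q:=\dd_{\rho}$, the exterior derivative of the Lie algebroid. By construction $\dd_{\rho}$ is a derivation of degree $+1$ of $\Gamma(\wg^{\bullet}A^{\vee})=\ci(A[1])$, hence a vector field $Q\in\frakX(A[1])$ of degree $1$, and the already-recorded identity $(\dd_{\rho})^{2}=0$ gives $[Q,Q]=2Q\circ Q=0$. Thus $(A[1],\dd_{\rho})$ is a dg manifold with support $M$.

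\textbf{dg manifold $\Rightarrow$ Lie algebroid.} Conversely, let $Q\in\frakX(A[1])_{1}$ satisfy $[Q,Q]=0$. As $\Gamma(\wg^{\bullet}A^{\vee})$ is generated as an $\bbR$-algebra by $\ci(M)=\Gamma(\wg^{0}A^{\vee})$ together with $\Gamma(A^{\vee})$, the derivation $Q$ is determined by its restrictions $Q|_{\ci(M)}\colon\ci(M)\to\Gamma(A^{\vee})$ and $Q|_{\Gamma(A^{\vee})}\colon\Gamma(A^{\vee})\to\Gamma(\wg^{2}A^{\vee})$. I would define an anchor $\rho\colon\Gamma(A)\to\frakX(M)$ by $\rho(X)f:=\langle Q(f),X\rangle$ and a bracket on $\Gamma(A)$ by dualizing $Q|_{\Gamma(A^{\vee})}$ through the $p=1$ case of the defining formula of $\dd_{\rho}$, namely
\[
\langle\alpha,[X,Y]_{A}\rangle:=\rho(Y)\langle\alpha,X\rangle-\rho(X)\langle\alpha,Y\rangle-(Q\alpha)(X,Y)
\]
for $\alpha\in\Gamma(A^{\vee})$ and $X,Y\in\Gamma(A)$. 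That $Q$ is a graded derivation forces the antisymmetry of $[-,-]_{A}$, the $\ci(M)$-linearity of $\rho$ (so that $\rho$ is a genuine bundle map), and the Leibniz rule $[X,fY]_{A}=f[X,Y]_{A}+(\rho(X)f)Y$.

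\textbf{Extracting Jacobi and the morphism property.} It remains to recover the anchor-morphism property and the Jacobi identity from $Q^{2}=0$. The key structural observation is that $Q^{2}$, being the square of an odd derivation, is itself a derivation of degree $+2$, hence vanishes identically as soon as it vanishes on the generators $\ci(M)$ and $\Gamma(A^{\vee})$. I expect the vanishing of $Q^{2}$ on $\ci(M)$ to be equivalent to $\rho([X,Y]_{A})=[\rho(X),\rho(Y)]$, and its vanishing on $\Gamma(A^{\vee})$ to be equivalent to the Jacobi identity for $[-,-]_{A}$, exactly matching the degreewise content of $(\dd_{\rho})^{2}=0$. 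Carrying out this bookkeeping on generators, and checking that the $Q$ reconstructed from $(\rho,[-,-]_{A})$ coincides with $\dd_{\rho}$ so that the two constructions are mutually inverse, is the main (though essentially routine) obstacle; the remaining ingredients---the generator argument, the derivation extension, and the Batchelor equivalence---are formal.
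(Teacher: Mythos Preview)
Your argument is correct and complete in outline; the identification $Q=\dd_{\rho}$ together with the duality formulas for $\rho$ and $[-,-]_{A}$ is the standard intrinsic proof, and your observation that $Q^{2}$ is a degree-$2$ derivation (hence determined by its action on generators) is exactly the right structural point.

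The paper, however, takes a different route: it works entirely in local coordinates. After choosing a chart $(x^{\alpha})$ on $M$ and a local frame $(\varepsilon_{i})$ of $A$ with dual coframe $(\eta^{i})$, it writes the most general degree-$1$ vector field as
\[
Q=\rho_{i}^{\alpha}\eta^{i}\partial_{x^{\alpha}}-\tfrac{1}{2}c_{pq}^{k}\eta^{p}\eta^{q}\partial_{\eta^{k}},
\]
reads off the anchor and bracket directly from the coefficient functions $\rho_{i}^{\alpha}$ and $c_{pq}^{k}$, and then asserts (leaving the verification to the reader) that the Lie algebroid axioms for these structure functions are equivalent to $Q^{2}=0$. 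Your approach is more conceptual and avoids coordinate computations by exploiting the already-established exterior derivative $\dd_{\rho}$ and its square-zero property. The paper's coordinate approach, on the other hand, is not chosen for elegance: the explicit local expression \eqref{Eq. Lie algebroid structure A implies Q} and the structure functions $\rho_{i}^{\alpha},c_{ij}^{k}$ are reused verbatim throughout Sections~\ref{Section: Cohomology theory of Nijenhuis Lie algebroids}--\ref{Section: An Example of Calculating Frolicher-Nijenhuis Cohomology} (for instance in Propositions~\ref{Prop. Equation of B_X}, \ref{Prop. equation of B_Q}, and \ref{Prop. definition of chi}), so the proof doubles as a notational setup for what follows.
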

\begin{proof}
	The algebra of functions on the graded manifold $A[1]$ is 
	$$\ci(A[1]) 
	= \Gamma(\hs((A[1])^{\vee})) 
	= \Gamma(\wedge^{\bullet}A^{\vee}).$$

	Let $(x^{1},\dots,x^{m})$ be a local coordinate system of $M$ and $(\eta^{1},\dots,\eta^{n})$ be a local basis of $A^{\vee}$.
	Denote $(\varepsilon_{1},\dots,\varepsilon_{n})$ the local basis of $A$, dual to $(\eta^{1},\dots,\eta^{n})$.
	Any homological vector field of degree $1$ on graded manifold $A[1]$ has the form
	\begin{align} \label{Eq. Lie algebroid structure A implies Q}
		Q 
		= \rho_{i}^{\alpha} \eta^{i} \partial_{x^{\alpha}} -\frac{1}{2} c_{p q}^{k}\eta^{p} \eta^{q} \partial_{\eta^{k}}
		= \rho_{i}^{\alpha} \eta^{i} \partial_{x^{\alpha}} - \sum_{p < q} c_{p q}^{k}\eta^{p} \eta^{q} \partial_{\eta^{k}},
	\end{align}
	where $\rho_{i}^{\alpha}$ and $c_{p q}^{k}$ are smooth functions on $M$ depending on $x^{i}$, and $c_{p q}^{k} = - c_{q p}^{k}$.

	Define $\rho: \Gamma(A) \rightarrow \frakX(M), \
	\varepsilon_{i} \mapsto \rho_{i}^{\alpha}\partial_{x^{\alpha}}$, $[\varepsilon_{i},\varepsilon_{j}]_{A} := c_{ij}^{k} \varepsilon_{k}$, and for any sections $X = f^{i} \varepsilon_{i}, Y = g^{j} \varepsilon_{j} \in \Gamma(A)$, define
	\begin{align*}
		&\rho: \Gamma(A) \rightarrow \frakX(M), \
		X \mapsto f^{i}\rho_{i}^{\alpha}\partial_{x^{\alpha}}, \\
		& [X,Y]_{A} 
		:= f^{i}\rho_{i}^{\alpha}\frac{\partial g^{j}}{\partial x^{\alpha}}\varepsilon_{j} - g^{j}\rho_{j}^{\alpha}\frac{\partial f^{i}}{\partial x^{\alpha}}\varepsilon_{i} + f^{i}g^{j}c_{ij}^{k}\varepsilon_{k}.
	\end{align*}

	It can be verified directly that $(A,\rho,[-,-]_{A})$ a Lie algebroid if and only if $Q^{2} = 0$. 
\end{proof}

\begin{cor} \label{Lem. Lie algebroid and dg manifold}
	Let $M$ be a manifold. Then the Lie algebroid structures over $M$ are in one-to-one correspondence with the dg manifold structures of amplitude $[-1,-1]$ with support $M$.
\end{cor}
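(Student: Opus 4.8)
The plan is to deduce this corollary directly from Proposition~\ref{Prop. relation between Lie algebroids and dg manifolds} together with the Batchelor equivalence~\eqref{Eq. Batchelor equivalence}, once the meaning of ``amplitude $[-1,-1]$'' is unwound. First I would observe that, by definition, a split dg manifold $(\calM, Q)$ of amplitude $[-1,-1]$ has underlying graded manifold $\calM$ associated, under~\eqref{Eq. Batchelor equivalence}, with a graded vector bundle $E = E^{-1}$ concentrated in the single cohomological degree $-1$. Such a bundle is exactly $A[1]$ for an ordinary vector bundle $A$ (concentrated in degree $0$) over $M$, and the assignment $A \mapsto A[1]$ is a bijection between ordinary vector bundles over $M$ and graded vector bundles over $M$ concentrated in degree $-1$. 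Consequently the underlying graded manifolds of amplitude $[-1,-1]$ are precisely the graded manifolds $A[1]$ appearing in Proposition~\ref{Prop. relation between Lie algebroids and dg manifolds}, with $\ci(A[1]) = \Gamma(\wg^\bullet A^\vee)$.

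Next I would fix such an $A$ and invoke Proposition~\ref{Prop. relation between Lie algebroids and dg manifolds} to upgrade its ``if and only if'' into an explicit bijection. The key point is that the homological vector field data amounts to a degree-$1$ vector field $Q$ with $Q^2 = 0$: since the coordinates $\eta^i$ dual to a local frame of $A$ sit in degree $1$ while $\partial_{x^\alpha}$ and $\partial_{\eta^k}$ carry degrees $0$ and $-1$, a degree reason forces every degree-$1$ vector field to have the polynomial shape~\eqref{Eq. Lie algebroid structure A implies Q}, parametrized by the local functions $\rho_i^\alpha$ and $c_{pq}^k = -c_{qp}^k$. Reading these off yields an anchor $\rho$ and a skew bracket $[-,-]_A$ on $\Gamma(A)$, and conversely any such anchor and bracket produce a $Q$ of this form; the computation in the proof of Proposition~\ref{Prop. relation between Lie algebroids and dg manifolds} shows that $Q^2 = 0$ holds exactly when the Lie algebroid axioms (the Leibniz rule for $\rho$ and the Jacobi identity for $[-,-]_A$) hold. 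This gives, for each fixed $A$, a bijection between Lie algebroid structures on $A$ and homological vector fields on $A[1]$.

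Combining the two steps produces the desired one-to-one correspondence: a dg manifold of amplitude $[-1,-1]$ with support $M$ is the same datum as a pair $(A, Q)$ with $A$ an ordinary vector bundle over $M$ and $Q$ a homological vector field on $A[1]$, which in turn is the same as a Lie algebroid structure $(A, [-,-]_A, \rho)$ over $M$. The main obstacle I anticipate is not the logic but the bookkeeping of globalization and degree conventions: one must check that the locally defined $\rho_i^\alpha$ and $c_{pq}^k$ patch into coordinate-independent global objects, so that $\rho$ and $[-,-]_A$ are genuinely well-defined and the correspondence is independent of the chosen frame, and one must verify that the shift $A[1]$ indeed lands in degree $-1$ under the cohomological suspension convention $(sH)^n = H^{n+1}$ fixed in this paper, so that ``amplitude $[-1,-1]$'' correctly singles out ordinary vector bundles. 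Since the substantive content is already contained in Proposition~\ref{Prop. relation between Lie algebroids and dg manifolds}, the corollary then follows immediately.
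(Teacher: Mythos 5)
Your proposal is correct and follows essentially the route the paper intends: the corollary is stated as an immediate consequence of Proposition~\ref{Prop. relation between Lie algebroids and dg manifolds}, whose proof already contains the local identification of degree-$1$ homological vector fields on $A[1]$ with anchor-and-bracket data, and your only additional step --- identifying amplitude $[-1,-1]$ graded manifolds with the $A[1]$'s via the Batchelor equivalence --- is exactly the intended unwinding of the definitions. The points you flag (globalization of the local coefficients and the degree convention placing $A[1]$ in degree $-1$) are the right ones to check and are consistent with the paper's conventions.
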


\medskip

\section{Cohomology theory of Nijenhuis Lie algebroids} \label{Section: Cohomology theory of Nijenhuis Lie algebroids}

In this section, we introduce the cohomology theory of Nijenhuis Lie algebroids, using the language of dg manifolds, by combining the cohomology of Lie algebroids and the cohomology of Nijenhuis operators. 

Let $A$ be a vector bundle over a manifold $M$. According to Proposition~\ref{Prop. relation between Lie algebroids and dg manifolds}, we obtain that $A[1]$ is a graded manifold with  $\ci(A[1]) = \Gamma(\wedge^{\bullet}A^{\vee})$. Then any vector field $ X \in \frakX(A[1])$ of degree $b-1$ can be uniquely written as $a_{X} + \partial_{X}$, where
\begin{enumerate}
	\item $a_{X}: \ci(M) \rightarrow \Gamma(\wg^{b-1}A^{\vee})$ is a derivation, i.e., 
	$$a_{X}(fg) = a_{X}(f)g + f a_{X}(g)$$
	for any $f, g \in \ci(M)$.
	\item $\partial_{X}: \Gamma(A^{\vee}) \rightarrow \Gamma(\wg^{b}A^{\vee})$ is a $\mathbb{R}$-linear map such that
	$$\partial_{X}(fE) = a_{X}(f)E + f\partial_{X}(E)$$
	for any $f \in \ci(M)$ and $E \in \Gamma(A^{\vee})$.
\end{enumerate}

Subsequently, we will continue to use the notations and conventions established in Proposition~\ref{Prop. relation between Lie algebroids and dg manifolds} and its proof. 
Locally, $X$ forms
$$X = \sum_{i_{1} < \dots < i_{b-1}} f_{i_{1} \cdots i_{b-1}}^{\alpha} \eta^{i_{1}} \wg \cdots \wg \eta^{i_{b-1}} \partial_{x^{\alpha}} 
+ \sum_{j_{1} < \dots < j_{b}} g_{j_{1} \cdots j_{b}}^{\beta} \eta^{j_{1}} \wg \cdots \wg \eta^{j_{b}} \partial_{\eta^{\beta}},$$
where $f_{i_{1} \cdots i_{b-1}}^{\alpha}, g_{j_{1} \cdots j_{b}}^{\beta} \in \ci(M)$.
In this time, we have 
$$a_{X} = \sum\limits_{i_{1} < \dots < i_{b-1}} f_{i_{1} \cdots i_{b-1}}^{\alpha} \eta^{i_{1}} \wg \cdots \wg \eta^{i_{b-1}} \partial_{x^{\alpha}}$$ 
and 
\begin{align*}
	\partial_{X}(\eta^{k}) 
	= \sum\limits_{j_{1} < \dots < j_{b}} g_{j_{1} \cdots j_{b}}^{k} \eta^{j_{1}} \wg \cdots \wg \eta^{j_{b}}.
\end{align*}

\begin{defn}
	For any vector field $ X \in \frakX(A[1])$ of degree $b-1$, define a map
	$$B_{X}: \underbrace{\Gamma(A) \times \dots \times \Gamma(A)}_{b} \rightarrow \Gamma(A)$$
	by
	\begin{align*}
	\langle W,&\ B_{X}(E_{1}, \dots, E_{b}) \rangle :=
		(-1)^{b-1} \Big( \langle \partial_{X}(W), E_{1} \wg \cdots \wg E_{b} \rangle 
		- \sum_{i=1}^{b}(-1)^{b-i} \langle a_{X}\langle W, E_{i} \rangle, E_{1} \wg \cdots\wg \widehat{E_{i}} \wg \cdots \wg E_{b} \rangle \Big),
	\end{align*}
	for any $W \in \Gamma(A^{\vee})$ and $E_{1}, \dots, E_{b} \in \Gamma(A)$.
\end{defn}

It can be seen that $B_{X}$ is antisymmetric after calculation, which means 
$$B_{X}(E_{1}, \dots, E_{i}, \dots, E_{j}, \dots, E_{b}) = - B_{X}(E_{1}, \dots, E_{j}, \dots, E_{i}, \dots, E_{b})$$
for any $E_{1}, \dots, E_{b} \in \Gamma(A)$ and $1 \leqslant i < j \leqslant b$. Further, $B_{X}$ acts like a derivation on each component, see the following result.

\begin{prop} \label{Prop. B_X as a derivation}
	For any $E_{1}, \dots, E_{b} \in \Gamma(A)$ and $1 \leqslant j \leqslant b$, we have 
	$$B_{X}(E_{1}, \dots, fE_{j}, \dots, E_{b}) = f B_{X}(E_{1}, \dots, E_{j}, \dots, E_{b}) + (-1)^{j} \langle a_{X}(f), E_{1} \wg \cdots \wg \widehat{E_{j}} \wg \cdots \wg E_{b} \rangle E_{j}.$$
\end{prop}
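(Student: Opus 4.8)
The plan is to verify the identity by evaluating both sides against an arbitrary form $W \in \Gamma(A^\vee)$, since the duality coupling $\langle -, -\rangle : \Gamma(A^\vee) \times \Gamma(A) \to \ci(M)$ is nondegenerate; hence the two sections of $\Gamma(A)$ agree as soon as their pairings with every $W$ agree. Because $\langle a_X(f), E_1 \wg \cdots \wg \widehat{E_j} \wg \cdots \wg E_b\rangle$ lies in $\ci(M)$ and the coupling is $\ci(M)$-linear, the statement to be proved becomes the purely scalar identity
\[
\langle W, B_X(E_1,\dots,fE_j,\dots,E_b)\rangle
= f\,\langle W, B_X(E_1,\dots,E_j,\dots,E_b)\rangle
+ (-1)^j \langle a_X(f), E_1\wg\cdots\wg\widehat{E_j}\wg\cdots\wg E_b\rangle\,\langle W,E_j\rangle .
\]
I would then substitute $fE_j$ directly into the defining formula for $\langle W, B_X(-)\rangle$ and treat its two summands separately.

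For the first summand $\langle \partial_X(W), E_1 \wg \cdots \wg E_b\rangle$, the factor $\partial_X(W)$ does not involve the $E$'s at all, and the multivector $E_1 \wg \cdots \wg fE_j \wg \cdots \wg E_b$ equals $f\,(E_1 \wg \cdots \wg E_b)$ in $\Gamma(\wg^b A)$ since the wedge is $\ci(M)$-bilinear. The $\ci(M)$-linearity of the pairing on $\Gamma(\wg^b A^\vee) \times \Gamma(\wg^b A)$ then pulls the scalar $f$ out, so this term contributes precisely $f$ times the original first summand. Next I would split the second summand $\sum_{i=1}^b (-1)^{b-i}\langle a_X\langle W, E_i\rangle, E_1 \wg \cdots \widehat{E_i}\cdots\wg E_b\rangle$ according to whether $i\ne j$ or $i=j$. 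For each $i \ne j$ the factor $\langle W, E_i\rangle$ is untouched while the omitted-wedge still carries the modified entry $fE_j$, so $\ci(M)$-linearity again extracts $f$; these terms reassemble into $f$ times the corresponding part of the original second summand.

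The index $i = j$ is where the derivation property enters. There $\langle W, E_j\rangle$ becomes $f\langle W, E_j\rangle$, and applying the Leibniz rule for $a_X$ on $\ci(M)$ gives $a_X(f\langle W, E_j\rangle) = a_X(f)\langle W, E_j\rangle + f\,a_X\langle W, E_j\rangle$. The piece $f\,a_X\langle W, E_j\rangle$ combines with the $i\ne j$ contributions to complete $f$ times the original second summand, while the piece $a_X(f)\langle W, E_j\rangle$ produces the anomalous term. Finally I would collect signs: carrying the global prefactor $(-1)^{b-1}$ through the minus sign of the second summand and the positional sign $(-1)^{b-j}$ yields the coefficient $-(-1)^{b-1}(-1)^{b-j} = (-1)^j$, and one further use of $\ci(M)$-linearity rewrites $\langle a_X(f)\langle W, E_j\rangle, E_1 \wg \cdots \widehat{E_j}\cdots\wg E_b\rangle$ as $\langle W, E_j\rangle\,\langle a_X(f), E_1 \wg \cdots \widehat{E_j}\cdots\wg E_b\rangle$, matching the desired right-hand side. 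The only genuinely delicate point is this sign bookkeeping—confirming that the interplay of $(-1)^{b-1}$, $(-1)^{b-j}$, and the sign of the summand collapses to exactly $(-1)^j$; everything else is a routine application of $\ci(M)$-linearity of the couplings and the derivation property of $a_X$.
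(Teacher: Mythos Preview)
Your proposal is correct and follows essentially the same route as the paper's own proof: pair with an arbitrary $W\in\Gamma(A^{\vee})$, substitute $fE_j$ into the defining formula for $B_X$, split the sum over $i$ into the cases $i\neq j$ and $i=j$, and use the Leibniz rule for $a_X$ on $f\langle W,E_j\rangle$ together with $\ci(M)$-linearity of the pairings. Your sign computation $-(-1)^{b-1}(-1)^{b-j}=(-1)^j$ matches the paper's $-(-1)^{j-1}$.
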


\begin{proof}
	Direct computation yields, for any $W \in \Gamma(A^{\vee})$,
	\begin{align*}
		&\ \langle W, B_{X}(E_{1}, \dots, f E_{j}, \dots, E_{b}) \rangle \\
		=&\ (-1)^{b-1} \Big( \langle \partial_{X}(W), E_{1} \wg \cdots \wg fE_{j} \wg \cdots \wg E_{b} \rangle 
		- \sum_{i \neq j} (-1)^{b-i} \langle a_{X}\langle W, E_{i} \rangle, E_{1} \wg \cdots \wg \widehat{E_{i}} \cdots \wg f E_{j} \wg \cdots \wg E_{b} \rangle\\
		&\ - (-1)^{b-j} \langle a_{X}\langle W, f E_{j} \rangle, E_{1} \wg \cdots \wg E_{i} \cdots \wg \widehat{f E_{j}} \wg \cdots \wg E_{b} \rangle \Big)\\
		=&\ \langle W, f B_{X}(E_{1}, \dots, E_{j}, \dots, E_{b}) \rangle 
		- (-1)^{j-1} \langle W, E_{j}\rangle \langle a_{X}(f), E_{1} \wg \cdots \wg \widehat{f E_{j}} \wg \cdots \wg E_{b} \rangle\\
		=&\ \langle W, f B_{X}(E_{1}, \dots, E_{j}, \dots, E_{b}) + (-1)^{j} \langle a_{X}(f), E_{1} \wg \cdots \wg \widehat{E_{j}} \wg \cdots \wg E_{b} \rangle E_{j} \rangle.
	\end{align*}
	Thus, we have obtained the desired result of the proposition.
\end{proof}

Note that, for any vector field $ X \in \frakX(A[1])$, the map $B_{X}$ is $\mathbb{R}$-linear but not $\ci(M)$-linear. In fact, with the help of a $(1,1)$-form in $\Gamma(A^{\vee} \ot A)$, not necessarily a Nijenhuis operator, we can also construct a $\ci(M)$-linear map $\Phi(X)$, justified by Proposition~\ref{Prop. Phi is a tensor}, based on $B_{X}$ as follows.

\begin{defn} \label{Defn. definition of Phi}
	For any vector field $ X \in \frakX(A[1])$ of degree $b-1$ and a $(1,1)$-form $P$, define a map
	$$\Phi(X): \underbrace{\Gamma(A) \times \dots \times \Gamma(A)}_{b} \rightarrow \Gamma(A)$$
	as
	\begin{align*}
	\Phi(X)(E_{1}, \dots, E_{b}):=
		\sum_{k=0}^{b} \sum_{1 \leqslant i_{1} < \dots < i_{k} \leqslant b} (-1)^{b-k} P^{b-k} B_{X}(E_{1}, \dots, PE_{i_{1}}, \dots, PE_{i_{k}}, \dots, E_{b}),
	\end{align*}
	for any $E_{1}, \dots, E_{b} \in \Gamma(A)$.
\end{defn}

\begin{prop} \label{Prop. Phi is a tensor}
		For any vector field $ X \in \frakX(A[1])$ of degree $b-1$, $\Phi(X) \in \Gamma(\wg^{b}A^{\vee} \ot A)$.
\end{prop}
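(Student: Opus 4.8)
The plan is to verify the two defining properties of a $(1,b)$-form: that $\Phi(X)$ is antisymmetric in its $b$ arguments and that it is $\ci(M)$-multilinear (tensorial). Note first that $\Phi(X)$ is the geometric counterpart of the algebraic map $\Psi_{M}$ of Subsection~\ref{Subsec: cohomology NjL}, with $B_{X}$ playing the role of the cochain being twisted and $P$ inserted at the selected slots; so the sign combinatorics of the two run in parallel. Antisymmetry is the easier half: since $P$ is $\ci(M)$-linear it commutes with any permutation of the arguments, and $B_{X}$ is itself antisymmetric, so swapping two adjacent arguments $E_{j}, E_{j+1}$ either produces the sign $-1$ directly from the antisymmetry of $B_{X}$ (when slots $j, j+1$ are both selected or both unselected) or is absorbed by the involution of the index set that exchanges the membership of $j$ and $j+1$. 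I would record this bijection and read off the overall factor $-1$.

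The essential point is tensoriality, and this is where the Nijenhuis insertions matter. Fix a slot $j$ and replace $E_{j}$ by $fE_{j}$ with $f \in \ci(M)$. In the sum defining $\Phi(X)$, each selection $S = \{i_{1} < \cdots < i_{k}\}$ of slots modified by $P$ either omits or contains $j$. If $j \notin S$ the argument in slot $j$ is $fE_{j}$; if $j \in S$ it is $P(fE_{j}) = fP(E_{j})$, using the $\ci(M)$-linearity of $P$. In both cases Proposition~\ref{Prop. B_X as a derivation} expands $B_{X}$ on the $f$-scaled slot into a principal term $f\cdot B_{X}(\dots)$ plus one anomaly term proportional to $a_{X}(f)$. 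Summed over all selections the principal terms reassemble precisely into $f\,\Phi(X)(E_{1}, \dots, E_{b})$, so it remains to show that the anomaly terms cancel in total.

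The cancellation should be produced by the involution $S \leftrightarrow S \cup \{j\}$ pairing each selection omitting $j$ against the one obtained by adjoining $j$. Writing $\widetilde{E}_{i} = PE_{i}$ for $i \in S$ and $\widetilde{E}_{i} = E_{i}$ otherwise, for $S$ with $j \notin S$ (so $|S| = k$) the anomaly carries the prefactor $(-1)^{b-k}$ and, after applying $P^{b-k}$, equals $(-1)^{b-k}(-1)^{j}\,c_{S}\,P^{b-k}(E_{j})$, where $c_{S} = \langle a_{X}(f),\, \widetilde{E}_{1} \wg \cdots \wg \widehat{E_{j}} \wg \cdots \wg \widetilde{E}_{b}\rangle$ is the scalar pairing $a_{X}(f)$ with the wedge of the remaining arguments. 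For the matched selection $S \cup \{j\}$ the slot $j$ now carries $fP(E_{j})$, and the same Proposition produces the anomaly $(-1)^{b-k-1}(-1)^{j}\,c_{S}\,P^{b-k-1}(P(E_{j}))$; the scalar is again $c_{S}$ since the remaining arguments and their $P$-decorations are unchanged. As $P^{b-k-1}(P(E_{j})) = P^{b-k}(E_{j})$ while the signs differ by exactly $-1$, the two contributions cancel, and summing over all $S$ omitting $j$ annihilates every anomaly term. This yields $\Phi(X)(E_{1}, \dots, fE_{j}, \dots, E_{b}) = f\,\Phi(X)(E_{1}, \dots, E_{b})$, and together with antisymmetry shows $\Phi(X) \in \Gamma(\wg^{b}A^{\vee} \ot A)$.

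The step I expect to be the main obstacle is the precise sign and index bookkeeping in the last paragraph: I must confirm that the wedge coefficient $c_{S}$ and the power of $P$ line up identically across the involution, and in particular that the anomaly sign $(-1)^{j}$ of Proposition~\ref{Prop. B_X as a derivation} is governed only by the position of the differentiated slot and is insensitive to whether that slot is itself $P$-decorated. Once this matching is pinned down the cancellation is mechanical.
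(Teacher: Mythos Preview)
Your proposal is correct and follows essentially the same approach as the paper's proof: both reduce to Proposition~\ref{Prop. B_X as a derivation} and cancel the anomaly terms by pairing each selection of $P$-decorated slots against the one obtained by toggling membership of the differentiated slot. The only cosmetic differences are that the paper specializes to $j=1$ (deferring other slots to the antisymmetry already noted) and writes out the index shift $k\mapsto k-1$ explicitly, whereas you phrase the same cancellation as the involution $S\leftrightarrow S\cup\{j\}$ for arbitrary $j$; your observation that the wedge coefficient $c_{S}$ is unchanged under this involution (since the omitted slot is the one being toggled) is exactly the point the paper's computation verifies line by line.
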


\begin{proof}
	It can be easily seen that $\Phi(X)$ is antisymmetric, i.e., 
	$$\Phi(X)(E_{1}, \dots, E_{i}, \dots, E_{j}, \dots, E_{b}) = - \Phi(X)(E_{1}, \dots, E_{j}, \dots, E_{i}, \dots, E_{b})$$
	for any $E_{1}, \dots, E_{b} \in \Gamma(A)$ and $1 \leqslant i < j \leqslant b$. So we only need to show that $\Phi(X)$ is $\ci(M)$-linear for the first component.
	For $f \in \ci(M)$, we have
	\begin{align*}
		&\ \Phi(X)(fE_{1}, E_{2}, \dots, E_{b})\\
		=&\ \sum_{k=0}^{b-1} \sum_{1 < i_{1} < i_{2} < \dots < i_{k} \leqslant b} (-1)^{b-k} P^{b-k} B_{X}(fE_{1}, \dots, PE_{i_{1}}, \dots, PE_{i_{k}}, \dots, E_{b})\\
		&\ + \sum_{k=1}^{b} \sum_{1 = i_{1} < i_{2} < \dots < i_{k} \leqslant b} (-1)^{b-k} P^{b-k} B_{X}(fPE_{1}, \dots, PE_{i_{2}}, \dots, PE_{i_{k}}, \dots, E_{b})\\
		=&\ f(\sum_{k=0}^{b-1} \sum_{1 < i_{1} < i_{2} < \dots < i_{k} \leqslant b} (-1)^{b-k} P^{b-k} B_{X}(E_{1}, \dots, PE_{i_{1}}, \dots, PE_{i_{k}}, \dots, E_{b}))\\
		&\ + \sum_{k=0}^{b-1} \sum_{1 < i_{1} < i_{2} < \dots < i_{k} \leqslant b} (-1)^{b-k} P^{b-k} (-1)^{1} \langle a_{X}(f), \widehat{E_{1}} \wg E_{2} \wg \cdots \wg PE_{i_{1}} \wg \cdots \wg PE_{i_{k}}, \dots, E_{b} \rangle E_{1}\\
		&\ + f(\sum_{k=1}^{b} \sum_{1 = i_{1} < i_{2} < \dots < i_{k} \leqslant b} (-1)^{b-k} P^{b-k} B_{X}(PE_{1}, \dots, PE_{i_{2}}, \dots, PE_{i_{k}}, \dots, E_{b}))\\
		&\ + \sum_{k=1}^{b} \sum_{1 = i_{1} < i_{2} < \dots < i_{k} \leqslant b} (-1)^{b-k} P^{b-k} (-1)^{1} \langle a_{X}(f), \widehat{PE_{1}} \wg E_{2} \wg \cdots \wg PE_{i_{2}} \wg \cdots \wg PE_{i_{k}} \wg \cdots \wg E_{b} \rangle PE_{1}\\
		=&\ f\Phi(X)(E_{1}, E_{2}, \dots, E_{b})\\
		&\ + \sum_{k=0}^{b-1} \sum_{1 < i_{1} < i_{2} < \dots < i_{k} \leqslant b} (-1)^{b-k} (-1)^{1} \langle a_{X}(f), \widehat{E_{1}} \wg E_{2} \wg \cdots \wg PE_{i_{1}} \wg \cdots \wg PE_{i_{k}}, \dots, E_{b} \rangle P^{b-k} E_{1}\\
		&\ + \sum_{k=1}^{b} \sum_{1 = i_{1} < i_{2} < \dots < i_{k} \leqslant b} (-1)^{b-k} (-1)^{1} \langle a_{X}(f), \widehat{PE_{1}} \wg E_{2} \wg \cdots \wg PE_{i_{2}} \wg \cdots \wg PE_{i_{k}} \wg \cdots \wg E_{b} \rangle P^{b-k+1}E_{1}\\
		=&\ f\Phi(X)(E_{1}, E_{2}, \dots, E_{b})\\
		&\ + \sum_{k=0}^{b-1} \sum_{1 < i_{1} < i_{2} < \dots < i_{k} \leqslant b} (-1)^{b-k} (-1)^{1} \langle a_{X}(f), \widehat{E_{1}} \wg E_{2} \wg \cdots \wg PE_{i_{1}} \wg \cdots \wg PE_{i_{k}}, \dots, E_{b} \rangle P^{b-k} E_{1}\\
		&\ + \sum_{l=0}^{b-1} \sum_{1 = i_{1} < i_{2} < \dots < i_{l+1} \leqslant b} (-1)^{b-l-1} (-1)^{1} \langle a_{X}(f), \widehat{PE_{1}} \wg E_{2} \wg \cdots \wg PE_{i_{2}} \wg \cdots \wg PE_{i_{l+1}} \wg \cdots \wg E_{b} \rangle P^{b-l}E_{1}\\
		=&\ f\Phi(X)(E_{1}, E_{2}, \dots, E_{b}).
	\end{align*}
\end{proof}

Therefore, we get a well-defined map of graded space:
\begin{align} \label{Eq. definition of chain map Phi}
	\Phi: \frakX(A[1]) \rightarrow \C_{\FN}(A), \
	X \mapsto \Phi(X).
\end{align}

As stated in Proposition~\ref{Prop. relation between Lie algebroids and dg manifolds}, when $(A, [-,-]_{A}, \rho)$ is a Lie algebroid, we have that $A[1]$ is a dg manifold with homological vector field $Q \in \frakX(A[1])$ given by Equation~\eqref{Eq. Lie algebroid structure A implies Q}. Therefore $(\frakX(A[1]), \dd_{Q} = - [Q,-], [-,-])$ is a dg Lie algebra. Moreover, when $(A, [-,-]_{A}, \rho, P)$ is a Nijenhuis Lie algebroid, Theorem~\ref{Thm. geometric version of cohomology of Nijenhuis operator} states that $(\C_{\FN}^{\bullet}(A), \dd_{\FN} = [P,-]_{\FN}, [-,-]_{\FN})$ forms a dg Lie algebra as well. Next, we will show that $\Phi: (\frakX(A[1]), \dd_{Q}) \rightarrow (\C_{\FN}(A), \dd_{\FN})$ is a chain map, see Theorem~\ref{Thm. Phi is a chain map}.

\begin{prop} \label{Prop. Equation of B_X}
	Let $$X = \sum_{i_{1} < \dots < i_{b-1}} f_{i_{1} \cdots i_{b-1}}^{\alpha} \eta^{i_{1}} \wg \cdots \wg \eta^{i_{b-1}} \partial_{x^{\alpha}} 
	+ \sum_{j_{1} < \dots < j_{b}} g_{j_{1} \cdots j_{b}}^{\beta} \eta^{j_{1}} \wg \cdots \wg \eta^{j_{b}} \partial_{\eta^{\beta}}$$
	be a vector field of degree $b-1$ in $\frakX(A[1])$. Then 
	$$B_{X}(\varepsilon_{\varphi_{1}}, \dots, \varepsilon_{\varphi_{b}}) = (-1)^{b-1} g_{\varepsilon_{\varphi_{1}} \cdots \varepsilon_{\varphi_{b}}}^{q} \varepsilon_{q}.$$
\end{prop}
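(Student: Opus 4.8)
The plan is to evaluate both sides against the dual basis $\{\eta^{q}\}$ of $\Gamma(A^{\vee})$, which reduces the identity to a single computation of the defining duality coupling. Since $\{\varepsilon_{q}\}$ and $\{\eta^{q}\}$ are dual bases, it suffices to show that $\langle \eta^{q}, B_{X}(\varepsilon_{\varphi_{1}}, \dots, \varepsilon_{\varphi_{b}})\rangle = (-1)^{b-1} g^{q}_{\varphi_{1}\cdots\varphi_{b}}$ for every $q$ (where I write $g^{q}_{\varphi_{1}\cdots\varphi_{b}}$ for the coefficient denoted $g^{q}_{\varepsilon_{\varphi_{1}}\cdots\varepsilon_{\varphi_{b}}}$ in the statement); expanding $B_{X}(\varepsilon_{\varphi_{1}},\dots,\varepsilon_{\varphi_{b}}) = \langle \eta^{q}, B_{X}(\varepsilon_{\varphi_{1}},\dots,\varepsilon_{\varphi_{b}})\rangle\,\varepsilon_{q}$ (summed over $q$) then yields the asserted formula. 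By the antisymmetry of $B_{X}$ established just before the proposition, I may assume $\varphi_{1} < \dots < \varphi_{b}$.

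The key simplification I would exploit is that the entire $a_{X}$-sum in the definition of $B_{X}$ drops out. Indeed, setting $W = \eta^{q}$ and $E_{i} = \varepsilon_{\varphi_{i}}$, each coupling $\langle \eta^{q}, \varepsilon_{\varphi_{i}}\rangle = \delta^{q}_{\varphi_{i}}$ is a constant function on $M$; since $a_{X}\colon \ci(M) \to \Gamma(\wg^{b-1}A^{\vee})$ is an $\mathbb{R}$-linear derivation it annihilates constants (from $a_{X}(1) = 2a_{X}(1)$ one gets $a_{X}(1)=0$), so $a_{X}\langle \eta^{q}, \varepsilon_{\varphi_{i}}\rangle = 0$ for all $i$. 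Therefore the defining formula collapses to
$$\langle \eta^{q}, B_{X}(\varepsilon_{\varphi_{1}}, \dots, \varepsilon_{\varphi_{b}})\rangle = (-1)^{b-1}\langle \partial_{X}(\eta^{q}), \varepsilon_{\varphi_{1}}\wg\cdots\wg\varepsilon_{\varphi_{b}}\rangle.$$

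To finish, I would compute the single surviving pairing using the local expression $\partial_{X}(\eta^{q}) = \sum_{j_{1}<\dots<j_{b}} g^{q}_{j_{1}\cdots j_{b}}\,\eta^{j_{1}}\wg\cdots\wg\eta^{j_{b}}$ recalled above, together with the determinantal rule $\langle \eta^{j_{1}}\wg\cdots\wg\eta^{j_{b}}, \varepsilon_{\varphi_{1}}\wg\cdots\wg\varepsilon_{\varphi_{b}}\rangle = \det(\delta^{j_{a}}_{\varphi_{c}})$. This determinant is nonzero exactly when $\{j_{1},\dots,j_{b}\} = \{\varphi_{1},\dots,\varphi_{b}\}$, and with both index sets increasing it equals $1$, so the sum reduces to $g^{q}_{\varphi_{1}\cdots\varphi_{b}}$, giving the claim. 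The computation is essentially a direct substitution; the only place needing care is the sign and index bookkeeping in the determinant, and making sure the increasing-order convention for the $\varphi$'s is consistent with the antisymmetrization built into $B_{X}$, but no genuine obstacle arises beyond this routine check.
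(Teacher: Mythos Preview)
Your proof is correct and follows essentially the same approach as the paper: pair against $\eta^{q}$, observe that the $a_{X}$-terms vanish because $\langle \eta^{q},\varepsilon_{\varphi_{i}}\rangle$ is constant, and then evaluate $\langle \partial_{X}(\eta^{q}),\varepsilon_{\varphi_{1}}\wedge\cdots\wedge\varepsilon_{\varphi_{b}}\rangle$ using the local formula for $\partial_{X}(\eta^{q})$. The paper's own proof is terser---it jumps straight to the line with only $\partial_{X}$, leaving the vanishing of the $a_{X}$-sum implicit---whereas you spell out that step explicitly; otherwise the arguments are identical.
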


\begin{proof}
	By definition, we have
	\begin{align*}
		\langle \eta^{q}, B_{X}(\varepsilon_{\varphi_{1}}, \dots, \varepsilon_{\varphi_{b}}) \rangle 
		=&\ (-1)^{b-1} \langle \partial_{X}(\eta^{q}), \varepsilon_{\varphi_{1}} \wg \cdots \wg \varepsilon_{\varphi_{b}} \rangle\\
		=&\ (-1)^{b-1} \langle \sum_{j_{1} < \dots < j_{b}} g_{j_{1} \cdots j_{b}}^{q} \eta^{j_{1}} \wg \cdots \wg \eta^{j_{b}}, \varepsilon_{\varphi_{1}} \wg \cdots \wg \varepsilon_{\varphi_{b}} \rangle\\
		=&\ (-1)^{b-1} g_{\varepsilon_{\varphi_{1}} \cdots \varepsilon_{\varphi_{b}}}^{q}.
	\end{align*}
	Hence, $B_{X}(\varepsilon_{\varphi_{1}}, \dots, \varepsilon_{\varphi_{b}}) = (-1)^{b-1} g_{\varepsilon_{\varphi_{1}} \cdots \varepsilon_{\varphi_{b}}}^{q} \varepsilon_{q}$.
\end{proof}

Assume that $(A, [-,-]_{A}, \rho, P)$ is a Nijenhuis Lie algebroid with Lie algebroid structure $\rho(\varepsilon_{i}) = \rho_{i}^{\alpha}\partial_{x^{\alpha}}$ and $[\varepsilon_{i},\varepsilon_{j}]_{A} = c_{ij}^{k} \varepsilon_{k}$, which corresponds to the following homological vector field
\begin{align*} 
	Q 
	= \rho_{i}^{\alpha} \eta^{i} \partial_{x^{\alpha}} - \sum_{p < q} c_{p q}^{k}\eta^{p} \eta^{q} \partial_{\eta^{k}} \in \frakX(A[1]).
\end{align*}

\begin{prop} \label{Prop. equation of B_Q}
	With above notions, we obtain $B_{Q} = [-,-]_{A}: \Gamma(A) \times \Gamma(A) \to \Gamma(A)$, and further derive $\Phi(Q) = \calN_{P} = 0$.
\end{prop}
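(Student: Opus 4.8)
The plan is to exploit that $Q$ is a vector field of degree $1$, so in every construction attached to a degree-$(b-1)$ vector field we specialize to $b=2$, and then to reduce everything to the explicit local formulas recorded in the proof of Proposition~\ref{Prop. relation between Lie algebroids and dg manifolds}. The statement then splits into two independent claims: first that $B_Q$ coincides with the Lie algebroid bracket, and second that $\Phi(Q)$ is the Nijenhuis torsion $\calN_P$; the vanishing $\calN_P=0$ is then immediate from Definition~\ref{Defn. Nijenhuis Lie algebroid}.

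First I would verify $B_Q=[-,-]_A$ on a local frame. Since both sides are antisymmetric, it suffices to compare them on basis sections $\varepsilon_{\varphi_1},\varepsilon_{\varphi_2}$ with $\varphi_1<\varphi_2$. Applying Proposition~\ref{Prop. Equation of B_X} with $b=2$ gives $B_Q(\varepsilon_{\varphi_1},\varepsilon_{\varphi_2})=(-1)^{b-1}g^q_{\varphi_1\varphi_2}\varepsilon_q=-g^q_{\varphi_1\varphi_2}\varepsilon_q$, while reading off the coefficients of $Q$ in Equation~\eqref{Eq. Lie algebroid structure A implies Q} yields $g^q_{\varphi_1\varphi_2}=-c^q_{\varphi_1\varphi_2}$. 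The two signs cancel, producing $c^q_{\varphi_1\varphi_2}\varepsilon_q=[\varepsilon_{\varphi_1},\varepsilon_{\varphi_2}]_A$. To pass to arbitrary sections $X=f^i\varepsilon_i$ and $Y=g^j\varepsilon_j$, I would apply the derivation rule of Proposition~\ref{Prop. B_X as a derivation} in each of the two slots of $B_Q$, peeling off the function coefficients. The correction terms carry $a_Q$, and since $a_Q=\rho^\alpha_i\eta^i\partial_{x^\alpha}$ one has $\langle a_Q(g^j),\varepsilon_i\rangle=\rho(\varepsilon_i)(g^j)$; these terms assemble into $\rho(X)(g^j)\varepsilon_j-\rho(Y)(f^i)\varepsilon_i$, while the frame values contribute $f^ig^jc^k_{ij}\varepsilon_k$. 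This is precisely the local expression for $[X,Y]_A$ recorded in the proof of Proposition~\ref{Prop. relation between Lie algebroids and dg manifolds}, so $B_Q=[-,-]_A$ as maps $\Gamma(A)\times\Gamma(A)\to\Gamma(A)$.

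Second, I would compute $\Phi(Q)$ directly from Definition~\ref{Defn. definition of Phi}. With $b=2$ the double sum has exactly four summands, indexed by $k=0,1,2$: namely $P^2B_Q(E_1,E_2)$, then $-PB_Q(PE_1,E_2)$ and $-PB_Q(E_1,PE_2)$, and finally $B_Q(PE_1,PE_2)$. Substituting $B_Q=[-,-]_A$ from the first part turns these into $P^2[E_1,E_2]_A-P[PE_1,E_2]_A-P[E_1,PE_2]_A+[PE_1,PE_2]_A$, which is exactly $\calN_P(E_1,E_2)$ as in Definition~\ref{Defn. Nijenhuis torsion}. Hence $\Phi(Q)=\calN_P$, and since $(A,[-,-]_A,\rho,P)$ is a Nijenhuis Lie algebroid, Definition~\ref{Defn. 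Nijenhuis Lie algebroid} forces $\calN_P=0$, giving $\Phi(Q)=0$.

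The hard part will be the bookkeeping in the extension step, i.e.\ confirming that the anchor-valued correction terms from the two applications of Proposition~\ref{Prop. B_X as a derivation} reassemble with the correct signs into the $\rho(X)(g^j)\varepsilon_j-\rho(Y)(f^i)\varepsilon_i$ part of $[X,Y]_A$; the remaining two claims are essentially one-line substitutions once $B_Q=[-,-]_A$ is in hand. I would also take care to note that the order in which the two slots are reduced does not affect the outcome (as it must not, by $\mathbb{R}$-bilinearity of $B_Q$), so the computation is consistent.
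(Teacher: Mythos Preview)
Your proposal is correct and follows essentially the same approach as the paper: first identify $B_Q$ with $[-,-]_A$ on frame sections via Proposition~\ref{Prop. Equation of B_X}, extend to arbitrary sections using the derivation rule of Proposition~\ref{Prop. B_X as a derivation} together with $a_Q=\rho^\alpha_i\eta^i\partial_{x^\alpha}$, and then read off $\Phi(Q)=\calN_P$ directly from Definition~\ref{Defn. definition of Phi}. The only simplification in the paper is that it checks the extension in just the second slot, relying on the antisymmetry of both $B_Q$ and $[-,-]_A$ to cover the first; this makes the ``bookkeeping'' you anticipate in the extension step essentially trivial.
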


\begin{proof}
	By Proposition~\ref{Prop. Equation of B_X}, we obtain that $B_{Q}(\varepsilon_{j}, \varepsilon_{k}) = c_{jk}^{i} \varepsilon_{i} = [\varepsilon_{j}, \varepsilon_{k}]_{A}$.
	Moreover, for any $g \in \ci(M)$, by Proposition~\ref{Prop. B_X as a derivation}, we have
	\begin{align*}
		B_{Q}(\varepsilon_{j}, g\varepsilon_{k})
		=&\ g B_{Q}(\varepsilon_{j}, \varepsilon_{k}) + \langle a_{Q}(g), \varepsilon_{j} \rangle \varepsilon_{k}\\
		=&\ g B_{Q}(\varepsilon_{j}, \varepsilon_{k}) + \rho^{\alpha}_{j} \frac{\partial g}{\partial x^{\alpha}} \varepsilon_{k}\\
		=&\ g B_{Q}(\varepsilon_{j}, \varepsilon_{k}) + \rho(\varepsilon_{j})(g) \varepsilon_{k}\\
		=&\ [\varepsilon_{j}, g\varepsilon_{k}]_{A}.
	\end{align*}
	As a conclusion, we obtain $B_{Q} = [-,-]_{A}$, and the result $\Phi(Q) = \calN_{P} = 0$ follows directly from Definitions~\ref{Defn. definition of Phi} and \ref{Defn. Nijenhuis Lie algebroid}.
\end{proof}

\begin{prop} \label{Prop. definition of chi}
	Consider the following map
	\begin{align*}
		\chi: (\frakX(A[1]), [-,-]) \rightarrow (\C_{\Lie}^{\bullet}(\Gamma(A)), [-,-]_{\RNA}), \ X \mapsto B_{X}.
	\end{align*}
	We have 
	$$B_{[X,Y]} = [B_{X}, B_{Y}]_{\RNA}$$
	for any $X, Y \in \frakX(A[1])$.
\end{prop}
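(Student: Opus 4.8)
The plan is to prove the identity by pairing both sides against an arbitrary $W \in \Gamma(A^{\vee})$ and evaluating on arbitrary sections $E_{1}, \dots, E_{b+c-1} \in \Gamma(A)$, where $|X| = b-1$ and $|Y| = c-1$. Since $B_{[X,Y]}$ and $[B_{X}, B_{Y}]_{\RNA}$ are both $\bbR$-multilinear and antisymmetric, it suffices to compare the scalars $\langle W, B_{[X,Y]}(E_{1}, \dots, E_{b+c-1})\rangle$ and $\langle W, [B_{X}, B_{Y}]_{\RNA}(E_{1}, \dots, E_{b+c-1})\rangle$. The first step is to determine the pair $(a_{[X,Y]}, \partial_{[X,Y]})$ defining the vector field $[X,Y] = X \circ Y - (-1)^{|X||Y|} Y \circ X$. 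Restricting the graded commutator to the generators $\ci(M)$ and $\Gamma(A^{\vee})$ of $\Gamma(\wg^{\bullet}A^{\vee})$ yields $a_{[X,Y]}(f) = X(a_{Y}(f)) - (-1)^{|X||Y|}Y(a_{X}(f))$ and $\partial_{[X,Y]}(W) = X(\partial_{Y}(W)) - (-1)^{|X||Y|}Y(\partial_{X}(W))$ for $f \in \ci(M)$ and $W \in \Gamma(A^{\vee})$, since on a $1$-form $Y$ reduces to $\partial_{Y}$.

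The essential input is that $X$ and $Y$ act as derivations of $\Gamma(\wg^{\bullet}A^{\vee})$, so that $X(\partial_{Y}(W))$, with $\partial_{Y}(W) \in \Gamma(\wg^{c}A^{\vee})$, expands by the Leibniz rule into a sum whose terms feed exactly one factor into $\partial_{X}$ and apply $a_{X}$ to the remaining coefficient functions. Re-expressing these expansions through the duality pairing $\langle -, - \rangle$ and the defining formula of $B_{X}, B_{Y}$ converts the raw expansion into a signed sum over insertions of $E_{1}, \dots, E_{b+c-1}$. I would then write out $\langle W, [B_{X}, B_{Y}]_{\RNA}(E_{1}, \dots, E_{b+c-1})\rangle$ using the explicit shuffle description of the Richardson--Nijenhuis bracket, namely the shuffle brace $B_{X}\{B_{Y}\}$ together with Equation~\eqref{Eq: Richardson-Nijenhuis}, and match the two signed shuffle sums term by term, identifying the two brace terms with the two halves $X(\partial_{Y}W)$ and $Y(\partial_{X}W)$ of the commutator.

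Conceptually, this identity expresses that $X \mapsto B_{X}$ is the correspondence sending a derivation of the graded algebra $\ci(A[1]) = \Gamma(\wg^{\bullet}A^{\vee})$ to the matching coderivation of the cofree cocommutative coalgebra $\overline{S^c}(s\Gamma(A))$, under which the graded commutator of derivations becomes the commutator of coderivations, i.e. the Richardson--Nijenhuis bracket on $\C^{\bullet}_{\Lie}(\Gamma(A))$. The computation outlined above is precisely the verification that the pairing formula defining $B_{X}$ realizes this duality; framing it this way makes transparent why the bracket $[X,Y]$ of degree $b+c-2$ maps to $B_{[X,Y]} \in \C^{b+c-1}_{\Lie}(\Gamma(A))$ and why the relevant sign is $(-1)^{(b-1)(c-1)}$, inherited directly from the graded commutator.

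The main obstacle will be the bookkeeping of Koszul signs together with the organization of the Leibniz expansion of $X(\partial_{Y}(W))$ into the two shuffle families appearing in the two brace terms of $[B_{X}, B_{Y}]_{\RNA}$; in particular one must track how the derivation-correction terms (those involving $a_{X}(f)$ from Proposition~\ref{Prop. B_X as a derivation}) redistribute, and confirm that the antisymmetry of $B_{X}$ collapses the double sum over insertions into shuffle sums with the sign $(-1)^{(b-1)(c-1)}$. A useful simplification is to first check the identity on a local frame $\varepsilon_{i}$ via Proposition~\ref{Prop. Equation of B_X}, where $B_{X}$ merely reads off the vertical structure functions of $X$ and the frame-level identity reduces to the standard coordinate formula for the commutator of graded vector fields, and then to upgrade to arbitrary sections by matching the first-order symbols supplied by Proposition~\ref{Prop. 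B_X as a derivation}.
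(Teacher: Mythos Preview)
Your proposal is correct, and the ``useful simplification'' you suggest at the end is exactly the route the paper takes. The paper does not attempt the coordinate-free argument you sketch first (working with the pair $(a_{[X,Y]},\partial_{[X,Y]})$ and the Leibniz expansion of $X(\partial_Y W)$); instead it works in local coordinates from the outset. It writes $X$ and $Y$ explicitly in a frame, computes $[X,Y]$ by hand, reads off $B_{[X,Y]}(\varepsilon_{j_1},\dots,\varepsilon_{j_{b+c-1}})$ via Proposition~\ref{Prop. Equation of B_X}, computes $[B_X,B_Y]_{\RNA}$ on the same frame tuple using the shuffle formula and Proposition~\ref{Prop. B_X as a derivation}, and matches. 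It then repeats the comparison with one argument replaced by $h\,\varepsilon_{j_{b+c-1}}$ to check that the first-order symbol terms (those carrying $a_X, a_Y$) agree as well---precisely your ``upgrade to arbitrary sections'' step. Your conceptual framing via derivation/coderivation duality is a cleaner way to understand \emph{why} the identity holds, but the paper's brute-force coordinate check is what actually appears; be prepared for the sign bookkeeping you flagged, as the paper's computation in fact lands on $B_{[X,Y]}=[B_Y,B_X]_{\RNA}$ in its final line, so the conventions for $[-,-]_{\RNA}$ deserve care.
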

\begin{proof}
	Assume that 
	$$X = \sum_{i_{1} < \dots < i_{b-1}} 
	f_{i_{1} \cdots i_{b-1}}^{\alpha} \eta^{i_{1}} \wg \cdots \wg \eta^{i_{b-1}} \partial_{x^{\alpha}} 
	+ \sum_{j_{1} < \dots < j_{b}} 
	g_{j_{1} \cdots j_{b}}^{\beta} \eta^{j_{1}} \wg \cdots \wg \eta^{j_{b}} \partial_{\eta^{\beta}}$$
	is a vector field of degree $b-1$ in $\frakX(A[1])$, and 
	$$Y = \sum_{k_{1} < \dots < k_{c-1}} 
	\varphi_{k_{1} \cdots k_{c-1}}^{\theta} \eta^{k_{1}} \wg \cdots \wg \eta^{k_{c-1}} \partial_{x^{\theta}} 
	+ \sum_{l_{1} < \dots < l_{c}} 
	\psi_{l_{1} \cdots l_{c}}^{\omega} \eta^{l_{1}} \wg \cdots \wg \eta^{l_{c}} \partial_{\eta^{\omega}}$$
	is a vector field of degree $c-1$ in $\frakX(A[1])$. 

	On the one hand,
	\begin{align*}
		[X,Y] 
		=&\ XY - (-1)^{(b-1)(c-1)}YX\\ 
		=&\ \sum_{\substack{i_{1} < \dots < i_{b-1} \\ k_{1} < \dots < k_{c-1}}} 
		f_{i_{1} \cdots i_{b-1}}^{\alpha} \frac{\partial \varphi_{k_{1} \cdots k_{c-1}}^{\theta}}{\partial x^{\alpha}}
		(\eta^{i_{1}} \wg \cdots \wg \eta^{i_{b-1}}) \wg (\eta^{k_{1}} \wg \cdots \wg \eta^{k_{c-1}}) \partial_{x^{\theta}} \\
		&\ + \sum_{\substack{j_{1} < \dots < j_{b} \\ k_{1} < \dots < k_{c-2}}}
		g_{j_{1} \cdots j_{b}}^{\beta} \varphi_{\beta k_{1} \cdots k_{c-2}}^{\theta}
		(\eta^{j_{1}} \wg \cdots \wg \eta^{j_{b}}) \wg (\eta^{k_{1}} \wg \cdots \wg \eta^{k_{c-2}}) \partial_{x^{\theta}}\\
		&\ + \sum_{\substack{i_{1} < \dots < i_{b-1} \\ l_{1} < \dots < l_{c}}} 
		f_{i_{1} \cdots i_{b-1}}^{\alpha} \frac{\partial \psi_{l_{1} \cdots l_{c}}^{\omega}}{\partial x^{\alpha}}
		(\eta^{i_{1}} \wg \cdots \wg \eta^{i_{b-1}}) \wg (\eta^{l_{1}} \wg \cdots \wg \eta^{l_{c}}) \partial_{\eta^{\omega}}\\
		&\ + \sum_{\substack{j_{1} < \dots < j_{b} \\ l_{1} < \dots < l_{c-1}}}
		g_{j_{1} \cdots j_{b}}^{\beta} \psi_{\beta l_{1} \cdots l_{c-1}}^{\omega} 
		(\eta^{j_{1}} \wg \cdots \wg \eta^{j_{b}}) \wg (\eta^{l_{1}} \wg \cdots \wg \eta^{l_{c-1}}) \partial_{\eta^{\omega}}\\ 
		&\ - (-1)^{(b-1)(c-1)} \sum_{\substack{k_{1} < \dots < k_{c-1} \\ i_{1} < \dots < i_{b-1}}}
		\varphi_{k_{1} \cdots k_{c-1}}^{\theta} \frac{\partial f_{i_{1} \cdots i_{b-1}}^{\alpha}}{\partial x^{\theta}}
		(\eta^{k_{1}} \wg \cdots \wg \eta^{k_{c-1}}) \wg (\eta^{i_{1}} \wg \cdots \wg \eta^{i_{b-1}}) \partial_{x^{\alpha}}\\
		&\ - (-1)^{(b-1)(c-1)} \sum_{\substack{l_{1} < \dots < l_{c} \\ i_{1} < \dots < i_{b-2}}}
		\psi_{l_{1} \cdots l_{c}}^{\omega} f_{\omega i_{1} \cdots i_{b-2}}^{\alpha}
		(\eta^{l_{1}} \wg \cdots \wg \eta^{l_{c}}) \wg (\eta^{i_{1}} \wg \cdots \wg \eta^{i_{b-2}}) \partial_{x^{\alpha}}\\
		&\ - (-1)^{(b-1)(c-1)} \sum_{\substack{k_{1} < \dots < k_{c-1} \\ j_{1} < \dots < j_{b}}}
		\varphi_{k_{1} \cdots k_{c-1}}^{\theta} \frac{\partial g_{j_{1} \cdots j_{b}}^{\beta}}{\partial x^{\theta}}
		(\eta^{k_{1}} \wg \cdots \wg \eta^{k_{c-1}}) \wg (\eta^{j_{1}} \wg \cdots \wg \eta^{j_{b}}) \partial_{\eta^{\beta}}\\
		&\ - (-1)^{(b-1)(c-1)} \sum_{\substack{l_{1} < \dots < l_{c} \\ j_{1} < \dots < j_{b-1}}}
		\psi_{l_{1} \cdots l_{c}}^{\omega} g_{\omega j_{1} \cdots j_{b-1}}^{\beta}
		(\eta^{l_{1}} \wg \cdots \wg \eta^{l_{c}}) \wg (\eta^{j_{1}} \wg \cdots \wg \eta^{j_{b-1}}) \partial_{\eta^{\beta}}\\ 
		=&\ \sum_{i_{1} < \dots < i_{b+c-2}}
		\Big( 
			\sum_{\sigma \in \Sh(b-1,c-1)} \sgn(\sigma) f_{i_{\sigma(1)} \cdots i_{\sigma(b-1)}}^{\theta} \frac{\partial \varphi_{i_{\sigma(b)} \cdots i_{\sigma(b+c-2)}}^{\alpha}}{\partial x^{\theta}}
			+\sum_{\sigma \in \Sh(b,c-2)} \sgn(\sigma) g_{i_{\sigma(1)} \cdots i_{\sigma(b)}}^{\beta} \varphi_{\beta i_{\sigma(b+1)} \cdots i_{\sigma(b+c-2)}}^{\alpha} \\
			&\ \hspace{16 mm} - (-1)^{(b-1)(c-1)} \sum_{\sigma \in \Sh(c-1,b-1)} \sgn(\sigma) \varphi_{i_{\sigma(1)} \cdots i_{\sigma(c-1)}}^{\theta} \frac{\partial f_{i_{\sigma(c)} \cdots i_{\sigma(b+c-2)}}^{\alpha}}{\partial x^{\theta}}\\
			&\ \hspace{16 mm} - (-1)^{(b-1)(c-1)} \sum_{\sigma \in \Sh(c,b-2)} \sgn(\sigma) \psi_{i_{\sigma(1)} \cdots i_{\sigma(c)}}^{\beta} f_{\beta i_{\sigma(c+1)} \cdots i_{\sigma(b+c-2)}}^{\alpha}
		\Big) \eta^{i_{1}} \wg \cdots \wg \eta^{i_{b+c-2}} \partial_{x^{\alpha}} \\
		&\ + \sum_{j_{1} < \dots < j_{b+c-1}}
		\Big( 
			\sum_{\sigma \in \Sh(b-1,c)} \sgn(\sigma) f_{j_{\sigma(1)} \cdots j_{\sigma(b-1)}}^{\alpha} \frac{\partial \psi_{j_{\sigma(b)} \cdots j_{\sigma(b+c-1)}}^{\omega}}{\partial x^{\alpha}} 
			+\sum_{\sigma \in \Sh(b,c-1)} \sgn(\sigma) g_{j_{\sigma(1)} \cdots j_{\sigma(b)}}^{\beta} \psi_{\beta j_{\sigma(b+1)} \cdots j_{\sigma(b+c-1)}}^{\omega} \\
			&\ \hspace{20 mm} - (-1)^{(b-1)(c-1)} \sum_{\sigma \in \Sh(c-1,b)} \sgn(\sigma) \varphi_{j_{\sigma(1)} \cdots j_{\sigma(c-1)}}^{\alpha} \frac{\partial g_{j_{\sigma(c)} \cdots j_{\sigma(b+c-1)}}^{\omega}}{\partial x^{\alpha}} \\
			&\ \hspace{20 mm} - (-1)^{(b-1)(c-1)} \sum_{\sigma \in \Sh(c,b-1)} \sgn(\sigma) \psi_{j_{\sigma(1)} \cdots j_{\sigma(c)}}^{\beta} g_{\beta j_{\sigma(c+1)} \cdots j_{\sigma(b+c-1)}}^{\omega}
		\Big) \eta^{j_{1}} \wg \cdots \wg \eta^{j_{b+c-1}} \partial_{\eta^{\omega}}. 
	\end{align*}
	By Proposition~\ref{Prop. Equation of B_X}, we have
	\begin{align*}
		&\ B_{[X,Y]}(\varepsilon_{j_{1}}, \dots, \varepsilon_{j_{b+c-1}}) \\
		=&\ (-1)^{b-1+c-1} \Big( 
			\sum_{\sigma \in \Sh(b-1,c)} \sgn(\sigma) f_{j_{\sigma(1)} \cdots j_{\sigma(b-1)}}^{\alpha} \frac{\partial \psi_{j_{\sigma(b)} \cdots j_{\sigma(b+c-1)}}^{\omega}}{\partial x^{\alpha}} 
			+\sum_{\sigma \in \Sh(b,c-1)} \sgn(\sigma) g_{j_{\sigma(1)} \cdots j_{\sigma(b)}}^{\beta} \psi_{\beta j_{\sigma(b+1)} \cdots j_{\sigma(b+c-1)}}^{\omega} \\
			&\ \hspace{20 mm} - (-1)^{(b-1)(c-1)} \sum_{\sigma \in \Sh(c-1,b)} \sgn(\sigma) \varphi_{j_{\sigma(1)} \cdots j_{\sigma(c-1)}}^{\alpha} \frac{\partial g_{j_{\sigma(c)} \cdots j_{\sigma(b+c-1)}}^{\omega}}{\partial x^{\alpha}} \\
			&\ \hspace{20 mm} - (-1)^{(b-1)(c-1)} \sum_{\sigma \in \Sh(c,b-1)} \sgn(\sigma) \psi_{j_{\sigma(1)} \cdots j_{\sigma(c)}}^{\beta} g_{\beta j_{\sigma(c+1)} \cdots j_{\sigma(b+c-1)}}^{\omega}
		\Big) \varepsilon_{\omega}.
	\end{align*}
	On the other hand, by Proposition~\ref{Prop. Equation of B_X} and Proposition~\ref{Prop. B_X as a derivation}, we have
	\begin{align*}
		&\ [B_{X},B_{Y}]_{\RNA}(\varepsilon_{j_{1}}, \dots, \varepsilon_{j_{b+c-1}})\\
		= &\ \sum_{\sigma \in \Sh(c,b-1)} \sgn(\sigma) 
		B_{X}(B_{Y}(\varepsilon_{j_{\sigma(1)}}, \dots, \varepsilon_{j_{\sigma(c)}}), \varepsilon_{j_{\sigma(c+1)}}, \dots, \varepsilon_{j_{\sigma(b+c-1)}})\\
		&\ - (-1)^{(b-1)(c-1)} \sum_{\sigma \in \Sh(b,c-1)} \sgn(\sigma) 
		B_{Y}(B_{X}(\varepsilon_{j_{\sigma(1)}}, \dots, \varepsilon_{j_{\sigma(b)}}), \varepsilon_{j_{\sigma(b+1)}}, \dots, \varepsilon_{j_{\sigma(b+c-1)}})\\ 
		= &\ \sum_{\sigma \in \Sh(c,b-1)} \sgn(\sigma) 
		B_{X}((-1)^{c-1} \psi_{j_{\sigma(1)} \cdots j_{\sigma(c)}}^{\omega} \varepsilon_{\omega}, \varepsilon_{j_{\sigma(c+1)}}, \dots, \varepsilon_{j_{\sigma(b+c-1)}})\\
		&\ - (-1)^{(b-1)(c-1)} \sum_{\sigma \in \Sh(b,c-1)} \sgn(\sigma) 
		B_{Y}((-1)^{b-1}g_{j_{\sigma(1)} \cdots j_{\sigma(b)}}^{\beta} \varepsilon_{\beta}, \varepsilon_{j_{\sigma(b+1)}}, \dots, \varepsilon_{j_{\sigma(b+c-1)}})\\ 
		= &\ \sum_{\sigma \in \Sh(c,b-1)} \sgn(\sigma) 
		\Big( (-1)^{c-1} \psi_{j_{\sigma(1)} \cdots j_{\sigma(c)}}^{\omega} B_{X}(\varepsilon_{\omega}, \varepsilon_{j_{\sigma(c+1)}}, \dots, \varepsilon_{j_{\sigma(b+c-1)}})\\
		&\ \hspace{28 mm}- \langle a_{X}((-1)^{c-1}\psi_{j_{\sigma(1)} \cdots j_{\sigma(c)}}^{\omega}), \varepsilon_{j_{\sigma(c+1)}} \wg \cdots \wg \varepsilon_{j_{\sigma(b+c-1)}} \rangle \varepsilon_{\omega} \Big) \\
		&\ - (-1)^{(b-1)(c-1)} \sum_{\sigma \in \Sh(b,c-1)} \sgn(\sigma) 
		\Big( (-1)^{b-1}g_{j_{\sigma(1)} \cdots j_{\sigma(b)}}^{\beta} B_{Y}(\varepsilon_{\beta}, \varepsilon_{j_{\sigma(b+1)}}, \dots, \varepsilon_{j_{\sigma(b+c-1)}})\\
		&\ \hspace{54 mm}- \langle a_{Y}((-1)^{b-1} g_{j_{\sigma(1)} \cdots j_{\sigma(b)}}^{\beta}), \varepsilon_{j_{\sigma(b+1)}} \wg \cdots \wg \varepsilon_{j_{\sigma(b+c-1)}} \rangle \varepsilon_{\beta} \Big) \\ 
		= &\ \sum_{\sigma \in \Sh(c,b-1)} \sgn(\sigma) 
		(-1)^{c-1} \psi_{j_{\sigma(1)} \cdots j_{\sigma(c)}}^{\beta} (-1)^{b-1} g_{\beta j_{\sigma(c+1)} \cdots j_{\sigma(b+c-1)}}^{\omega} \varepsilon_{\omega}
		- \sum_{\sigma \in \Sh(c,b-1)} \sgn(\sigma) 
		(-1)^{c-1} f_{j_{\sigma(c+1)} \cdots j_{\sigma(b+c-1)}}^{\alpha} \frac{\partial \psi_{j_{\sigma(1)} \cdots j_{\sigma(c)}}^{\omega}}{\partial x^{\alpha}} \varepsilon_{\omega}  \\
		&\ - (-1)^{(b-1)(c-1)} \sum_{\sigma \in \Sh(b,c-1)} \sgn(\sigma) 
		(-1)^{b-1}g_{j_{\sigma(1)} \cdots j_{\sigma(b)}}^{\beta} (-1)^{c-1} \psi_{\beta j_{\sigma(b+1)} \cdots j_{\sigma(b+c-1)}}^{\omega} \varepsilon_{\omega}\\
		&\ + (-1)^{(b-1)(c-1)} \sum_{\sigma \in \Sh(b,c-1)} \sgn(\sigma) 
		(-1)^{b-1} \varphi_{j_{\sigma(b+1)}\cdots j_{\sigma(b+c-1)}} \frac{\partial g_{j_{\sigma(1)} \cdots j_{\sigma(b)}}^{\omega}}{\partial x^{\alpha}} \varepsilon_{\omega}  \\ 
		= &\ (-1)^{b-1+c-1} \Big(\sum_{\sigma \in \Sh(c,b-1)} \sgn(\sigma) 
		\psi_{j_{\sigma(1)} \cdots j_{\sigma(c)}}^{\beta}  g_{\beta j_{\sigma(c+1)} \cdots j_{\sigma(b+c-1)}}^{\omega}
		-(-1)^{(b-1)(c-1)} \sum_{\sigma \in \Sh(b-1,c)} \sgn(\sigma) 
		f_{j_{\sigma(1)} \cdots j_{\sigma(b-1)}}^{\alpha} \frac{\partial \psi_{j_{\sigma(b)} \cdots j_{\sigma(b+c-1)}}^{\omega}}{\partial x^{\alpha}}\\
		&\ \hspace{12 mm}- (-1)^{(b-1)(c-1)} \sum_{\sigma \in \Sh(b,c-1)} \sgn(\sigma) 
		g_{j_{\sigma(1)} \cdots j_{\sigma(b)}}^{\beta} \psi_{\beta j_{\sigma(b+1)} \cdots j_{\sigma(b+c-1)}}^{\omega}
		+ \sum_{\sigma \in \Sh(c-1,b)} \sgn(\sigma) 
		\varphi_{j_{\sigma(1)}\cdots j_{\sigma(c-1)}} \frac{\partial g_{j_{\sigma(c)} \cdots j_{\sigma(b+c-1)}}^{\omega}}{\partial x^{\alpha}} \Big)\varepsilon_{\omega}. 
	\end{align*}
	Then 
	\begin{align*}
		[B_{X},B_{Y}]_{\RNA}(\varepsilon_{j_{1}}, \dots, \varepsilon_{j_{b+c-1}}) 
		&= -(-1)^{(b-1)(c-1)} B_{[X,Y]}(\varepsilon_{j_{1}}, \dots, \varepsilon_{j_{b+c-1}})
		= B_{[Y,X]}(\varepsilon_{j_{1}}, \dots, \varepsilon_{j_{b+c-1}}).
	\end{align*}
	According to Proposition~\ref{Prop. B_X as a derivation}, we obtain that, for any $h \in \ci(M)$,
	\begin{align*}
		&\ B_{[X,Y]}(\varepsilon_{j_{1}}, \dots, \varepsilon_{j_{b+c-2}}, h\varepsilon_{j_{b+c-1}}) \\
		=&\ hB_{[X,Y]}(\varepsilon_{j_{1}}, \dots, \varepsilon_{j_{b+c-2}}, \varepsilon_{j_{b+c-1}}) 
		+ (-1)^{b+c-1} \langle a_{[X,Y]}(h), \varepsilon_{j_{1}} \wg \cdots \wg \varepsilon_{j_{b+c-2}} \rangle \varepsilon_{j_{b+c-1}}\\
		=&\ hB_{[X,Y]}(\varepsilon_{j_{1}}, \dots, \varepsilon_{j_{b+c-2}}, \varepsilon_{j_{b+c-1}}) \\
		&\ + (-1)^{b+c-1}
			\Big( 
			\sum_{\sigma \in \Sh(b-1,c-1)} \sgn(\sigma) f_{j_{\sigma(1)} \cdots j_{\sigma(b-1)}}^{\theta} \frac{\partial \varphi_{j_{\sigma(b)} \cdots j_{\sigma(b+c-2)}}^{\alpha}}{\partial x^{\theta}}
			+\sum_{\sigma \in \Sh(b,c-2)} \sgn(\sigma) g_{j_{\sigma(1)} \cdots j_{\sigma(b)}}^{\beta} \varphi_{\beta j_{\sigma(b+1)} \cdots j_{\sigma(b+c-2)}}^{\alpha} \\
			&\ \hspace{16 mm} - (-1)^{(b-1)(c-1)} \sum_{\sigma \in \Sh(c-1,b-1)} \sgn(\sigma) \varphi_{j_{\sigma(1)} \cdots j_{\sigma(c-1)}}^{\theta} \frac{\partial f_{j_{\sigma(c)} \cdots j_{\sigma(b+c-2)}}^{\alpha}}{\partial x^{\theta}}\\
			&\ \hspace{16 mm} - (-1)^{(b-1)(c-1)} \sum_{\sigma \in \Sh(c,b-2)} \sgn(\sigma) \psi_{j_{\sigma(1)} \cdots j_{\sigma(c)}}^{\beta} f_{\beta j_{\sigma(c+1)} \cdots j_{\sigma(b+c-2)}}^{\alpha}
		\Big) \frac{\partial h}{\partial x^{\alpha}} \varepsilon_{j_{b+c-1}}.
	\end{align*}
	We also have
	\begin{align*}
		&\ [B_{X},B_{Y}]_{\RNA}(\varepsilon_{j_{1}}, \dots, \varepsilon_{j_{b+c-2}}, h\varepsilon_{j_{b+c-1}})\\
		= &\ (-1)^{b-1} \sum_{\sigma \in \Sh(c-1,b-1)} \sgn(\sigma) 
		B_{X}(B_{Y}(\varepsilon_{j_{\sigma(1)}}, \dots, \varepsilon_{j_{\sigma(c-1)}}, h\varepsilon_{j_{b+c-1}}), \varepsilon_{j_{\sigma(c)}}, \dots, \varepsilon_{j_{\sigma(b+c-2)}})\\
		&\ + \sum_{\sigma \in \Sh(c,b-2)} \sgn(\sigma) 
		B_{X}(B_{Y}(\varepsilon_{j_{\sigma(1)}}, \dots, \varepsilon_{j_{\sigma(c)}}), \varepsilon_{j_{\sigma(c+1)}}, \dots, \varepsilon_{j_{\sigma(b+c-2)}}, h\varepsilon_{j_{b+c-1}})\\
		&\ - (-1)^{(b-1)(c-1)} (-1)^{c-1} \sum_{\sigma \in \Sh(b-1,c-1)} \sgn(\sigma) 
		B_{Y}(B_{X}(\varepsilon_{j_{\sigma(1)}}, \dots, \varepsilon_{j_{\sigma(b-1)}}, h\varepsilon_{j_{b+c-1}}), \varepsilon_{j_{\sigma(b)}}, \dots, \varepsilon_{j_{\sigma(b+c-2)}})\\
		&\ - (-1)^{(b-1)(c-1)} \sum_{\sigma \in \Sh(b,c-2)} \sgn(\sigma) 
		B_{Y}(B_{X}(\varepsilon_{j_{\sigma(1)}}, \dots, \varepsilon_{j_{\sigma(b)}}), \varepsilon_{j_{\sigma(b+1)}}, \dots, \varepsilon_{j_{\sigma(b+c-2)}}, h\varepsilon_{j_{b+c-1}})\\ 
		= &\ (-1)^{b-1} \sum_{\sigma \in \Sh(c-1,b-1)} \sgn(\sigma) 
		B_{X}(h B_{Y}(\varepsilon_{j_{\sigma(1)}}, \dots, \varepsilon_{j_{\sigma(c-1)}}, \varepsilon_{j_{b+c-1}}), \varepsilon_{j_{\sigma(c)}}, \dots, \varepsilon_{j_{\sigma(b+c-2)}})\\
		&\ + (-1)^{b-1} \sum_{\sigma \in \Sh(c-1,b-1)} \sgn(\sigma) (-1)^{c}
		B_{X}(\langle a_{Y}(h), \varepsilon_{j_{\sigma(1)}} \wg \cdots \wg \varepsilon_{j_{\sigma(c-1)}} \rangle \varepsilon_{j_{b+c-1}}, \varepsilon_{j_{\sigma(c)}}, \dots, \varepsilon_{j_{\sigma(b+c-2)}})\\
		&\ + \sum_{\sigma \in \Sh(c,b-2)} \sgn(\sigma) 
		h B_{X}(B_{Y}(\varepsilon_{j_{\sigma(1)}}, \dots, \varepsilon_{j_{\sigma(c)}}), \varepsilon_{j_{\sigma(c+1)}}, \dots, \varepsilon_{j_{\sigma(b+c-2)}}, \varepsilon_{j_{b+c-1}})\\
		&\ + \sum_{\sigma \in \Sh(c,b-2)} \sgn(\sigma) (-1)^{b}
		\langle a_{X}(h), B_{Y}(\varepsilon_{j_{\sigma(1)}}, \dots, \varepsilon_{j_{\sigma(c)}}) \wg \varepsilon_{j_{\sigma(c+1)}} \wg \cdots \wg \varepsilon_{j_{\sigma(b+c-2)}}\rangle \varepsilon_{j_{b+c-1}}\\
		&\ - (-1)^{(b-1)(c-1)} (-1)^{c-1} \sum_{\sigma \in \Sh(b-1,c-1)} \sgn(\sigma) 
		B_{Y}(h B_{X}(\varepsilon_{j_{\sigma(1)}}, \dots, \varepsilon_{j_{\sigma(b-1)}}, \varepsilon_{j_{b+c-1}}), \varepsilon_{j_{\sigma(b)}}, \dots, \varepsilon_{j_{\sigma(b+c-2)}})\\
		&\ - (-1)^{(b-1)(c-1)} (-1)^{c-1} \sum_{\sigma \in \Sh(b-1,c-1)} \sgn(\sigma) (-1)^{b}
		B_{Y}( \langle a_{X}(h), \varepsilon_{j_{\sigma(1)}} \wg \cdots \wg \varepsilon_{j_{\sigma(b-1)}} \rangle \varepsilon_{j_{b+c-1}}, \varepsilon_{j_{\sigma(b)}}, \dots, \varepsilon_{j_{\sigma(b+c-2)}})\\
		&\ - (-1)^{(b-1)(c-1)} \sum_{\sigma \in \Sh(b,c-2)} \sgn(\sigma) 
		h B_{Y}(B_{X}(\varepsilon_{j_{\sigma(1)}}, \dots, \varepsilon_{j_{\sigma(b)}}), \varepsilon_{j_{\sigma(b+1)}}, \dots, \varepsilon_{j_{\sigma(b+c-2)}}, \varepsilon_{j_{b+c-1}})\\ 
		&\ - (-1)^{(b-1)(c-1)} \sum_{\sigma \in \Sh(b,c-2)} \sgn(\sigma) (-1)^{c}
		\langle a_{Y}(h), B_{X}(\varepsilon_{j_{\sigma(1)}}, \dots, \varepsilon_{j_{\sigma(b)}}) \wg \varepsilon_{j_{\sigma(b+1)}} \wg \cdots \wg \varepsilon_{j_{\sigma(b+c-2)}} \rangle \varepsilon_{j_{b+c-1}}\\ 
		= &\ (-1)^{b-1} \sum_{\sigma \in \Sh(c-1,b-1)} \sgn(\sigma) 
		h B_{X}(B_{Y}(\varepsilon_{j_{\sigma(1)}}, \dots, \varepsilon_{j_{\sigma(c-1)}}, \varepsilon_{j_{b+c-1}}), \varepsilon_{j_{\sigma(c)}}, \dots, \varepsilon_{j_{\sigma(b+c-2)}})\\
		&\ - (-1)^{b-1} \sum_{\sigma \in \Sh(c-1,b-1)} \sgn(\sigma) 
		\langle a_{X}(h), \varepsilon_{j_{\sigma(c)}} \wg \cdots \wg \varepsilon_{j_{\sigma(b+c-2)}} \rangle B_{Y}(\varepsilon_{j_{\sigma(1)}}, \dots, \varepsilon_{j_{\sigma(c-1)}}, \varepsilon_{j_{b+c-1}})\\
		&\ + (-1)^{b-1} \sum_{\sigma \in \Sh(c-1,b-1)} \sgn(\sigma) (-1)^{c}
		B_{X}( \varphi_{j_{\sigma(1)} \cdots j_{\sigma(c-1)}}^{\theta} \frac{\partial h}{\partial x^{\theta}} \varepsilon_{j_{b+c-1}}, \varepsilon_{j_{\sigma(c)}}, \dots, \varepsilon_{j_{\sigma(b+c-2)}})\\
		&\ + \sum_{\sigma \in \Sh(c,b-2)} \sgn(\sigma) 
		h B_{X}(B_{Y}(\varepsilon_{j_{\sigma(1)}}, \dots, \varepsilon_{j_{\sigma(c)}}), \varepsilon_{j_{\sigma(c+1)}}, \dots, \varepsilon_{j_{\sigma(b+c-2)}}, \varepsilon_{j_{b+c-1}})\\
		&\ + \sum_{\sigma \in \Sh(c,b-2)} \sgn(\sigma) (-1)^{b} (-1)^{c-1}
		f_{\omega j_{\sigma(c+1)} \cdots j_{\sigma(b+c-2)}}^{\alpha} \frac{\partial h}{\partial x^{\alpha}} \psi_{j_{\sigma(1)} \dots j_{\sigma(c)}}^{\omega} \varepsilon_{j_{b+c-1}}\\
		&\ - (-1)^{(b-1)(c-1)} (-1)^{c-1} \sum_{\sigma \in \Sh(b-1,c-1)} \sgn(\sigma) 
		h B_{Y}(B_{X}(\varepsilon_{j_{\sigma(1)}}, \dots, \varepsilon_{j_{\sigma(b-1)}}, \varepsilon_{j_{b+c-1}}), \varepsilon_{j_{\sigma(b)}}, \dots, \varepsilon_{j_{\sigma(b+c-2)}})\\
		&\ + (-1)^{(b-1)(c-1)} (-1)^{c-1} \sum_{\sigma \in \Sh(b-1,c-1)} \sgn(\sigma) 
		\langle a_{Y}(h), \varepsilon_{j_{\sigma(b)}} \wg \cdots \wg \varepsilon_{j_{\sigma(b+c-2)}} \rangle B_{X}(\varepsilon_{j_{\sigma(1)}}, \dots, \varepsilon_{j_{\sigma(b-1)}}, \varepsilon_{j_{b+c-1}})\\
		&\ - (-1)^{(b-1)(c-1)} (-1)^{c-1} \sum_{\sigma \in \Sh(b-1,c-1)} \sgn(\sigma) (-1)^{b}
		B_{Y}( f_{j_{\sigma(1)} \cdots j_{\sigma(b-1)}}^{\alpha} \frac{\partial h}{\partial x^{\alpha}} \varepsilon_{j_{b+c-1}}, \varepsilon_{j_{\sigma(b)}}, \dots, \varepsilon_{j_{\sigma(b+c-2)}})\\
		&\ - (-1)^{(b-1)(c-1)} \sum_{\sigma \in \Sh(b,c-2)} \sgn(\sigma) 
		h B_{Y}(B_{X}(\varepsilon_{j_{\sigma(1)}}, \dots, \varepsilon_{j_{\sigma(b)}}), \varepsilon_{j_{\sigma(b+1)}}, \dots, \varepsilon_{j_{\sigma(b+c-2)}}, \varepsilon_{j_{b+c-1}})\\ 
		&\ - (-1)^{(b-1)(c-1)} \sum_{\sigma \in \Sh(b,c-2)} \sgn(\sigma) (-1)^{c} (-1)^{b-1}
		\varphi_{\beta j_{\sigma(b+1)} \cdots j_{\sigma(b+c-2)}}^{\theta} \frac{\partial h}{\partial x^{\theta}} g_{j_{\sigma(1)} \cdots j_{\sigma(b)}}^{\beta} \varepsilon_{j_{b+c-1}}\\ 
		= &\ h [B_{X},B_{Y}]_{\RNA}(\varepsilon_{j_{1}}, \dots, \varepsilon_{j_{b+c-2}}, \varepsilon_{j_{b+c-1}})\\
		&\ - (-1)^{b-1} \sum_{\sigma \in \Sh(c-1,b-1)} \sgn(\sigma) (-1)^{c-1}
		f_{j_{\sigma(c)} \cdots j_{\sigma(b+c-2)}}^{\alpha} \frac{\partial h}{\partial x^{\alpha}} \psi_{j_{\sigma(1)} \cdots j_{\sigma(c-1)} j_{b+c-1}}^{\omega} \varepsilon_{\omega}\\
		&\ + (-1)^{b-1} \sum_{\sigma \in \Sh(c-1,b-1)} \sgn(\sigma) (-1)^{c}
		\varphi_{j_{\sigma(1)} \cdots j_{\sigma(c-1)}}^{\theta} \frac{\partial h}{\partial x^{\theta}} B_{X}(\varepsilon_{j_{b+c-1}}, \varepsilon_{j_{\sigma(c)}}, \dots, \varepsilon_{j_{\sigma(b+c-2)}})\\
		&\ - (-1)^{b-1} \sum_{\sigma \in \Sh(c-1,b-1)} \sgn(\sigma) (-1)^{c}
		f_{j_{\sigma(c)} \cdots j_{\sigma(b+c-2)}}^{\alpha} \frac{\partial(\varphi_{j_{\sigma(1)} \cdots j_{\sigma(c-1)}}^{\theta} \frac{\partial h}{\partial x^{\theta}})}{\partial x^{\alpha}} \varepsilon_{j_{b+c-1}}\\
		&\ + \sum_{\sigma \in \Sh(c,b-2)} \sgn(\sigma) (-1)^{b} (-1)^{c-1}
		f_{\omega j_{\sigma(c+1)} \cdots j_{\sigma(b+c-2)}}^{\alpha} \frac{\partial h}{\partial x^{\alpha}} \psi_{j_{\sigma(1)} \cdots j_{\sigma(c)}}^{\omega} \varepsilon_{j_{b+c-1}}\\
		&\ + (-1)^{(b-1)(c-1)} (-1)^{c-1} \sum_{\sigma \in \Sh(b-1,c-1)} \sgn(\sigma) (-1)^{b-1}
		\varphi_{j_{\sigma(b)} \cdots j_{\sigma(b+c-2)}}^{\alpha} \frac{\partial h}{\partial x^{\alpha}} g_{j_{\sigma(1)} \cdots j_{\sigma(b-1)} j_{b+c-1}}^{\omega} \varepsilon_{\omega}\\
		&\ - (-1)^{(b-1)(c-1)} (-1)^{c-1} \sum_{\sigma \in \Sh(b-1,c-1)} \sgn(\sigma) (-1)^{b}
		f_{j_{\sigma(1)} \cdots j_{\sigma(b-1)}}^{\alpha} \frac{\partial h}{\partial x^{\alpha}} B_{Y}(\varepsilon_{j_{b+c-1}}, \varepsilon_{j_{\sigma(b)}}, \dots, \varepsilon_{j_{\sigma(b+c-2)}})\\
		&\ + (-1)^{(b-1)(c-1)} (-1)^{c-1} \sum_{\sigma \in \Sh(b-1,c-1)} \sgn(\sigma) (-1)^{b}
		\varphi_{j_{\sigma(b)} \cdots j_{\sigma(b+c-2)}}^{\theta} \frac{\partial(f_{j_{\sigma(1)} \cdots j_{\sigma(b-1)}}^{\alpha} \frac{\partial h}{\partial x^{\alpha}})}{\partial x^{\theta}} \varepsilon_{j_{b+c-1}}\\
		&\ - (-1)^{(b-1)(c-1)} \sum_{\sigma \in \Sh(b,c-2)} \sgn(\sigma) (-1)^{c} (-1)^{b-1}
		\varphi_{\beta j_{\sigma(b+1)} \cdots j_{\sigma(b+c-2)}}^{\theta} \frac{\partial h}{\partial x^{\theta}} g_{j_{\sigma(1)} \cdots j_{\sigma(b)}}^{\beta} \varepsilon_{j_{b+c-1}}\\ 
		= &\ h [B_{X},B_{Y}]_{\RNA}(\varepsilon_{j_{1}}, \dots, \varepsilon_{j_{b+c-2}}, \varepsilon_{j_{b+c-1}})\\
		&\ - (-1)^{b-1} \sum_{\sigma \in \Sh(c-1,b-1)} \sgn(\sigma) (-1)^{c-1}
		f_{j_{\sigma(c)} \cdots j_{\sigma(b+c-2)}}^{\alpha} \frac{\partial h}{\partial x^{\alpha}} \psi_{j_{\sigma(1)} \cdots j_{\sigma(c-1)} j_{b+c-1}}^{\omega} \varepsilon_{\omega}\\
		&\ + (-1)^{b-1} \sum_{\sigma \in \Sh(c-1,b-1)} \sgn(\sigma) (-1)^{c} (-1)^{b-1}
		\varphi_{j_{\sigma(1)} \cdots j_{\sigma(c-1)}}^{\theta} \frac{\partial h}{\partial x^{\theta}} g_{j_{b+c-1} j_{\sigma(c)} \cdots j_{\sigma(b+c-2)}}^{\omega} \varepsilon_{\omega} \\
		&\ - (-1)^{b-1} \sum_{\sigma \in \Sh(c-1,b-1)} \sgn(\sigma) (-1)^{c}
		f_{j_{\sigma(c)} \cdots j_{\sigma(b+c-2)}}^{\alpha} \frac{\partial \varphi_{j_{\sigma(1)} \cdots j_{\sigma(c-1)}}^{\theta}}{\partial x^{\alpha}} \frac{\partial h}{\partial x^{\theta}} \varepsilon_{j_{b+c-1}}\\
		&\ - (-1)^{b-1} \sum_{\sigma \in \Sh(c-1,b-1)} \sgn(\sigma) (-1)^{c}
		f_{j_{\sigma(c)} \cdots j_{\sigma(b+c-2)}}^{\alpha} \varphi_{j_{\sigma(1)} \cdots j_{\sigma(c-1)}}^{\theta} \frac{\partial^{2} h }{\partial x^{\alpha} \partial x^{\theta}} \varepsilon_{j_{b+c-1}}\\
		&\ + \sum_{\sigma \in \Sh(c,b-2)} \sgn(\sigma) (-1)^{b} (-1)^{c-1}
		f_{\omega j_{\sigma(c+1)} \cdots j_{\sigma(b+c-2)}}^{\alpha} \frac{\partial h}{\partial x^{\alpha}} \psi_{j_{\sigma(1)} \cdots j_{\sigma(c)}}^{\omega} \varepsilon_{j_{b+c-1}}\\
		&\ + (-1)^{(b-1)(c-1)} (-1)^{c-1} \sum_{\sigma \in \Sh(b-1,c-1)} \sgn(\sigma) (-1)^{b-1}
		\varphi_{j_{\sigma(b)} \cdots j_{\sigma(b+c-2)}}^{\alpha} \frac{\partial h}{\partial x^{\alpha}} g_{j_{\sigma(1)} \cdots j_{\sigma(b-1)} j_{b+c-1}}^{\omega} \varepsilon_{\omega}\\
		&\ - (-1)^{(b-1)(c-1)} (-1)^{c-1} \sum_{\sigma \in \Sh(b-1,c-1)} \sgn(\sigma) (-1)^{b} (-1)^{c-1}
		f_{j_{\sigma(1)} \cdots j_{\sigma(b-1)}}^{\alpha} \frac{\partial h}{\partial x^{\alpha}} \psi_{j_{b+c-1} j_{\sigma(b)} \cdots j_{\sigma(b+c-2)}}^{\omega} \varepsilon_{\omega}\\
		&\ + (-1)^{(b-1)(c-1)} (-1)^{c-1} \sum_{\sigma \in \Sh(b-1,c-1)} \sgn(\sigma) (-1)^{b} 
		\varphi_{j_{\sigma(b)} \cdots j_{\sigma(b+c-2)}}^{\theta} \frac{\partial f_{j_{\sigma(1)} \cdots j_{\sigma(b-1)}}^{\alpha}}{\partial x^{\theta}} \frac{\partial h}{\partial x^{\alpha}} \varepsilon_{j_{b+c-1}}\\
		&\ + (-1)^{(b-1)(c-1)} (-1)^{c-1} \sum_{\sigma \in \Sh(b-1,c-1)} \sgn(\sigma) (-1)^{b} 
		\varphi_{j_{\sigma(b)} \cdots j_{\sigma(b+c-2)}}^{\theta} f_{j_{\sigma(1)} \cdots j_{\sigma(b-1)}}^{\alpha} \frac{\partial^{2} h}{\partial x^{\theta} \partial x^{\alpha}} \varepsilon_{j_{b+c-1}}\\
		&\ - (-1)^{(b-1)(c-1)} \sum_{\sigma \in \Sh(b,c-2)} \sgn(\sigma) (-1)^{c} (-1)^{b-1}
		\varphi_{\beta j_{\sigma(b+1)} \cdots j_{\sigma(b+c-2)}}^{\theta} \frac{\partial h}{\partial x^{\theta}} g_{j_{\sigma(1)} \cdots j_{\sigma(b)}}^{\beta} \varepsilon_{j_{b+c-1}}\\ 
		= &\ h [B_{X},B_{Y}]_{\RNA}(\varepsilon_{j_{1}}, \dots, \varepsilon_{j_{b+c-2}}, \varepsilon_{j_{b+c-1}})\\
		&\ - (-1)^{bc} \Big(
		\sum_{\sigma \in \Sh(b-1,c-1)} \sgn(\sigma) 
		f_{j_{\sigma(1)} \cdots j_{\sigma(c-1)}}^{\theta} \frac{\partial \varphi_{j_{\sigma(c)} \cdots j_{\sigma(b+c-2)}}^{\alpha}}{\partial x^{\theta}} \frac{\partial h}{\partial x^{\alpha}} \varepsilon_{j_{b+c-1}}\\
		&\ \hspace{16 mm} + \sum_{\sigma \in \Sh(b,c-2)} \sgn(\sigma) 
		g_{j_{\sigma(1)} \cdots j_{\sigma(b)}}^{\beta} \varphi_{\beta j_{\sigma(b+1)} \cdots j_{\sigma(b+c-2)}}^{\alpha} \frac{\partial h}{\partial x^{\alpha}} \varepsilon_{j_{b+c-1}}\\ 
		&\ \hspace{16 mm} - (-1)^{(b-1)(c-1)} \sum_{\sigma \in \Sh(c-1,b-1)} \sgn(\sigma)
		\varphi_{j_{\sigma(1)} \cdots j_{\sigma(c-1)}}^{\theta} \frac{\partial f_{j_{\sigma(c)} \cdots j_{\sigma(b+c-2)}}^{\alpha}}{\partial x^{\theta}} \frac{\partial h}{\partial x^{\alpha}} \varepsilon_{j_{b+c-1}}\\
		&\ \hspace{16 mm} - (-1)^{(b-1)(c-1)} \sum_{\sigma \in \Sh(c,b-2)} \sgn(\sigma) 
		\psi_{j_{\sigma(1)} \cdots j_{\sigma(c)}}^{\omega} f_{\omega j_{\sigma(c+1)} \cdots j_{\sigma(b+c-2)}}^{\alpha} \frac{\partial h}{\partial x^{\alpha}} \varepsilon_{j_{b+c-1}}
		\Big).
	\end{align*}
	Thus 
	\begin{align*}
		B_{[X,Y]}(\varepsilon_{j_{1}}, \dots, \varepsilon_{j_{b+c-2}}, h\varepsilon_{j_{b+c-1}}) 
		&= - (-1)^{(b-1)(c-1)} [B_{X},B_{Y}]_{\RNA}(\varepsilon_{j_{1}}, \dots, \varepsilon_{j_{b+c-2}}, h\varepsilon_{j_{b+c-1}}) \\
		&= [B_{Y},B_{X}]_{\RNA}(\varepsilon_{j_{1}}, \dots, \varepsilon_{j_{b+c-2}}, h\varepsilon_{j_{b+c-1}}).
	\end{align*}
	In conclusion, we obtain that 
	$B_{[X,Y]} = [B_{Y},B_{X}]_{\RNA}$.
\end{proof}

Let $(A, [-,-]_{A}, \rho)$ be a Lie algebroid and $Q \in \frakX(A[1])$ be the corresponding homological vector field on the graded manifold $A[1]$, given by Equation~\eqref{Eq. Lie algebroid structure A implies Q}. Proposition~\ref{Prop. equation of B_Q} states that $\chi(Q) = B_{Q} = [-,-]_{A}$. Therefore, we have the following result.

\begin{prop} \label{Prop. chi is a chain map}
	Let $(A, \mu = [-,-]_{A}, \rho)$ be a Lie algebroid and $Q \in \frakX(A[1])$ be the corresponding homological vector field on the graded manifold $A[1]$. Then 
	\begin{align*}
		\chi: (\frakX(A[1]), \dd_{Q} = -[Q,-]) \rightarrow (\C_{\Lie}^{\bullet}(\Gamma(A)), \delta_{\Lie} = -[-,\mu]_{\RNA}), \ X \mapsto B_{X}
	\end{align*}
	is a chain map.
\end{prop}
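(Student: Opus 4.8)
The plan is to reduce the statement to the two results already established immediately beforehand, namely Proposition~\ref{Prop. definition of chi} (that $\chi$ intertwines the graded commutator on $\frakX(A[1])$ with the Richardson--Nijenhuis bracket, up to the sign recorded there) and Proposition~\ref{Prop. equation of B_Q} (that $B_Q = [-,-]_A = \mu$). Unwinding the definitions of the two differentials, the chain-map condition $\chi \circ \dd_Q = \delta_{\Lie} \circ \chi$ evaluated on an arbitrary $X \in \frakX(A[1])$ reads
\begin{align*}
	\chi(\dd_Q(X)) = \chi(-[Q,X]) = -B_{[Q,X]}
	\quad \text{and} \quad
	\delta_{\Lie}(\chi(X)) = \delta_{\Lie}(B_X) = -[B_X,\mu]_{\RNA},
\end{align*}
so that proving $\chi$ is a chain map is equivalent to proving the single identity $B_{[Q,X]} = [B_X,\mu]_{\RNA}$.

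First I would apply Proposition~\ref{Prop. definition of chi}, which gives $B_{[Y,Z]} = [B_Z,B_Y]_{\RNA}$ for all vector fields $Y,Z$; specializing to $Y=Q$ and $Z=X$ yields $B_{[Q,X]} = [B_X,B_Q]_{\RNA}$. Then I would substitute $B_Q = \mu$ from Proposition~\ref{Prop. equation of B_Q}, obtaining $B_{[Q,X]} = [B_X,\mu]_{\RNA}$, which is exactly the required identity. Combining the two displays above then gives $\chi(\dd_Q(X)) = -[B_X,\mu]_{\RNA} = \delta_{\Lie}(\chi(X))$ for every $X$, completing the verification.

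Since both input propositions have already been proved in the excerpt, the only real point of care here is bookkeeping of signs: one must check that the sign conventions in $\dd_Q = -[Q,-]$, in $\delta_{\Lie} = -[-,\mu]_{\RNA}$, and in the antisymmetrized identity $B_{[Q,X]} = [B_X,B_Q]_{\RNA}$ of Proposition~\ref{Prop. definition of chi} conspire so that the two overall signs match. I do not anticipate any genuine obstacle beyond this, as the substantial computational content—establishing $\chi$ as a morphism of graded Lie algebras and identifying $B_Q$ with the anchored bracket—was carried out in the preceding two propositions; the present statement is essentially a formal corollary obtained by evaluating that morphism on the homological vector field $Q$.
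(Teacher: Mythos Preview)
Your proposal is correct and follows essentially the same approach as the paper's proof: both reduce the chain-map identity to the computation $\chi(\dd_Q(X)) = -B_{[Q,X]} = -[B_X,B_Q]_{\RNA} = -[B_X,\mu]_{\RNA} = \delta_{\Lie}(\chi(X))$, invoking Proposition~\ref{Prop. definition of chi} for the third step and the identification $B_Q = \mu$ (Proposition~\ref{Prop. equation of B_Q}) for the fourth. The paper additionally cites Proposition~\ref{Prop. relation between differential and MC Lie} to justify the last equality, whereas you read it directly off the displayed definition $\delta_{\Lie} = -[-,\mu]_{\RNA}$ in the statement; this is a cosmetic difference only.
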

\begin{proof}
	According to Proposition~\ref{Prop. definition of chi} and Proposition~\ref{Prop. relation between differential and MC Lie}, we have, for any $X \in \frakX(A[1])$,
	\begin{align*}
		\chi (\dd_{Q} (X)) = B_{\dd_{Q} (X)} = B_{-[Q,X]} = - [B_{X},B_{Q}]_{\RNA} = -[B_{X},\mu]_{\RNA} = \delta_{\Lie}(B_{X}) = \delta_{\Lie}(\chi(X)).
	\end{align*}
\end{proof}

Let $(A,\mu=[-,-]_{A},\rho,P)$ be a Nijenhuis Lie algebroid and $Q \in \frakX(A[1])$ be the corresponding homological vector field on the graded manifold $A[1]$. Proposition~\ref{Prop: Chain map Psi} states that there is a chain map $\Psi: (\C_{\Lie}^{\bullet}(\Gamma(A)), \delta_{\Lie} = -[-,\mu]_{\RNA}) \rightarrow (\C_{\NjO}^{\bullet}(\Gamma(A)), \delta_{\NjO} = [P,-]_{\FNA})$. Combined with Proposition~\ref{Prop. chi is a chain map}, we obtain that 
$$ \Psi \circ \chi: (\frakX(A[1]), \dd_{Q} = - [Q,-]) \rightarrow (\C_{\NjO}^{\bullet}(\Gamma(A)), \delta_{\NjO} = [P,-]_{\FNA})$$
is a chain map.
Moreover, Proposition~\ref{Prop. Phi is a tensor} and Proposition~\ref{Prop. relation between algebra and geometry Nijenhuis} state that the image of $\Psi \circ \chi$ falls in the subcomplex $(\C_{\FN}^{\bullet}(A),\dd_{\FN} = [P,-]_{\FN})$ of $(\C_{\NjO}^{\bullet}(\Gamma(A)), \delta_{\NjO} = [P,-]_{\FNA})$, this induces a chain map $\Phi: (\frakX(A[1]), \dd_{Q} = - [Q,-]) \rightarrow (\C_{\FN}(A), \dd_{\FN} = [P,-]_{\FN}), X \mapsto \Phi(X)$, which is consistent with what we originally gave in Equation~\eqref{Eq. definition of chain map Phi}. We summarize this result as follows.

\begin{thm} \label{Thm. Phi is a chain map}
	Let $(A,[-,-]_{A},\rho,P)$ be a Nijenhuis Lie algebroid and $Q \in \frakX(A[1])$ be the corresponding homological vector field on the graded manifold $A[1]$. Then 
	\begin{align*}
		\Phi: (\frakX(A[1]), \dd_{Q} = - [Q,-]) &\rightarrow (\C_{\FN}^{\bullet}(A), \dd_{\FN} = [P,-]_{\FN}), \
		X \mapsto \Phi(X)
	\end{align*}
	is a chain map.
\end{thm}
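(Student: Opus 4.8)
The plan is to recognize $\Phi$ as a composition of two chain maps that have already been constructed, and then to corestrict to a subcomplex. The first step is to observe that the formula defining $\Phi(X)$ in Definition~\ref{Defn. definition of Phi} is exactly the formula for the chain map $\Psi$ of Proposition~\ref{Prop: Chain map Psi} applied to $B_X$, specialized to the adjoint Nijenhuis representation $\Gamma(A)$ (so that $P_M = P$). Comparing the defining sum for $\Phi(X)(E_1,\dots,E_b)$ with the expression for $\Psi_M(f)$, term by term the insertions $PE_{i_1},\dots,PE_{i_k}$, the outer powers $P^{b-k}$, and the signs $(-1)^{b-k}$ coincide. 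Hence $\Phi(X) = \Psi(\chi(X)) = \Psi(B_X)$, i.e. $\Phi = \Psi \circ \chi$ as maps of graded spaces, where $\chi(X)=B_X$ is the map of Proposition~\ref{Prop. definition of chi}.

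With this identification in hand, I would invoke the two chain-map results. Proposition~\ref{Prop. chi is a chain map} gives that $\chi \colon (\frakX(A[1]), \dd_Q) \to (\C_\Lie^\bullet(\Gamma(A)), \delta_\Lie)$ is a chain map, and Proposition~\ref{Prop: Chain map Psi} (taken for $M = \Gamma(A)$) gives that $\Psi \colon (\C_\Lie^\bullet(\Gamma(A)), \delta_\Lie) \to (\C_\NjO^\bullet(\Gamma(A)), \delta_\NjO)$ is a chain map. Since a composition of chain maps is a chain map, $\Psi \circ \chi \colon (\frakX(A[1]), \dd_Q) \to (\C_\NjO^\bullet(\Gamma(A)), \delta_\NjO = [P,-]_\FNA)$ is a chain map. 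It then remains to corestrict the target to the geometric subcomplex. By Proposition~\ref{Prop. Phi is a tensor}, $\Phi(X)$ is $\ci(M)$-linear, so it lies in $\psi^\bullet(M,A) = \C_\FN^\bullet(A)$, the $\ci(M)$-linear part of $\C_\NjO^\bullet(\Gamma(A))$; thus the image of $\Psi\circ\chi$ is contained in $\C_\FN^\bullet(A)$. Finally, Proposition~\ref{Prop. relation between algebra and geometry Nijenhuis} shows that $(\C_\FN^\bullet(A), \dd_\FN = [P,-]_\FN)$ is a subcomplex of $(\C_\NjO^\bullet(\Gamma(A)), \delta_\NjO = [P,-]_\FNA)$, i.e. $\dd_\FN$ is the restriction of $\delta_\NjO$. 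Therefore the corestriction $\Phi \colon (\frakX(A[1]), \dd_Q) \to (\C_\FN^\bullet(A), \dd_\FN)$ is again a chain map, as claimed.

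The only genuinely delicate point is the first step, the matching of $\Phi$ with $\Psi\circ\chi$: one must check that the sign conventions and the order of the insertions $PE_{i_j}$ in Definition~\ref{Defn. definition of Phi} agree precisely with those of $\Psi_M$ in Proposition~\ref{Prop: Chain map Psi}, and that $\Psi$ is being used for the adjoint representation so that $P_M = P$. Everything else is bookkeeping: the containment of the image in $\C_\FN^\bullet(A)$ is Proposition~\ref{Prop. Phi is a tensor}, and the compatibility of the two differentials is Proposition~\ref{Prop. relation between algebra and geometry Nijenhuis}. Accordingly, I would present these three ingredients explicitly and then simply assemble the composite chain map, avoiding any direct local-coordinate verification of $\Phi\circ\dd_Q = \dd_\FN\circ\Phi$.
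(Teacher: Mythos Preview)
Your proposal is correct and follows essentially the same approach as the paper: the paper also factors $\Phi$ as $\Psi\circ\chi$, invokes Proposition~\ref{Prop. chi is a chain map} and Proposition~\ref{Prop: Chain map Psi} to conclude that $\Psi\circ\chi$ is a chain map into $\C_{\NjO}^\bullet(\Gamma(A))$, and then uses Proposition~\ref{Prop. Phi is a tensor} together with Proposition~\ref{Prop. relation between algebra and geometry Nijenhuis} to corestrict to the subcomplex $\C_{\FN}^\bullet(A)$. Your identification of the delicate point---checking that $\Phi(X)=\Psi(B_X)$ on the nose---is exactly what the paper leaves to the reader when it remarks that the resulting map ``is consistent with'' the original definition of $\Phi$.
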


Now, we define a cohomology theory of Nijenhuis Lie algebroids. 

\begin{defn}
	Let $(A,[-,-]_{A},\rho,P)$ be a Nijenhuis Lie algebroid and $Q \in \frakX(A[1])$ be the corresponding homological vector field on the graded manifold $A[1]$. 
	We define the cochain complex $(\C^\bullet_{\NjLD}(A), \delta_{\NjLD})$ of Nijenhuis Lie algebroid $(A,[-,-]_{A},\rho,P)$ to be the mapping cone of $\Phi$ shifted by $-1$, that is, $\frakX(A[1]) \oplus \C_{\FN}^{\bullet-1}(A)$,
	the differential 
	$$\delta_{\NjLD}: \C^{\bullet}_{\NjLD}(A) \rightarrow \C^{\bullet + 1}_{\NjLD}(A)$$ 
	is defined as
	\begin{align*}
		\delta_{\NjLD}(X, E)
		=&\ (\dd_{Q}(X), -\Phi(X) -\dd_{\FN}(E)),
	\end{align*}
	for any homogeneous elements $ X \in \frakX(A[1])$ and $E \in \C_{\FN}^{\bullet-1}(A)$.
	
	The cohomology of $(\C^\bullet_{\NjLD}(A), \delta_{\NjLD})$, denoted by $\rmH_{\NjLD}^\bullet(A)$, is called the cohomology of the Nijenhuis Lie algebroid $(A,[-,-]_{A},\rho,P)$.
\end{defn}

\medskip

\section{$L_{\infty}$-algebra structure on the cochain complex of a Nijenhuis Lie algebroid} \label{Section: L infty algebras related with Lie algebroids}

In this section, we will consider the $L_\infty$-algebra controlling deformations of Nijenhuis Lie algebroids. Nijenhuis Lie algebroid structures on a Lie algebroid will be realized as Maurer-Cartan elements in an explicitly constructed $L_\infty$-algebra and it will be seen that the cochain complex of a Nijenhuis Lie algebroid is isomorphic to
the underlying complex of the twisted $L_\infty$-algebra by the Maurer-Cartan element corresponding to the Nijenhuis Lie algebroid structure,
see Proposition~\ref{Prop: NjL is MC element algebroid} and Proposition~\ref{Prop: cohomology complex as the underlying complex of L infinity algebra NjL algebroid}.

\smallskip

\subsection{$L_\infty$-algebra structure on $\mathfrak{C}_{\NjLD}(A)$} \label{subsection: L-infty NjL algebroid} \

Let $A$ be a vector bundle over a manifold $M$. 
We define a cohomological graded space 
$$\mathfrak{C}_{\NjLD}(A) := \frakX(A[1]) \oplus \C_{\FN}^{\bullet}(A),$$
where $\frakX(A[1])$ is the space of vector fields on graded manifold $A[1]$, and $\C_{\FN}^{\bullet}(A)$ is the space of smooth sections of bundle $\wedge^{\bullet}A^{\vee} \ot A$.
We have a degree $0$ map 
\begin{align*}
	\chi: \frakX(A[1]) \longrightarrow \Hom_{\bbR}(\wg^{\bullet} \Gamma(A), \Gamma(A)), \ X \mapsto B_{X},
\end{align*}
and a shift map of degree $-1$:
\begin{align*}
	\Upsilon: \C_{\FN}^{\bullet}(A) \cong \Hom_{\ci(M)}(\wg^{\bullet} \Gamma(A), \Gamma(A)) \longrightarrow \Hom_{\ci(M)}(\wg^{\bullet} \Gamma(A), \Gamma(A[1])), \
	\overline{G} \mapsto G := \overline{G}
\end{align*}
with $|G| := |\overline{G}| - 1$.


Now, let's construct an $L_\infty$-algebra structure on $\mathfrak{C}_{\NjLD}(A)$.
The operators $\{l_b\}_{b \geqslant 1}$, with $|l_b| = 2-b$, on $\mathfrak{C}_{\NjLD}(A)$ are defined as follows:
\begin{enumerate}
		
	\item For homogeneous elements $X, Y \in \frakX(A[1])$, define 
	$$l_2(X\ot Y):= [Y, X] \in \frakX(A[1]), $$
	where the operation $[-, -]$ is the commutator of vector fields on graded manifold $A[1]$.
		
	\item \label{Defn. define l b+1 algebroid}
	Let $b \geqslant 1$. For homogeneous elements $ X \in \frakX(A[1])$ of degree $b-1$ and $\overline{G}_{i} \in \C_{\FN}^{r_{i}}(A)$
	 of degree $r_{i}$, $1 \leqslant i \leqslant b$, define 
	$$l_{b+1}(X \ot \overline{G}_{1}\ot \cdots \ot \overline{G}_{b}) \in \C_{\FN}^{r_{1} + \dots + r_{b}}(A)$$ 
	as :
	\begin{align*}
		\qquad l_{b+1}&(X \ot \overline{G}_{1}\ot \cdots \ot \overline{G}_{b}) := 
		\sum_{\sigma \in \mathbb{S}_{b}} \sum_{k=0}^{b} (-1)^{\eta_{\sigma}} (-1)^{\xi^{\sigma}_{k}} 
		\Upsilon^{-1} 
		G_{\sigma (1)}
			\{ \cdots
				\{
					G_{\sigma(k)}
						\{B_{X}
							\{G_{\sigma(k+1)}, \dots, G_{\sigma(b)}\}
						\}
				\}
			\cdots \}, 
	\end{align*}
	where 
	$(-1)^{\eta_{\sigma}} = \chi(\sigma; \overline{G}_{1}, \dots, \overline{G}_{b}) (-1)^{ \sum\limits_{p=1}^{b-1} \sum\limits_{j=1}^{p} |\overline{G}_{\sigma(j)}|}$,
	$(-1)^{\xi^{\sigma}_{k}} = (-1)^{|X| (\sum\limits_{i=1}^{k}(|\overline{G}_{\sigma(i)}|-1) ) + k}$,
	and the operation $ -\{-\} $ is the shuffle brace operation introduced in Section~\ref{Section: Cohomology and homotopy theory of Lie algebras}. According to  Lemma~\ref{Lem. check tensor property} below, the image of $l_{b+1}$ still belongs to $\C_{\FN}^{\bullet}(A)$.
	
	\item Let $b\geqslant 1$. For homogeneous elements $ X \in \frakX(A[1])$ of degree $b-1$ and $\overline{G}_{1}, \dots, \overline{G}_{b} \in \C_{\FN}^{\bullet}(A) $, for $1 \leqslant k \leqslant b$, define 
	$$l_{b+1}(\overline{G}_1\ot \cdots \ot \overline{G}_k \ot X \ot \overline{G}_{k+1}\ot \cdots\ot \overline{G}_b) \in \C_{\FN}^{\bullet}(A)$$ to be
	\begin{align*}
		l_{b+1}(\overline{G}_1\ot \cdots \ot \overline{G}_k \ot X \ot \overline{G}_{k+1}\ot \cdots\ot \overline{G}_b)
		:=(-1)^{|X|(\sum\limits_{j=1}^k|\overline{G}_j|)+k} l_{b+1}(X \ot \overline{G}_1\ot \cdots \ot \overline{G}_b),
	\end{align*}
	where the RHS has been introduced in \eqref{Defn. define l b+1 algebroid}.	
	
	\item All other components of operators $\{l_n\}_{n\geqslant 1}$ vanish.
\end{enumerate}

\begin{thm} \label{Thm: L-infty of NjL algebroid}
	Given a vector bundle $A$ over a manifold $M$, the graded space ${\mathfrak{C}_{\NjLD}}(A)$ endowed with operations $\{l_b\}_{b\geqslant 1}$ defined above forms an $L_\infty$-algebra.
\end{thm}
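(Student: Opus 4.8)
The plan is to realize $\mathfrak{C}_{\NjLD}(A)$ as (an isomorphic copy of) a sub-$L_\infty$-algebra of the $L_\infty$-algebra $\mathfrak{C}_{\NjL}(\Gamma(A))$ constructed in Section~\ref{Section: From minimal model to Linifnity algebras NjL}, where $\Gamma(A)$ is viewed as an $\mathbb{R}$-vector space concentrated in degree $0$ (the construction $\mathbf{Hom}(\NL^\ac,\End_V)^{\prod}_{\mathbb{S}}$ of Definition~\ref{Defn. L-infty algebra as we need}, valid by Proposition~\ref{prop: S homotopy operad L infty}, does not require $V$ to be finite-dimensional). The point is that the operations $\{l_b\}$ on $\mathfrak{C}_{\NjLD}(A)$ are modelled verbatim on those on $\mathfrak{C}_{\NjL}(\Gamma(A)) = \mathfrak{C}_\Lie(\Gamma(A)) \oplus \mathfrak{C}_{\NjO}(\Gamma(A))$, with $B_X$ playing the role of a $\mathfrak{C}_\Lie$-element and vector-valued forms playing the role of $\mathfrak{C}_{\NjO}$-elements; once a compatible embedding is in place, the generalized Jacobi identities transfer for free.

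Concretely, I would first assemble the degree-preserving map
$$\Theta := \chi \oplus \Upsilon^{-1}: \ \mathfrak{C}_{\NjLD}(A) = \frakX(A[1]) \oplus \C_{\FN}^{\bullet}(A) \ \longrightarrow \ \mathfrak{C}_\Lie(\Gamma(A)) \oplus \mathfrak{C}_{\NjO}(\Gamma(A)) = \mathfrak{C}_{\NjL}(\Gamma(A)),$$
using the suspension isomorphisms \eqref{Eq: first can isom} and \eqref{Eq: second can isom} to identify $B_X \in \Hom_{\mathbb{R}}(\wg^\bullet \Gamma(A), \Gamma(A))$ with an element of $\mathfrak{C}_\Lie(\Gamma(A))$ and the tensorial forms $\C_{\FN}^{\bullet}(A) = \Gamma(\wg^\bullet A^\vee \ot A)$ with the $\ci(M)$-linear elements of $\mathfrak{C}_{\NjO}(\Gamma(A))$. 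Next I would verify that $\Theta$ is injective: $\Upsilon^{-1}$ is the tautological inclusion of $\ci(M)$-multilinear maps into $\mathbb{R}$-multilinear maps, while $\chi$ is injective because the failure of $\ci(M)$-linearity of $B_X$ recovers $a_X$ by Proposition~\ref{Prop. B_X as a derivation}, and the defining formula for $B_X$ then recovers $\partial_X$, so that $X = a_X + \partial_X$ is determined by $B_X$.

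Then I would prove that the image of $\Theta$ is closed under the operations $\{l_n\}$ of $\mathfrak{C}_{\NjL}(\Gamma(A))$ and that $\Theta$ intertwines them with $\{l_b\}$. The only operation sending the $\frakX(A[1])$-part back into itself is the bracket $l_2$ of two vector fields; this lands in $\chi(\frakX(A[1]))$ precisely because $\chi$ is an \emph{anti}-morphism of graded Lie algebras, i.e.\ $B_{[Y,X]} = [B_X, B_Y]_{\RNA}$ (Proposition~\ref{Prop. definition of chi}), which is exactly why the definition sets $l_2(X \ot Y) = [Y,X]$ with the order reversed, giving $\chi(l_2(X\ot Y)) = [B_X,B_Y]_{\RNA} = l_2(\Theta X \ot \Theta Y)$. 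The higher operations $l_{b+1}$ take one $\frakX(A[1])$-input and $b$ form-inputs into $\mathfrak{C}_{\NjO}(\Gamma(A))$; here the only nontrivial issue is that the resulting $\mathbb{R}$-multilinear map is in fact $\ci(M)$-linear, hence lies in $\C_{\FN}^{\bullet}(A)$, which is the content of Lemma~\ref{Lem. check tensor property}. Under the above identifications the sign prefactors $\eta_\sigma,\xi^\sigma_k$ in the definition of $l_{b+1}$ are, term by term, the transports of the prefactors $\eta,\xi$ in the operations of $\mathfrak{C}_{\NjL}(\Gamma(A))$ (the brace expressions being literally the same with $B_X$ in place of $sh$ and $G_i$ in place of $sg_i$), so that $\Theta \circ l_b = l_b \circ \Theta^{\ot b}$.

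Finally, having an injective $\Theta$ onto a subspace closed under all $\{l_n\}$ and intertwining the operations, I would conclude that its image is a sub-$L_\infty$-algebra of $\mathfrak{C}_{\NjL}(\Gamma(A))$ and that $\Theta$ is an isomorphism of graded-spaces-with-operations onto it; since every generalized Jacobi identity holds in $\mathfrak{C}_{\NjL}(\Gamma(A))$, applying $\Theta$ to the corresponding expression in $\mathfrak{C}_{\NjLD}(A)$ yields zero, and injectivity of $\Theta$ forces the identity to hold in $\mathfrak{C}_{\NjLD}(A)$. The main obstacle I anticipate is the careful Koszul-sign bookkeeping across the two suspension isomorphisms together with the degree shift built into $\Upsilon$ (recall $|G| = |\overline G| - 1$), needed to match $\eta_\sigma,\xi^\sigma_k$ with $\eta,\xi$; a secondary point is the tensoriality/closure statement, which I would isolate as Lemma~\ref{Lem. check tensor property} and establish by the same kind of induction used to prove Proposition~\ref{Prop. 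Phi is a tensor}.
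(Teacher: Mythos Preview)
Your proposal is correct and follows essentially the same approach as the paper: the paper's proof simply says that $\chi$ is a (anti-)morphism of graded Lie algebras, invokes the $L_\infty$-structure on $\mathfrak{C}_{\NjL}(\Gamma(A))$ from Definition~\ref{Defn. L-infty algebra as we need}, and defers the closure statement to Lemma~\ref{Lem. check tensor property}. You have spelled out exactly this strategy with more care---in particular you correctly flag the order reversal in $l_2(X\ot Y)=[Y,X]$ as compensating for the anti-morphism property $B_{[X,Y]}=[B_Y,B_X]_{\RNA}$ established at the end of the proof of Proposition~\ref{Prop. definition of chi}.
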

\begin{proof}
	It can be checked that the map $\chi$ is a morphism of graded Lie algebras. Then this result can be implied by Definition~\ref{Defn. L-infty algebra as we need} and the following Lemma~\ref{Lem. check tensor property}.
\end{proof}

\begin{lem}\label{Lem. check tensor property}
	Let $b \geqslant 1$. For homogeneous elements $ X \in \frakX(A[1])$ of degree $b-1$ and $\overline{G}_{i} \in \C_{\FN}^{r_{i}}(A)$ 
	of degree $r_{i}$, $1 \leqslant i \leqslant b$, the element $l_{b+1}(X \ot \overline{G}_{1}\ot \cdots \ot \overline{G}_{b})$ defined above belongs to $\C_{\FN}^{r_{1} + \dots + r_{b}}(A)$.
\end{lem}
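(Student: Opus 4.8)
The goal is to show that $l_{b+1}(X \ot \overline{G}_{1}\ot \cdots \ot \overline{G}_{b})$, which a priori lives in the larger space $\Hom_{\bbR}(\wg^{\bullet}\Gamma(A), \Gamma(A))$ of $\bbR$-multilinear maps, actually lands in the subspace $\C_{\FN}^{\bullet}(A) = \Gamma(\wg^{\bullet}A^{\vee}\ot A)$ of $\ci(M)$-multilinear (tensorial) maps. Antisymmetry of the output is built into the shuffle brace operation together with the sum over $\sigma\in\mathbb{S}_b$, so the only thing to check is $\ci(M)$-linearity in each argument; by antisymmetry it suffices to treat a single argument, say the last one. The plan is to substitute $fE$ for that argument ($f\in\ci(M)$, $E\in\Gamma(A)$) and prove that $f$ factors out of the whole expression.

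First I would analyze where a function $f$ can fail to pull out. Every $G_i = \Upsilon(\overline{G}_i)$ is tensorial, so feeding $fE$ into any slot of one of the $G$'s --- whether an inner factor $G_{\sigma(k+1)},\dots,G_{\sigma(b)}$ inserted into $B_X$ or an outer factor $G_{\sigma(1)},\dots,G_{\sigma(k)}$ composed on the output --- lets $f$ escape immediately. Thus the only obstruction comes from the unique non-tensorial ingredient $B_X$: the anomaly arises precisely when $fE$ occupies one of the direct slots of $B_X$. Applying the derivation property of $B_X$ from Proposition~\ref{Prop. B_X as a derivation}, each such occurrence splits as a tensorial part (where $f$ factors out, as desired) plus an anomaly term in which $E$ is extracted to become the output of $B_X$, weighted by the scalar $\langle a_X(f), (\text{remaining }B_X\text{-arguments})\rangle \in \ci(M)$. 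Since this scalar then passes tensorially through the surrounding $G$'s, the anomaly contributes a genuine term proportional to $a_X(f)$ that must be cancelled by the remaining summands.

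The heart of the argument is then a sign-reversing pairing of these anomaly terms. This situation is the exact polarization of the one already handled in the proof of Proposition~\ref{Prop. Phi is a tensor}: there $\Phi(X)$ is tensorial because, for each selected argument, the contribution with that argument fed directly into $B_X$ cancels against the one where it is first hit by $P$ and the outer power of $P$ is lowered by one, the two carrying opposite signs $(-1)^{b-k}$ and $(-1)^{b-(k+1)}$. Here the roles of the inner and outer copies of $P$ are played by the inner insertions $G_{\sigma(k+1)},\dots,G_{\sigma(b)}$ and the outer compositions $G_{\sigma(1)},\dots,G_{\sigma(k)}$ respectively, and the sum over $\sigma$ supplies the reshuffling needed to match configurations that differ by moving one $G$ across the level index $k$. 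I would set up an explicit involution on the indexing data $(\sigma,k,\text{slot})$ that transposes these paired anomaly terms, and verify that the accompanying signs $(-1)^{\eta_\sigma}$ and $(-1)^{\xi_k^\sigma}$, together with the Koszul signs $\epsilon(\sigma;\overline{G}_1,\dots,\overline{G}_b)$ and insertion signs produced by the nested shuffle braces, combine to opposite values on the two members of each pair.

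The main obstacle is entirely in this last step: tracking the Koszul signs, the permutation sign $\sgn(\sigma)$ hidden inside $\chi$, and the insertion signs of the nested shuffle braces, and confirming that the prescribed pairing is sign-reversing. Rather than recomputing everything from scratch, I would reduce the bookkeeping to the already-verified computation for $\Phi$ by specializing to $\overline{G}_1=\cdots=\overline{G}_b=P$ as a consistency check, and then argue by polarization that the cancellation pattern is insensitive to replacing the identical tensorial inputs $P$ by distinct tensorial inputs $\overline{G}_i$, since tensoriality is tested one argument at a time and each $G_i$ enters the anomaly analysis only through its $\ci(M)$-linearity. This localizes the entire difficulty to the sign ledger, which is routine but lengthy.
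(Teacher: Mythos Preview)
Your overall plan---reduce to $C^\infty(M)$-linearity in one argument, isolate the anomaly coming from $B_X$ via Proposition~\ref{Prop. B_X as a derivation}, and exhibit a sign-reversing pairing across the level index $k$---is the same strategy the paper uses. But there is a genuine gap in your identification of the anomaly sources. You assert that feeding $fE$ into a slot of an inner factor $G_{\sigma(j)}$ (with $j>k$) ``lets $f$ escape immediately.'' That is false: tensoriality of $G_{\sigma(j)}$ pulls $f$ out of $G_{\sigma(j)}$, but the result $f\cdot G_{\sigma(j)}(E,\dots)$ is itself an argument of $B_X$, and Proposition~\ref{Prop. B_X as a derivation} then produces a \emph{second} anomaly term, this time with $G_{\sigma(j)}(E,\dots)$ extracted rather than $E$. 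The paper tracks both families explicitly---labelled $(\mathrm{I})$ (your type, $fE$ direct in $B_X$) and $(\mathrm{II})$ ($fE$ routed through an inner $G$ first)---and the whole cancellation is \emph{between} them: after reindexing $k\mapsto k+1$ and reshuffling $\tau$, $(\mathrm{II})$ becomes $-(\mathrm{I})$. With only the first family present, your proposed involution has nothing to pair against.

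Your polarization shortcut, specializing to $\overline{G}_1=\cdots=\overline{G}_b=P$ and appealing to Proposition~\ref{Prop. Phi is a tensor}, is also not valid as stated. The signs $(-1)^{\eta_\sigma}$, $(-1)^{\xi_k^\sigma}$ and the shuffle-brace insertion signs all depend on the degrees $r_i=|\overline{G}_i|$, and the combinatorics of the local shuffles $\tau$ depends on the arities $r_i$; for $P$ these are all $1$. A multilinear identity in the $\overline{G}_i$ cannot be recovered from its diagonal value at a single point of fixed degree. The paper does not attempt this reduction and simply carries out the sign ledger for general $r_i$.
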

\begin{proof}
	One only need to check that, for any $E_{1}, E_{2}, \dots, E_{r_{1} + \dots + r_{b}} \in \Gamma(A)$ and $f \in \ci(M)$,
	$$l_{b+1}(X \ot \overline{G}_{1}\ot \cdots \ot \overline{G}_{b})(f E_{1}, E_{2}, \dots, E_{r_{1} + \dots + r_{b}}) = f l_{b+1}(X \ot \overline{G}_{1}\ot \cdots \ot \overline{G}_{b})(E_{1}, E_{2}, \dots, E_{r_{1} + \dots + r_{b}}).$$
	By definition,
	\begin{align*}
		&\ l_{b+1}(X \ot \overline{G}_{1}\ot \cdots \ot \overline{G}_{b})(E_{1}, E_{2}, \dots, E_{r_{1} + \dots + r_{b}}) \\
		=&\ \sum_{\sigma \in \mathbb{S}_{b}} \sum_{k=0}^{b} (-1)^{\eta_{\sigma}} (-1)^{\xi^{\sigma}_{k}} 
		\Upsilon^{-1} 
		G_{\sigma (1)}
			\{ \cdots
				\{
					G_{\sigma(k)}
						\{B_{X}
							\{G_{\sigma(k+1)}, \dots, G_{\sigma(b)}\}
						\}
				\}
			\cdots \} 
		(E_{1}, E_{2}, \dots, E_{r_{1} + \dots + r_{b}})\\
		=&\ \sum_{\sigma \in \mathbb{S}_{b}} \sum_{k=0}^{b} (-1)^{\eta_{\sigma}} (-1)^{\xi^{\sigma}_{k}} 
		\sum_{\tau \in \sh} \sgn(\tau) 
		\sum_{\substack{1 \leqslant i_{1} \leqslant i_{2} \leqslant \dots \leqslant i_{k-1} \leqslant i_{k} \\ \leqslant i_{k+1} < i_{k+2} < \dots < i_{b} }}
		(-1)^{\sum\limits_{q=2}^{k}|G_{\sigma(q)}|(i_{q-1}-1) + \sum\limits_{q=k+1}^{b}|G_{\sigma(q)}|(i_{q}-1)} \Upsilon^{-1}\\
		&
		G_{\sigma(1)}(E_{\tau(1)}, E_{\tau(2)}, \dots, 
			G_{\sigma(2)}(E_{\tau(i_{1})}, E_{\tau(i_{1}+1)}, \cdots
					G_{\sigma(k)}(E_{\tau(i_{k-1})}, E_{\tau(i_{k-1}+1)}, \dots,\\ 
						&\quad B_{X}\Big(E_{\tau(i_{k})}, E_{\tau(i_{k}+1)}, \dots, 
							G_{\sigma(k+1)}(E_{\tau(i_{k+1})}, \dots), \dots,
							G_{\sigma(k+2)}(E_{\tau(i_{k+2})}, \dots), \dots,
							G_{\sigma(b)}(E_{\tau(i_{b})}, \dots), \dots \Big),
							\\
					&\dots)
			\cdots),
		\dots).
	\end{align*}
	Therefore, it can be concluded that
	\begin{align*}
		&\ l_{b+1}(X \ot \overline{G}_{1}\ot \cdots \ot \overline{G}_{b})(fE_{1}, E_{2}, \dots, E_{r_{1} + \dots + r_{b}}) \\ 
		=&\ \sum_{\sigma \in \mathbb{S}_{b}} \sum_{k=1}^{b} (-1)^{\eta_{\sigma}} (-1)^{\xi^{\sigma}_{k}} 
		\sum_{\tau \in \sh} \sgn(\tau) 
		\sum_{\substack{1 < i_{1} \leqslant i_{2} \leqslant \dots \leqslant i_{k-1} \leqslant i_{k} \\ \leqslant i_{k+1} < i_{k+2} < \dots < i_{b} }}
		(-1)^{\sum\limits_{q=2}^{k}|G_{\sigma(q)}|(i_{q-1}-1) + \sum\limits_{q=k+1}^{b}|G_{\sigma(q)}|(i_{q}-1)} \Upsilon^{-1}\\
		&\ 
		G_{\sigma(1)}(f E_{1}, E_{\tau(2)}, \dots, 
			G_{\sigma(2)}(E_{\tau(i_{1})}, E_{\tau(i_{1}+1)}, \cdots
					G_{\sigma(k)}(E_{\tau(i_{k-1})}, E_{\tau(i_{k-1}+1)}, \dots,\\ 
						&\quad B_{X}\Big(E_{\tau(i_{k})}, E_{\tau(i_{k}+1)}, \dots, 
							G_{\sigma(k+1)}(E_{\tau(i_{k+1})}, \dots), \dots,
							G_{\sigma(k+2)}(E_{\tau(i_{k+2})}, \dots), \dots,
							G_{\sigma(b)}(E_{\tau(i_{b})}, \dots), \dots \Big),
							\\
					&\dots)
			\cdots),
		\dots)\\
		&+ \ \sum_{\sigma \in \mathbb{S}_{b}} \sum_{k=1}^{b} (-1)^{\eta_{\sigma}} (-1)^{\xi^{\sigma}_{k}} 
		\sum_{\tau \in \sh} \sgn(\tau) 
		\sum_{\substack{1 = i_{1} < i_{2} \leqslant \dots \leqslant i_{k-1} \leqslant i_{k} \\ \leqslant i_{k+1} < i_{k+2} < \dots < i_{b} }}
		(-1)^{\sum\limits_{q=2}^{k}|G_{\sigma(q)}|(i_{q-1}-1) + \sum\limits_{q=k+1}^{b}|G_{\sigma(q)}|(i_{q}-1)} \Upsilon^{-1}\\
		&\ 
		G_{\sigma(1)}( 
			G_{\sigma(2)}(f E_{1}, E_{\tau(2)}, \cdots
					G_{\sigma(k)}(E_{\tau(i_{k-1})}, E_{\tau(i_{k-1}+1)}, \dots,\\ 
						&\quad B_{X}\Big(E_{\tau(i_{k})}, E_{\tau(i_{k}+1)}, \dots, 
							G_{\sigma(k+1)}(E_{\tau(i_{k+1})}, \dots), \dots,
							G_{\sigma(k+2)}(E_{\tau(i_{k+2})}, \dots), \dots,
							G_{\sigma(b)}(E_{\tau(i_{b})}, \dots), \dots \Big),
							\\
					&\dots)
			\cdots),
		\dots)\\
		&+ \cdots\\
		&+ \ \sum_{\sigma \in \mathbb{S}_{b}} \sum_{k=1}^{b} (-1)^{\eta_{\sigma}} (-1)^{\xi^{\sigma}_{k}} 
		\sum_{\tau \in \sh} \sgn(\tau) 
		\sum_{\substack{1 = i_{1} = i_{2} = \dots = i_{k-1} < i_{k} \\ \leqslant i_{k+1} < i_{k+2} < \dots < i_{b} }}
		(-1)^{\sum\limits_{q=2}^{k}|G_{\sigma(q)}|(i_{q-1}-1) + \sum\limits_{q=k+1}^{b}|G_{\sigma(q)}|(i_{q}-1)} \Upsilon^{-1}\\
		&\ 
		G_{\sigma(1)}( 
			G_{\sigma(2)}( \cdots G_{\sigma(k)}(f E_{1}, E_{\tau(2)}, \dots,\\
						&\quad B_{X}\Big(E_{\tau(i_{k})}, E_{\tau(i_{k}+1)}, \dots, 
							G_{\sigma(k+1)}(E_{\tau(i_{k+1})}, \dots), \dots,
							G_{\sigma(k+2)}(E_{\tau(i_{k+2})}, \dots), \dots,
							G_{\sigma(b)}(E_{\tau(i_{b})}, \dots), \dots \Big),
							\\
					&\dots)
			\cdots),
		\dots)\\
		&+ \ \sum_{\sigma \in \mathbb{S}_{b}} \sum_{k=1}^{b} (-1)^{\eta_{\sigma}} (-1)^{\xi^{\sigma}_{k}} 
		\sum_{\tau \in \sh} \sgn(\tau) 
		\sum_{\substack{1 = i_{1} = i_{2} = \dots = i_{k-1} = i_{k} \\ < i_{k+1} < i_{k+2} < \dots < i_{b} }}
		(-1)^{\sum\limits_{q=2}^{k}|G_{\sigma(q)}|(i_{q-1}-1) + \sum\limits_{q=k+1}^{b}|G_{\sigma(q)}|(i_{q}-1)} \Upsilon^{-1}\\
		&\ 
		G_{\sigma(1)}( 
			G_{\sigma(2)}( \cdots G_{\sigma(k)}(\\
						&\quad B_{X}\Big(f E_{1}, E_{\tau(2)}, \dots, 
							G_{\sigma(k+1)}(E_{\tau(i_{k+1})}, \dots), \dots,
							G_{\sigma(k+2)}(E_{\tau(i_{k+2})}, \dots), \dots,
							G_{\sigma(b)}(E_{\tau(i_{b})}, \dots), \dots \Big),
							\\
					&E_{\tau(k+r_{\sigma(k+1)} + \dots + r_{\sigma(b)} + 1)}, \dots)
			\cdots),
		\dots)\\
		&+ \ \sum_{\sigma \in \mathbb{S}_{b}} \sum_{k=0}^{b-1} (-1)^{\eta_{\sigma}} (-1)^{\xi^{\sigma}_{k}} 
		\sum_{\tau \in \sh} \sgn(\tau) 
		\sum_{\substack{1 = i_{1} = i_{2} = \dots = i_{k-1} = i_{k} \\ = i_{k+1} < i_{k+2} < \dots < i_{b} }}
		(-1)^{\sum\limits_{q=2}^{k}|G_{\sigma(q)}|(i_{q-1}-1) + \sum\limits_{q=k+1}^{b}|G_{\sigma(q)}|(i_{q}-1)} \Upsilon^{-1}\\
		&\ 
		G_{\sigma(1)}( 
			G_{\sigma(2)}( \cdots G_{\sigma(k)}(\\
						&\quad B_{X}\Big(
							G_{\sigma(k+1)}(f E_{1}, E_{\tau(2)}, \dots), E_{\tau(r_{\sigma(k+1)}+1)}, \dots,
							G_{\sigma(k+2)}(E_{\tau(i_{k+2})}, \dots), \dots,
							G_{\sigma(b)}(E_{\tau(i_{b})}, \dots), \dots \Big),
							\\
					&E_{\tau(k+r_{\sigma(k+1)} + \dots + r_{\sigma(b)} + 1)}, \dots)
			\cdots),
		\dots)\\ 
		=&\ f l_{b+1}(X \ot \overline{G}_{1}\ot \cdots \ot \overline{G}_{b})(E_{1}, E_{2}, \dots, E_{r_{1} + \dots + r_{b}})\\
		&- \ \sum_{\sigma \in \mathbb{S}_{b}} \sum_{k=1}^{b} (-1)^{\eta_{\sigma}} (-1)^{\xi^{\sigma}_{k}} 
		\sum_{\tau \in \sh} \sgn(\tau) 
		\sum_{\substack{1 = i_{1} = i_{2} = \dots = i_{k-1} = i_{k} \\ < i_{k+1} < i_{k+2} < \dots < i_{b} }}
		(-1)^{\sum\limits_{q=2}^{k}|G_{\sigma(q)}|(i_{q-1}-1) + \sum\limits_{q=k+1}^{b}|G_{\sigma(q)}|(i_{q}-1)} \Upsilon^{-1}\\
		& \
		\underline{\begin{aligned}
			& \langle a_{X}(f), E_{\tau(2)} \wg E_{\tau(3)} \wg \cdots \wg 
			G_{\sigma(k+1)}(E_{\tau(i_{k+1})}, \dots) \wg \cdots \wg
			G_{\sigma(b)}(E_{\tau(i_{b})}, \dots) \wg \cdots \rangle \\
			&\quad G_{\sigma(1)}( 
			G_{\sigma(2)}( \cdots G_{\sigma(k)}(E_{1},E_{\tau(k+r_{\sigma(k+1)} + \dots + r_{\sigma(b)} + 1)}, \dots)
			\cdots),
		\dots)
		\end{aligned}}_{(\mathrm{I})}\\
		&- \ \sum_{\sigma \in \mathbb{S}_{b}} \sum_{k=0}^{b-1} (-1)^{\eta_{\sigma}} (-1)^{\xi^{\sigma}_{k}} 
		\sum_{\tau \in \sh} \sgn(\tau) 
		\sum_{\substack{1 = i_{1} = i_{2} = \dots = i_{k-1} = i_{k} \\ = i_{k+1} < i_{k+2} < \dots < i_{b} }}
		(-1)^{\sum\limits_{q=2}^{k}|G_{\sigma(q)}|(i_{q-1}-1) + \sum\limits_{q=k+1}^{b}|G_{\sigma(q)}|(i_{q}-1)} \Upsilon^{-1}\\
		&\ 
		\underline{\begin{aligned}
			&\langle a_{X}(f),
							E_{\tau(r_{\sigma(k+1)}+1)} \wg \cdots \wg
							G_{\sigma(k+2)}(E_{\tau(i_{k+2})}, \dots) \wg \cdots \wg
							G_{\sigma(b)}(E_{\tau(i_{b})}, \dots) \wg \cdots \wg E_{\tau(k+r_{\sigma(k+1)} + \dots + r_{\sigma(b)})}\rangle\\
			&\quad G_{\sigma(1)}( 
			G_{\sigma(2)}( \cdots G_{\sigma(k)}(G_{\sigma(k+1)}(E_{1}, E_{\tau(2)}, E_{\tau(3)}, \dots, E_{\tau(r_{\sigma(k+1)})}), E_{\tau(k+r_{\sigma(k+1)} + \dots + r_{\sigma(b)} + 1)}, \dots)
			\cdots),
		\dots).
		\end{aligned}}_{(\mathrm{II})}\\
	\end{align*}

	Notice that, $\xi^{\sigma}_{k+1} - \xi^{\sigma}_{k} = |X|(|\overline{G}_{\sigma(k+1)}|-1)+1 = (b-1)(r_{\sigma(k+1)}-1)+1$. 
	Careful calculation shows that after adjusting the local shuffles $\tau$ with respect to elements 
	$$E_{\tau(2)}, E_{\tau(3)}, \dots, E_{\tau(r_{\sigma(k+1)})}, E_{\tau(r_{\sigma(k+1)}+1)}, \dots, E_{\tau(k+r_{\sigma(k+1)} + \dots + r_{\sigma(b)})}$$
	and recalculating the Koszul sign of $G_{\sigma(k+2)}, \dots, G_{\sigma(b)}$ acting on these elements, $(\mathrm{II})$  can be transformed into $-(\mathrm{I})$, this complete the proof. Specifically,
	\begin{align*}
		(\mathrm{II}) =
		& \ \sum_{\sigma \in \mathbb{S}_{b}} \sum_{k=0}^{b-1} (-1)^{\eta_{\sigma}} (-1)^{\xi^{\sigma}_{k+1}} 
		\sum_{\tau \in \sh} \sgn(\tau) 
		\sum_{\substack{1 = i_{1} = i_{2} = \dots = i_{k-1} = i_{k} \\ = i_{k+1} < i_{k+2} < \dots < i_{b} }}
		(-1)^{\sum\limits_{q=2}^{k+1}|G_{\sigma(q)}|(i_{q-1}-1) + \sum\limits_{q=k+2}^{b}|G_{\sigma(q)}|(i_{q}-1)} \Upsilon^{-1}\\
		&\ \langle a_{X}(f),
			E_{\tau(2)} \wg E_{\tau(3)} \wg \cdots \wg
			G_{\sigma(k+2)}(E_{\tau(i_{k+2})}, \dots) \wg \cdots \wg
			G_{\sigma(b)}(E_{\tau(i_{b})}, \dots) \wg \cdots \rangle\\
		&\quad G_{\sigma(1)}( 
			G_{\sigma(2)}( \cdots G_{\sigma(k)}( 
							G_{\sigma(k+1)}(E_{1}, E_{\tau(k+1+r_{\sigma(k+2)} + \dots + r_{\sigma(b)} + 1)}, \dots), 
							E_{\tau(k+1+r_{\sigma(k+1)} + \dots + r_{\sigma(b)})},
							\dots)
			\cdots),
		\dots)\\
		=&\ \sum_{\sigma \in \mathbb{S}_{b}} \sum_{k=1}^{b} (-1)^{\eta_{\sigma}} (-1)^{\xi^{\sigma}_{k}} 
		\sum_{\tau \in \sh} \sgn(\tau) 
		\sum_{\substack{1 = i_{1} = i_{2} = \dots = i_{k-1} = i_{k} \\ < i_{k+1} < i_{k+2} < \dots < i_{b} }}
		(-1)^{\sum\limits_{q=2}^{k}|G_{\sigma(q)}|(i_{q-1}-1) + \sum\limits_{q=k+1}^{b}|G_{\sigma(q)}|(i_{q}-1)} \Upsilon^{-1}\\
		&\ \langle a_{X}(f),
		E_{\tau(2)} \wg E_{\tau(3)} \wg \cdots \wg
		G_{\sigma(k+1)}(E_{\tau(i_{k+1})}, \dots) \wg \cdots \wg
		G_{\sigma(b)}(E_{\tau(i_{b})}, \dots) \wg \cdots \rangle\\
		&\quad G_{\sigma(1)}( 
			G_{\sigma(2)}( \cdots G_{\sigma(k-1)}(G_{\sigma(k)}(E_{1}, E_{\tau(k+r_{\sigma(k+1)} + \dots + r_{\sigma(b)} + 1)}, \dots),
			E_{\tau(k+r_{\sigma(k)} + \dots + r_{\sigma(b)})},
							\dots)
			\cdots),
		\dots)\\
		=&\ - (\mathrm{I}).
	\end{align*}
\end{proof}

\smallskip

\subsection{Realising Nijenhuis Lie algebroid structures as Maurer-Cartan elements}\

\begin{prop} \label{Prop: NjL is MC element algebroid} 
	Let $A$ be a vector bundle over a manifold $M$. Then a Nijenhuis Lie algebroid structure on $A$ is equivalent to a Maurer-Cartan element in the $L_\infty$-algebra ${\mathfrak{C}_{\NjLD}}(A)$ introduced above.
\end{prop}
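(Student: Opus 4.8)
The plan is to unwind the Maurer--Cartan equation for the explicit $L_\infty$-algebra $\mathfrak{C}_{\NjLD}(A)$ and to match its two homogeneous pieces against the two defining conditions of a Nijenhuis Lie algebroid. First I would pin down the degree: since the operators satisfy $|l_b| = 2-b$, a Maurer--Cartan element sits in homological degree $-1$, i.e. cohomological degree $+1$, so it is a pair $\alpha = (X, P)$ with $X \in \frakX(A[1])$ a vector field of degree $1$ and $P \in \C_{\FN}^1(A) = \Gamma(A^\vee \ot A)$ a $(1,1)$-form. By Equation~\eqref{Eq. Lie algebroid structure A implies Q} the degree-$1$ vector fields are exactly the candidates $Q$ for a homological vector field, and $P$ is exactly a candidate Nijenhuis operator, so the degree bookkeeping already produces data of the right type.

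Next I would split the Maurer--Cartan expression $\sum_{n\ge 1}(-1)^{n(n-1)/2}\tfrac{1}{n!} l_n(\alpha^{\ot n})$ along the decomposition $\mathfrak{C}_{\NjLD}(A) = \frakX(A[1]) \oplus \C_{\FN}^\bullet(A)$. The key observation, which makes the sum finite, is that rule~(ii) of the construction only feeds the operation $l_{b+1}$ a vector-field argument of degree $b-1$; since $X$ has degree $1$, the only surviving terms containing a vector-field slot are $l_2(X\ot X) = [X,X]$, landing in $\frakX(A[1])$, and $l_3(X \ot P \ot P)$, landing in $\C_{\FN}^2(A)$ (every other $l_n(\alpha^{\ot n})$ vanishes, either by this degree constraint or because such an operation needs a vector-field argument while $l_n(P^{\ot n})=0$). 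Hence the Maurer--Cartan equation reduces to the pair $[X,X]=0$ and, after collecting the three placements of $X$ with their Koszul signs, $l_3(X\ot P\ot P)=0$.

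The $\frakX(A[1])$-component $[X,X]=0$ says precisely that $X=:Q$ is a homological vector field, which by Proposition~\ref{Prop. relation between Lie algebroids and dg manifolds} and Corollary~\ref{Lem. Lie algebroid and dg manifold} is equivalent to $(A,[-,-]_A,\rho)$ being a Lie algebroid. For the $\C_{\FN}$-component I would invoke Proposition~\ref{Prop. equation of B_Q}, which yields $B_Q = [-,-]_A =: \mu$, and then expand $l_3(Q\ot P\ot P)$ via rule~(ii) with $b=2$: the term $k=0$ contributes $\mu(P-,P-)$, the $k=1$ terms (summed over $\mathbb{S}_2$) contribute $-P\mu(P-,-)-P\mu(-,P-)$, and the $k=2$ term contributes $P^2\mu$. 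Up to an overall sign these four terms assemble exactly into the Nijenhuis torsion $\calN_P$ of Definition~\ref{Defn. Nijenhuis torsion}, so $l_3(Q\ot P\ot P)=0 \iff \calN_P = 0 \iff P$ is a Nijenhuis operator; thus $(A,[-,-]_A,\rho,P)$ is a Nijenhuis Lie algebroid. This is the geometric mirror of the algebraic identity in Proposition~\ref{Prop: justifying Cohomology theory of Nijenhuis operator}, which is no coincidence since $\chi$ intertwines the relevant brace operations (Proposition~\ref{Prop. definition of chi}).

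The step I expect to be the main obstacle is the sign and Koszul-sign bookkeeping in the last reduction: checking that the factor $(-1)^{3\cdot 2/2}\tfrac{1}{3!}$ together with the three placements of $X$ produces a nonzero multiple of $\calN_P$, and that the internal signs $(-1)^{\eta_\sigma}$ and $(-1)^{\xi^\sigma_k}$ of rule~(ii) reproduce the pattern $+,-,-,+$ of the four Nijenhuis-torsion terms. Everything else is structural, the degree count forcing finiteness and the two surviving pieces mapping onto the two conditions through Propositions~\ref{Prop. relation between Lie algebroids and dg manifolds} and~\ref{Prop. equation of B_Q}. Alternatively, one can sidestep this computation altogether by transporting the statement along the strict $L_\infty$-morphism $(\chi,\iota)\colon \mathfrak{C}_{\NjLD}(A)\to \mathfrak{C}_{\NjL}(\Gamma(A))$ and applying Proposition~\ref{Prop: NjL is MC element}, reading off $[X,X]=0$ directly from the $\frakX(A[1])$-component and using the injectivity of $\iota$ on $\C_{\FN}^\bullet(A)$ to obtain the converse.
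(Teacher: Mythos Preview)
Your proposal is correct and follows essentially the same route as the paper: identify the degree-$1$ piece as a pair $(Q,\overline{P})$, observe that only $l_2(Q\ot Q)=[Q,Q]$ and $l_3(Q\ot\overline{P}\ot\overline{P})$ survive by the degree constraint in rule~(ii), then interpret the two resulting equations via Proposition~\ref{Prop. relation between Lie algebroids and dg manifolds} and Proposition~\ref{Prop. equation of B_Q}. The paper writes the $l_3$ term directly in shuffle-brace notation as $-6\,\Upsilon^{-1}\bigl(B_Q\{P,P\}-P\{B_Q\{P\}\}+P\{P\{B_Q\}\}\bigr)$ rather than splitting into $k=0,1,2$ contributions, but this is the same computation, and your anticipated sign check is exactly the only work left.
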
 

\begin{proof}
	Since $\Gamma(A)$ is concentrated in degree 0, the degree $1$ part of ${\mathfrak{C}_{\NjLD}}(A)$ is $\frakX(A[1])^{1} \oplus \C_{\FN}^{1}(A) $. Take a degree 1 element $\alpha=(Q, \overline{P})$ in ${\mathfrak{C}_{\NjLD}}(A)$,
	then 
	\begin{align*}
			l_2(\alpha\ot \alpha)&=(l_2(Q \ot Q), 0) = ( [Q, Q], 0), \\
			l_3(\alpha^{\ot 3})&=(0, l_3(Q \ot \overline{P} \ot \overline{P} )+l_3(\overline{P} \ot Q \ot \overline{P})+l_3(\overline{P} \ot \overline{P} \ot Q ))\\
			&=(0, 3l_3(Q \ot \overline{P} \ot \overline{P}))\\
			&=\Big(0, -6 \Upsilon^{-1} \big(B_{Q} \{P, P\}- P \{ B_{Q} \{ P\}\} + P\{P \{B_{Q}\}\}\big)\Big), 
	\end{align*}
	and $l_b(\alpha^{\ot b})=0$ for $b \ne 2, 3$.
	By expanding the Maurer-Cartan Equation~(\ref{Eq: mc-equation}), we get
	\begin{eqnarray} \label{Eq: Lie-MC NjL algebroid} 
		[Q, Q]=0, 
	\end{eqnarray}
	\begin{eqnarray} \label{Eq: NjL-MC NjL algebroid} 
		B_{Q} \{P, P\}- P \{ B_{Q} \{ P\}\} + P\{P \{B_{Q}\}\} =0.
	\end{eqnarray}
	By Proposition~\ref{Prop. relation between Lie algebroids and dg manifolds} and Proposition~\ref{Prop. equation of B_Q}, one can find that Equation~(\ref{Eq: Lie-MC NjL algebroid}) is equivalent to saying that $A$ has a Lie algebroid structure with the Lie bracket $[-,-]_{A} = B_{Q}$ on $\Gamma(A)$, and Equation~(\ref{Eq: NjL-MC NjL algebroid}) says exactly that $P$ is a Nijenhuis operator on the Lie algebroid $A$. The proof in the opposite direction is obvious.
\end{proof}

\begin{prop} \label{Prop: cohomology complex as the underlying complex of L infinity algebra NjL algebroid}
	Let $(A, [-,-]_{A}, \rho, P)$ be a Nijenhuis Lie algebroid. Twist the $L_\infty$-algebra $\mathfrak{C}_{\NjLD}(A)$ by the Maurer-Cartan element corresponding to the Nijenhuis Lie algebroid structure $(A, [-,-]_{A}, \rho, P)$, then its underlying complex is isomorphic to $\C^\bullet_{\NjLD}(A)$.
\end{prop}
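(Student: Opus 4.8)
The plan is to mirror the proof of Proposition~\ref{Prop: cohomology complex as the underlying complex of L infinity algebra NjL}, replacing the convolution-operadic ingredients by their geometric counterparts through the maps $\chi$ and $\Phi$ of Section~\ref{Section: Cohomology theory of Nijenhuis Lie algebroids}. By Proposition~\ref{Prop: NjL is MC element algebroid}, the Nijenhuis Lie algebroid structure $(A,[-,-]_A,\rho,P)$ is encoded by the degree-$1$ Maurer--Cartan element $\alpha=(Q,\overline{P})$ in $\mathfrak{C}_{\NjLD}(A)=\frakX(A[1])\oplus\C_{\FN}^{\bullet}(A)$, where $Q$ is the homological vector field of Proposition~\ref{Prop. relation between Lie algebroids and dg manifolds}. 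Applying the twisting procedure (Proposition~\ref{Prop: deformed-L-infty}) yields the twisted structure $\{l_b^\alpha\}_{b\geqslant1}$, and since $\mathfrak{C}_{\NjLD}(A)$ has no intrinsic $l_1$, it suffices to identify the twisted differential $l_1^\alpha$ with the mapping-cone differential $\delta_{\NjLD}$ under the degree-$1$ shift built into $\Upsilon$ on the $\C_{\FN}^{\bullet}(A)$-summand.

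First I would evaluate $l_1^\alpha$ on a homogeneous $X\in\frakX(A[1])$. Because the only operation whose output lies in $\frakX(A[1])$ is $l_2$ on two vector fields, the $\frakX(A[1])$-component of $l_1^\alpha(X)=\sum_{i}(-1)^{i+\frac{i(i-1)}{2}}\tfrac{1}{i!}\,l_{1+i}(\alpha^{\otimes i}\otimes X)$ collapses to $-l_2(Q\otimes X)$, which is $[Q,X]$, i.e.\ $\dd_Q(X)$ up to the sign absorbed by the isomorphism; all terms with $i\geqslant2$ output into $\C_{\FN}^{\bullet}(A)$. The degree constraint $|X|=b-1$ in the definition of $l_{b+1}(X\otimes\overline{G}_1\otimes\cdots\otimes\overline{G}_b)$ forces a single value $b=|X|$ to contribute to the $\C_{\FN}$-component, namely $\tfrac{1}{|X|!}\,l_{|X|+1}(\overline{P}^{\otimes|X|}\otimes X)$, in which the internal sum over $\mathbb{S}_{|X|}$ absorbs the factor $\tfrac{1}{|X|!}$. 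Comparing the resulting expression with Definition~\ref{Defn. definition of Phi}, and using $B_X=\chi(X)$, identifies this term (up to the conventions of the shift $\Upsilon$) with the $-\Phi(X)$ appearing in $\delta_{\NjLD}$.

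Next I would evaluate $l_1^\alpha$ on $E\in\C_{\FN}^{\bullet}(A)$. Since $E$ has no vector-field part and each $l_{b+1}$ requires precisely one vector field, the only surviving twist terms are those in which exactly one copy of $\alpha$ supplies $Q$. As $|Q|=1$, the degree-matching condition $|Q|=b-1$ singles out $b=2$, so only the cubic term contributes and $l_1^\alpha(E)=-\tfrac12 l_3(\alpha^{\otimes2}\otimes E)=-l_3(Q\otimes\overline{P}\otimes E)$, exactly as in the dg Lie algebra computation. Invoking $B_Q=[-,-]_A$ (Proposition~\ref{Prop. equation of B_Q}) and the shuffle-brace expansion of $l_3$, this equals the Fr\"olicher--Nijenhuis coboundary $-\dd_{\FN}(E)=-[P,E]_{\FN}$ of Theorem~\ref{Thm. geometric version of cohomology of Nijenhuis operator}. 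Assembling the two computations gives $l_1^\alpha(X,E)=\big(\dd_Q(X),-\Phi(X)-\dd_{\FN}(E)\big)$, which is exactly $\delta_{\NjLD}(X,E)$, so the underlying complex of the twisted $L_\infty$-algebra is isomorphic to $\C^{\bullet}_{\NjLD}(A)$.

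The hard part will be the sign bookkeeping together with the $\Upsilon$-shift: one must verify that the Koszul signs $(-1)^{\eta_\sigma}$ and $(-1)^{\xi^\sigma_k}$ from the definition of $l_{b+1}$, combined with the twisting signs $(-1)^{i+\frac{i(i-1)}{2}}$, reproduce precisely those in $\Phi$ (Definition~\ref{Defn. definition of Phi}) and in $[P,-]_{\FN}$. Rather than re-deriving these from scratch, I would route the identification through the already-established facts $\Phi=\Psi\circ\chi$ (Theorem~\ref{Thm. Phi is a chain map}) and $[-,-]_{\FN}=[-,-]_{\FNA}|_{\C_{\FN}^{\bullet}(A)}$ (Proposition~\ref{Prop. relation between algebra and geometry Nijenhuis}), together with $\chi$ being a morphism of graded Lie algebras (Proposition~\ref{Prop. definition of chi}). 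These reduce every geometric sign computation to the corresponding algebraic one, which has already been settled in the course of proving Proposition~\ref{Prop: cohomology complex as the underlying complex of L infinity algebra NjL}, leaving only the routine check that the shift $\Upsilon$ matches the degree shift $\C_{\FN}^{\bullet}\rightsquigarrow\C_{\FN}^{\bullet-1}$ in the mapping cone.
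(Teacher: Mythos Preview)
Your approach is essentially identical to the paper's: compute $l_1^\alpha$ separately on $X\in\frakX(A[1])$ (splitting into the $\frakX(A[1])$-component $-l_2(Q\otimes X)=-[X,Q]$ and the $\C_{\FN}$-component coming from a single $l_{b+1}(\overline{P}^{\otimes b}\otimes X)$) and on $\overline{G}\in\C_{\FN}(A)$ (only $-l_3(Q\otimes\overline{P}\otimes\overline{G})$ survives), then identify these with $\dd_Q$, $\Phi$, and $\dd_{\FN}$ respectively. One slip to correct: the degree constraint $|X|=b-1$ gives $b=|X|+1$, not $b=|X|$, so the surviving $\C_{\FN}$-term for a vector field $X$ is $l_{|X|+2}(\overline{P}^{\otimes\,|X|+1}\otimes X)$; in the paper's notation (with $|X|=b-1$) this is written $\frac{1}{b!}\,l_{b+1}(\overline{P}^{\otimes b}\otimes X)$, which indeed matches $\Phi(X)\in\C_{\FN}^{b}(A)$ since $\Phi(X)$ takes $b$ arguments.
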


\begin{proof}
	By Proposition~\ref{Prop: NjL is MC element algebroid}, the Nijenhuis Lie algebra structure $(A, [-,-]_{A}, \rho, P)$ is equivalent to a Maurer-Cartan element $\alpha=(Q, \overline{P})$ in the $L_\infty$-algebra $\mathfrak{C}_{\NjLD}(A)$.
	By Proposition~\ref{Prop: deformed-L-infty}, this Maurer-Cartan element induces a new $L_\infty$-algebra structure $\{l_b^\alpha\}_{b\geqslant 1}$ on the graded space $\mathfrak{C}_{\NjLD}(A)$.
	More precisely, for any $(X, \overline{G}) \in \frakX(A[1])^{b-1} \oplus \C_{\FN}^{b-1}(A) \subset \mathfrak{C}_{\NjLD}(A)^{b-1}$ with $ b \geqslant 1 $, we have  
	\begin{align*}
		l_1^\alpha(X)
		&=l_{1}(X) -l_2(\alpha \ot X) + \sum_{i=2}^{b} (-1)^{\frac{i(i+1)}{2}} \frac{1}{i!} l_{i+1}(\alpha^{\ot i}\ot X)\\
		&=-l_2(Q \ot X) -l_2(\overline{P} \ot X) + \sum_{i=2}^{b} (-1)^{\frac{i(i+1)}{2}} \frac{1}{i!} l_{i+1}(\overline{P}^{\ot i}\ot X)\\
		&=-[X, Q] + (-1)^{\frac{b(b+1)}{2}} \frac{1}{b!} l_{b+1}(\overline{P}^{\ot b}\ot X).
	\end{align*}
	Note that $-[X, Q]$ corresponds to $-(-1)^{b-1} \dd_{Q}(X)$ in $\frakX(A[1])$ and  
	\begin{align*}
	 (-1)^{\frac{b(b+1)}{2}} \frac{1}{b!} l_{b+1}(\overline{P}^{\ot b}\ot X)
		&= (-1)^{\frac{b(b+1)}{2}} (-1)^{b|X|+b} \frac{1}{b!} l_{b+1}( X \ot \overline{P}^{\ot b})\\
		&= \sum_{k=0}^{b} (-1)^{k} \Upsilon^{-1} \underbrace{P \circ \cdots \circ P}_{k} \circ B_{X} \{\underbrace{P, \dots, P}_{b-k}\}, 
	\end{align*}
	which corresponds to $\Phi(X)$. We also have
	\begin{align*}
		l_1^{\alpha}(\overline{G})
		=&\ l_1(\overline{G}) - l_2(\alpha \ot \overline{G}) - \frac{1}{2!} l_3(\alpha \ot \alpha \ot \overline{G})\\
		=&\ -l_3(Q \ot \overline{P} \ot \overline{G})\\
		=&\ \Upsilon^{-1} \big(B_{Q} \{P, G\} - P \{B_{Q}\{G\}\} + (-1)^{b} P \{G\{B_{Q}\}\} 
		+ B_{Q} \{G, P\} - (-1)^{b} G\{B_{Q}\{P\}\} + (-1)^{b} G \{P\{B_{Q}\}\}\big), 
	\end{align*}
	which corresponds to $ (-1)^{b} \dd_{\FN}(\overline{G})$.
	
	In conclusion, 	we have that the underlying complex of the twisted $L_\infty$-algebra $\mathfrak{C}_{\NjLD}(A)$ by Maurer-Cartan element $\alpha$ is isomorphic to the cochain complex $\C^\bullet_{\NjLD}(A)$. 
\end{proof}

Although $\mathfrak{C}_{\NjLD}(A)$ is an $L_\infty$-algebra, the next proposition shows that once the Lie algebroid structure on $A$ is fixed, the graded space $\C_{\FN}^{\bullet}(A)$, which controls deformations of Nijenhuis operators, is a genuine graded Lie algebra. 

\begin{prop} \label{Prop: justifying Cohomology theory of Nijenhuis operator algebroid}
	Let $(A, [-,-]_{A}, \rho)$ be a Lie algebroid. Then we have the following results:
	\begin{enumerate}
		\item The graded space $\C_{\FN}^{\bullet}(A)$ can be endowed with a graded Lie algebra structure, and a Nijenhuis operator on $(A, [-,-]_{A}, \rho)$ is equivalent to a Maurer-Cartan element in this graded Lie algebra.
		\item Given a Nijenhuis operator $P$ on Lie algebroid $(A, [-,-]_{A}, \rho)$, the underlying complex of the twisted dg Lie algebra $\C_{\FN}^{\bullet}(A)$ by the corresponding Maurer-Cartan element is isomorphic to the Fr\"olicher-Nijenhuis cochain complex of Nijenhuis operator $P$, introduced in Theorem~\ref{Thm. geometric version of cohomology of Nijenhuis operator}.
	\end{enumerate}
\end{prop}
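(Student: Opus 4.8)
The plan is to mirror exactly the proof strategy of Proposition~\ref{Prop: justifying Cohomology theory of Nijenhuis operator} in the algebraic setting, now transported to the geometric $L_\infty$-algebra $\mathfrak{C}_{\NjLD}(A)$. The key structural fact to exploit is that the homological vector field $Q$ on $A[1]$ encoding the Lie algebroid structure gives a degree~$1$ element $\alpha_0 = (Q, 0)$ in $\mathfrak{C}_{\NjLD}(A)$. By Proposition~\ref{Prop: NjL is MC element algebroid} (with $P=0$, noting that Equation~\eqref{Eq: NjL-MC NjL algebroid} is vacuously satisfied when $P=0$), the element $\alpha_0=(Q,0)$ is a Maurer-Cartan element, since $[Q,Q]=0$ is equivalent to the Lie algebroid axioms via Proposition~\ref{Prop. relation between Lie algebroids and dg manifolds}.

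For part~(i), I would first twist $\mathfrak{C}_{\NjLD}(A)$ by $\alpha_0$ using the twisting procedure of Proposition~\ref{Prop: deformed-L-infty} to obtain a new family $\{l_b^{\alpha_0}\}_{b\geqslant 1}$. The crucial observation is that the graded subspace $\C_{\FN}^{\bullet}(A)$ is closed under the twisted operations: inspecting the explicit definition of $\{l_b\}_{b\geqslant 1}$ in Subsection~\ref{subsection: L-infty NjL algebroid}, every operation producing an output in $\C_{\FN}^{\bullet}(A)$ requires exactly one input from $\frakX(A[1])$, and after twisting by $(Q,0)$ the extra inputs can only be the arity-$2$ element $Q$. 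Since $Q$ has arity corresponding to a binary bracket, the restriction of $l_b^{\alpha_0}$ to $\C_{\FN}^{\bullet}(A)$ vanishes for $b\geqslant 3$, while $l_1^{\alpha_0}\equiv 0$ on $\C_{\FN}^{\bullet}(A)$ because $Q$ alone (without a Nijenhuis operator inserted) contributes nothing of the right form. This leaves only $l_2^{\alpha_0}$, giving a genuine graded Lie algebra structure on $\C_{\FN}^{\bullet}(A)$. I would then identify $l_2^{\alpha_0}$ explicitly and check, exactly as in the proof of Proposition~\ref{Prop: justifying Cohomology theory of Nijenhuis operator}(i), that a degree-$1$ element $\overline{P}\in\C_{\FN}^1(A)$ satisfying the Maurer-Cartan equation $-\frac12 l_2^{\alpha_0}(\overline{P}\otimes\overline{P})=0$ translates, under $\chi(Q)=B_Q=[-,-]_A$ (Proposition~\ref{Prop. equation of B_Q}) and the identification $[-,-]_{\FN}$, precisely into the Nijenhuis condition $\calN_P=0$, i.e. $[P,P]_{\FN}=0$.

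For part~(ii), given a Nijenhuis operator $P$, I would take $\beta = \overline{P}$ as the corresponding Maurer-Cartan element in the graded Lie algebra $(\C_{\FN}^{\bullet}(A), l_2^{\alpha_0})$ and twist once more. Computing $(l_1^{\alpha_0})^{\beta}(\overline{G}) = -l_2^{\alpha_0}(\beta\otimes \overline{G})$ for $\overline{G}\in\C_{\FN}^{\bullet}(A)$ and comparing with Definition~\ref{Defn. definition of FN bracket} and Theorem~\ref{Thm. geometric version of cohomology of Nijenhuis operator}, I expect this to equal (up to a sign that can be tracked as in the algebraic case) the Fr\"olicher-Nijenhuis differential $\dd_{\FN}=[P,-]_{\FN}$. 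This hinges on the relation $[K,L]_{\FN}=[L,K]_{\RNA}$ from the remark following the corollary to Proposition~\ref{Prop. formula of iK 2}, together with Proposition~\ref{Prop. relation between algebra and geometry Nijenhuis}, which guarantees that the restriction of the algebraic bracket to $\C_{\FN}^{\bullet}(A)$ is the Fr\"olicher-Nijenhuis bracket.

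The main obstacle I anticipate is bookkeeping of signs in the twisted operations, particularly verifying that the composite twist (first by $(Q,0)$, then by $\overline{P}$) produces exactly $[P,-]_{\FN}$ rather than a scalar multiple or a sign-twisted variant. The cleanest route is to avoid re-deriving the sign conventions from scratch and instead invoke the already-established algebraic computation in Proposition~\ref{Prop: justifying Cohomology theory of Nijenhuis operator}: since $\chi$ is a morphism of graded Lie algebras carrying $B_Q$ to $\mu=[-,-]_A$ (Proposition~\ref{Prop. definition of chi}, Proposition~\ref{Prop. chi is a chain map}) and $\C_{\FN}^{\bullet}(A)$ embeds as a dg Lie subalgebra of $\C_{\NjO}^{\bullet}(\Gamma(A))$ (Proposition~\ref{Prop. relation between algebra and geometry Nijenhuis}), the entire geometric statement follows by restricting the already-proven algebraic statement along this embedding, so that the sign $(-1)^{b+1}$ appearing in Proposition~\ref{Prop: justifying Cohomology theory of Nijenhuis operator}(ii) transfers verbatim.
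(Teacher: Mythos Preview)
Your proposal is correct and follows essentially the same route as the paper: twist $\mathfrak{C}_{\NjLD}(A)$ by $\alpha_0=(Q,0)$, observe that on $\C_{\FN}^{\bullet}(A)$ only $l_2^{\alpha_0}$ survives (because the operations $l_{b+1}$ require the single $\frakX(A[1])$-input to have degree $b-1$, and $Q$ has degree~$1$), identify $l_2^{\alpha_0}$ with $(-1)^{rk}[-,-]_{\FN}$ via $B_Q=[-,-]_A$, and then twist once more by $\overline{P}$ to recover $(-1)^{r-1}\dd_{\FN}$. The alternative shortcut you sketch at the end---pulling the result back from Proposition~\ref{Prop: justifying Cohomology theory of Nijenhuis operator} along the embedding of Proposition~\ref{Prop. relation between algebra and geometry Nijenhuis}---is a legitimate way to avoid redoing the sign computation, though the paper instead carries out the direct calculation in the geometric $L_\infty$-algebra; either works.
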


\begin{proof}
	(i) The Lie algebroid structure of $A$ corresponds to a degree $1$ vector field $Q$ in $\frakX(A[1])$, which satisfies $[Q, Q]=0$. 
	Then $\alpha=(Q, 0)$ is naturally a Maurer-Cartan element in $L_\infty$-algebra $\mathfrak{C}_{\NjLD}(A)$.
	By the construction of $\{l_b\}_{b\geqslant 1}$ on $\mathfrak{C}_{\NjLD}(A)$, the graded subspace $\C_{\FN}^{\bullet}(A)$ is closed under the action of $\{l_{b}^\alpha\}_{b\geqslant 1}$. Note that $ l_1^\alpha \equiv 0 $ on  $\C_{\FN}^{\bullet}(A)$, and the restriction of $l_n^\alpha$ on  $\C_{\FN}^{\bullet}(A)$ are also $0$ for $n\geqslant 3$. Thus $(\C_{\FN}^{\bullet}(A), l_2^\alpha)$ forms a graded Lie algebra.
	More explicitly, for any $\overline{F} \in \C_{\FN}^{r}(A)$, $\overline{G} \in \C_{\FN}^{k}(A)$, $|\overline{F}| = r$, $|\overline{G}| = k$, we have	
	\begin{align*}
		l_2^\alpha(\overline{F} \ot \overline{G} )
		=&\ l_{2}(\overline{F}, \overline{G})+l_{3}(\alpha, \overline{F}, \overline{G})\\
		=&\ l_{3}(Q, \overline{F}, \overline{G})\\
		=&\ \Upsilon^{-1}\big((-1)^{r}B_{Q}\{F, G\} + F\{B_{Q} \{G\}\} + (-1)^{k}F\{G\{B_{Q}\}\}\\
		&+(-1)^{rk+k+1}B_{Q}\{G, F\}-(-1)^{rk} G\{B_{Q} \{F\} \}+ (-1)^{rk+r+1}G\{F\{B_{Q}\}\}\big),
	\end{align*}
	which corresponds to $(-1)^{rk}[F,G]_{\FN}$.

	Take a degree $1$ element $\overline{P} \in \C_{\FN}^{1}(A)$, then $\overline{P}$
	satisfies the Maurer-Cartan equation
	$$-\frac{1}{2}l_2^{\alpha}(\overline{P} \ot \overline{P})=0 $$
	if and only if
	$$B_{Q}\{P, P\}-P\{B_{Q}\{P\}\}+P\{P\{B_{Q}\}\}=0,$$
	then $B_{Q} = [-,-]_{A}$ implies that $P$ is a Nijenhuis operator on Lie algebroid $(A, [-,-]_{A}, \rho)$.
		
	(ii) Now let $P$ be a Nijenhuis operator on Lie algebroid $(A, [-,-]_{A}, \rho)$. By the first statement, $P$ corresponds to a Maurer-Cartan element $\overline{P}$ in the graded Lie algebra $(\mathfrak{C}_{\NjLD}(A), l_2^\alpha)$.
	For $\overline{F} \in \C_{\FN}^{r}(A) $ of degree $r$, we compute $(l_1^\alpha)^{\overline{P}}(\overline{F})$ as follows:
	$$\begin{aligned}
		(l_1^\alpha)^{\overline{P}}(\overline{F})
		=&\ l_1^\alpha(\overline{F})-l_2^\alpha(\overline{P} \ot \overline{F})\\
		=&\ -l_2^\alpha(\overline{P} \ot \overline{F})\\
		=&\ \Upsilon^{-1} \big(\nu\{P, F\} - P\{\nu\{F\}\} - (-1)^{r} P \{F\{\nu\}\}
		+ \nu\{F, P\} +(-1)^{r} F\{\nu\{P\}\} - (-1)^{r} F\{P\{\nu\}\}\big), 
	\end{aligned}$$
	which corresponds to $ (-1)^{r-1} \dd_{\FN}(\overline{F})$. So
	the underlying complex of the twisted dg Lie algebra $\C_{\FN}^{\bullet}(A)$ by the corresponding Maurer-Cartan element $\overline{P}$ is isomorphic to the Fr\"olicher-Nijenhuis cochain complex of Nijenhuis operator $P$.
\end{proof}

\begin{remark}
	In Theorem~\ref{Thm: L-infty of NjL algebroid}, we constructed an $L_\infty$-algebra associated to a given vector bundle $A$ over $M$, and proved that the Maurer-Cartan elements in this $L_\infty$-algebra corresponds to Nijenhuis Lie algebroid structure on $A$. More generally, if we replace the vector bundle $A$ with a 
	non-positive 
	cohomological graded vector bundle, the same construction in Theorem~\ref{Thm: L-infty of NjL algebroid} still yields an $L_\infty$-algebra. In this case, a Maurer-Cartan element in this $L_\infty$-algebra defines an $L_\infty$-algebroid equipped with a homotopy Nijenhuis structure.
\end{remark}

\medskip

\section{A computational example: Poincar\'e Lemma of a certain class of Nijenhuis operators} \label{Section: An Example of Calculating Frolicher-Nijenhuis Cohomology}

In this section, we will compute the Fr\"olicher-Nijenhuis cohomology of a special case of the Nijenhuis operator, which confirms a conjecture by Bolsinov and Konyaev.

We will continue to use the notations and conventions established in Proposition~\ref{Prop. relation between Lie algebroids and dg manifolds} and its proof. 
Assume that $(A, [-,-]_{A}, \rho, P)$ is a Nijenhuis Lie algebroid over $M$ with Nijenhuis Lie algebroid structure $\rho(\varepsilon_{i}) = \rho_{i}^{\alpha}\partial_{x^{\alpha}}$, $[\varepsilon_{i},\varepsilon_{j}]_{A} = c_{ij}^{k} \varepsilon_{k}$, and $P(\varepsilon_{i}) = P_{i}^{j}\varepsilon_{j}$. Proposition~\ref{Prop. Nijenhuis torsion is a tensor} shows that the Nijenhuis torsion $\calN_{P}$ of $P$ is a tensor of type $(1,2)$, and direct calculation yields coefficients of $\calN_{P}$ are 
\begin{align*}
	{\calN_{P}}_{i j}^{k} 
	=&\ P_{i}^{a}P_{j}^{b}c_{ab}^{k} - P_{b}^{k}P_{i}^{a}c_{aj}^{b} + P_{a}^{k}P_{b}^{a}c_{ij}^{b} - P_{a}^{k}P_{j}^{b}c_{ib}^{a} 
	\\
	&\ 
	+ P_{i}^{a}\rho_{a}^{b} \frac{\partial P_{j}^{k}}{\partial x^{b}} - P_{j}^{a}\rho_{a}^{b} \frac{\partial P_{i}^{k}}{\partial x^{b}} 
	- P_{a}^{k}\rho_{i}^{b} \frac{\partial P_{j}^{a}}{\partial x^{b}} + P_{a}^{k}\rho_{j}^{b} \frac{\partial P_{i}^{a}}{\partial x^{b}}.
\end{align*}
In particular, when $(A, [-,-]_{A}, \rho)$ is taken as the trivial Lie algebroid $(T_{M}, [-,-], \id)$, we have 
\begin{align*}
	{\calN_{P}}_{i j}^{k} 
	=P_{i}^{a}\frac{\partial P_{j}^{k}}{\partial x^{a}} - P_{j}^{a}\frac{\partial P_{i}^{k}}{\partial x^{a}} 
	- P_{a}^{k}\frac{\partial P_{j}^{a}}{\partial x^{i}} + P_{a}^{k} \frac{\partial P_{i}^{a}}{\partial x^{j}}.
\end{align*}
This is consistent with the classical result, see \cite{BKM22}.

 Consider the $(1,1)$-form $P = x^{i} \dd x^{i} \ot \partial_{x^{i}}$ on the manifold $\mathbb{R}^n$, viewed as a $\ci(\mathbb{R}^n)$-linear map
\begin{align*}
	P: \Gamma(T_{\mathbb{R}^n}) \rightarrow \Gamma(T_{\mathbb{R}^n}),\ \partial_{x^{i}} \mapsto x^{i}\partial_{x^{i}},
\end{align*}
i.e., $P=\operatorname{diag} \{x^1, \ldots, x^n \}$. 
It is not hard to see that ${\calN_{P}}_{i j}^{k} = 0$ for any $1 \leqslant i,j,k \leqslant n$. Thus, $P$ is indeed a Nijenhuis operator. In \cite{BKM22},  Bolsinov, Konyaev, and Matveev classified Nijenhuis structures based on the local form of Nijenhuis operators. In their classification, a Nijenhuis operator of the above form is a point in general position within the class of differentially non-degenerate Nijenhuis operators.

In \cite{BK21-17},  Bolsinov and Konyaev have shown that $\rmH_{\FN}^{0}(T_{\mathbb{R}^n}) = \rmH_{\FN}^{1}(T_{\mathbb{R}^n}) = 0$, and they conjectured that  $\rmH_{\FN}^{k}(T_{\mathbb{R}^n})=0$ for all $k>1$. This would imply that we have a kind of Poincar\'e lemma for this kind of Nijenhuis geometry. The following proposition provides an affirmative answer to the conjecture.

\begin{prop}[Poincar\'e Lemma of diagonal Nijenhuis operator]
	All cohomologies 
	$\rmH_{\FN}^{k}(T_{\mathbb{R}^n}) = 0$
	for all $k \geqslant 0$ in the case of $P=\operatorname{diag} \{x^1, \ldots, x^n\}$. In other words, the Fr\"olicher-Nijenhuis cochain complex $ (\C_{\FN}^{\bullet}(T_{\mathbb{R}^n}), \dd_{\FN}=[P, -]_{\FN}) $ of Nijenhuis operator $P$ is exact.
\end{prop}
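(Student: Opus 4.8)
The plan is to construct an explicit contracting homotopy $h\colon \C_{\FN}^{k}(T_{\RR^n}) \to \C_{\FN}^{k-1}(T_{\RR^n})$ satisfying $\dd_{\FN}\circ h + h\circ \dd_{\FN} = \id$ on each $\C_{\FN}^{k}(T_{\RR^n})$ with $k \geqslant 1$, together with a direct argument that $\C_{\FN}^{0}(T_{\RR^n})$ is handled (here $\dd_{\FN}$ on degree $0$ injects, or one treats $k=0$ via the same formula reading into degree $-1$ as zero). Since the complex is concentrated in nonnegative degrees and the homotopy identity forces every cocycle in positive degree to be a coboundary, exactness follows immediately. First I would write a general vector-valued form in the diagonal coordinates as $K = \sum K^{k}_{i_1\cdots i_l}\, \dd x^{i_1}\wg\cdots\wg \dd x^{i_l}\ot \partial_{x^k}$ and compute $\dd_{\FN}K = [P,K]_{\FN}$ componentwise using the explicit bracket formula from Definition~\ref{Defn. definition of FN bracket}, specialized to $P = \operatorname{diag}\{x^1,\ldots,x^n\}$, $\rho = \id$, $[-,-]_{A}=[-,-]$. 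The key simplification is that $P$ acts diagonally, so $\calLr_{P\partial_{x^i}} = x^i \calLr_{\partial_{x^i}}$ (no sum) and the interior products collapse cleanly; I expect $\dd_{\FN}$ to act on each monomial coefficient as a first-order Euler-type operator of the shape $\bigl(\sum_{a\in S}x^a\partial_{x^a} - \text{(integer weight depending on the multi-index and }k\text{)}\bigr)$, up to the antisymmetrizing shuffle sums.

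The heart of the construction is to diagonalize this Euler operator. Because $P$ is the diagonal scaling operator, the Fr\"olicher-Nijenhuis differential should be homogeneous with respect to the multi-degree counting how many distinct coordinate directions appear among $\{i_1,\ldots,i_l,k\}$ and their total scaling weight; monomials $\dd x^{i_1}\wg\cdots\wg \dd x^{i_l}\ot \partial_{x^k}$ fall into two types according to whether $k\in\{i_1,\ldots,i_l\}$ or not, and whether the coefficient function depends on the ``missing'' variables. I would then define $h$ by inverting the Euler operator on the nonzero-weight eigenspaces — integrating along the radial/scaling vector field — while on the kernel (constant-weight, or resonant, terms) one checks directly that no cohomology survives. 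Concretely the homotopy will be an integral operator of the form $(h K)(\cdots) = \int_0^1 t^{w-1}\,(\text{radial contraction of }K)\,\dd t$ for a suitable weight $w$, mimicking the classical Poincar\'e-lemma homotopy for the de Rham differential but twisted by $P$.

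The main obstacle, and the step requiring genuine care, is the treatment of the \emph{resonant monomials} — those on which the scaling weight of $\dd_{\FN}$ vanishes, so the naive inversion $1/w$ is undefined. These are precisely the terms where the algebra of the problem is most delicate, and I expect that the correct homotopy must separate the coefficient's dependence on the distinguished variable $x^k$ (the one matching $\partial_{x^k}$) from its dependence on the remaining variables, absorbing the resonance into a logarithmic-free integration in that single variable. Verifying that the shuffle-sum combinatorics in the bracket formula (the $\Sh(k,1,l-1)$ and $\Sh(2,k-1,l-1)$ terms of the proposition preceding Definition~\ref{Defn. definition of FN bracket}) conspire so that $h$ lands back in the $\ci(\RR^n)$-linear subspace $\C_{\FN}^{\bullet}(T_{\RR^n})$ — rather than the larger $\RR$-linear space — is the technical crux; I would check this by confirming $h$ commutes with the tensoriality that Lemma~\ref{Lem. check tensor property} and Proposition~\ref{Prop. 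Phi is a tensor} guarantee on the $L_\infty$-side. Once the homotopy identity $\dd_{\FN}h + h\dd_{\FN} = \id$ is established on a spanning set of monomials in every positive degree, exactness of $(\C_{\FN}^{\bullet}(T_{\RR^n}),\dd_{\FN})$, hence $\rmH_{\FN}^{k}(T_{\RR^n})=0$ for all $k\geqslant 0$, is immediate, confirming the Bolsinov--Konyaev conjecture.
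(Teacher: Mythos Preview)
Your overall strategy---construct a contracting homotopy $h$ with $\dd_{\FN}h + h\dd_{\FN} = \id$---is exactly what the paper does. But the implementation you propose is analytic (integrate along a scaling flow, invert an Euler-type operator on weight eigenspaces), whereas the paper's homotopy is purely algebraic and never touches the coefficient functions at all.

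Concretely, the paper sets, for $K = \sum_\alpha \omega^\alpha \ot \partial_{x^\alpha}$ with $\omega^\alpha \in \Omega^{j}(\RR^n)$,
\[
h(K) \;=\; -\sum_\alpha \bigl(i_{\partial_{x^\alpha}}\omega^\alpha\bigr)\ot\partial_{x^\alpha},
\]
i.e.\ each output-direction component is contracted with its own output vector. This is manifestly $\ci(\RR^n)$-linear, so your worry about tensoriality evaporates. The point of the computation is that when one expands $\dd_{\FN}h(K) + h\dd_{\FN}(K)$ using the explicit bracket formula for $[P,-]_{\FN}$, every term involving a derivative of the coefficient---both the $x^{l}\partial_{x^{l}}K^{\alpha}_{I}$ contributions and the $x^{\alpha}\partial_{x^{m}}K^{\alpha}_{I}$ contributions---appears once from $\dd_{\FN}h$ and once with the opposite sign from $h\dd_{\FN}$, and they cancel pairwise. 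What survives is a $\ci(\RR^n)$-linear identity on the exterior factors: the monomials with $\alpha\in I$ are reproduced by $\dd_{\FN}h$, those with $\alpha\notin I$ by $h\dd_{\FN}$, and together they reassemble $K$.

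Your route has two genuine obstacles that this algebraic homotopy simply bypasses. First, smooth functions on $\RR^n$ do not decompose into weight eigenspaces of an Euler operator, so ``diagonalize and invert on nonzero weights'' and the integral $\int_0^1 t^{w-1}(\cdots)\,\dd t$ have no direct meaning for general $K^\alpha_I\in\ci(\RR^n)$; at best you would get a formal-power-series argument. Second, the ``resonant'' piece you flag is not a boundary case but the main term: the undifferentiated contribution $-K^\alpha_I\,\dd x^\alpha\wg\omega_I\ot\partial_{x^\alpha}$ in $\dd_{\FN}K$ has scaling weight zero, and your sketch of a ``logarithmic-free integration in that single variable'' is not a workable fix. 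I would recommend abandoning the integral ansatz, computing $[P,K]_{\FN}$ explicitly for a monomial $K$, and observing directly that the trace-type contraction above is already the homotopy.
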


\begin{proof}
	Construct a homotopy $ h $ on the cochain complex $ \C_{\FN}^{\bullet}(T_{\mathbb{R}^n}) $ as follows: for any $ 1 \leqslant j \leqslant n $ and $ K=K^{\alpha}_{i_{1} \cdots i_{j}} \dd x^{i_{1}} \wg \cdots \wg \dd x^{i_{j}} \ot \partial_{x^{\alpha}} \in \C_{\FN}^{j}(T_{\mathbb{R}^n}) \cong \Hom_{\ci(\mathbb{R}^n)}(\wedge^{j} \Gamma(T_{\mathbb{R}^n}), \Gamma(T_{\mathbb{R}^n})) $ with $ i_{1} < \cdots < i_{j} $, define
	$$ h(K) = \sum_{i_{1}< \cdots < i_{k} < \alpha < i_{k+1} < \cdots < i_{j-1}
	\atop 0 \leqslant k \leqslant j-1 }
	(-1)^{k+1} K^{\alpha}_{i_{1} \cdots i_{k} \alpha i_{k+1} \cdots i_{j-1}}  \dd x^{i_{1}} \wg \cdots \wg \dd x^{i_{j-1}} \ot \partial_{x^{\alpha}}.$$
	In particular, for any $ K \in \C_{\FN}^{0}(T_{\mathbb{R}^n}) \cong \Gamma(T_{\mathbb{R}^n}) $, define $ h(K) = 0 $.

	We claim that $ \dd_{\FN} h + h \dd_{\FN} = \id $, thus the complex $ (\C_{\FN}^{\bullet}(T_{\mathbb{R}^n}), \dd_{\FN}) $ is exact. In fact, for any $ 0 \leqslant j \leqslant n$ and $ K=K^{\alpha}_{i_{1} \cdots i_{j}} \dd x^{i_{1}} \wg \cdots \wg \dd x^{i_{j}} \ot \partial_{x^{\alpha}} \in \C_{\FN}^{j}(T_{\mathbb{R}^n})$, according to Definition~\ref{Defn. definition of FN bracket} and Proposition~\ref{Prop. Some properties of Lie derivative}, we have
	\begin{align*}
		\dd_{\FN}h(K)
		=&\ [P,h(K)]_{\FN}\\
		=&\ \Big[x^{l} \dd x^{l} \ot \partial_{x^{l}}, 
		\sum_{i_{1}< \cdots < i_{k} < \alpha < i_{k+1} < \cdots < i_{j-1}
		\atop 0 \leqslant k \leqslant j-1} (-1)^{k+1}
		K^{\alpha}_{i_{1} \cdots i_{k} \alpha i_{k+1} \cdots i_{j-1}} \dd x^{i_{1}} \wg \cdots \wg \dd x^{i_{j-1}} \ot \partial_{x^{\alpha}} \Big]_{\FN}\\
		=&\ \sum_{i_{1}< \cdots < i_{k} < \alpha < i_{k+1} < \cdots < i_{j-1}
		\atop 0 \leqslant k \leqslant j-1} (-1)^{k+1}
		\dd x^{l} \wg (\dd x^{i_{1}} \wg \cdots \wg \dd x^{i_{j-1}}) \ot [x^{l} \partial_{x^{l}}, K^{\alpha}_{i_{1} \cdots i_{k} \alpha i_{k+1} \cdots i_{j-1}} \partial_{x^{\alpha}}]\\
		&\ + \sum_{i_{1}< \cdots < i_{k} < \alpha < i_{k+1} < \cdots < i_{j-1}
		\atop 0 \leqslant k \leqslant j-1} (-1)^{k+1}
		K^{\alpha}_{i_{1} \cdots i_{k} \alpha i_{k+1} \cdots i_{j-1}} 
		\dd x^{l} \wg (\mathcal{L}_{x^{l} \partial_{x^{l}}} \dd x^{i_{1}} \wg \cdots \wg \dd x^{i_{j-1}}) \ot \partial_{x^{\alpha}}\\
		&\ - \sum_{i_{1}< \cdots < i_{k} < \alpha < i_{k+1} < \cdots < i_{j-1}
		\atop 0 \leqslant k \leqslant j-1} (-1)^{k+1}
		x^{l}
		(\mathcal{L}_{K^{\alpha}_{i_{1} \cdots i_{k} \alpha i_{k+1} \cdots i_{j-1}} \partial_{x^{\alpha}}}\dd x^{l}) \wg (\dd x^{i_{1}} \wg \cdots \wg \dd x^{i_{j-1}}) \ot \partial_{x^{l}}\\
		=&\ \sum_{i_{1}< \cdots < i_{k} < \alpha < i_{k+1} < \cdots < i_{j-1}\atop 0 \leqslant k \leqslant j-1} (-1)^{k+1}
		x^{l} \frac{\partial K^{\alpha}_{i_{1} \cdots i_{k} \alpha i_{k+1} \cdots i_{j-1}}}{\partial x^{l}}
		\dd x^{l} \wg \dd x^{i_{1}} \wg \cdots \wg \dd x^{i_{j-1}} \ot \partial_{x^{\alpha}}\\
		&\ - \sum_{i_{1}< \cdots < i_{k} < \alpha < i_{k+1} < \cdots < i_{j-1}\atop 0 \leqslant k \leqslant j-1} (-1)^{k+1}
		K^{\alpha}_{i_{1} \cdots i_{k} \alpha i_{k+1} \cdots i_{j-1}} \frac{\partial x^{l} }{\partial x^{\alpha}}
		\dd x^{l} \wg \dd x^{i_{1}} \wg \cdots \wg \dd x^{i_{j-1}} \ot \partial_{x^{l}}\\
		&\ + \sum_{i_{1}< \cdots < i_{k} < \alpha < i_{k+1} < \cdots < i_{j-1}\atop 0 \leqslant k \leqslant j-1} (-1)^{k+1}
		K^{\alpha}_{i_{1} \cdots i_{k} \alpha i_{k+1} \cdots i_{j-1}} 
		\dd x^{l} \wg (x^{l}\mathcal{L}_{ \partial_{x^{l}}} \dd x^{i_{1}} \wg \cdots \wg \dd x^{i_{j-1}}) \ot \partial_{x^{\alpha}}\\
		&\ + \sum_{i_{1}< \cdots < i_{k} < \alpha < i_{k+1} < \cdots < i_{j-1}\atop 0 \leqslant k \leqslant j-1} (-1)^{k+1}
		K^{\alpha}_{i_{1} \cdots i_{k} \alpha i_{k+1} \cdots i_{j-1}}
		\dd x^{l} \wg
		\big(
		\dd x^{l} \wg \iota_{ \partial_{x^{l}}} (\dd x^{i_{1}} \wg \cdots \wg \dd x^{i_{j-1}})
		\big) \ot \partial_{x^{\alpha}}\\
		&\ - \sum_{i_{1}< \cdots < i_{k} < \alpha < i_{k+1} < \cdots < i_{j-1}\atop 0 \leqslant k \leqslant j-1} (-1)^{k+1}
		x^{l}
		(K^{\alpha}_{i_{1} \cdots i_{k} \alpha i_{k+1} \cdots i_{j-1}} \mathcal{L}_{ \partial_{x^{\alpha}}}\dd x^{l}) \wg (\dd x^{i_{1}} \wg \cdots \wg \dd x^{i_{j-1}}) \ot \partial_{x^{l}}\\
		&\ - \sum_{i_{1}< \cdots < i_{k} < \alpha < i_{k+1} < \cdots < i_{j-1}\atop 0 \leqslant k \leqslant j-1} (-1)^{k+1}
		x^{l} (\dd K^{\alpha}_{i_{1} \cdots i_{k} \alpha i_{k+1} \cdots i_{j-1}} \wg
		\iota_{\partial_{x^{\alpha}}}\dd x^{l}) \wg (\dd x^{i_{1}} \wg \cdots \wg \dd x^{i_{j-1}}) \ot \partial_{x^{l}}\\
		=&\ \sum_{i_{1}< \cdots < i_{k} < \alpha < i_{k+1} < \cdots < i_{j-1}\atop 0 \leqslant k \leqslant j-1} (-1)^{k+1}
		x^{l} \frac{\partial K^{\alpha}_{i_{1} \cdots i_{k} \alpha i_{k+1} \cdots i_{j-1}}}{\partial x^{l}}
		\dd x^{l} \wg \dd x^{i_{1}} \wg \cdots \wg \dd x^{i_{j-1}} \ot \partial_{x^{\alpha}}\\
		&\ - \sum_{i_{1}< \cdots < i_{k} < \alpha < i_{k+1} < \cdots < i_{j-1}\atop 0 \leqslant k \leqslant j-1} (-1)^{k+1}
		K^{\alpha}_{i_{1} \cdots i_{k} \alpha i_{k+1} \cdots i_{j-1}}
		\dd x^{\alpha} \wg \dd x^{i_{1}} \wg \cdots \wg \dd x^{i_{j-1}}	\ot \partial x^{\alpha}\\
		&\ - \sum_{i_{1}< \cdots < i_{k} < \alpha < i_{k+1} < \cdots < i_{j-1}\atop 0 \leqslant k \leqslant j-1} (-1)^{k+1}
		x^{\alpha} \frac{\partial K^{\alpha}_{i_{1} \cdots i_{k} \alpha i_{k+1} \cdots i_{j-1}}}{\partial x^{m}} 
		\dd x^{m}\wg
		\dd x^{i_{1}} \wg \cdots \wg \dd x^{i_{j-1}} \ot \partial_{x^{\alpha}}\\
		=&\ \sum_{i_{1}< \cdots < i_{k} < \alpha < i_{k+1} < \cdots < i_{j-1}\atop 0 \leqslant k \leqslant j-1} (-1)^{k+1}
		x^{\alpha} \frac{\partial K^{\alpha}_{i_{1} \cdots i_{k} \alpha i_{k+1} \cdots i_{j-1}}}{\partial x^{\alpha}}
		\dd x^{\alpha} \wg \dd x^{i_{1}} \wg \cdots \wg \dd x^{i_{j-1}}	\ot \partial_{x^{\alpha}}\\
		&\ + \sum_{i_{1}< \cdots < i_{k} < \alpha < i_{k+1} < \cdots < i_{j-1}\atop l \neq \alpha, 0 \leqslant k \leqslant j-1} (-1)^{k+1}
		x^{l} \frac{\partial K^{\alpha}_{i_{1} \cdots i_{k} \alpha i_{k+1} \cdots i_{j-1}}}{\partial x^{l}}
		\dd x^{l} \wg \dd x^{i_{1}} \wg \cdots \wg \dd x^{i_{j-1}} \ot \partial_{x^{\alpha}}\\
		&\ - \sum_{i_{1}< \cdots < i_{k} < \alpha < i_{k+1} < \cdots < i_{j-1}\atop 0 \leqslant k \leqslant j-1} (-1)^{k+1}
		K^{\alpha}_{i_{1} \cdots i_{k} \alpha i_{k+1} \cdots i_{j-1}}
		\dd x^{\alpha} \wg \dd x^{i_{1}} \wg \cdots \wg \dd x^{i_{j-1}}	\ot \partial_{x^{\alpha}}\\
		&\ - \sum_{i_{1}< \cdots < i_{k} < \alpha < i_{k+1} < \cdots < i_{j-1}\atop 0 \leqslant k \leqslant j-1} (-1)^{k+1}
		x^{\alpha} \frac{\partial K^{\alpha}_{i_{1} \cdots i_{k} \alpha i_{k+1} \cdots i_{j-1}}}{\partial x^{\alpha}}
		\dd x^{\alpha}\wg
		\dd x^{i_{1}} \wg \cdots \wg \dd x^{i_{j-1}} \ot \partial_{x^{\alpha}}\\
		&\ - \sum_{i_{1}< \cdots < i_{k} < \alpha < i_{k+1} < \cdots < i_{j-1}\atop m \neq \alpha, 0 \leqslant k \leqslant j-1} (-1)^{k+1}
		x^{\alpha} \frac{\partial K^{\alpha}_{i_{1} \cdots i_{k} \alpha i_{k+1} \cdots i_{j-1}}}{\partial x^{m}}
		\dd x^{m}\wg
		\dd x^{i_{1}} \wg \cdots \wg \dd x^{i_{j-1}} \ot \partial_{x^{\alpha}}\\
		=&\ \sum_{i_{1}< \cdots < i_{k} < \alpha < i_{k+1} < \cdots < i_{j-1}\atop l \neq \alpha, 0 \leqslant k \leqslant j-1} (-1)^{k+1}
		x^{l} \frac{\partial K^{\alpha}_{i_{1} \cdots i_{k} \alpha i_{k+1} \cdots i_{j-1}}}{\partial x^{l}}
		\dd x^{l} \wg \dd x^{i_{1}} \wg \cdots \wg \dd x^{i_{j-1}} \ot \partial_{x^{\alpha}}\\
		&\ - \sum_{i_{1}< \cdots < i_{k} < \alpha < i_{k+1} < \cdots < i_{j-1}\atop 0 \leqslant k \leqslant j-1} (-1)^{k+1}
		K^{\alpha}_{i_{1} \cdots i_{k} \alpha i_{k+1} \cdots i_{j-1}}
		\dd x^{\alpha} \wg \dd x^{i_{1}} \wg \cdots \wg \dd x^{i_{j-1}} \ot \partial_{x^{\alpha}}\\
		&\ - \sum_{i_{1}< \cdots < i_{k} < \alpha < i_{k+1} < \cdots < i_{j-1}\atop m \neq \alpha, 0 \leqslant k \leqslant j-1} (-1)^{k+1}
		x^{\alpha} \frac{\partial K^{\alpha}_{i_{1} \cdots i_{k} \alpha i_{k+1} \cdots i_{j-1}}}{\partial x^{m}}
		\dd x^{m}\wg
		\dd x^{i_{1}} \wg \cdots \wg \dd x^{i_{j-1}} \ot \partial_{x^{\alpha}}, 
	\end{align*}
	\begin{align*}
		h \dd_{\FN}(K) 
		=&\ h[ P,K ]_{\FN} \\
		=&\ h[ x^{l} \dd x^{l} \ot \partial_{x^{l}}, K^{\alpha}_{i_{1} \cdots i_{j}} \dd x^{i_{1}} \wg \cdots \wg \dd x^{i_{j}} \ot \partial_{x^{\alpha}} ]_{\FN} \\
		=&\ h \big(
		\dd x^{l} \wg (\dd x^{i_{1}} \wg \cdots \wg \dd x^{i_{j}}) \ot [x^{l} \partial_{x^{l}}, K^{\alpha}_{i_{1} \cdots i_{j}} \partial_{x^{\alpha}}] \big)
		+ h \big(
		K^{\alpha}_{i_{1} \cdots i_{j}} 
		\dd x^{l} \wg (\mathcal{L}_{x^{l} \partial_{x^{l}}} \dd x^{i_{1}} \wg \cdots \wg \dd x^{i_{j}}) \ot \partial_{x^{\alpha}} \big)\\
		&\ - h \big(
		x^{l} 
		(\mathcal{L}_{K^{\alpha}_{i_{1} \cdots i_{j}} \partial_{x^{\alpha}}} \dd x^{l}) \wg (\dd x^{i_{1}} \wg \cdots \wg \dd x^{i_{j}}) \ot \partial_{x^{l}}
		\big)	\\
		=&\ h \big( x^{l} \frac{\partial K^{\alpha}_{i_{1} \cdots i_{j}}}{\partial x^{l}} 
		\dd x^{l} \wg \dd x^{i_{1}} \wg \cdots \wg \dd x^{i_{j}} \ot \partial_{x^{\alpha}}\big)
		- h \big(
		K^{\alpha}_{i_{1} \cdots i_{j}} \frac{\partial{x^{l}}}{\partial x^{\alpha}} 
		\dd x^{l} \wg \dd x^{i_{1}} \wg \cdots \wg \dd x^{i_{j}} \ot \partial_{x^{l}} \big)\\	
		&\ + h \big(
		K^{\alpha}_{i_{1} \cdots i_{j}} 
		\dd x^{l} \wg (x^{l} \mathcal{L}_{ \partial_{x^{l}}} \dd x^{i_{1}} \wg \cdots \wg \dd x^{i_{j}}) \ot \partial_{x^{\alpha}}\big)
		+ h \big(
		K^{\alpha}_{i_{1} \cdots i_{j}} 
		\dd x^{l} \wg (\dd x^{l} \wg \iota_{ \partial_{x^{l}}} (\dd x^{i_{1}} \wg \cdots \wg \dd x^{i_{j}})) \ot \partial_{x^{\alpha}}\big)\\
		&\ - h \big(
		x^{l} 
		(K^{\alpha}_{i_{1} \cdots i_{j}} \mathcal{L}_{ \partial_{x^{\alpha}}} \dd x^{l}) \wg (\dd x^{i_{1}} \wg \cdots \wg \dd x^{i_{j}}) \ot \partial_{x^{l}}
		\big)
		- h \big(
		x^{l} 
		(\dd K^{\alpha}_{i_{1} \cdots i_{j}} \wg \iota_{ \partial_{x^{\alpha}}} \dd x^{l}) \wg (\dd x^{i_{1}} \wg \cdots \wg \dd x^{i_{j}}) \ot \partial_{x^{l}}
		\big)	\\
		=&\ h\big( x^{l} \frac{\partial K^{\alpha}_{i_{1} \cdots i_{j}}}{\partial x^{l}} 
		\dd x^{l} \wg \dd x^{i_{1}} \wg \cdots \wg \dd x^{i_{j}} \ot \partial_{x^{\alpha}} \big)
		- h\big(
		K^{\alpha}_{i_{1} \cdots i_{j}}
		\dd x^{\alpha} \wg \dd x^{i_{1}} \wg \cdots \wg \dd x^{i_{j}} \ot \partial_{x^{\alpha}}\big)\\	
		&\ - h \big(
		x^{\alpha} \frac{\partial K^{\alpha}_{i_{1} \cdots i_{j}}}{\partial x^{m}}
		\dd x^{m}\wg \dd x^{i_{1}} \wg \cdots \wg \dd x^{i_{j}} \ot \partial_{x^{\alpha}}
		\big)	\\
		=&\ - \sum_{\alpha \neq i_{1}, \dots, i_{j}} 
		x^{\alpha} \frac{\partial K^{\alpha}_{i_{1} \cdots i_{j}}}{\partial x^{\alpha}}
		\dd x^{i_{1}} \wg \cdots \wg \dd x^{i_{j}} \ot \partial_{x^{\alpha}}\\	
		&\ 
		+ \sum_{i_{1}< \cdots < i_{k} < \alpha < i_{k+1} < \cdots < i_{j-1}
		\atop l \neq \alpha, 0 \leqslant k \leqslant j-1	} (-1)^{k+2}
		x^{l} \frac{\partial K^{\alpha}_{i_{1} \cdots i_{k} \alpha i_{k+1} \cdots i_{j-1}}}{\partial x^{l}} 
		\dd x^{l} \wg \dd x^{i_{1}} \wg \cdots \wg \dd x^{i_{j-1}} \ot \partial_{x^{\alpha}}\\
		&\ + \sum_{\alpha \neq i_{1}, \dots, i_{j}} K^{\alpha}_{i_{1} \cdots i_{j}}
		\dd x^{i_{1}} \wg \cdots \wg \dd x^{i_{j}} \ot \partial_{x^{\alpha}} 
		+\sum_{\alpha \neq i_{1}, \dots, i_{j}} x^{\alpha} \frac{\partial K^{\alpha}_{i_{1} \cdots i_{j}}}{\partial x^{\alpha}}
		\dd x^{i_{1}} \wg \cdots \wg \dd x^{i_{j}} \ot \partial_{x^{\alpha}}\\
		&\ - \sum_{i_{1}< \cdots < i_{k} < \alpha < i_{k+1} < \cdots < i_{j-1}
		\atop	m \neq \alpha, 0 \leqslant k \leqslant j-1} (-1)^{k+2}
		x^{\alpha} \frac{\partial K^{\alpha}_{i_{1} \cdots i_{k} \alpha i_{k+1} \cdots i_{j-1}}}{\partial x^{m}}
		\dd x^{m}\wg \dd x^{i_{1}} \wg \cdots \wg \dd x^{i_{j-1}} \ot \partial_{x^{\alpha}}\\
		=&\ \sum_{i_{1}< \cdots < i_{k} < \alpha < i_{k+1} < \cdots < i_{j-1}
		\atop l \neq \alpha, 0 \leqslant k \leqslant j-1	} (-1)^{k+2}
		x^{l} \frac{\partial K^{\alpha}_{i_{1} \cdots i_{k} \alpha i_{k+1} \cdots i_{j-1}}}{\partial x^{l}} 
		\dd x^{l} \wg \dd x^{i_{1}} \wg \cdots \wg \dd x^{i_{j-1}} \ot \partial_{x^{\alpha}}
		+ \sum_{\alpha \neq i_{1}, \dots, i_{j}} K^{\alpha}_{i_{1} \cdots i_{j}}
		\dd x^{i_{1}} \wg \cdots \wg \dd x^{i_{j}} \ot \partial_{x^{\alpha}}\\
		&\ - \sum_{i_{1}< \cdots < i_{k} < \alpha < i_{k+1} < \cdots < i_{j-1}
		\atop	m \neq \alpha, 0 \leqslant k \leqslant j-1} (-1)^{k+2}
		x^{\alpha} \frac{\partial K^{\alpha}_{i_{1} \cdots i_{k} \alpha i_{k+1} \cdots i_{j-1}}}{\partial x^{m}}
		\dd x^{m}\wg \dd x^{i_{1}} \wg \cdots \wg \dd x^{i_{j-1}} \ot \partial_{x^{\alpha}}.
	\end{align*}
	So we get
	\begin{align*}
		\dd_{\FN} h(K) + h \dd_{\FN}(K)
		=&\ - \sum_{i_{1}< \cdots < i_{k} < \alpha < i_{k+1} < \cdots < i_{j-1}\atop 0 \leqslant k \leqslant j-1} (-1)^{k+1}
		K^{\alpha}_{i_{1} \cdots i_{k} \alpha i_{k+1} \cdots i_{j-1}}
		\dd x^{\alpha} \wg \dd x^{i_{1}} \wg \cdots \wg \dd x^{i_{j-1}} \ot \partial_{x^{\alpha}}\\
		&\ + \sum_{\alpha \neq i_{1}, \dots, i_{j}} K^{\alpha}_{i_{1} \cdots i_{j}}
		\dd x^{i_{1}} \wg \cdots \wg \dd x^{i_{j}} \ot \partial_{x^{\alpha}}\\
		=&\ \sum_{i_{1}< \cdots < i_{k} < \alpha < i_{k+1} < \cdots < i_{j-1}\atop 0 \leqslant k \leqslant j-1} 
		K^{\alpha}_{i_{1} \cdots i_{k} \alpha i_{k+1} \cdots i_{j-1}}
		\dd x^{i_{1}} \wg \cdots \wg \dd x^{i_{k}} \wg \dd x^{\alpha} \wg \dd x^{i_{k+1}} \wg \cdots \wg \dd x^{i_{j-1}} \ot \partial_{x^{\alpha}}\\
		&\ + \sum_{\alpha \neq i_{1}, \dots, i_{j}} K^{\alpha}_{i_{1} \cdots i_{j}}
		\dd x^{i_{1}} \wg \cdots \wg \dd x^{i_{j}} \ot \partial_{x^{\alpha}}\\
		=&\ K^{\alpha}_{i_{1} \dots i_{j}}
		\dd x^{i_{1}} \wg \cdots \wg \dd x^{i_{j}} \ot \partial_{x^{\alpha}}\\
		=&\ \id(K).
	\end{align*}
	This completes the proof.
\end{proof}

\medskip

\textbf{Acknowledgements:} The first, second and the fourth authors were supported by  the National Key R  $\&$ D Program of China (No. 2024YFA1013803) and by Shanghai Key Laboratory of PMMP (No.22DZ2229014). The third author was supported by the National Natural Science Foundation of China (No. 12101183), and also  by the Postdoctoral Fellowship Program of CPSF (No. GZC20240406).

\bigskip

\end{document}